%
%
%


\documentclass{amsart}

\usepackage{amsmath,amssymb,amscd,amsxtra,soul,mathtools}

\usepackage{hyperref}

\usepackage{esint}

\usepackage[shortlabels]{enumitem}
\usepackage[capitalise]{cleveref}
\crefrangeformat{enumi}{items #3#1#4--#5#2#6}

\usepackage{graphics} 
\usepackage{epsfig} 
\usepackage{graphicx}  
\usepackage{epstopdf}

\usepackage[toc,page]{appendix}

\hypersetup{colorlinks=false}

\usepackage{orcidlink}

\numberwithin{equation}{section}

\setcounter{tocdepth}{1}

\newtheorem{thm}{Theorem}[section]
\newtheorem{lem}[thm]{Lemma}
\newtheorem{cor}[thm]{Corollary}
\newtheorem{prop}[thm]{Proposition}

\theoremstyle{definition}

\theoremstyle{remark}

\newtheorem{rem}[thm]{Remark}
\newtheorem{notation}[thm]{Notation}
\newtheorem{claim}{Claim}


\crefname{thm}{Theorem}{Theorems}
\crefname{lem}{Lemma}{Lemmas}
\crefname{cor}{Corollary}{Corollaries}
\crefname{prop}{Proposition}{Propositions}
\crefname{mainthm}{Theorem}{Theorems}
\crefname{maincor}{Corollary}{Corollaries}

\crefname{defn}{Definition}{Definitions}
\crefname{conj}{Conjecture}{Conjectures}
\crefname{ex}{Example}{Examples}
\crefname{exs}{Examples}{Examples}
\crefname{prob}{Problem}{Problems}
\crefname{quest}{Question}{Questions}

\crefname{rem}{Remark}{Remarks}
\crefname{claim}{Claim}{Claims}
\crefname{case}{case}{cases}
\crefname{hyp}{Hypothesis}{Hypotheses}
\crefname{notation}{Notation}{Notations}



\newcommand{\C}{\mathbb{C}}
\newcommand{\K}{\mathbb{K}}
\newcommand{\N}{\mathbb{N}}

\newcommand{\R}{\mathbb{R}}
\renewcommand{\S}{\mathbb{S}}
\newcommand{\Z}{\mathbb{Z}}

\newcommand{\HH}{\mathcal{H}}

\newcommand{\LL}{\mathcal{L}}
\newcommand{\MM}{\mathcal{M}}
\newcommand{\OO}{\mathcal{O}}

\renewcommand{\SS}{\mathcal{S}}
\newcommand{\TT}{\mathcal{T}}

\newcommand{\XX}{\mathcal{X}}
\newcommand{\YY}{\mathcal{Y}}

\newcommand{\fa}{\mathfrak{a}}
\newcommand{\fe}{\mathfrak{e}}

\newcommand{\fX}{\mathfrak{X}}

\newcommand{\bfd}{\mathbf{d}}
\newcommand{\bfe}{\mathbf{e}}
\newcommand{\bff}{\mathbf{f}}
\newcommand{\bfg}{\mathbf{g}}

\newcommand{\bfz}{\mathbf{z}}

\newcommand{\bfC}{\mathbf{C}}
\newcommand{\bfD}{\mathbf{D}}

\newcommand{\bfdelta}{\boldsymbol{\delta}}
\newcommand{\bfDelta}{\boldsymbol{\Delta}}

\newcommand{\bfPi}{\boldsymbol{\Pi}}

\newcommand{\sw}{\mathsf{w}}

\newcommand{\sH}{\mathsf{H}}
\newcommand{\sJ}{\mathsf{J}}

\newcommand{\sN}{\mathsf{N}}

\newcommand{\supp}{\operatorname{supp}}

\newcommand{\im}{\operatorname{im}}

\newcommand{\Cl}{\operatorname{Cl}}

\newcommand{\Tr}{\operatorname{Tr}}
\newcommand{\tr}{\operatorname{tr}}

\newcommand{\rank}{\operatorname{rank}}

\newcommand{\Hess}{\operatorname{Hess}}
\newcommand{\spec}{\operatorname{spec}}
\newcommand{\ind}{\operatorname{ind}}
\newcommand{\Crit}{\operatorname{Crit}}
\newcommand{\Zero}{\operatorname{Zero}}
\newcommand{\grad}{\operatorname{grad}}

\newcommand{\Str}{\operatorname{Str}}
\newcommand{\str}{\operatorname{str}}

\newcommand{\Cinftyc}{C^\infty_{\text{\rm c}}}

\newcommand{\betaNo}{\beta_{\text{\rm No}}}

\newcommand{\zetasm}{\zeta_{\text{\rm sm}}}
\newcommand{\zetala}{\zeta_{\text{\rm la}}}
\newcommand{\thetasm}{\theta_{\text{\rm sm}}}
\newcommand{\thetala}{\theta_{\text{\rm la}}}
\newcommand{\dvol}{\operatorname{dvol}}


 
\usepackage{color}
\definecolor{darkgreen}{cmyk}{1,0,1,.2}
\definecolor{m}{rgb}{1,0.1,1}


\newdimen\theight
\def\TeXref#1{%
             \leavevmode\vadjust{\setbox0=\hbox{{\tt
                     \quad\quad  {\small \textrm #1}}}%
             \theight=\ht0
             \advance\theight by \lineskip
             \kern -\theight \vbox to
             \theight{\rightline{\rlap{\box0}}%
             \vss}%
             }}%



\title{Zeta invariants of Morse forms}

\author[J.A. \'Alvarez L\'opez]{Jes\'us A. \'Alvarez L\'opez\,\orcidlink{0000-0001-6056-2847}}
\email{jesus.alvarez@usc.es}
\address{Department of Mathematics and CITMAga\\
         University of Santiago de Compostela\\
         15782 Santiago de Compostela\\ Spain}

\author[Y.A. Kordyukov]{Yuri A. Kordyukov\,\orcidlink{0000-0003-2957-2873}}
\email{yurikor@matem.anrb.ru}
\address{Institute of Mathematics\\ Ufa Federal Research Center\\
Russian Academy of Sciences\\
112 Chernyshevsky street\\ 450008 Ufa\\ Russia}

\author[E. Leichtnam]{Eric Leichtnam\,\orcidlink{0000-0002-5058-5508}}
\email{eric.leichtnam@imj-prg.fr}
\address{Institut de Math\'ematiques de Jussieu-PRG\\ CNRS\\ Batiment Sophie Germain (bureau 740)\\ Case~7012\\ 75205 Paris Cedex 13, France}

\thanks{The authors are partially supported by the grants MTM2017-89686-P and PID2020-114474GB-I00 (AEI/FEDER, UE) and ED431C 2019/10 (Xunta de Galicia, FEDER)}

\date{\today}

\subjclass[2023]{58A12, 58A14, 58J20, 57R58}

\keywords{Witten's perturbation, Morse form, Morse complex, zeta function of operators, heat invariant, Ray-Singer metric.}

\begin{document}

\begin{abstract}
Let $\eta$ be a closed real 1-form on a closed Riemannian $n$-manifold $(M,g)$. Let $d_z$, $\delta_z$ and $\Delta_z$ be the induced Witten's type perturbations of the de~Rham derivative and coderivative and the Laplacian, parametrized by $z=\mu+i\nu\in\mathbb C$ ($\mu,\nu\in\mathbb{R}$, $i=\sqrt{-1}$). Let $\zeta(s,z)$ be the zeta function of $s\in\mathbb{C}$, defined as the meromorphic extension of the function $\zeta(s,z)=\operatorname{Str}({\eta\wedge}\,\delta_z\Delta_z^{-s})$ for $\Re s\gg0$. We prove that $\zeta(s,z)$ is smooth at $s=1$ and establish a formula for $\zeta(1,z)$ in terms of the associated heat semigroup. For a class of Morse forms, $\zeta(1,z)$ converges to some $\mathbf{z}\in\mathbb{R}$ as $\mu\to+\infty$, uniformly on $\nu$. We describe $\mathbf{z}$ in terms of the instantons of an auxiliary Smale gradient-like vector field $X$ and the Mathai-Quillen current on $TM$ defined by $g$. Any real 1-cohomology class has a representative $\eta$ satisfying the hypothesis. If $n$ is even, we can prescribe any real value for $\mathbf{z}$ by perturbing $g$, $\eta$ and $X$, and achieve the same limit as $\mu\to-\infty$. This is used to define and describe certain tempered distributions induced by $g$ and $\eta$. These distributions appear in another publication as contributions from the preserved leaves in a trace formula for simple foliated flows, giving a solution to a problem stated by C.~Deninger.
\end{abstract}

\maketitle

\tableofcontents

\section{Introduction}\label{s: intro}

\subsection{Witten's perturbed operators}\label{ss: intro-Witten}

Let $M$ be  a closed $n$-manifold. For any smooth function $h$ on $M$, Witten \cite{Witten1982} introduced a perturbed de~Rham differential operator $d_\mu=d+\mu\,{dh\wedge}$, depending on a parameter $\mu\in\R$. Endowing $M$ with a Riemannian metric $g$, we have a corresponding perturbed codifferential operator $\delta_\mu=\delta-\mu\,{dh\lrcorner}$, and a perturbed Laplacian $\Delta_\mu=d_\mu\delta_\mu+\delta_\mu d_\mu$. Since $d_\mu=e^{-\mu h}\,d\,e^{\mu h}$, it defines the same Betti numbers as $d$. However $\Delta_\mu$ and the usual Laplacian $\Delta$ have different spectrum in general. In fact, if $h$ is a Morse function and $g$ is Euclidean with respect to Morse coordinates around the critical points, then the spectrum of $\Delta_\mu$ develops a long gap as $\mu\to+\infty$, giving rise to the small and large spectrum. The eigenforms of the small/large eigenvalues generate the small/large subcomplex, $(E_{\mu,\text{\rm sm/la}},d_\mu)$. When $h$ is a Morse function, Witten gave a beautiful analytic proof of the Morse inequalities by analyzing the small spectrum. This was refined by subsequent work of Helffer and Sj\"ostrand \cite{HelfferSjostrand1985} and Bismut and Zhang \cite{BismutZhang1992,BismutZhang1994}, showing that, if moreover $X:=-\grad h$ is a Smale vector field, then the Morse complex $(\bfC^\bullet,\bfd)$ of $X$ can be considered as the limit of $(E_{\mu,\text{\rm sm}},d_\mu)$. More precisely, for certain perturbed Morse complex $(\bfC^\bullet,\bfd_\mu)$, isomorphic to $(\bfC^\bullet,\bfd)$, there is a quasi-isomorphism $\Phi_\mu:(E_{z,\text{\rm sm}},d_\mu)\to(\bfC^\bullet,\bfd_\mu)$, defined by integration on the unstable cells of the zero points of $X$, which becomes an isomorphism for $\mu\gg0$ and almost isometric as $\mu\to+\infty$ (after rescaling at every degree).

We can replace $dh$ with any closed real 1-form $\eta$, obtaining a generalization of the Witten's perturbations, $d_\mu$, $\delta_\mu$ and $\Delta_\mu$. Now $d_\mu$ need not be gauge equivalent to $d$, obtaining new twisted Betti numbers $\beta^k_\mu$. However the numbers $\beta^k_\mu$ have well defined ground values $\betaNo^k$, called the Novikov numbers, which depend upon the de~Rham cohomology class $[\eta]\in H^1(M,\R)$. Assume that:
\begin{enumerate}[{\rm(a)}]
\item\label{i-a:  eta Morse} $\eta$ is a Morse form (it has Morse-type zeros), and $g$ is Euclidean with respect to Morse coordinates around the zero points of $\eta$.
\end{enumerate} 
(Some concepts used in this section are recalled in \Cref{ss: Morse forms,ss: Smale}.) Then $\Delta_\mu$ also develops a long gap separating a small spectrum and a large spectrum, and the analysis of the small spectrum gives Morse inequalities for the Novikov numbers. Take any auxiliary vector field $X$ such that:
\begin{enumerate}[{\rm(a)}]
\setcounter{enumi}{1}
\item\label{i-a:  X ...} $X$ has Morse-type zeros, and is gradient-like and Smale; and
\item\label{i-a:  eta Lyapunov} $\eta$ is Lyapunov for $X$, and $\eta$ and $g$ are in standard form with respect to $X$.
\end{enumerate}  
Then the small complex approaches a perturbed Morse complex of $X$. We refer to work by Novikov \cite{Novikov1981,Novikov1982}, Pajitnov \cite{Pajitnov1987}, Braverman and Farber~\cite{BravermanFarber1997}, Burghelea and Haller  \cite{BurgheleaHaller2001,BurgheleaHaller2004,BurgheleaHaller2008}, and Harvey and Minervini \cite{HarveyMinervini2006,Minervini2015}.

We can similarly define the perturbation $d_z=d+z\,{\eta\wedge}$ with parameter $z=\mu+i\nu\in\C$ ($\mu,\nu\in\R$ and $i=\sqrt{-1}$). Its adjoint is $\delta_z=\delta-\bar z\,{\eta\lrcorner}$, and we have a corresponding perturbed Laplacian $\Delta_z=d_z\delta_z+\delta_z d_z$. As a first step in our study, we prove extensions of the above results to this case, taking limits as $|\mu|\to+\infty$, uniformly on $\nu$. First, assuming~\ref{i-a:  eta Morse}, we get the long gap in the spectrum of $\Delta_z$ separating the small and large spectrum, which depends only on $\mu$ (\Cref{t: spec Delta_z}). Second, assuming~\ref{i-a:  eta Morse}--\ref{i-a:  eta Lyapunov}, we show that the quasi-isomorphism $\Phi_z:(E_{z,\text{\rm sm}},d_z)\to(\bfC^\bullet,\bfd_z)$ becomes an isomorphism for $|\mu|\gg0$ and almost isometric as $|\mu|\to+\infty$ (\Cref{t: Phi_z+tau Psi_z}). To get that the convergence is uniform on $\nu$, the key ingredient is a version of a Sobolev inequality for integers $m>n/2$: on smooth complex differential forms,
\begin{equation}\label{| |_L^infty le C_m | |_m i nu}
\|\ \|_{L^\infty}\le C_m\|\ \|_{m,i\nu}\;,
\end{equation}
where $C_m>0$ is independent of $\nu$ and  $\|\alpha\|_{m,i\nu}=\sum_{k=0}^m\langle\Delta_{i\nu}^k\alpha,\alpha\rangle^{1/2}$ (\Cref{p: perturbed Sobolev}). (The analogous property for $\Delta_\mu$ is wrong.) Then we adapt the arguments of Bismut and Zhang \cite{BismutZhang1992,BismutZhang1994} (see also \cite{Zhang2001}).

The indicated properties of $\Delta_z$, holding uniformly on $\mu$, depend on remarkable differences between $\Delta_{i\nu}$ and $\Delta_\mu$. For instance, if $\eta$ is exact, all operators $\Delta_{i\nu}$ are gauge equivalent, whereas this is not true for the operators $\Delta_\mu$ when $\eta\ne0$. If $\eta$ is not exact, the operators $\Delta_{i\nu}$ are not gauge equivalent either. Moreover $\Delta_{i\nu}-\Delta$ is of order one when $\nu\ne0$, whereas $\Delta_{\mu}-\Delta$ is of order zero.

\subsection{Zeta invariants of Morse forms}\label{ss: intro zeta}

To begin with, $\eta$ is only assumed to be an arbitrary closed real 1-form. Let $\Pi_z^\perp$ and $\Pi_z^1$ be the orthogonal projections to the images of $\Delta_z$ and $d_z$. We consider a zeta function $\zeta(s,z)$ associated with $\eta$ and the parameter $z\in\C$. As a function of $s\in\C$, it is the meromorphic extension of the holomorphic function
\[
\zeta(s,z)=\Str({\eta\wedge}\,\delta_z\Delta_z^{-s}\Pi_z^\perp)=\Str({\eta\wedge}\,d_z^{-1}\Delta_z^{-s+1}\Pi^1_z)
\]
defined for $\Re s\gg0$, where $\Str$ stands for the super-trace. We are interested in the zeta invariant $\zeta(1,z)$ that can be interpreted as a renormalization of the super-trace of ${\eta\wedge}\,d_z^{-1}\Pi^1_z$, which is not of trace class by the Weyl's law. According to the general theory of zeta functions of elliptic operators, $\zeta(s,z)$ might have a simple pole at $s=1$. However our first main theorem states that $\zeta(s,z)$ is smooth at $s=1$ and gives a formula for $\zeta(1,z)$ in terms of the associated heat semigroup.

\begin{thm}\label{t: zeta(1 z)}
Let $M\equiv(M,g)$ be a closed Riemannian $n$-manifold, and let $\eta$ be a closed real $1$-form on $M$. If $n$ is even {\rm(}resp., odd\/{\rm)}, then, for any $z\in\C$, $s\mapsto\zeta(s,z)$ is smooth on the half-plane $\Re s>0$ {\rm(}resp., $\Re s>1/2$\/{\rm)}. Furthermore
\[
\zeta(1,z)=\lim_{t\downarrow0}\Str\big({\eta\wedge}\,d_z^{-1}e^{-t\Delta_z}\Pi^1_z\big)\;.
\]
\end{thm}

The existence of the limit of \Cref{t: zeta(1 z)} is surprising because ${\eta\wedge}\,d_z^{-1}e^{-t\Delta_z}\Pi_z^1$ is weakly convergent to ${\eta\wedge}\,d_z^{-1}\Pi_z^1$.  An expression similar to $\Str({\eta\wedge}\,d_z^{-1}e^{-t\Delta_z}\Pi_z^1)$ was used by Mrowka, Ruberman and Saveliev to define a cyclic eta invariant \cite{MrowkaRubermanSaveliev2016}. 

Next, we additionally assume that $\eta$ is a Morse form and use the results described in the previous section. The zeta-function decomposes as the sum of terms defined by the contributions from the small/large spectrum, $\zeta_{\text{\rm sm/la}}(s,z)=\zeta_{\text{\rm sm/la}}(s,z,\eta)$, where $\zetasm(s,z)$ is an entire function of $s$. Our second main theorem describes the asymptotic behavior of $\zeta(1,z)$ as $\mu\to\pm\infty$, uniformly on $\nu$. In fact, since
\begin{equation}\label{zeta(s z eta) = -zeta(s -z -eta)}
\zeta(s,z,\eta)=-\zeta(s,-z,-\eta)\;,\quad\zeta_{\text{\rm sm/la}}(s,z,\eta)=-\zeta_{\text{\rm sm/la}}(s,-z,-\eta)\;,
\end{equation}
it is enough to consider the case where $\mu\gg0$ and take the limit as $\mu\to+\infty$.

We use the current $\psi(M,\nabla^M)$ of degree $n-1$ on $TM$ constructed by Mathai and Quillen in \cite{MathaiQuillen1986}, depending on the Levi-Civita connection $\nabla^M$. This current is smooth on the complement of the zero section, where it is given by the solid angle. It is also locally integrable, and its wave front set is contained in the conormal bundle in $T^* TM$ of the zero section of $T M$. Since this set does not meet the conormal bundle of the map $X: M \rightarrow T M$ (assuming~\ref{i-a:  X ...}), $(-X)^*\psi(M,\nabla^M)$ is well defined as a current on $M$. Assuming also~\ref{i-a:  eta Morse}--\ref{i-a:  eta Lyapunov}, consider the real number
\[
\bfz_{\text{\rm la}}=\bfz_{\text{\rm la}}(M,g,\eta)=\int_M\eta\wedge (-X)^*\psi(M,\nabla^M)\;,
\]
which is known to be independent of $X$ \cite[Proposition~6.1]{BismutZhang1992}.

Now suppose also that:
\begin{enumerate}[{\rm(a)}]
\setcounter{enumi}{3}
\item\label{i-a:  tight} for every zero point $p$ of $X$ with Morse index $k$, the maximum value of the integrals of $\eta$ along the instantons of $X$ with $\alpha$-limit $p$ only depends on $k$.
\end{enumerate} 
This maximum value is denoted by $-a_k$ for some $a_k>0$. Let $m^1_k=\dim d_z(E_{z,\text{\rm sm}}^{k-1})$ for $\mu\gg0$, which is independent of $z$. Consider also the real number
\[
\bfz_{\text{\rm sm}}=\bfz_{\text{\rm sm}}(M,g,\eta,X)=\sum_{k=1}^n(-1)^k\big(1-e^{a_k}\big)m^1_k\;,
\]
and let $\bfz=\bfz(M,g,\eta,X)=\bfz_{\text{\rm sm}}+\bfz_{\text{\rm la}}$.

Recall that we write $z= \mu + i \nu$.

\begin{thm}\label{t: zeta_sm/la(1 z)}
Let $M\equiv(M,g)$ be a closed Riemannian $n$-manifold, let $\eta$ be a closed real $1$-form on $M$ satisfying~\ref{i-a:  eta Morse}, and let $X$ be a vector field on $M$ satisfying~\ref{i-a:  X ...}--\ref{i-a:  eta Lyapunov}.
\begin{enumerate}[{\rm(i)}]
\item\label{i: zetala(1 z)} We have
\[
\zetala(1,z)=\bfz_{\text{\rm la}}+O(\mu^{-1})
\]
as $\mu\to+\infty$, uniformly on $\nu$.
\item\label{i: zetasm(1 z)} If moreover~\ref{i-a:  tight} holds, then
\[
\zetasm(1,z)=\bfz_{\text{\rm sm}}+O(\mu^{-1})
\]
as $\mu\to+\infty$, uniformly on $\nu$.
\end{enumerate}
\end{thm}

\Cref{t: zeta_sm/la(1 z)}~\ref{i: zetasm(1 z)} shows that $\bfz_{\text{\rm sm}}$ and $\bfz$ are also independent of $X$. Thus $X$ will be omitted in their notation. In the notation of $\bfz_{\text{\rm sm/la}}$ and $\bfz$, we may also omit $M$ or $g$ if they are fixed. 

By~\eqref{zeta(s z eta) = -zeta(s -z -eta)}, if we take $\mu\to-\infty$ in \Cref{t: zeta_sm/la(1 z)}, we have to replace $\bfz_{\text{\rm sm/la}}(\eta)$ with $-\bfz_{\text{\rm sm/la}}(-\eta)$. Descriptions of $-\bfz_{\text{\rm sm/la}}(-\eta)$ are given in~\eqref{-bfz_la(-eta)} and~\eqref{-bfz(M g -eta)}.

Our third main theorem is about the prescription of $\bfz=\bfz(M,g,\eta)$ without changing the cohomology class of $\eta$.

\begin{thm}\label{t: prescription of lim zeta(1 z)}
Let $M$ be a smooth closed $n$-manifold. If $n$ is even {\rm(}resp., odd\/{\rm)}, for all $\xi\in H^1(M,\R)$ and $\tau\in\R$ {\rm(}resp., $\tau\gg0$\/{\rm)}, there is some $\eta\in\xi$, a Riemannian metric $g$ and a vector field $X$ satisfying~\ref{i-a:  eta Morse}--\ref{i-a:  tight} such that  $\pm\bfz(M,g,\pm\eta)=\tau$ {\rm(}resp., $\bfz(M,g,\eta)=\tau${\rm)}.
\end{thm}

\subsection{A distribution associated to some Morse forms}\label{ss: intro - Z^pm}

A trace formula for simple foliated flows on closed foliated manifolds was conjectured by C.~ Deninger (see e.g.\ \cite{Deninger2008}). He was motivated by analogies with Weil’s explicit formulas in Arithmetics, and previous work of Guillemin and Sternberg \cite{Guillemin1977}. This trace formula is an expression for a Lefschetz distribution in terms of infinitesimal data of the flow at the fixed points and closed orbits. This Lefschetz distribution should be an analogue of the Lefschetz number for the action induced by the flow on some leafwise cohomology, whose value is a distribution on $\R$---the precise definition of these notions is part of the problem. In \cite{AlvKordy2002,AlvKordy2008a}, the first two authors proved such a trace formula when the flow has no preserved leaves; see also the contributions \cite{Leichtnam2008,Leichtnam2014} by the third author. The general case is considerably more involved. In \cite{AlvKordyLeichtnam-atffff}, we propose a solution to this problem using a few additional ingredients. One of them is the b-trace introduced by Melrose \cite{Melrose1993}. Since the b-trace is not really a trace, it produces an extra term, denoted by $Z$, in the same way as the eta invariant shows up in Index Theory on manifolds with boundary. In our trace formula, the term $Z$ is a contribution from the compact leaves preserved by the flow, which depends on the choice of a form defining the foliation and a metric on the ambient manifold. But $Z$ may not be well defined in general; it will be proved that appropriate choices of the form and the metric guarantee its existence.

Precisely, we would like to define
\begin{equation}\label{Z}
Z=Z(M,g,\eta)=\lim_{\mu\to+\infty}Z_\mu\;,
\end{equation}
in the space of tempered distributions on $\R$, where $Z_\mu=Z_\mu(M,g,\eta)$ ($\mu\gg0$) should be a tempered distribution defined by
\begin{equation}\label{Z_mu}
\langle Z_\mu,f\rangle=-\frac{1}{2\pi}\int_0^\infty\int_{-\infty}^\infty\Str\left({\eta\wedge}\,\delta_ze^{-u\Delta_z}\right)
\,\hat f(\nu)\,d\nu\,du\;,
\end{equation}
for any Schwartz function $f$, where $\hat f$ stands for the Fourier transform of $f$. 

Let $\delta_0$ denote the Dirac distribution at $0$ on $\R$. The problem about the definition of $Z$ is solved in our fourth main theorem for the same class of Morse forms as before.

\begin{thm}\label{t: Z_pm}
Let $M\equiv(M,g)$ be a closed Riemannian $n$-manifold. Let $\eta$ be a closed $1$-form on $M$ satisfying~\ref{i-a:  eta Morse},~\ref{i-a:  eta Lyapunov} and~\ref{i-a:  tight} with some vector field satisfying~\ref{i-a:  X ...}. Then~\eqref{Z} and~\eqref{Z_mu} define the tempered distribution $Z=\bfz\delta_0$.
\end{thm}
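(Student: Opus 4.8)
The plan is to reduce the statement about tempered distributions to the zeta-invariant asymptotics of \Cref{t: zeta(1 z)}. First I would make sense of the inner $\nu$-integral in~\eqref{Z_mu}: for fixed $u>0$ the supertrace $\Trs({\eta\wedge}\,\delta_z e^{-u\Delta_z})$ is, by the heat-kernel expansion and the local nature of $\delta_z,\eta\wedge$, a smooth function of $\nu$ with all derivatives bounded polynomially in $\nu$ (uniformly for $u$ in compact subsets of $(0,\infty)$), so pairing against $\hat f$ is legitimate and Fubini applies on $(0,\infty)\times\R$ once I control the $u\to0$ and $u\to\infty$ ends. The $u\to\infty$ decay comes from the spectral gap of $\Delta_z$ on the orthogonal complement of its kernel and from the fact that $\eta\wedge\,\delta_z$ kills the harmonic part in the relevant supertrace (a Hodge-theoretic cancellation that should already be implicit in the setup of $\zeta(s,z)$); the $u\to0$ end is where the zeta function enters. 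Indeed, by Mellin transform, $\int_0^\infty u^{s-1}\Trs({\eta\wedge}\,\delta_z e^{-u\Delta_z}\Pi_z^\perp)\,du=\Gamma(s)\,\zeta(s,z)$ for $\Re s\gg0$, and since by \Cref{t: zeta(1 z)}\ref{i: zeta(1 z)} $\zeta(s,z)$ is regular at $s=1$ for $\mu\gg0$, the value $\int_0^\infty \Trs({\eta\wedge}\,\delta_z e^{-u\Delta_z}\Pi_z^\perp)\,du$ is exactly $\zeta(1,z)$; the harmonic contribution being zero, $\int_0^\infty \Trs({\eta\wedge}\,\delta_z e^{-u\Delta_z})\,du=\zeta(1,z)$ as well.

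Granting that interchange, \eqref{Z_mu} becomes $\langle Z_\mu,f\rangle=-\frac1{2\pi}\int_{-\infty}^\infty\zeta(1,\mu+i\nu)\,\hat f(\nu)\,d\nu$. Now I invoke the uniform asymptotics: by \Cref{t: zeta(1 z)}\ref{i: zetala(1 z)} and~\ref{i: zetasm(1 z)}, $\zeta(1,\mu+i\nu)=\bfz+O(\mu^{-1/2})$ as $\mu\to+\infty$, uniformly in $\nu$. Hence $\langle Z_\mu,f\rangle=-\frac1{2\pi}\int_{-\infty}^\infty\big(\bfz+O(\mu^{-1/2})\big)\hat f(\nu)\,d\nu=-\frac{\bfz}{2\pi}\int_{-\infty}^\infty\hat f(\nu)\,d\nu+O(\mu^{-1/2})\|\hat f\|_{L^1}$. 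By the Fourier inversion formula $\frac1{2\pi}\int_{-\infty}^\infty\hat f(\nu)\,d\nu=f(0)=\langle\delta_0,f\rangle$ (with the sign/normalization conventions of~\eqref{Z_mu}; I would double-check that the constant $-\tfrac1{2\pi}$ there is compensated so the answer is $+\bfz\delta_0$ rather than $-\bfz\delta_0$, tracing through the definition of $\hat f$ used). Therefore $\langle Z_\mu,f\rangle\to\bfz\,f(0)$ for every Schwartz $f$, and since $f\mapsto\|\hat f\|_{L^1}$ is a continuous seminorm on Schwartz space, the convergence $Z_\mu\to\bfz\delta_0$ holds in the space of tempered distributions, which is what~\eqref{Z} asserts. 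Along the way this also shows each $Z_\mu$ is a well-defined tempered distribution, since $|\langle Z_\mu,f\rangle|\le C_\mu\|\hat f\|_{L^1}$ with $C_\mu=\sup_\nu|\zeta(1,\mu+i\nu)|<\infty$ (finite and in fact bounded in $\mu$ by the same asymptotics).

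The main obstacle is the justification of the double-integral interchange together with the identification $\int_0^\infty\Trs({\eta\wedge}\,\delta_z e^{-u\Delta_z})\,du=\zeta(1,z)$. Two points need care: (a) showing the $u$-integral converges absolutely at $u\to0$ \emph{after} the $\nu$-integration has produced the Schwartz damping, or alternatively performing the $\nu$-integration last and using the Mellin/zeta description directly — I would organize it as $\langle Z_\mu,f\rangle=-\frac1{2\pi}\int_\R\hat f(\nu)\big(\int_0^\infty\Trs(\cdots)\,du\big)d\nu$, the inner integral being $\zeta(1,\mu+i\nu)$ by \Cref{t: zeta(1 z)}\ref{i: zeta(1 z)}, which is the cleanest route and sidesteps a delicate Fubini; and (b) confirming the harmonic part contributes nothing, i.e.\ $\Trs({\eta\wedge}\,\delta_z\Pi_z^{\mathrm{harm}})=0$, which follows because $\delta_z$ annihilates $\ker\Delta_z$ (or more precisely because the relevant supertrace already incorporates $\Pi_z^\perp$, as in the definition of $\zeta(s,z)$ via $\Delta_z^{-s}\Pi_z^\perp$). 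Once these are in place the rest is the elementary Fourier-inversion computation above.
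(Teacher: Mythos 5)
The second half of your argument --- uniform asymptotics of $\zeta(1,z)$ from \Cref{t: zeta(1 z)} plus Fourier inversion, giving $Z_\mu\to\bfz\,\delta_0$ in $\SS'$ together with the continuity estimate in terms of $\|\hat f\|_{L^1}$ --- is exactly how the paper concludes, and your point (b) about the harmonic part is fine. The gap is in the first half. The definition~\eqref{Z_mu} fixes the order of integration, with the $u$-integral outermost and the $\nu$-integral innermost. Your ``cleanest route'', namely organizing the pairing as $-\frac1{2\pi}\int_\R\hat f(\nu)\big(\int_0^\infty\Trs({\eta\wedge}\,\delta_z e^{-u\Delta_z})\,du\big)\,d\nu$, is not a harmless reorganization: it \emph{is} the interchange whose justification constitutes the substance of the theorem. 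Adopting the $\nu$-outside order as the definition proves a statement about~\eqref{Z_mu switching integrals}, and the paper devotes \Cref{p: switch} precisely to showing that~\eqref{Z_mu} and~\eqref{Z_mu switching integrals} agree; sidestepping that step leaves the theorem as stated unproved (indeed it even leaves open whether the iterated integral in~\eqref{Z_mu} converges).

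When you do contemplate Fubini, you do not identify the estimate that makes it work. The delicate end is not $u\to0$ (there the $O(u^{-1/2})$ bound coming from the regularity of $\zeta(s,z)$ at $s=1$ suffices for $\mu\gg0$), but the tail $u\to\infty$ with \emph{polynomial control in $\nu$}: one must bound $\int_t^\infty\big|\Trs({\eta\wedge}\,\delta_z e^{-u\Delta_z})\big|\,du$ by a polynomial in $|z|$ (times constants depending on $t,\mu$), so that it is integrable against $|\hat f(\nu)|$. The paper obtains this in \Cref{p: finite integral of the absolute value} from the trace-norm bound $|\Trs({\eta\wedge}\,\delta_z e^{-u\Delta_z})|\le\|\eta\|_{L^\infty}\Tr(\Delta_z^{1/2}e^{-u\Delta_z})$, whose $u$-integral is $\|\eta\|_{L^\infty}\Tr(\Delta_z^{-1/2}e^{-t\Delta_z}\Pi_z^\perp)$, and then --- crucially --- from the lower bound on the nonzero small spectrum of $\Delta_z$ that is \emph{independent of $\nu$} (\Cref{t: estimates of the small eigenvalues}), combined with the Sobolev-type estimate of \Cref{c: | alpha |_m z}. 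Your appeal to ``the spectral gap of $\Delta_z$ on the orthogonal complement of its kernel'' glosses over the fact that for fixed $\mu$ the nonzero small eigenvalues are exponentially small and could a priori degenerate as $|\nu|\to\infty$; ruling this out uniformly in $\nu$ is exactly why hypotheses~\ref{i-a:  eta Lyapunov}--\ref{i-a:  tight} and \Cref{t: estimates of the small eigenvalues} enter this theorem at all, as the introduction stresses. Without that input neither the absolute convergence needed for Fubini nor the dominated-convergence passage $t\downarrow0$ is justified. (Your caution about the sign of the constant $-\tfrac1{2\pi}$ is reasonable but secondary.)
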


According to \Cref{t: prescription of lim zeta(1 z),t: Z_pm}, we can choose $\eta$ and $g$ in the trace formula for foliated flows so that $Z(M,g,\pm\eta)=0$ if $n$ is even, achieving the original expression of Deninger's conjecture. 

It looks clear that extensions of \Cref{t: zeta(1 z),t: zeta_sm/la(1 z),t: prescription of lim zeta(1 z),t: Z_pm} with coefficients in flat vector bundles could be similarly proved. We only consider complex coefficients for the sake of simplicity since this is enough for our application.

\subsection{Some ideas of the proofs of \Cref{t: zeta(1 z),t: zeta_sm/la(1 z),t: prescription of lim zeta(1 z),t: Z_pm}}\label{ss: intro-proofs}

As mentioned before, the inequality~\eqref{| |_L^infty le C_m | |_m i nu} is essential to obtain the uniformity on $\nu$ of our estimates. To prove it, we can take $\nu=1$ by considering an arbitrary closed real 1-form $\eta$ (\Cref{p: perturbed Sobolev}). Let $\|\ \|_{m,i\eta}$ be the $m$th Sobolev norm defined with the perturbed Laplacian $\Delta_{i\eta}$ induced by $i\eta$ as above. By ellipticity, $\|\ \|_{L^\infty}\le C_{m,i\eta}\|\ \|_{m,i\eta}$ for some $C_{m,i\eta}>0$ depending on $\eta$, which can be chosen to be optimal. For two such forms, $\eta$ and $\eta'$, the cohomology class $[\eta-\eta']$ is in the lattice $2\pi H^1(M,\Z)$ of $H^1(M,\R)$ just when $\eta-\eta'=h^*d\theta$ for some smooth map $h:M\to\S^1$, where $\theta$ is the multivalued angle function on the circle $\S^1$. This gives the gauge equivalence $\Delta_{i\eta'}=e^{-ih^*\theta}\,\Delta_{i\eta}\,e^{ih^*\theta}$, where $e^{\pm ih^*\theta}$ is well defined on $M$. It follows that $\eta\mapsto C_{m,i\eta}$ induces a function on the torus $H^1(M,\R)/2\pi H^1(M,\Z)$. On the other hand, every $C_{m,i\eta}$ can be estimated in terms of the $C^m$ norm of $\eta$ (\Cref{p: | alpha |_m i eta}). Hence, by compactness of $H^1(M,\R)/2\pi H^1(M,\Z)$, the values $C_{m,i\eta}$ have an upper bound $C_m$, which satisfies the desired inequality $\|\ \|_{L^\infty}\le C_m\|\ \|_{m,i\eta}$.

For an arbitrary closed real 1-form $\eta$, and for all $t>0$ and $z\in\C$, a supersymmetric argument shows that (\Cref{p: partial_z Str(sN e^-t Delta_z) = -t Str(eta wedge D_z e^-t Delta_z)})
\begin{equation}\label{partial_z Str(fN e^-t Delta_z) = -t Str(eta wedge D_z e^-t Delta_z)}
\partial_z\Str\big(\sN e^{-t\Delta_z}\big)=-t\Str\big({\eta\wedge}\,D_ze^{-t\Delta_z}\big)\;,
\end{equation}
where $\sN$ is the number operator on $\Omega(M)$ (\Cref{sss: basic notation}). Then we apply that the coefficients of the asymptotic expansion of $\Str(\sN e^{-t\Delta_z})$ as $t\downarrow0$ (the derived heat trace invariants) are independent of $z$ up to order $n$ \cite[Theorem~7.10]{BismutZhang1992} (see also \cite{AlvGilkey2023}). Thus, by~\eqref{partial_z Str(fN e^-t Delta_z) = -t Str(eta wedge D_z e^-t Delta_z)}, the coefficients of the asymptotic expansion of $\Str({\eta\wedge}\,D_ze^{-t\Delta_z})$ as $t\downarrow0$ vanish up to order $n$. Now \Cref{t: zeta(1 z)} follows by the general theory of zeta functions of operators (\Cref{ss: regularity}).

The theta function $\theta(s,z)$ is defined like $\zeta(s,z)$ by using $-\Str(\sN\Delta_z^{-s}\Pi_z^\perp)$ instead of $\Str({\eta\wedge}\,\delta_z\Delta_z^{-s}\Pi_z^\perp)$. Assuming the hypotheses of \Cref{t: zeta_sm/la(1 z)}, write $\theta(s,z)$ as the sum of contributions from the small/large spectrum, $\theta_{\text{\rm sm/la}}(s,z)$, as before. Thus $e^{\theta'(0,z)/2}$ is the factor used to define the Ray-Singer metric on $\det H_z^\bullet(M)$ \cite{BismutZhang1992}, where the prime denotes $\partial_s$. We obtain (\Cref{c: zetala(1 z) = partial_z thetala'(0 z)})
\begin{equation}\label{zetala(1 z) = partial_z thetala'(0 z)}
\zetala(1,z)=\partial_z\thetala'(0,z)\;.
\end{equation}
This equality allows us to use the deep relation between the Ray-Singer metric and the Milnor metric on $\det H_z^\bullet(M)$, proved by Bismut and Zhang \cite{BismutZhang1992,BismutZhang1994}. To apply this result, we have to make involved computations concerning derivatives with respect to $z$ of the orthogonal projection to $E_{z,\text{\rm sm}}$ and of other operators related with the isomorphism $\Phi_z:E_{z,\text{\rm sm}}\to\bfC^\bullet$, as well as estimates of the asymptotic behavior as $\mu\to+\infty$ of these operators and their derivatives (\Cref{ss: asymptotic properties of the sm proj,ss: derivatives of the sm proj,ss: small complex vs Morse complex,ss: derivatives,ss: large zeta invariant}). In this way, we obtain that $\zetala(1,z)$ is asymptotic to $\bfz_{\text{\rm la}}$ as $\mu\to+\infty$ (\Cref{ss: large zeta invariant}). This proves \Cref{t: zeta_sm/la(1 z)}~\ref{i: zetala(1 z)}.

When $\eta$ is exact, we show this asymptotic expression of $\zeta_{\text{\rm la}}(1,z)$ assuming only~\ref{i-a:  eta Morse} (\Cref{ss: differential of a Morse function}), without using~\eqref{zetala(1 z) = partial_z thetala'(0 z)} and the indicated strong result of Bismut and Zhang. Instead, we apply that the index density of $\Delta_z$ is independent of $z$, also proved by Bismut and Zhang \cite[Theorem 13.4]{BismutZhang1992}; see also \cite[Theorem~1.5]{AlvGilkey2021a} and \cite{AlvKordyLeichtnam-atffff}.

On the other hand, given any $\xi\in H^1(M,\R)$ and a vector field $X$ satisfying~\ref{i-a:  X ...}, we prove that there is some $\eta\in\xi$ and a metric $g$ satisfying~\ref{i-a:  eta Morse},~\ref{i-a:  eta Lyapunov} and~\ref{i-a:  tight} (\Cref{t: extension of Smale}). This can be considered as an extension of a theorem of Smale stating the existence of nice Morse functions \cite[Theorem~B]{Smale1961} (the case where $\xi=0$). Its proof is relegated to \Cref{s: integrals along instantons} because of its different nature. 

The properties~\ref{i-a:  eta Morse}--\ref{i-a:  tight} are used to give an asymptotic description of $\bfd_z$ as $\mu\to+\infty$ (\Cref{ss: bfd'}). From this asymptotic description and using that $\Phi_z:E_{z,\text{\rm sm}}\to\bfC^\bullet$ is an isomorphism for $\mu\gg0$, we get upper and lower bounds of the nonzero small spectrum of $\Delta_z$ (\Cref{t: estimates of the small eigenvalues}), which are independent of $\nu$. This is a partial extension of accurate descriptions of the nonzero small eigenvalues achieved in the case where $\eta$ is exact and the parameter is real \cite{LePeutrecNierViterbo2013,Michel2019}. With the same procedure and using the bounds of the nonzero small spectrum, it also follows that $\zetasm(1,z)=\bfz_{\text{\rm sm}}+O(\mu^{-1})$ as $\mu\to+\infty$ (\Cref{ss: sm zeta inv}), showing \Cref{t: zeta_sm/la(1 z)}~\ref{i: zetasm(1 z)}.

Next, by modifying $\eta$ and $X$ around its zero points of index $0$ and $n$, without changing the cohomology class of $\eta$, we can achieve any real number as $\pm\bfz(\pm\eta)$ if $n$ is even, or any large enough real number as $\bfz(\eta)$ if $n$ is odd (\Cref{s: prescription}). This shows~\Cref{t: prescription of lim zeta(1 z)}.

If it is possible to switch the order of integration in~\eqref{Z_mu},
\begin{multline}\label{Z_mu switching integrals}
\langle Z_\mu,f\rangle
=-\frac{1}{2\pi}\int_{-\infty}^\infty\int_0^\infty\Str\left({\eta\wedge}\,\delta_ze^{-u\Delta_z}\right)\,\hat f(\nu)\,du\,d\nu\\
=\frac{1}{2\pi}\int_{-\infty}^\infty\lim_{t\downarrow0}
\Str\left({\eta\wedge}\,d_z^{-1}e^{-t\Delta_z}\Pi_z^1\right)\,\hat f(\nu)\,d\nu\;,
\end{multline}
then \Cref{t: Z_pm} is an easy consequence of \Cref{t: zeta(1 z)}. Thus it only remains to prove that both~\eqref{Z_mu} and~\eqref{Z_mu switching integrals} define the same tempered distribution $Z_\mu$. This follows from the Lebesgue's dominated convergence theorem and Fubini's theorem (\Cref{s: switch}). The verification of the hypothesis of the Fubini's theorem requires the above lower estimate of the nonzero spectrum.

For the readers convenience, we recall the needed preliminaries about the many topics involved: Witten's perturbations, Morse forms, asymptotic expansions of heat kernels, zeta functions of operators, Morse and Smale vector fields, the Morse complex and Quillen metrics (Reidemeister, Milnor and Ray-Singer metrics).



\section{Witten's perturbations}\label{s: Witten}

\subsection{Preliminaries on the Witten's perturbations}\label{ss: prelim Witten}

\subsubsection{Basic notation}\label{sss: basic notation}

Let $M\equiv(M,g)$ be a closed Riemannian $n$-manifold. For any smooth Euclidean/Hermitean vector bundle $E$ over $M$, let $C^m(M;E)$, $C^\infty(M;E)$, $L^2(M;E)$, $L^\infty(M;E)$ and $H^m(M;E)$ denote the spaces of distributional sections that are $C^m$, $C^\infty$, $L^2$, $L^\infty$ and of Sobolev order $m$, respectively; as usual, $E$ is removed from this notation if it is the trivial line bundle. Consider the induced scalar product $\langle\ ,\ \rangle$ and norm $\|\ \|$ on $L^2(M;E)$, and the induced norm $\|\ \|_{L^\infty}$ on $L^\infty(M;E)$. Fix also norms, $\|\ \|_m$ on every $H^m(M;E)$ and $\|\ \|_{C^m}$ on $C^m(M;E)$. If $P$ is the orthogonal projection of $L^2(M;E)$ to some closed subspace $V$, then $P^\perp$ denotes the orthogonal projection to $V^\perp$. Let $o(E)$ denote the flat real orientation line bundle of $E$. It is said that $E$ is orientable when $o(E)$ is trivial. In this case, an orientation of $E$ is described by a (necessarily smooth) non-vanishing flat section $\OO_E$ of $o(E)$; for simplicity, it will be said that $\OO_E$ itself is an orientation. In particular, an orientation of $M$ is described using $o(M):=o(TM)$. The flat line bundle $o(E)\otimes o(E)$ is always trivial.

Let $T_\C M=TM\otimes\C$ and $T_\C^*M=T^*M\otimes\C$. The exterior bundle with coefficients in $\K=\R,\C$ is denoted by $\Lambda_\K=\Lambda_\K M$, and let $\Omega(M,\K)=C^\infty(M;\Lambda_\K)$; in particular, $C^\infty(M,\K)=\Omega^0(M,\K)$. The Levi-Civita connection is denoted by $\nabla=\nabla^M$. As usual, $d$ and $\delta$ denote the de~Rham derivative and coderivative, and let $D=d+\delta$ and $\Delta=D^2=d\delta+\delta d$ (the Laplacian). Let $Z(M,\K)$ and $B(M,\K)$ denote the kernel and image of $d$ in $\Omega(M,\K)$. Thus $H^\bullet(M,\K)=Z(M,\K)/B(M,\K)$ is the de~Rham cohomology with coefficients in $\K$. We typically consider complex coefficients, so we will omit $\K$ from all of the above notation just when $\K=\C$. Take $\|\ \|_m$ and $\|\ \|_{C^m}$ given on $\Omega(M)$ by
\[
\|\alpha\|_m=\sum_{k=0}^m\|D^k\alpha\|\;,\quad\|\alpha\|_{C^m}=\sum_{k=0}^m\|\nabla^k\alpha\|_{L^\infty}\;.
\]
In particular, we take $\|\ \|=\|\ \|_0$ and $\|\ \|_{C^0}=\|\ \|_{L^\infty}|_{C^0(M;E)}$.

On any graded vector space $V^\bullet$, let $\sw$ and $\sN$ be the degree involution and number operator; i.e., $\sw=(-1)^k$ and $\sN=k$ on $V^k$. For any homogeneous linear operator between graded vector spaces, $T:V^\bullet\to W^\bullet$, the notation $T_k$ means its precomposition with the canonical projection of $V^\bullet$ to $V^k$. 
 If $T$ is of degree $l$ ($T(V^k)\subset W^{k+l}$ for all $k$), then
\begin{equation}\label{sN T = T (sN + l)}
\sw T=(-1)^lT\sw\;,\quad \sN T=T(\sN+l)\;.
\end{equation}

For any $\eta\in\Omega^1(M,\R)$ with $\eta^\sharp=X\in\fX(M):=C^\infty(M;TM)$ ($\eta=g(X,{\cdot})$), let $\LL_X$ and $\iota_X$ denote the Lie derivative and interior product with respect to $X$, and let ${\eta\lrcorner}=-(\eta\wedge)^*=-\iota_X$. Using the identity $\Cl(T^*M)\equiv\Lambda_\R M$ defined by the symbol of filtered algebras, the left Clifford multiplication by $\eta$ is $c(\eta)={\eta\wedge}+{\eta\lrcorner}$, and the composition of $\sw$ with the right Clifford multiplication by $\eta$ is $\hat c(\eta)={\eta\wedge}-{\eta\lrcorner}$; in particular, $c(\eta)^*=-c(\eta)$ and $\hat c(\eta)^*=\hat c(\eta)$. Recall that, for any $h\in C^\infty(M,\R)$,
\begin{equation}\label{[D h] = hat c(dh)}
[D,h]=\hat c(dh)\;.
\end{equation}

In the whole paper, unless otherwise indicated, we will use the following notation without further comment. We use constants $C,c>0$ without even mentioning their existence, and their precise values may change from line to line. We may add subindices or primes to these constants if needed. We also use a complex parameter $z=\mu+i\nu\in\C$ ($\mu,\nu\in\R$ and $i=\sqrt{-1}$). Recall that $\partial_z=(\partial_\mu-i\partial_\nu)/2$ and $\partial_{\bar z}=(\partial_\mu+i\partial_\nu)/2$.

\subsubsection{Perturbations defined by a closed real 1-form}\label{sss: perturbations}

For any $\omega\in Z^1(M)$, we have the Witten's type perturbations $d_\omega$, $\delta_\omega$, $D_\omega$ and $\Delta_\omega$ of $d$, $\delta$, $D$ and $\Delta$. Given $\eta\in Z^1(M,\R)$ and $z\in\C$, we write $d_z=d_{z\eta}$, $\delta_z=\delta_{z\eta}$, $D_z=D_{z\eta}$ and $\Delta_z=\Delta_{z\eta}$. These operators have the following expressions:
\begin{equation}\label{perturbed opers}
\left\{
\begin{aligned}
d_z&=d+z\,\eta\wedge\;,\quad
\delta_z=d_z^*=\delta-\bar z\,{\eta\lrcorner}\;,\\
D_z&=d_z+\delta_z=D+\mu\hat c(\eta)+i\nu c(\eta)
=D_{i\nu}+\mu\hat c(\eta)\;,\\
\Delta_z&=D_z^2=d_z\delta_z+\delta_zd_z
=\Delta+\mu\sH_\eta+i\nu\sJ_\eta+|z|^2|\eta|^2\\
&=\Delta_{i\nu}+\mu\sH_\eta+\mu^2|\eta|^2\;,
\end{aligned}
\right.
\end{equation}
where, for $X=\eta^\sharp$,
\[
\sH_\eta=D\hat c(\eta)+\hat c(\eta)D=\LL_X^*+\LL_X\;,\quad
\sJ_\eta=Dc(\eta)+c(\eta)D=\LL_X^*-\LL_X\;.
\]
Note that $\sH_\eta$ is of order zero and $\sJ_\eta$ of order one. 

As families of operators, $d_z$ and $\delta_z$ are holomorphic and anti-holomorphic functions of $z$, respectively. More precisely, 
it follows from~\eqref{perturbed opers} that
\begin{equation}\label{partial_z delta_z}
\left\{
\begin{alignedat}{3}
\partial_zd_z&={\eta\wedge}\;,&\quad\partial_z\delta_z&=0\;,&\quad\partial_z\Delta_z&={\eta\wedge}\,\delta_z+\delta_z\,{\eta\wedge}\;,\\
\partial_{\bar z}d_z&=0\;,&\quad\partial_{\bar z}\delta_z&=-{\eta\lrcorner}\;,&\quad\partial_{\bar z}\Delta_z&=-{\eta\lrcorner}\,d_z-d_z\,{\eta\lrcorner}\;.
\end{alignedat}
\right.
\end{equation}

The operator $d_z$ defines an elliptic complex on $\Omega(M)$, whose cohomology is denoted by $H_z^\bullet(M)$. Since $d_z$ has the same principal symbol as $d$, it is a generalized Dirac complex and $\Delta_z$ a self-adjoint generalized Laplacian \cite[Definition~2.2]{BerlineGetzlerVergne2004}. If $\theta=\eta+dh$ for some $h\in C^\infty(M,\R)$, then the multiplication operator
\begin{equation}\label{e^zh: (Omega(M) d_z theta) to (Omega(M) d_z eta)}
e^{zh}:(\Omega(M),d_{z\theta})\to(\Omega(M),d_{z\eta})
\end{equation}
is an isomorphism of differential complexes, and therefore it induces an isomorphism $H_{z\theta}^\bullet(M)\cong H_{z\eta}^\bullet(M)$. Thus the isomorphism class of $H_z^\bullet(M)$ only depends on $\xi:=[\eta]\in H^1(M,\R)$ and $z\in\C$. By ellipticity, $D_z$ and $\Delta_z$ have a discrete spectrum, and there is a decomposition, equalities and isomorphism of Hodge type,
\begin{equation}\label{Hodge dec Novikov}
\left\{
\begin{gathered}
\Omega(M)=\ker\Delta_z\oplus\im d_z\oplus\im\delta_z\;,\\
\ker\Delta_z=\ker D_z=\ker d_z\cap\ker\delta_z\;,\quad
\im\Delta_z=\im D_z=\im d_z\oplus\im\delta_z\;,\\
H_z^\bullet(M)\cong\ker\Delta_z\;,
\end{gathered}
\right.
\end{equation}
as topological vector spaces. The orthogonal projections of $\Omega(M)$ to $\ker\Delta_z$, $\im d_z$ and $\im\delta_z$ are denoted by $\Pi_z=\Pi^0_z$, $\Pi^1_z$ and $\Pi^2_z$, respectively; thus $\Pi_z^\perp=\Pi^1_z+\Pi^2_z$. The restrictions $d_z:\im\delta_z\to\im d_z$, $\delta_z:\im d_z\to\im\delta_z$ and $D_z:\im D_z\to\im D_z$ are topological isomorphisms, and therefore the compositions $d_z^{-1}\Pi_z^1$, $\delta_z^{-1}\Pi_z^2$ and $D_z^{-1}\Pi_z^\perp$ are defined and continuous on $\Omega(M)$. For every degree $k$, the diagram
\begin{equation}\label{CD im delta_z k+1 ...}
\begin{CD}
\im\delta_{z,k+1} @>{d_{z,k}}>> \im d_{z,k} \\
@V{\Delta_{z,k}}VV @VV{\Delta_{z,k+1}}V \\
\im\delta_{z,k+1} @>{d_{z,k}}>> \im d_{z,k}
\end{CD}
\end{equation}
is commutative. The twisted Betti numbers $\beta_z^k=\beta_z^k(M,\xi)=\dim H_z^k(M)$ give rise to the usual Euler characteristic \cite[Proposition~1.40]{Farber2004},
\begin{equation}\label{sum_k (-1)^k beta_z^k = chi(M)}
\sum_k(-1)^k\beta_z^k=\chi(M)\;.
\end{equation}
(This is also a consequence of the index theorem.) For every degree $k$, $\beta_z^k$ is independent of $z$ outside a discrete subset of $\C$, where $\beta_z^k$ jumps (Mityagin and Novikov \cite[Theorem~1]{Novikov2002}). This ground value of $\beta_z^k$ is called the $k$-th \emph{Novikov Betti number}, denoted by $\betaNo^k=\betaNo^k(M,\xi)$. It will be shown in \Cref{sss: perturbed Morse complex} that
\begin{equation}\label{beta_z^k = betaNo^k}
\beta_z^k=\betaNo^k\quad\text{for}\quad|\mu|\gg0\;.
\end{equation} 
(When $z$ is real, this is proved in \cite[Theorem~2.8]{Farber1995}, \cite[Lemma~1.3]{BravermanFarber1997}, \cite[Proposition~4]{BurgheleaHaller2004}.) Thus the discrete set of parameters $z\in\C$ with $\beta_z^k(M,\xi)>\betaNo^k(M,\xi)$ for some degree $k$ is contained in a strip $|\mu|\le C$.

By~\eqref{perturbed opers} and since $\eta$ is real, for all $\alpha\in \Omega(M)$,
\begin{equation}\label{overline d_z alpha}
\overline{d_z\alpha}=d_{\bar z}\bar\alpha\;,\quad\overline{\delta_z\alpha}=\delta_{\bar z}\bar\alpha\;,\quad
\overline{D_z\alpha}=D_{\bar z}\bar\alpha\;,\quad\overline{\Delta_z\alpha}=\Delta_{\bar z}\bar\alpha\;. 
\end{equation}
So conjugation induces $\C$-antilinear isomorphisms
\[
H_z^k(M)\cong H_{\bar z}^k(M)\;,\quad\ker\Delta_{z,k}\cong\ker\Delta_{\bar z,k}\;,
\]
yielding $\beta_z^k=\beta_{\bar z}^k$.

\subsubsection{Case of an exact form}\label{sss: case of an exact form}

When $\eta=dh$ for some $h\in C^\infty(M,\R)$, we have the original Witten's perturbations, which satisfy
\begin{equation}\label{Witten's opers}
\left\{
\begin{gathered}
d_z=e^{-zh}\,d\,e^{zh}=e^{-i\nu h}\,d_\mu\,e^{i\nu h}\;,\quad
\delta_z=e^{\bar zh}\,\delta\,e^{-\bar zh}=e^{-i\nu h}\,\delta_\mu\,e^{i\nu h}\;,\\
D_z=e^{-i\nu h}\,D_\mu\,e^{i\nu h}\;,\quad
\Delta_z=e^{-i\nu h}\,\Delta_\mu\,e^{i\nu h}\;. 
\end{gathered}
\right.
\end{equation}
Thus the multiplication operator
\begin{equation}\label{e^zh: (Omega(M) d_z) to (Omega(M) d)}
e^{zh}:(\Omega(M),d_z)\to(\Omega(M),d)
\end{equation} 
is an isomorphism of differential complexes. Therefore $H_z^\bullet(M)\cong H^\bullet(M)$, yielding $\beta_z^k=\beta^k=\beta^k(M)$ (the $k$th Betti number) in this case. Moreover multiplication by $e^{i\nu h}$ defines a unitary isomorphism $\ker\Delta_z\cong\ker\Delta_\mu$.

\subsubsection{Interpretation of the closed form as a flat connection}\label{sss: LL^z}

There is a unique flat connection $\nabla^{M\times\C}$ on the trivial complex line bundle $M\times\C$ so that $\nabla^{M\times\C}1=\eta$. The corresponding flat complex line bundle is denoted by $\LL=\LL_\eta$. Note that $\LL_{z\eta}=\LL^z$. Let $(\Omega(M,\LL^z)=(\Omega(M),d^{\LL^z})$ be the de Rham complex with coefficients in $\LL^z$. It is well-known that $d_z=d^{\LL^z}$ on $\Omega(M)=\Omega(M,\LL^z)$, and therefore $H^\bullet(M,\LL^z)=H_z^\bullet(M)$. Since every $\LL^z$ is canonically trivial as a line bundle, it has a canonical Hermitian structure $g^{\LL^z}$. An easy local computation shows that (see the example given in \cite[pp.~11--12]{BismutZhang1992})
\begin{equation}\label{nabla^LL^z g^LL^z = -2 mu eta otimes g^LL^z}
\nabla^{\LL^z}g^{\LL^z}=-2\mu\eta\otimes g^{\LL^z}\;.
\end{equation}



\subsubsection{Perturbed operators on oriented manifolds}\label{sss: perturbed ops on oriented mfds}

The mappings $(\alpha,\beta)\mapsto\alpha\wedge\beta$ and $(\alpha,\beta)\mapsto\alpha\wedge\bar\beta$ induce respective bilinear and sesquilinear maps,
\[
H_z^k(M)\times H_{-z}^l(M) \to H^{k+l}(M)\;,\quad H_z^k(M)\times H_{-\bar z}^l(M) \to H^{k+l}(M)\;,
\]
as follows from the interpretation of $d_z$ given in \Cref{sss: LL^z}, or by a direct check.

Now assume $M$ is oriented. Then the above maps and integration on $M$ define respective nondegenerate bilinear and sesquilinear pairings
\[
H_z^k(M)\times H_{-z}^{n-k}(M)\to\C\;,\quad H_z^k(M)\times H_{-\bar z}^{n-k}(M)\to\C\;.
\] Thus
\begin{equation}\label{beta_z^k = beta_-z^n-k}
\beta_z^k=\beta_{-z}^{n-k}=\beta_{-\bar z}^{n-k}=\beta_{\bar z}^k\;.
\end{equation}

Let $\star$ and $\bar\star$ denote the $\C$-linear and $\C$-antilinear extensions to $\Lambda M$ of the Hodge operator $\star$ on $\Lambda_\R M$, respectively. These operators are determined by the conditions
\[
\alpha\wedge\overline{\star\beta}=g(\alpha,\beta)\,\dvol=\alpha\wedge\bar\star\beta
\]
for $\alpha,\beta\in \Omega(M)$, where $\dvol=\star1$ is the volume form. The following equalities on $\Omega^k(M)$ follow from~\eqref{perturbed opers} and the usual equalities relating $\star$, $d$, $\delta$, ${\eta\wedge}$ and ${\eta\lrcorner}$ (see e.g.\ \cite[Chapters~1 and~3]{Roe1998}, \cite[Section~1.5.2]{Gilkey1995}, \cite[Section~3.6]{BerlineGetzlerVergne2004}):
\begin{equation}\label{star z}
\left\{
\begin{alignedat}{3}
d_z\,\star&=(-1)^k\star\,\delta_{-\bar z}\;,&\quad\delta_z\,\star&=(-1)^{k+1}\star\,d_{-\bar z}\;,&\quad
\Delta_z\,\star&=\star\,\Delta_{-\bar z}\;,\\
d_z\,\bar\star&=(-1)^k\;\bar\star\;\delta_{-z}\;,&\quad\delta_z\;\bar\star&=(-1)^{k+1}\,\bar\star\;d_{-z}\;,&\quad
\Delta_z\;\bar\star&=\bar\star\;\Delta_{-z}\;.
\end{alignedat}
\right.
\end{equation}
Then we get a linear isomorphism $\star:\ker\Delta_z\to\ker\Delta_{-\bar z}$ and an antilinear isomorphism $\bar\star:\ker\Delta_z\to\ker\Delta_{-z}$, inducing a linear isomorphism $H_z^k(M)\cong H_{-\bar z}^{n-k}(M)$ and an antilinear isomorphism $H_z^k(M)\cong H_{-z}^{n-k}(M)$ by~\eqref{Hodge dec Novikov}.

\subsection{Perturbation of the Sobolev norms}\label{ss: Sobolev norms}

For $m\in\N_0$ and $\omega\in Z^1(M)$, define the norm $\|\ \|_{m,\omega}$ on $H^m(M;\Lambda)$ by
\[
\|\alpha\|_{m,\omega}=\sum_{k=0}^m\big\|D_\omega^k\alpha\big\|\;.
\]

\begin{prop}\label{p: | alpha |_m i eta}
For all $\omega\in Z^1(M)$ and $\alpha\in H^m(M;\Lambda)$,
\begin{gather*}
\|\alpha\|_{m,\omega}\le C_m\sum_{k=0}^m\|\omega\|_{C^k}^{m-k}\|\alpha\|_k\;,\quad
\|\alpha\|_m\le C_m\sum_{k=0}^m\|\omega\|_{C^k}^{m-k}\|\alpha\|_{k,\omega}\;.
\end{gather*}
\end{prop}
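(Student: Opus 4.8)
The plan is to prove both inequalities simultaneously by induction on $m$, using the relation $D_\omega = D + \hat c(\omega)$ from~\eqref{perturbed opers}, where multiplication by $\hat c(\omega)$ is a zeroth-order operator whose operator norm on $L^2(M;\Lambda)$ is controlled by $\|\omega\|_{C^0}$ (indeed by $\|\omega\|_{L^\infty}$ pointwise). The base case $m=0$ is trivial since $\|\alpha\|_{0,\omega} = \|\alpha\| = \|\alpha\|_0$ by definition, so $C_0 = 1$ works for both inequalities. For the inductive step, the key observation is a commutator-type expansion: write $D_\omega^k = (D + \hat c(\omega))^k$ and expand. Each term is a product of $k$ factors, each being either $D$ or the multiplication operator $\hat c(\omega)$.

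First I would establish the forward inequality $\|\alpha\|_{m,\omega}\le C_m\sum_{k=0}^m\|\omega\|_{C^k}^{m-k}\|\alpha\|_k$. The main step is to control $\|D_\omega^m\alpha\|$. Expanding $D_\omega^m = (D+\hat c(\omega))^m$ and moving all the $\hat c(\omega)$-factors past the $D$-factors using the Leibniz rule — each commutator $[D,\hat c(\omega)]$ is again a zeroth-order operator, morally $\hat c(\nabla\omega)$ plus lower terms, with $L^2$-norm bounded by $\|\omega\|_{C^1}$, and more generally $k$-fold nested commutators of $D$ with $\hat c(\omega)$ contribute a factor bounded by $\|\omega\|_{C^k}$ — one rewrites $D_\omega^m\alpha$ as a sum of terms of the form $A_{k}D^{j}\alpha$ with $j\le k\le m$, where $A_k$ is a zeroth-order operator of $L^2$-norm $\lesssim \|\omega\|_{C^{k-j}}\cdots$; collecting, each term is bounded by a monomial $\|\omega\|_{C^{i_1}}\cdots$ of total $C$-weight $m-j$ applied to $\|D^j\alpha\|\le\|\alpha\|_j$. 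Summing over all terms and using $\|\alpha\|_{k,\omega} = \sum_{j\le k}\|D_\omega^j\alpha\|$ together with the inductive bounds for lower $k$ gives the claim with a suitable combinatorial constant $C_m$.

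For the reverse inequality, the symmetry of the situation is the point: since $D = D_\omega - \hat c(\omega) = D_\omega + \hat c(-\omega)$, the same argument with $\omega$ replaced by $-\omega$ and the roles of $D$ and $D_\omega$ interchanged gives $\|\alpha\|_m\le C_m\sum_{k=0}^m\|\omega\|_{C^k}^{m-k}\|\alpha\|_{k,\omega}$ — note $\|-\omega\|_{C^k}=\|\omega\|_{C^k}$, so the constants match. One must check that the commutator estimates used are symmetric in this exchange, which holds because $[D,\hat c(\omega)]$ and $[D_\omega,\hat c(\omega)] = [D,\hat c(\omega)]$ coincide (the extra $[\hat c(\omega),\hat c(\omega)]=0$ term vanishes), so the nested-commutator bounds transfer verbatim.

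The main obstacle is bookkeeping rather than conceptual: one must carefully track the $C^k$-norms of $\omega$ that arise from iterated commutators $[\ldots[[D,\hat c(\omega)],\hat c(\omega)]\ldots]$ and from commutators $[D,\cdot]$ acting repeatedly, and verify that after expanding $(D+\hat c(\omega))^m$ and normal-ordering, every resulting monomial in the $\|\omega\|_{C^i}$ has total degree exactly $m-j$ when multiplied against $\|D^j\alpha\|$. A clean way to organize this is to prove the sharper statement that $D_\omega^m - \sum$ (lower-order-in-$\omega$ normal-ordered terms) is, for each $m$, a finite sum of operators $P_{\omega,j}D^j$ with $j\le m$ and $\|P_{\omega,j}\|_{L^2\to L^2}\le C_m\,\|\omega\|_{C^{m-j}}^{?}\cdots$; an easy induction on $m$, peeling off one factor of $D_\omega = D+\hat c(\omega)$ at a time and commuting the newly-produced $\hat c(\omega)$ to the left, makes the degree count transparent. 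Absorbing the resulting polynomial expressions into $\sum_{k=0}^m\|\omega\|_{C^k}^{m-k}\|\alpha\|_k$ (using that lower-degree monomials in the $\|\omega\|_{C^i}$ are dominated after adjusting $C_m$, since we may assume $\|\omega\|_{C^k}$ is either uniformly bounded or large — or simply keep all monomials and note each is $\le$ a constant times one of the displayed terms) completes the argument.
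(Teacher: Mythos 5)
Your proposal is correct in substance and rests on the same two pillars as the paper's proof (induction on $m$, plus the fact that $D_\omega-D$ is an order-zero operator whose $H^k$-operator norm is controlled by $\|\omega\|_{C^k}$), but the execution is genuinely different. The paper never expands $(D+\hat c(\omega))^m$ or normal-orders: it exploits the recursion $\|\alpha\|_{m,\omega}=\|\alpha\|+\|D_\omega\alpha\|_{m-1,\omega}$, applies the inductive inequality for $m-1$ directly to the form $D_\omega\alpha$, and then uses the single estimate $\|\hat c(\eta)\beta\|_k,\|c(\eta)\beta\|_k\le C'_k\|\eta\|_{C^k}\|\beta\|_k$ (applied to the real and imaginary parts of $\omega$) to bound $\|D_\omega\alpha\|_k\le\|\alpha\|_{k+1}+C'_k\|\omega\|_{C^k}\|\alpha\|_k$; this produces exactly the pure-power monomials $\|\omega\|_{C^k}^{m-k}\|\alpha\|_k$ with no combinatorial bookkeeping, and the reverse inequality is obtained by re-running the same recursion with the roles of $D$ and $D_\omega$ exchanged. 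Two points in your write-up deserve care. First, for complex $\omega$ the perturbation is $\hat c(\Re\omega)+i\,c(\Im\omega)$ rather than $\hat c(\omega)$; this is cosmetic, but it is why the paper invokes the estimate for both $\hat c$ and $c$. Second, your normal-ordering route creates mixed monomials $\prod_i\|\omega\|_{C^{a_i}}\|\alpha\|_j$, and of your two proposed absorption arguments only the second is sound: a monomial with $r$ factors of $\omega$ and total weight $r+\sum_i a_i=k-j\le m-j$ is dominated by the displayed term with index $l=m-r$, using monotonicity of $\|\alpha\|_l$ and $\|\omega\|_{C^l}$ in $l$; the alternative of assuming $\|\omega\|_{C^k}$ is either uniformly bounded or large cannot be used as stated, since the constant $C_m$ must be uniform in $\omega$ (this uniformity is exactly what \Cref{p: perturbed Sobolev} later needs). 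With that absorption lemma made explicit, your argument closes, so the gap is one of bookkeeping rather than of ideas; the paper's recursion simply avoids it.
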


\begin{proof}
We proceed by induction on $m$. We have  $\|\ \|_{0,\omega}=\|\ \|$. Now take $m>0$ and assume these inequalities hold for $m-1$. For $\eta\in Z^1(M,\R)$ and $\alpha\in\Omega(M)$, we have
\begin{equation}\label{| hat c(eta) alpha |_m}
\|\hat c(\eta)\alpha\|_m,\|c(\eta)\alpha\|_m\le C'_m\|\eta\|_{C^m}\|\alpha\|_m\;.
\end{equation}
Applying these inequalities to the real and imaginary parts of $\omega$, and using the induction hypothesis and~\eqref{perturbed opers}, we get
\begin{align*}
\|\alpha\|_{m,\omega}&=\|\alpha\|+\|D_{\omega}\alpha\|_{m-1,\omega}
\le\|\alpha\|+C_{m-1}\sum_{k=0}^{m-1}\|\omega\|_{C^k}^{m-1-k}\|D_{\omega}\alpha\|_k\\
&\le\|\alpha\|+C_{m-1}\sum_{k=0}^{m-1}\|\omega\|_{C^k}^{m-1-k}\big(\|D\alpha\|_k+C'_k\|\omega\|_{C^k}\|\alpha\|_k\big)\\
&\le\|\alpha\|+C_{m-1}\sum_{k=0}^{m-1}\|\omega\|_{C^k}^{m-1-k}\big(\|\alpha\|_{k+1}+C'_k\|\omega\|_{C^k}\|\alpha\|_k\big)\\
&\le C_m\sum_{l=0}^m\|\omega\|_{C^l}^{m-l}\|\alpha\|_l\;,
\end{align*}
\begin{align*}
\|\alpha\|_m&=\|\alpha\|+\|D\alpha\|_{m-1}
\le\|\alpha\|+\|D_{\omega}\alpha\|_{m-1}+C'_{m-1}\|\omega\|_{C^{m-1}}\|\alpha\|_{m-1}\\
&\le\|\alpha\|+C_{m-1}\sum_{k=0}^{m-1}\big(\|\omega\|_{C^k}^{m-1-k}\|D_{\omega}\alpha\|_{k,\omega}
+C'_{m-1}\|\omega\|_{C^k}^{m-k}\|\alpha\|_{k,\omega}\big)\\
&\le\|\alpha\|+C_{m-1}\sum_{k=0}^{m-1}\big(\|\omega\|_{C^k}^{m-1-k}\|\alpha\|_{k+1,\omega}
+C'_{m-1}\|\omega\|_{C^k}^{m-k}\|\alpha\|_{k,\omega}\big)\\
&\le C_m\sum_{l=0}^m\|\omega\|_{C^l}^{m-l}\|\alpha\|_{l,\omega}\;.\;\qedhere
\end{align*}
\end{proof}

Let $Z(M,\Z)\subset Z(M,\R)$ denote the graded additive subgroup of forms that represent cohomology classes in the image of the canonical homomorphism $H^\bullet(M,\Z)\to H^\bullet(M,\R)$. Recall that we can consider $H^1(M,\Z)$ as a lattice in $H^1(M,\R)$ by the universal coefficient theorem for cohomology. Let $\theta$ be the multivalued angle function on $\S^1$. Then $d\theta$ is the angular form on $\S^1$ with $\int_{\S^1}d\theta=2\pi$. For $\eta\in Z^1(M,\R)$, we have $\eta\in2\pi Z^1(M,\Z)$ if and only if there is some smooth map $h:M\to\S^1$ such that $\eta=h^*d\theta$ (see e.g.\ \cite[Lemma~2.1]{Farber2004}).

In \Cref{p: | alpha |_m i eta}, the dependence of the constants on $\omega$ cannot be avoided. For instance, for $M=\S^1$ with the standard metric $g=(d\theta)^2$, we have $\|1\|_m=\sqrt{2\pi}$, whereas $\|1\|_{m,i\eta}=\sqrt{2\pi}\sum_{k=0}^m|\nu|^k$ for $\eta=\nu\,d\theta$ ($\nu\in\R$). However, the following version of a Sobolev inequality for $\|\ \|_{m,i\eta}$ involves a constant independent of $\eta$.

\begin{prop}\label{p: perturbed Sobolev}
If $m>n/2$, for all $\eta\in Z^1(M,\R)$ and $\alpha\in H^m(M;\Lambda)$,
\[
\|\alpha\|_{L^\infty}\le C_m\|\alpha\|_{m,i\eta}\;.
\]
\end{prop}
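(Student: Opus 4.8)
The plan is to deduce the $\eta$-independent Sobolev inequality from the ordinary Sobolev inequality on $M$ together with a clever use of the multiplicative structure: locally $\eta=dh$ for a real function $h$, and conjugation by $e^{i\nu h}$ relates $D_{i\nu}$ to $D$, but since $h$ is only local one cannot simply conjugate globally. The key observation is that the pointwise norm $|\alpha(x)|$ is unchanged by multiplication by the unimodular factor $e^{i h}$, so one should try to localize, conjugate on each patch, apply the classical Sobolev estimate there with a constant depending only on $(M,g)$ and the chart, and then patch back. More precisely, first I would fix a finite cover of $M$ by coordinate balls $U_j$ on each of which $\eta|_{U_j}=dh_j$ for some $h_j\in C^\infty(U_j,\R)$, together with a subordinate partition of unity $\{\varphi_j\}$.

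The second step is the following: on $U_j$, write $\beta=e^{ih_j}\alpha$. Then $e^{ih_j}$ intertwines $d_{i\,dh_j}$ with $d$ and $\delta_{i\,dh_j}$ with $\delta$ on $U_j$ (this is the local version of~\eqref{Witten's opers} with $\mu=0$, $\nu=1$, using that $\eta=dh_j$ there), hence $e^{ih_j}$ intertwines $D_{i\,dh_j}$ with $D$; but on $U_j$ we have $\eta=dh_j$, so $D_{i\eta}$ restricted to $U_j$ is exactly $D_{i\,dh_j}$. Therefore $D^k_{i\eta}(\varphi_j\alpha)$ transforms, after multiplying by $e^{ih_j}$, into an expression in $D^l\big(e^{ih_j}(\varphi_j\alpha)\big)$ for $l\le k$ with smooth bounded coefficients involving $\varphi_j$ and its derivatives (supported in $U_j$), and crucially \emph{no} dependence on $\nu$ beyond the unimodular factor. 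Applying the ordinary Sobolev inequality $\|\cdot\|_{L^\infty}\le C\|\cdot\|_m$ on $\R^n$ (or on the chart) to $e^{ih_j}\varphi_j\alpha$, and using $|e^{ih_j}\varphi_j\alpha|=\varphi_j|\alpha|$ pointwise, one gets $\|\varphi_j\alpha\|_{L^\infty}\le C\sum_{l\le m}\|D^l_{i\eta}\alpha\|$ over $U_j$ with $C$ independent of $\eta$. Summing over $j$ and using $\alpha=\sum_j\varphi_j\alpha$ yields the claim. Here one should be slightly careful that the norm $\|\ \|_{m,i\eta}$ uses $D_{i\eta}$ and that by \Cref{p: | alpha |_m i eta} the $\|\ \|_m$-Sobolev estimate could also be invoked, but the whole point is that the patching gives a bound in terms of $\|\alpha\|_{m,i\eta}$ directly with no $\nu$-dependent constant.

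An alternative, possibly cleaner, route avoiding explicit partitions of unity is to use the flat-bundle picture of \Cref{sss: LL^z}: for purely imaginary parameter $z=i\nu$, the line bundle $\LL^{i\nu}$ carries the \emph{unitary} flat connection with $\nabla^{\LL^{i\nu}}g^{\LL^{i\nu}}=0$ (set $\mu=0$ in~\eqref{nabla^LL^z g^LL^z = 2 mu eta otimes g^LL^z}). Thus $D_{i\nu}$ is precisely the Dirac-type operator $D^{\LL^{i\nu}}$ on $\Omega(M,\LL^{i\nu})$ associated to a \emph{unitary} flat bundle. A Sobolev inequality for sections of a Euclidean/Hermitian bundle with a metric connection holds with a constant depending only on $(M,g)$, the bundle metric, and a bound on the curvature of the connection and its covariant derivatives; but $\LL^{i\nu}$ is flat and its metric is the canonical one, so all these geometric data are uniformly bounded — indeed the bundle is isometric to the trivial Hermitian line bundle, only the connection differs by the \emph{imaginary} $1$-form $i\eta$, whose relevant contributions enter $D_{i\nu}$ in the combination that is exactly $\|\cdot\|_{m,i\eta}$. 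Carrying out the standard proof of the Sobolev embedding for such bundle sections (via an elliptic estimate for $1+D_{i\nu}^2$ followed by Rellich/Sobolev on a finite atlas, or via a parametrix) then gives the uniform constant.

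The main obstacle is the uniformity in $\nu$: in either approach the naive estimate produces a constant depending on $\|\eta\|_{C^m}$ (through commutators of $D_{i\nu}$ with cutoff functions, or through lower-order curvature-type terms), and this is exactly what the $\S^1$ example after \Cref{p: | alpha |_m i eta} shows cannot be absorbed into $\|\ \|_m$. The resolution — and the crux of the argument — is that all the $\nu$-dependence is genuinely \emph{unimodular} and hence disappears upon taking pointwise absolute values after the local conjugation $\alpha\mapsto e^{ih_j}\alpha$; equivalently, that $\LL^{i\nu}$ is a \emph{unitary} flat bundle so that the relevant Bochner/Weitzenböck and elliptic estimates have curvature terms that vanish (the connection is flat) rather than merely being bounded. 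I expect the bookkeeping of the lower-order terms $D^l_{i\eta}\alpha$ ($l<m$) produced by differentiating the partition of unity — showing each is controlled by $\|\alpha\|_{m,i\eta}$ with a $\nu$-free constant — to be the only place requiring genuine care, and I would handle it by the same local-conjugation trick applied degree by degree, or by a simple interpolation/Young-type absorption using \Cref{p: | alpha |_m i eta} in the reverse direction on each chart.
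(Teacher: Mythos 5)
Your first (main) argument has a genuine gap at its decisive step. You claim that $D_{i\eta}^{k}(\varphi_j\alpha)$ is a combination of the forms $D_{i\eta}^{l}\alpha$, $l\le k$, with zero-order coefficients built only from $\varphi_j$, its derivatives and $g$, ``with no dependence on $\nu$ beyond the unimodular factor''. This is true for $k=1$ ($D_{i\eta}(\varphi_j\alpha)=\varphi_jD_{i\eta}\alpha+\hat c(d\varphi_j)\alpha$) and for $k=2$, because $D_{i\eta}\hat c(d\varphi_j)+\hat c(d\varphi_j)D_{i\eta}=\sH_{d\varphi_j}$ is $\eta$-free by~\eqref{c(eta) hat c(theta) + hat c(theta) c(eta) = 0}; but it fails at $k=3$ (and $m>n/2$ forces $m\ge3$ already for $n\ge4$): commuting $D_{i\eta}=D+ic(\eta)$ past the degree-preserving endomorphism $\sH_{d\varphi_j}$ produces the zero-order term $i[c(\eta),\sH_{d\varphi_j}]$, whose pointwise size is of order $|\eta|\,|\nabla d\varphi_j|$ and which does not cancel within your scheme. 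Such a term cannot be absorbed the way you propose: quantities like $\||\eta|\alpha\|$ are \emph{not} bounded by $C\|\alpha\|_{m,i\eta}$ uniformly in $\eta$ (on $\S^1$ take $\eta=\nu\,d\theta$ with $\nu\in\Z$ and $\alpha=e^{-i\nu\theta}$; then $D_{i\eta}\alpha=0$, so $\|\alpha\|_{m,i\eta}=\sqrt{2\pi}$ while $\||\eta|\alpha\|=|\nu|\sqrt{2\pi}$), and your fallback of invoking \Cref{p: | alpha |_m i eta} ``in reverse'' on each chart reinstates constants depending on $\|\eta\|_{C^k}$, which is exactly what the proposition must avoid. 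The offending terms only disappear after being recombined into full twisted covariant derivatives, so the commutator bookkeeping, as stated, does not close.

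What would repair it is your alternative route, but there the crux is precisely what is asserted rather than proved: one needs $\sum_{l\le m}\|(\nabla^{i\eta})^{l}\alpha\|\le C_m\|\alpha\|_{m,i\eta}$ with $\eta$-free constants, where $\nabla^{i\eta}=\nabla+i\,\eta\otimes$ is the tensor connection of $\Lambda\otimes\LL^{i\eta}$ (\Cref{sss: LL^z}, using~\eqref{nabla^LL^z g^LL^z = 2 mu eta otimes g^LL^z} with $\mu=0$). This does hold, via Bochner--Weitzenb\"ock ($\Delta_{i\eta}=(\nabla^{i\eta})^*\nabla^{i\eta}+E$ with $E$ independent of $\eta$ because $d\eta=0$) and an induction in which all commutators involve only the flat twisting curvature; cutoffs then commute harmlessly with $\nabla^{i\eta}$, and the local parallel gauge $e^{ih_j}$ finishes. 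But this uniform elliptic estimate is the whole analytic content of the proposition, not a citation-level step. Note also that the paper proves the statement by a completely different, soft argument: the optimal constant $C_{m,i\eta}$ is unchanged when $\eta$ is shifted by any $\omega\in2\pi Z^1(M,\Z)$, since then $\omega=h^*d\theta$ and conjugation by the \emph{globally defined} unimodular function $e^{ih^*\theta}$ intertwines $D_{i\eta}$ and $D_{i(\eta+\omega)}$; because $2\pi H^1(M,\Z)$ is a lattice in $H^1(M,\R)$, every $\eta$ is such a shift of a form in a fixed compact subset of a finite-dimensional space of representatives, where \Cref{p: | alpha |_m i eta} already gives a uniform constant. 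Your local conjugation is the right instinct, but only its integral-lattice (global) version bypasses the curvature-type bookkeeping that your local version still owes.
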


\begin{proof}
By the Sobolev embedding theorem, we have
\[
C_{m,i\eta}:=\sup_{0\ne\alpha\in\Omega(M)}\frac{\|\alpha\|_{L^\infty}}{\|\alpha\|_{m,i\eta}}>0\;.
\]

Take any $\eta\in Z^1(M,\R)$ and $\omega\in 2\pi Z^1(M,\Z)$, and let $\eta'=\eta+\omega$. Then $\omega=h^*d\theta$ for some smooth function $h:M\to\S^1$. Since the difference between the multiple values of $\theta$ at every point of $\S^1$ are in $2\pi\Z$, the functions $e^{\pm ih^*\theta}$ are well defined and smooth on $M$. Moreover, applying~\eqref{Witten's opers} locally, we get $D_{i\eta'}=e^{-ih^*\theta}\,D_{i\eta}\,e^{ih^*\theta}$. So, for $0\ne\alpha\in\Omega(M)$,
\begin{align*}
\|\alpha\|_{L^\infty}&=\|e^{i\,h^*\theta}\alpha\|_{L^\infty}\le C_{m,i\eta}\|e^{ih^*\theta}\alpha\|_{m,i\eta}\\
&=C_{m,i\eta}\sum_{k=0}^m\|D_{i\eta}^k\,e^{ih^*\theta}\alpha\|
=C_{m,i\eta}\sum_{k=0}^m\|e^{-ih^*\theta}\,D_{i\eta}^k\,e^{ih^*\theta}\alpha\|\\
&=C_{m,i\eta}\sum_{k=0}^m\|D_{i\eta'}^k\alpha\|
=C_{m,i\eta}\|\alpha\|_{m,i\eta'}\;.
\end{align*}
This shows that
\begin{equation}\label{C_m i eta = C_m i eta'}
\eta-\eta'\in2\pi Z^1(M,\Z)\Rightarrow C_{m,i\eta}=C_{m,i\eta'}\;.
\end{equation}

Since $2\pi H^1(M,\Z)$ is a lattice in $H^1(M,\R)$, there is a compact subset $K\subset H^1(M,\R)$ such that
\begin{equation}\label{K + 2 pi H^1(M Z) = H^1(M R)}
K+2\pi H^1(M,\Z)=H^1(M,\R)\;.
\end{equation}
Take a linear subspace $V\subset Z^1(M,\R)$ such that the canonical projection $V\to H^1(M,\R)$ is an isomorphism, and let $L\subset V$ be the compact subset that corresponds to $K$. By~\eqref{K + 2 pi H^1(M Z) = H^1(M R)},
\begin{equation}\label{L + 2 pi Z^1(M Z) = Z^1(M R)}
L+2\pi Z^1(M,\Z)=Z^1(M,\R)\;.
\end{equation}
Moreover $L$ is bounded with respect to $\|\ \|_{C^m}$. Therefore, by \Cref{p: | alpha |_m i eta}, for all $\eta\in L$ and $\alpha\in\Omega(M)$,
\[
\|\alpha\|_{L^\infty}\le C_{m,0}\|\alpha\|_m\le C_m\|\alpha\|_{m,i\eta}\;,
\]
yielding
\begin{equation}\label{C_m i eta le C_m}
\sup_{\eta\in L}C_{m,i\eta}\le C_m\;.
\end{equation}
The result follows from~\eqref{C_m i eta = C_m i eta'},~\eqref{L + 2 pi Z^1(M Z) = Z^1(M R)} and~\eqref{C_m i eta le C_m}.
\end{proof}

Given $\eta\in Z^1(M,\R)$, we write $\|\ \|_{m,z}=\|\ \|_{m,z\eta}$.  \Cref{p: | alpha |_m i eta} has the following direct consequence.

\begin{cor}\label{c: | alpha |_m z}
For all $\alpha\in H^m(M;\Lambda)$ and $z\in\C$,
\[
\|\alpha\|_{m,z}\le C_m\sum_{k=0}^m|z|^{m-k}\|\alpha\|_k\;,\quad
\|\alpha\|_m\le C_m\sum_{k=0}^m|z|^{m-k}\|\alpha\|_{k,z}\;.
\]
\end{cor}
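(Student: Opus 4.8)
The plan is to specialize \Cref{p: | alpha |_m i eta} to the closed complex-valued $1$-form $\omega=z\eta$ and then absorb the now-fixed data of $\eta$ into the constants. First I would record that, by the definition $\|\ \|_{m,z}=\|\ \|_{m,z\eta}$ and the fact that $z\eta\in Z^1(M)$, \Cref{p: | alpha |_m i eta} applies verbatim with $\omega=z\eta$, yielding
\[
\|\alpha\|_{m,z}\le C_m\sum_{k=0}^m\|z\eta\|_{C^k}^{m-k}\|\alpha\|_k\;,\qquad
\|\alpha\|_m\le C_m\sum_{k=0}^m\|z\eta\|_{C^k}^{m-k}\|\alpha\|_{k,z}\;.
\]
Next I would use that $\|\ \|_{C^k}$ is a norm, hence homogeneous, so that $\|z\eta\|_{C^k}^{m-k}=|z|^{m-k}\|\eta\|_{C^k}^{m-k}$.

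Since $\eta\in Z^1(M,\R)$ is fixed throughout, the finitely many numbers $\|\eta\|_{C^k}^{m-k}$ for $0\le k\le m$ are constants, so replacing $C_m$ by $C_m\,\max_{0\le k\le m}\|\eta\|_{C^k}^{m-k}$ (still denoted $C_m$, following the paper's convention that the value of $C_m$ may change from line to line) gives both claimed inequalities at once. I do not expect any genuine obstacle here: the corollary is a mechanical rescaling of \Cref{p: | alpha |_m i eta}, and the only point to verify — that the $C^k$-norm of $z\eta$ scales like $|z|$ — is immediate from homogeneity of the norm $\|\ \|_{C^k}$.
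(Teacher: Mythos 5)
Your proof is correct and is exactly the paper's route: the corollary is stated there as a direct consequence of \Cref{p: | alpha |_m i eta}, obtained precisely by taking $\omega=z\eta$, using $\|z\eta\|_{C^k}=|z|\,\|\eta\|_{C^k}$, and absorbing the fixed quantities $\|\eta\|_{C^k}^{m-k}$ into the constant $C_m$.
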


\begin{prop}\label{p: | alpha |_1 z}
For all $\alpha\in H^1(M;\Lambda)$ and $z\in\C$,
\[
\|\alpha\|_{1,z}\le C\big(\|\alpha\|_{1,i\nu}+|\mu|\|\alpha\|\big)\;,\quad
\|\alpha\|_{1,i\nu}\le C\big(\|\alpha\|_{1,z}+|\mu|\|\alpha\|\big)\;.
\]
\end{prop}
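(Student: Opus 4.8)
The plan is to exploit the explicit relation
\[
D_z = D_{i\nu} + \mu\,\hat c(\eta)
\]
from~\eqref{perturbed opers}, together with the bound~\eqref{| hat c(eta) alpha |_m} applied with $m=0$, namely $\|\hat c(\eta)\alpha\| \le C\|\alpha\|$ (here $\eta$ is fixed, so the constant depends only on $\eta$). For the first inequality, I would simply write, for $\alpha\in H^1(M;\Lambda)$,
\[
\|\alpha\|_{1,z} = \|\alpha\| + \|D_z\alpha\|
\le \|\alpha\| + \|D_{i\nu}\alpha\| + |\mu|\,\|\hat c(\eta)\alpha\|
\le \|\alpha\| + \|D_{i\nu}\alpha\| + C|\mu|\,\|\alpha\|,
\]
and then bound $\|\alpha\| + \|D_{i\nu}\alpha\| = \|\alpha\|_{1,i\nu}$ and absorb $\|\alpha\|$ into $|\mu|\|\alpha\|$ when $|\mu|$ is large (or into $\|\alpha\|_{1,i\nu}\ge\|\alpha\|$ when $|\mu|$ is small), giving the stated estimate $\|\alpha\|_{1,z}\le C(\|\alpha\|_{1,i\nu}+|\mu|\|\alpha\|)$. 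The second inequality is the mirror image: from $D_{i\nu} = D_z - \mu\,\hat c(\eta)$ one gets
\[
\|\alpha\|_{1,i\nu} = \|\alpha\| + \|D_{i\nu}\alpha\|
\le \|\alpha\| + \|D_z\alpha\| + |\mu|\,\|\hat c(\eta)\alpha\|
\le \|\alpha\|_{1,z} + C|\mu|\,\|\alpha\|,
\]
again adjusting constants so the bare $\|\alpha\|$ is absorbed.

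There is essentially no obstacle here: the statement is a one-line consequence of the additive decomposition $D_z = D_{i\nu} + \mu\hat c(\eta)$ and the order-zero boundedness of Clifford multiplication by the fixed form $\eta$. The only point requiring a small amount of care is that the paper's convention (stated at the end of \Cref{sss: basic notation}) allows the constants $C$ to change from line to line and to depend on fixed data such as $\eta$; one should note that $\|\eta\|_{C^1}$ is a fixed finite quantity, so $C'_0\|\eta\|_{C^1}$ in~\eqref{| hat c(eta) alpha |_m} is just a constant. I would also remark that both inequalities are needed in both directions because later arguments will want to transfer estimates between the $\nu$-perturbed norm $\|\ \|_{1,i\nu}$ (which, by \Cref{p: perturbed Sobolev}, controls the $L^\infty$ norm with a $\nu$-independent constant) and the full $\|\ \|_{1,z}$ norm, at the cost of a controlled term linear in $|\mu|$.

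Thus the proof is short and I would present it essentially as above, spelled out as a single display for each of the two inequalities, invoking~\eqref{perturbed opers} for the identity $D_z = D_{i\nu}+\mu\hat c(\eta)$ and~\eqref{| hat c(eta) alpha |_m} (with $m=0$) for $\|\hat c(\eta)\alpha\|\le C\|\alpha\|$.
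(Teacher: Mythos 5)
Your proposal is correct and follows essentially the same argument as the paper: write $\|\alpha\|_{1,z}=\|\alpha\|+\|D_z\alpha\|$, use $D_z=D_{i\nu}+\mu\hat c(\eta)$ from~\eqref{perturbed opers} and the order-zero bound~\eqref{| hat c(eta) alpha |_m} to get $\|D_z\alpha\|\le\|D_{i\nu}\alpha\|+C'|\mu|\|\alpha\|$, and symmetrically for the reverse inequality. The small discussion about absorbing the bare $\|\alpha\|$ is unnecessary, since $\|\alpha\|+\|D_{i\nu}\alpha\|$ is exactly $\|\alpha\|_{1,i\nu}$ and one may simply take $C=\max(1,C')$, but this does not affect correctness.
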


\begin{proof}
By~\eqref{perturbed opers} and~\eqref{| hat c(eta) alpha |_m},
\begin{align*}
\|\alpha\|_{1,z}&=\|\alpha\|+\|D_z\alpha\|
\le\|\alpha\|+\|D_{i\nu}\alpha\|+C'|\mu|\|\alpha\|
\le C\big(\|\alpha\|_{1,i\nu}+|\mu|\|\alpha\|\big)\;,\\
\|\alpha\|_{1,i\nu}&=\|\alpha\|+\|D_{i\nu}\alpha\|
\le\|\alpha\|+\|D_z\alpha\|+C'|\mu|\|\alpha\|
\le C\big(\|\alpha\|_{1,z}+|\mu|\|\alpha\|\big)\;.\;\qedhere
\end{align*}
\end{proof}

\section{Zeta invariants of closed real 1-forms}\label{s: zeta}

\subsection{Preliminaries on asymptotic expansions of heat kernels}\label{ss: heat}

Let $A$ be a positive semi-definite symmetric elliptic differential operator of order $a$, and $B$ a differential operator of order $b$; both of them are defined in $C^\infty(M;E)$ for some Hermitian vector bundle $E$ over $M$. Then $Be^{-tA}$ is a smoothing operator with Schwartz kernel $K_t(x,y)$ in $C^\infty(M^2;E\boxtimes E^*)$ (omitting the Riemannian density $\dvol(y)$ of the second factor). On the diagonal, there is an asymptotic expansion (as $t\downarrow0$) with respect to the semi-norms $\|\ \|_{C^m}$ ($m\in\N_0$) on $C^\infty(M;E\otimes E^*)$ \cite[Lemma~1.9.1]{Gilkey1995}, \cite[Theorem~2.30, Proposition~2.46 and the paragraph that follows]{BerlineGetzlerVergne2004},
\begin{equation}\label{K_t(x x) sim ...}
K_t(x,x)\sim\sum_{l=0}^\infty e_l(x)t^{(l-n-b)/a}\;,
\end{equation}
with $e_l\in C^\infty(M;E\otimes E^*)$. Moreover, using a local system of coordinates, a local trivialization of $E$ and standard multi-index notation, if $B=\sum_\alpha b_\alpha(x)D^\alpha_x$, then $e_l(x)=\sum_\alpha b_\alpha(x)e_{l,\alpha}(x)$, where the $e_{l,\alpha}(x)$ are smooth local invariants of the symbol of $A$ which are homogeneous of degree $l+|\alpha|-b$. They vanish if $l+b$ is odd or if $l+|\alpha|-b<0$. Hence the function
\[
h(t)=\Tr\big(Be^{-tA}\big)=\int_M\tr K_t(x,x)\,\dvol(x)
\]
has an asymptotic expansion
\begin{equation}\label{h(t) sim ...}
h(t)\sim\sum_{l=0}^\infty a_lt^{(l-n-b)/a}\;,
\end{equation}
where
\begin{equation}\label{a_l}
a_l=\int_M\tr e_l(x)\,\dvol(x)\;,
\end{equation}
which vanishes if $l+b$ is odd.

The case of truncated heat kernels, in the following sense, is also needed. Given any $\lambda\ge0$, let $P_{A,\lambda}$ be the spectral projection of $A$ corresponding to $[0,\lambda]$; thus $P_{A,\lambda}^\perp$ is the spectral projection corresponding to $(\lambda,\infty)$. By ellipticity, $P_{A,\lambda}$ is of finite rank, and $Be^{-tA}P_{A,\lambda}$ is a smoothing operator defined for all $t\in\R$. Take any orthonormal frame $\phi_1,\dots,\phi_\kappa$ of $\im P_{A,\lambda}$, consisting of eigensections with corresponding eigenvalues $0\le\lambda_1\le\dots\le\lambda_\kappa\le\lambda$. Then the Schwartz kernel $H_t(x,y)$ of $Be^{-tA}P_{A,\lambda}$ ($t\ge0$) is given by
\[
H_t(x,y)=\sum_{j=1}^\kappa e^{-t\lambda_j}(B\phi_j)(x)\otimes\phi_j(y)\;,
\]
using the isomorphism $E\cong E^*$ given by the Hermitian structure. Thus $H_t(x,y)$ is defined for all $t\in\R$ and smooth. So 
\[
\Tr(Be^{-tA}P_{A,\lambda})=\int_M\tr H_t(x,x)\,\dvol(x)\;.
\]
In particular, for $t=0$, we have
\begin{align}
H_0(x,x)&=\sum_{j=1}^\kappa(B\phi_j)(x)\otimes\phi_j(x)\;,\label{H_0(x y)}\\
\Tr(BP_{A,\lambda})&=\int_M\tr H_0(x,x)\,\dvol(x)\;.\label{Tr(BP_A lambda)}
\end{align}
The Schwartz kernel of $Be^{-tA}P_{A,\lambda}^\perp$ is $\widetilde K_t(x,y)=K_t(x,y)-H_t(x,y)$ ($t>0$), which has an asymptotic expansion
\begin{equation}\label{widetilde K_t(x x) sim ...}
\widetilde K_t(x,x)\sim\sum_{l=0}^\infty\tilde e_l(x)t^{(l-n-b)/a}\;,
\end{equation}
where the first $n+b$ sections $\tilde e_l$ are given by
\[
\tilde e_l(x)=
\begin{cases}
e_l(x) & \text{if $l<n+b$}\\
e_l(x)-H_0(x,x) & \text{if $l=n+b$}\;.
\end{cases}
\]
Then the function
\begin{equation}\label{h_lambda(t)}
\tilde h_\lambda(t)=\Tr\big(Be^{-tA}P_{A,\lambda}^\perp\big)=\Tr\big(Be^{-tA}\big)-\Tr(Be^{-tA}P_{A,\lambda})
\end{equation}
has an asymptotic expansion
\begin{equation}\label{tilde h_lambda(t) sim ...}
\tilde h_\lambda(t)=\int_M\widetilde K_t(x,x)\,\dvol(x)\sim\sum_{l=0}^\infty\tilde a_lt^{(l-n-b)/a}\;,
\end{equation}
where the first $n+b$ coefficients $\tilde a_l$ are given by
\begin{equation}\label{tilde a_l}
\tilde a_l=
\begin{cases}
a_l & \text{if $l<n+b$}\\
a_l-\Tr(BP_{A,\lambda}) & \text{if $l=n+b$}\;.
\end{cases}
\end{equation}

Consider also smooth families of such operators, $\{A_\epsilon\}$ and $\{B_\epsilon\}$, for $\epsilon$ in some parameter space. Then $\Tr(B_\epsilon e^{-tA_\epsilon})$ is smooth in $(t,\epsilon)$, and we add $\epsilon$ to the above notation, writing for instance $K_t(x,y,\epsilon)$, $e_l(x,\epsilon)$, $h(t,\epsilon)$, $a_l(\epsilon)$, $\widetilde K_t(x,y,\epsilon)$, $\tilde e_l(x,\epsilon)$, $\tilde h(t,\epsilon)$ and $\tilde a_l(\epsilon)$ in~\eqref{K_t(x x) sim ...},~\eqref{h(t) sim ...},~\eqref{widetilde K_t(x x) sim ...} and~\eqref{tilde h_lambda(t) sim ...}. The operator $B_\epsilon P_{A_\epsilon,\lambda}$ may not be smooth in $\epsilon$ when some non-constant spectral branch of $\{A_\epsilon\}$ reaches the value $\lambda$. If the values of all non-constant spectral branches of $\{A_\epsilon\}$ stay away from some neighborhood of $\lambda$, then $\tilde h_\lambda(t,\epsilon)$ is smooth in $(t,\epsilon)$.

\subsection{Preliminaries on zeta functions of operators}\label{ss: prelim zeta}

\begin{prop}[See {\cite[Theorems~1.12.2 and~1.12.5]{Gilkey1995}, \cite[Propositions~9.35--9.37]{BerlineGetzlerVergne2004}}] \label{p: zeta functions}
The following holds: 
\begin{enumerate}[{\rm(i)}]
\item\label{i: zeta(s P Q)} For every {$\lambda\in\R$}, there is a meromorphic function $\zeta(s,A,B,\lambda)$ on $\C$ such that, for $\Re s\gg0$, 
\begin{equation}\label{zeta(s A B lambda)}
\zeta(s,A,B,\lambda)=\Tr\big(BA^{-s}P_{A,\lambda}^\perp\big)
=\frac{1}{\Gamma(s)}\int_0^\infty t^{s-1}\tilde h_\lambda(t)\,dt\;.
\end{equation}
\item\label{i: Gamma(s) zeta(s P Q)} The meromorphic function $\Gamma(s)\zeta(s,A,B,\lambda)$ has simple poles at the points $s=(n+b-l)/a$, for $l\in\N_0$ with $\tilde a_l\ne0$. The corresponding residues are $\tilde a_l$, and $\zeta(s,A,B,\lambda)$ is smooth away from these exceptional values of $s$.
\item\label{i: zeta(s A B mu) - zeta(s A B lambda)} For $\mu>\lambda\ge0$, let $\lambda_1\le\dots\le\lambda_k$ denote the eigenvalues of $A$ in $(\lambda,\mu]$, taking multiplicities into account, and let $\psi_1,\dots,\psi_k$ be corresponding orthonormal eigensections. Then, for all $s$,
\[
\zeta(s,A,B,\mu)-\zeta(s,A,B,\lambda)=\sum_{j=1}^k\lambda_k^{-s}\langle B\psi_j,\psi_j\rangle\;.
\] 
\item\label{i: zeta(s P_epsilon Q_epsilon)} For smooth families $\{A_\epsilon\}$ and $\{B_\epsilon\}$ of such operators, if the values of all non-constant branches of eigenvalues of $\{A_\epsilon\}$ stay away from some neighborhood of $\lambda$, then $\zeta(s,A_\epsilon,B_\epsilon,\lambda)$ is smooth in $(s,\epsilon)$ away from the exceptional values of $s$ given in~\ref{i: Gamma(s) zeta(s P Q)}.
\item\label{i: partial/partial epsilon zeta(s P_epsilon Q_epsilon)} Consider the conditions of~\ref{i: zeta(s P_epsilon Q_epsilon)} for $\epsilon$ in some open neighborhood of $0$ in $\R$. If $A_0$ and $B_0$ commute, then
\[
\partial_\epsilon\zeta(s,A_\epsilon,B_\epsilon,\lambda)\big|_{\epsilon=0}
=\zeta(s,A_0,\dot B_0,\lambda)-s\zeta(s+1,A_0,\dot A_0B_0,\lambda)\;,
\]
where the dot denotes $\partial_\epsilon$.
\end{enumerate}
\end{prop}

The last expression of~\eqref{zeta(s A B lambda)} is the Mellin transform of the function $\tilde h_\lambda(t)$ divided by $\Gamma(s)$. This function $\zeta(s,A,B,\lambda)$ is called the \emph{zeta function} of $(A,B,\lambda)$. If $B=1$ or $\lambda=0$, they may be omitted from the notation.

We will also use $\zeta(s,A,B,\lambda)$ when $B$ is not a differential operator, with the same definition. Then the asymptotic expansion~\eqref{tilde h_lambda(t) sim ...} and the properties stated in \Cref{p: zeta functions} need to be checked. With this generality, we can write
\begin{gather*}
\zeta(s,A,B,\lambda)=\zeta(s,A,BP_{A,\lambda}^\perp)=\zeta(s,A,P_{A,\lambda}^\perp B)\;,\\
\zeta(s,A,B)=\zeta(s,A,BP_{A,\lambda})+\zeta(s,A,B,\lambda)\;.
\end{gather*}
Since $P_{A,\lambda}$ is of finite rank, $\zeta(s,A,BP_{A,\lambda})$ is always defined and holomorphic on $\C$.

\subsection{Zeta invariants of closed real 1-forms}\label{ss: zeta invariants}

According to \Cref{p: zeta functions}~\ref{i: zeta(s P Q)}, let
\[
\zeta(s,z)=\zeta(s,z,\eta)=\zeta(s,\Delta_z,{\eta\wedge}\,D_z\sw)\;,
\]
which is a meromorphic function of $s\in\C$. For $\Re s\gg0$,
\begin{align*}
\zeta(s,z)&=\Str\big({\eta\wedge}\,D_z\Delta_z^{-s}\Pi_z^\perp\big)
=\Str\big({\eta\wedge}\,\delta_z\Delta_z^{-s}\Pi_z^1\big)\\
&=\Str\big({\eta\wedge}\,D_z^{-1}\Delta_z^{-s+1}\Pi_z^\perp\big)=\Str\big({\eta\wedge}\,d_z^{-1}\Delta_z^{-s+1}\Pi^1_z\big)\;,
\end{align*}
using that ${\eta\wedge}\,d_z$ and ${\eta\wedge}\,\delta_z^{-1}$ change the degree of homogeneous forms. So, when $\zeta(s,z)$ is regular at $s=1$, the value $\zeta(1,z)$ is a renormalized version of the super-trace of ${\eta\wedge}\,d_z^{-1}\Pi^1_z$, which is called the \emph{zeta invariant} of $(M,g,\eta,z)$ for the scope of this paper. According to \Cref{p: zeta functions}~\ref{i: Gamma(s) zeta(s P Q)} and since $\Gamma(s)$ is regular at $s=1$, $\zeta(s,z)$ might have a simple pole at $s=1$. But it will be shown that $\zeta(s,z)$ is regular at $s=1$ for all $\eta\in Z^1(M,\R)$ and $z\in\C$ (\Cref{c: zeta(s z) = ...}).

\subsection{Heat invariants of perturbed operators}\label{ss: heat invs of perturbed ops}

Consider the notation of \Cref{sss: perturbations}. For $k=0,\dots,n$, let $K_{z,k,t}(x,y)$ denote the Schwartz kernel of $e^{-t\Delta_{z,k}}$. By~\eqref{K_t(x x) sim ...}, its restriction to the diagonal has an asymptotic expansion (as $t\downarrow0$),
\[
K_{z,k,t}(x,x)\sim\sum_{l=0}^\infty e_{k,l}(x,z)t^{(l-n)/2}\;,
\]
where every $e_{k,l}(x,z)$ is a smooth local invariant of $z$ and the jets of the local coefficients of $g$ and $\eta$, which is homogeneous of degree $l$, and vanishes if $l$ is odd. According to \eqref{h(t) sim ...} and~\eqref{a_l},
\[
h_k(t,z):=\Tr\big(e^{-t\Delta_{z,k}}\big)\sim\sum_{l=0}^\infty a_{k,l}(z)t^{(l-n)/2}\;,
\]
where
\[
a_{k,l}(z)=\int_M\str e_{k,l}(x,z)\,\dvol(x)\;.
\]

The Schwartz kernel of $e^{-t\Delta_z}\sw$ is 
\[
K_{z,t}(x,y)=\sum_{k=0}^n(-1)^kK_{z,k,t}(x,y)\;.
\]
We have induced asymptotic expansions,
\begin{gather*}
K_{z,t}(x,x)\sim\sum_{l=0}^\infty e_l(x,z)t^{(l-n)/2}\;,\\
h(t,z):=\Str\big(e^{-t\Delta_z}\big)\sim\sum_{l=0}^\infty a_l(z)t^{(l-n)/2}\;,
\end{gather*}
where
\[
e_l(x,z)=\sum_{k=0}^n(-1)^ke_{k,l}(x,z)\;,\quad a_l(z)=\sum_{k=0}^n(-1)^ka_{k,l}(z)\;.
\]

\begin{thm}[{\cite[Theorem 13.4]{BismutZhang1992}; see also \cite[Theorem~1.5]{AlvGilkey2021a} and \cite{AlvKordyLeichtnam-atffff}}]\label{t: e_l(x z)}
We have:
\begin{enumerate}[{\rm(i)}]
\item\label{i: e_l(x,z) = 0} $e_l(x,z)=0$ for $l<n$; and,
\item\label{i: e_l(x,z) = EE_n(x)} if $n$ is even, then $e_n(x,z)=e(M,\nabla^M)(x)$.
\end{enumerate}
\end{thm}

\begin{rem}\label{r: e_l(x z)}
The analog of \Cref{t: e_l(x z)} fails for Witten's type perturbations of the Dolbeault complex on K\"ahler manifolds \cite{AlvGilkey2021b}.
\end{rem}

\subsection{Derived heat invariants of perturbed operators}\label{ss: derived heat invs of perturbed ops}

The following are sometimes called the \emph{derived heat density} and \emph{derived heat invariant} of order $l$ of $d_z$ or $\Delta_z$ \cite{GuntherSchimming1977}, \cite{RaySinger1971}, \cite[page~181]{Gilkey1995}, \cite{AlvGilkey2023}:
\begin{gather*}
\fe_l(x,z)=\sum_{k=0}^n(-1)^kke_{k,l}(x,z)\;,\\
\fa_l(z)=\sum_{k=0}^n(-1)^kka_{k,l}(z)=\int_M\str\fe_l(x,z)\,\dvol(x)\;.
\end{gather*}
We have
\begin{equation}\label{Str(sN e^-t Delta_z) sim ...}
\Str\big(\sN e^{-t\Delta_z}\big)\sim\sum_{l=0}^\infty\fa_l(z)t^{(l-n)/2}\;.
\end{equation}

\begin{thm}[{\cite[Theorem~7.10]{BismutZhang1992}}]\label{t: fa_l(z) is independent of z}
For $l\le n$, $\fa_l(z)$ is independent of $z$.
\end{thm}

\begin{rem}\label{r: fa_l(z) is independent of z}
\cite[Theorem~7.10]{BismutZhang1992} gives \Cref{t: fa_l(z) is independent of z} for real $z$. But, since the functions $\fe_l(x,z)$ have local expressions, we can assume $\eta$ is exact. Then the result can be extended to non-real $z$ using~\eqref{Witten's opers}. The exactness of $\eta$ in \cite[Theorem~7.10]{BismutZhang1992} is irrelevant because a general flat vector bundle is considered. Moreover \cite[Theorem~7.10]{BismutZhang1992} gives an explicit expression of $\fa_l(z)$ for $l\le n$.
\end{rem}

\begin{rem}\label{r: fe_l(x z)}
A refinement of \Cref{t: fa_l(z) is independent of z} is given in \cite[Theorem~1.3~(1b)]{AlvGilkey2023}, where $\fe_l(x,z)$ is described for $l\le n$, showing its independence of $z$.
\end{rem}

\subsection{Regularity}\label{ss: regularity}

By~\eqref{h(t) sim ...} and~\eqref{a_l}, we have an asymptotic expansion of the form
\begin{equation}\label{Str(eta wedge delta_z e^-t Delta_z) sim ...}
\Str\big({\eta\wedge}\,D_ze^{-t\Delta_z}\big)\sim\sum_{l=0}^\infty b_l(z)t^{(l-n-1)/2}\;,
\end{equation}
where $b_l(z)=0$ if $l$ is even.



\begin{prop}\label{p: partial_z Str(sN e^-t Delta_z) = -t Str(eta wedge D_z e^-t Delta_z)}
For all $t>0$ and $z\in\C$, the equality~\eqref{partial_z Str(fN e^-t Delta_z) = -t Str(eta wedge D_z e^-t Delta_z)} is true.
\end{prop}

\begin{proof}
For all $k$, we have \cite[Corollary~2.50]{BerlineGetzlerVergne2004}
\[
\partial_z\Tr\big(e^{-t\Delta_{z,k}}\big)=-t\Tr\big((\partial_z\Delta_{z,k})e^{-t\Delta_{z,k}}\big)\;.
\]
So, by~\eqref{sN T = T (sN + l)} and~\eqref{partial_z delta_z},
\begin{align*}
\partial_z\Str\big(\sN e^{-t\Delta_z}\big)&=-t\Str\big(\sN(\partial_z\Delta_z)e^{-t\Delta_z}\big)\\
&=-t\Str\big(\sN\,{\eta\wedge}\,\delta_ze^{-t\Delta_z}\big)-t\Str\big(\sN\delta_z\,{\eta\wedge}\,e^{-t\Delta_z}\big)\\
&=-t\Str\big(\sN\,{\eta\wedge}\,\delta_ze^{-t\Delta_z}\big)-t\Str\big(\delta_z(\sN-1)\,{\eta\wedge}\,e^{-t\Delta_z}\big)\\
&=-t\Str\big(\sN\,{\eta\wedge}\,\delta_ze^{-t\Delta_z}\big)+t\Str\big((\sN-1)\,{\eta\wedge}\,\delta_ze^{-t\Delta_z}\big)\\
&=-t\Str\big({\eta\wedge}\,D_ze^{-t\Delta_z}\big)\;.\qedhere
\end{align*}
\end{proof}

\begin{cor}\label{c: b_l(z) = 0}
For $l\le  n-1$, $b_l(z)=0$.
\end{cor}

\begin{proof}
By~\eqref{Str(sN e^-t Delta_z) sim ...},~\eqref{Str(eta wedge delta_z e^-t Delta_z) sim ...}, \Cref{t: fa_l(z) is independent of z,p: partial_z Str(sN e^-t Delta_z) = -t Str(eta wedge D_z e^-t Delta_z)}, for $l\le  n-1$,
\[
b_l(z)=-\partial_z\fa_{l+1}(z)=0\;.\qedhere
\]
\end{proof}

\begin{cor}\label{c: zeta(s z) = ...}
If $n$ is even and $\Re s>0$, or $n$ is odd and $\Re s>1/2$, then
\[
\zeta(s,z)=\frac{1}{\Gamma(s)}\int_0^\infty t^{s-1}\Str\big({\eta\wedge}\,D_ze^{-t\Delta_z}\big)\,dt\;,
\]
where the integral is absolutely convergent, and therefore $\zeta(s,z)$ is smooth in this half-plane.
\end{cor}

\begin{proof}
By~\eqref{Str(eta wedge delta_z e^-t Delta_z) sim ...} and \Cref{c: b_l(z) = 0},
\begin{equation}\label{Str(theta wedge D_z e^-t Delta_z) = O(t^-1/2)}
\Str\big({\eta\wedge}\,D_ze^{-t\Delta_z}\big)=
\begin{cases}
O(1) & \text{if $n$ is even}\\
O\big(t^{-1/2}\big) & \text{if $n$ is odd}
\end{cases}
\quad(t\downarrow0)\;.
\end{equation}
On the other hand, there is some $c>0$ such that
\begin{equation}\label{Str(theta wedge D_z e^-t Delta_z) = O(e^-ct)}
\Str\big({\eta\wedge}\,D_ze^{-t\Delta_z}\big)=O(e^{-ct})\quad(t\uparrow+\infty)\;.
\end{equation}
So the stated integral is absolutely convergent for $\Re s>0$ if $n$ is even, or for $\Re s>1/2$ if $n$ is odd, defining a holomorphic function of $s$ on this half-plane. Then the stated equality is true because it holds for $\Re s\gg0$.
\end{proof}

\begin{rem}\label{r: regularity}
From \Cref{p: zeta functions}~\ref{i: Gamma(s) zeta(s P Q)} and \Cref{c: b_l(z) = 0}, it also follows that, if $n$ is even (resp., odd), then $\zeta(s,z)$ is smooth on $\C$ (resp., on $\C\setminus((1-\N_0)/2)$). But this additional regularity is not needed in this work.
\end{rem}

\begin{cor}\label{c: zeta(1 z) = ...}
For all $z\in\C$,
\[
\zeta(1,z)=\lim_{t\downarrow0}\Str\big({\eta\wedge}\,D_z^{-1}e^{-t\Delta_z}\Pi_z^\perp\big)\;.
\]
\end{cor}

\begin{proof}
By \Cref{c: zeta(s z) = ...},~\eqref{Str(theta wedge D_z e^-t Delta_z) = O(t^-1/2)} and~\eqref{Str(theta wedge D_z e^-t Delta_z) = O(e^-ct)}, and since
\[
\Str\big({\eta\wedge}\,D_z^{-1}e^{-t\Delta_z}\Pi_z^\perp\big)=O(e^{-ct})\quad(t\uparrow+\infty)\;,
\]
we get
\begin{align*}
\zeta(1,z)&=\int_0^\infty\Str\big({\eta\wedge}\,D_ze^{-u\Delta_z}\Pi_z^\perp\big)\,du
=\lim_{t\downarrow0}\int_t^\infty\Str\big({\eta\wedge}\,D_ze^{-u\Delta_z}\Pi_z^\perp\big)\,du\\
&=\lim_{t\downarrow0}\Str\big({\eta\wedge}\,D_z^{-1}e^{-t\Delta_z}\Pi_z^\perp\big)\;.\qedhere
\end{align*}
\end{proof}

\Cref{c: zeta(s z) = ...,c: zeta(1 z) = ...} give \Cref{t: zeta(1 z)}.

\subsection{The case of the differential of a function}\label{ss: differential of a function}

Let us consider the special case where $\eta=dh$ for a smooth real-valued function $h$.

\begin{lem}\label{l: Trs(eta wedge d_z^-1 e^-t Delta_z Pi^1_z) = -Trs(h e^-t Delta_z Pi^perp_z)}
We have
\[
\Str\big({\eta\wedge}\,d_z^{-1}e^{-t\Delta_z}\Pi^1_z\big)=-\Str\big(h\,e^{-t\Delta_z}\Pi_z^\perp\big)\;.
\]
\end{lem}

\begin{proof}
Since ${\eta\wedge}=[d,h]$,
\begin{align*}
\Str\big({\eta\wedge}\,d_z^{-1}e^{-t\Delta_z}\Pi^1_z\big)
&=\Str\big([d_z,h]\,d_z^{-1}e^{-t\Delta_z}\Pi^1_z\big)\\
&=\Str\big(d_z\,h\,d_z^{-1}e^{-t\Delta_z}\Pi^1_z\big)-\Str\big(h\,d_zd_z^{-1}e^{-t\Delta_z}\Pi^1_z\big)\\
&=-\Str\big(h\,d_z^{-1}e^{-t\Delta_z}\Pi^1_zd_z\big)-\Str\big(h\,e^{-t\Delta_z}\Pi^1_z\big)\\
&=-\Str\big(h\,d_z^{-1}d_ze^{-t\Delta_z}\Pi^2_z\big)-\Str\big(h\,e^{-t\Delta_z}\Pi^1_z\big)\\
&=-\Str\big(h\,e^{-t\Delta_z}\Pi^2_z\big)-\Str\big(h\,e^{-t\Delta_z}\Pi^1_z\big)\\
&=-\Str\big(h\,e^{-t\Delta_z}\Pi_z^\perp\big)\;.\qedhere
\end{align*}
\end{proof}

\begin{cor}\label{c: zeta(1 z) = - lim_t->0 Str(h e^-t Delta_z Pi_z^perp)}
We have
\[
\zeta(1,z)=-\lim_{t\downarrow0}\Str\big(h\,e^{-t\Delta_z}\Pi_z^\perp\big)\;.
\]
\end{cor}

\begin{proof}
Apply~\Cref{c: zeta(1 z) = ...,l: Trs(eta wedge d_z^-1 e^-t Delta_z Pi^1_z) = -Trs(h e^-t Delta_z Pi^perp_z)}.
\end{proof}

\begin{cor}\label{c: zeta(1 Delta_z theta wedge D_z bfw) in R}
We have $\zeta(1,z)\in\R$.
\end{cor}

\begin{proof}
By \Cref{c: zeta(1 z) = - lim_t->0 Str(h e^-t Delta_z Pi_z^perp)}, it is enough to prove that $\Str(h\,e^{-t\Delta_z}\Pi_z^\perp)\in\R$. But, taking adjoints,
\[
\Str\big(h\,e^{-t\Delta_z}\Pi_z^\perp\big)=\overline{\Str\big(\Pi_z^\perp e^{-t\Delta_z}\,h\big)}
=\overline{\Str\big(h\,\Pi_z^\perp e^{-t\Delta_z}\big)}
=\overline{\Str\big(h\,e^{-t\Delta_z}\Pi_z^\perp\big)}\;.\qedhere
\]
\end{proof}

\begin{cor}\label{c: zeta(1 Delta_z theta wedge D_z bfw) = zeta(1 Delta_-bar z theta wedge D_- bar z bfw)}
If $M$ is oriented, then
\[
\zeta(1,z)=\zeta(1,-\bar z)=\zeta(1,-z)=\zeta(1,\bar z)\;.
\]
\end{cor}

\begin{proof}
By~\eqref{star z},
\begin{align*}
\Str\big(h\,e^{-t\Delta_z}\Pi_z^\perp\big)&=\Str\big(\star\star^{-1}h\,e^{-t\Delta_z}\Pi_z^\perp\big)
=\Str\big(\star^{-1}h\,e^{-t\Delta_z}\Pi_z^\perp\star\big)\\
&=\Str\big(\star^{-1}\star h\,e^{-t\Delta_{-\bar z}}\Pi_{-\bar z}^\perp\big)
=\Str\big(h\,e^{-t\Delta_{-\bar z}}\Pi_{-\bar z}^\perp\big)\;.
\end{align*}
Thus the first equality of the statement holds by \Cref{c: zeta(1 z) = - lim_t->0 Str(h e^-t Delta_z Pi_z^perp)}. The second equality follows with a similar argument, using $\bar\star$ instead of $\star$. The third equality is equivalent to the first one.
\end{proof}

\section{Small and large complexes of Morse forms}

\subsection{Preliminaries on Morse forms}\label{ss: Morse forms}

Recall that a critical point $p$ of any $h\in C^\infty(M,\R)$ is called \emph{nondegenerate} if the symmetric bilinear form $\Hess_ph$ on $T_pM$ is nondegenerate; then the index of $\Hess_ph$ is denoted by $\ind(p)$. By the Morse lemma \cite[Lemma~2.2]{Milnor1963}, this means that
\begin{gather}
h-h(p)=\frac{1}{2}\sum_{j=1}^n\epsilon_{p,j}(x_p^j)^2=\frac{1}{2}\big(|x_p^+|^2-|x_p^-|^2\big)\;,\label{h - h(p) around p}\\
\intertext{where}
\epsilon_{p,j}=
\begin{cases}
-1 & \text{if $j\le\ind(p)$}\\
1 & \text{if $j>\ind(p)$}\;,
\end{cases}
\label{epsilon_p,j}
\end{gather}
on some chart $(U_p,x_p=(x_p^1,\dots,x_p^n))$ (centered) at $p$ (\emph{Morse coordinates}), where $x_p^-=(x_p^1,\dots,x_p^{\ind(p)})$ and $x_p^+=(x_p^{\ind(p)+1},\dots,x_p^n)$.

Recall that $h$ is called a \emph{Morse function} when all of its critical points are nondegenerate. Then its critical points form a finite set denoted by $\Crit(h)$. The Morse functions form an open and dense subset of $C^\infty(M,\R)$ \cite[Theorem~6.1.2]{Hirsch1976}. On every $U_p$, we can assume the metric is Euclidean with respect to Morse coordinates:
\begin{equation}\label{g around p}
g=\sum_{j=1}^n(dx_p^j)^2\;.
\end{equation}

Now take any $\eta\in Z^1(M,\R)$. We can show that if $p$ is a zero of $\eta$, then $(\nabla\eta)_p$ is independent of the choice of the connection $\nabla$, and is symmetric. The zero $p$ is called \emph{nondegenerate} of \emph{index} $k$ if $(\nabla\eta)_p$ is nondegenerate of index $k$. In this case, any local primitive $h_{\eta,p}$ of $\eta$ near $p$ is a Morse function, and we can choose it so that $h_{\eta,p}(p)=0$. On a domain $U_p$ of Morse coordinates $x_p=(x_p^1,\dots,x_p^n)$ for $h_{\eta,p}$ at $p$, also called \emph{Morse coordinates} for $\eta$ at $p$, $h_{\eta,p}$ is given by the center and right-hand side of~\eqref{h - h(p) around p}, and
\begin{equation}\label{eta around p}
\eta=\sum_{j=1}^n\epsilon_{p,j}x_p^j\,dx_p^j\;.
\end{equation}
If all zeros are nondegenerate, then $\eta$ is called a \emph{Morse form}. In this case, its zeros form a finite set, $\XX=\Zero(\eta)$; subsets of $\XX$ defined by conditions on the index are denoted by writing the conditions as subscripts; for instance, $\XX_k$, $\XX_+$ and $\XX_{<k}$ are the subsets of zeros of index $k$, of positive index, and of index${}<k$, respectively. For any $\xi\in H^1(M,\R)$, the Morse representatives of $\xi$ form a dense open subset of $\xi$, considering $\xi\subset\Omega^1(M,\R)$ with the $C^\infty$ topology (see e.g.\ \cite[Theorem~2.1.25]{Pajitnov2006}). If $\xi=0$, this is just the classical property of Morse functions mentioned before.

From now on, unless otherwise stated, we will use some $\eta\in Z^1(M,\R)$ and a Riemannian metric $g$ on $M$ satisfying~\ref{i-a:  eta Morse} (\Cref{ss: intro-Witten}).

The Hopf index of $\eta^\sharp$ at any $p\in\XX_k$ is $(-1)^k$ (\Cref{sss: Morse-type zeros}). Thus, by the Hopf index theorem,
\begin{equation}\label{sum_k=0^n (-1)^k | XX_k | = chi(M)}
\sum_{k=0}^n(-1)^k|\XX_k|=\chi(M)\;.
\end{equation}

\subsection{The small and large spectrum}\label{ss: sm and la spec}

Consider the perturbed operators~\eqref{perturbed opers} defined by $\eta$ and $g$. We can suppose the closures $\overline{U_p}$ ($p\in\XX$) are disjoint from each other, and $x_p(U_p)=(-4r,4r)^n$ for some $r>0$ independent of $p$ with $4r<1$. Let $U=\bigcup_{p\in\XX}U_p$.

Denoting also the coordinates of $\R^n$ by $(x_p^1,\dots,x_p^n)$, consider the function $h_p\in C^\infty(\R^n)$ defined by the center and right-hand side of~\eqref{h - h(p) around p}. Let $d'_{p,z}$, $\delta'_{p,z}$, $D'_{p,z}$ and $\Delta'_{p,z}$ ($z\in\C$) denote the corresponding Witten's operators on $\R^n$, whose restrictions to $(-4r,4r)^n$ agree via $x_p$ with $d_z$, $\delta_z$, $D_z$ and $\Delta_z$ on $U_p$.

\begin{prop}[See e.g.\ {\cite[Chapters~9 and~14]{Roe1998}, \cite[Sections~4.5 and~4.7]{Zhang2001}}]\label{p: Delta'_p mu}
The following holds for $\mu\in\R$:
\begin{enumerate}[{\rm(i)}]

\item\label{i: Delta'_p mu = ...} We have
\begin{equation}\label{Delta_mu on U_p}
\Delta'_{p,\mu}=\sum_{j=1}^n\Big(-\Big(\frac{\partial}{\partial x_p^j}\Big)^2+\mu^2(x_p^j)^2+\mu\epsilon_{p,j}[{dx_p^j\lrcorner},{dx_p^j\wedge}]\Big)\;.
\end{equation}
Here $[{\cdot},{\cdot}]$ stands for the commutator of operators. Using multi-index notation, we can write
\[
[{dx_p^j\lrcorner},{dx_p^j\wedge}]dx_p^J=
\begin{cases}
dx_p^J & \text{if $j\in J$}\\
-dx_p^J & \text{if $j\notin J$}\;.
\end{cases}
\]

\item\label{i: Delta'_p mu is ...} $\Delta'_{p,\mu}$ is a non-negative selfadjoint operator in $L^2(\R^n;\Lambda)$ with a discrete spectrum, which consists of the eigenvalues
\begin{equation}\label{eigenvalues of Delta'_p mu}
\mu\sum_{j=1}^n(1+2u_j+\epsilon_{p,j}v_j)\;,
\end{equation}
where $u_j\in\N_0$ and $v_j=\pm1$. For the restriction of $\Delta'_{p,\mu}$ to $k$-forms, the spectrum has the additional requirement that exactly $k$ of the numbers $v_j$ are equal to $1$. In particular, $0$ is an eigenvalue of $\Delta'_{p,\mu}$ with multiplicity $1$ {\rm(}choosing $u_j=0$ and $v_j=-\epsilon_{p,j}$ for all $j${\rm)}, and the nonzero eigenvalues are of order $\mu$ as $\mu\to+\infty$. $D'_{p,\mu}$ is also a selfadjoint operator in $L^2(\R^n;\Lambda)$ with a discrete spectrum, which consists of the positive and negative square roots of~\eqref{eigenvalues of Delta'_p mu}.

\item\label{i: e'_p mu} The kernel of $D'_{p,\mu}$ and $\Delta'_{p,\mu}$ is generated by the normalized form
\[
e'_{p,\mu}=\Big(\frac\mu\pi\Big)^{n/4}e^{-\mu|x_p|^2/2}\,dx_p^1\wedge\dots\wedge dx_p^{\ind(p)}\;.
\]

\end{enumerate}
\end{prop}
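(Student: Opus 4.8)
The plan is to reduce the whole statement, by separation of variables, to the one-dimensional quantum harmonic oscillator; everything below is contained in the references cited in the statement, so I only indicate the steps.

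\emph{Part~(i).} Write $\partial_j$ for $\partial/\partial x_p^j$. On $\R^n$ with the Euclidean metric one has $d=\sum_j{dx_p^j\wedge}\,\partial_j$ and $\delta=\sum_j{dx_p^j\lrcorner}\,\partial_j$, hence $D=\sum_j c(dx_p^j)\,\partial_j$ and $\Delta=D^2=-\sum_j\partial_j^2$, acting componentwise on the coefficients $f_J$ of a form $\alpha=\sum_J f_J\,dx_p^J$ (there is no curvature term on flat space). Since $h_p$ is given by~\eqref{h_eta p}, $dh_p=\sum_k\epsilon_{p,k}x_p^k\,dx_p^k$, so $\hat c(dh_p)=\sum_j\epsilon_{p,j}x_p^j\,\hat c(dx_p^j)$ and, by~\eqref{perturbed opers}, $D'_{p,\mu}=D+\mu\,\hat c(dh_p)=\sum_j\left(c(dx_p^j)\,\partial_j+\mu\,\epsilon_{p,j}x_p^j\,\hat c(dx_p^j)\right)$. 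Squaring and using $c(dx_p^j)^2=-\id$, $\hat c(dx_p^j)^2=\id$, the relation $c(dx_p^j)\hat c(dx_p^k)+\hat c(dx_p^k)c(dx_p^j)=0$ from~\eqref{c(eta) hat c(theta) + hat c(theta) c(eta) = 0}, and the anticommutativity of the $c(dx_p^j)$ among themselves and of the $\hat c(dx_p^j)$ among themselves, every off-diagonal term vanishes and the cross terms cancel in pairs (relabel $j\leftrightarrow k$ and use $\hat c(dx_p^j)\,c(dx_p^k)=-c(dx_p^k)\,\hat c(dx_p^j)$); what remains is $\Delta'_{p,\mu}=\sum_j\left(-\partial_j^2+\mu^2(x_p^j)^2+\mu\,\epsilon_{p,j}\,c(dx_p^j)\hat c(dx_p^j)\right)$. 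Finally $c(dx_p^j)\hat c(dx_p^j)=[{dx_p^j\lrcorner},{dx_p^j\wedge}]$, and a short computation using ${dx_p^j\lrcorner}{dx_p^j\wedge}+{dx_p^j\wedge}{dx_p^j\lrcorner}=-\id$ shows this operator acts on $dx_p^J$ by $+1$ if $j\in J$ and by $-1$ if $j\notin J$; this gives~\eqref{Delta_mu on U_p} and the stated action of the commutator.

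\emph{Part~(ii).} The operator~\eqref{Delta_mu on U_p} is block diagonal for the decomposition of a form into its components $f_J\,dx_p^J$, since each $[{dx_p^j\lrcorner},{dx_p^j\wedge}]$ acts on the line $\C\,dx_p^J$ by the scalar $v_j=+1$ if $j\in J$ and $v_j=-1$ if $j\notin J$, with exactly $|J|$ of the $v_j$ equal to $1$. On such a block the operator is $\sum_j\left(-\partial_j^2+\mu^2(x_p^j)^2\right)+\mu\sum_j\epsilon_{p,j}v_j$, an $n$-dimensional harmonic oscillator plus a constant. For $\mu>0$ the one-dimensional oscillator $-(d/dx)^2+\mu^2x^2$ is essentially selfadjoint on $C^\infty_c(\R)$, has compact resolvent, and has spectrum $\{\mu(1+2u):u\in\N_0\}$ with eigenfunctions the Hermite functions (a Hermite polynomial in $\sqrt\mu\,x$ times $e^{-\mu x^2/2}$); separating variables in each block and summing over $J$ shows that $\Delta'_{p,\mu}$ is a non-negative, essentially selfadjoint operator with compact resolvent, whose eigenvalues are exactly~\eqref{eigenvalues of Delta'_p mu} with $u_j\in\N_0$, $v_j=\pm1$ (and exactly $k$ of the $v_j$ equal $1$ on $k$-forms), and whose eigenforms are products of one-dimensional Hermite functions times $dx_p^J$. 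Non-negativity follows from $1+2u_j+\epsilon_{p,j}v_j\ge0$ for each $j$; this is an equality for a given $j$ iff $u_j=0$ and $v_j=-\epsilon_{p,j}$, so $0$ is an eigenvalue iff $u_j=0$ and $v_j=-\epsilon_{p,j}$ for all $j$, which by~\eqref{epsilon_p,j} forces $J=\{1,\dots,\ind(p)\}$; hence $0$ is a simple eigenvalue, in degree $\ind(p)$, while every nonzero eigenvalue is $\ge2\mu$, in particular of order $\mu$ as $\mu\to+\infty$. Finally $D'_{p,\mu}$ is (essentially) selfadjoint with $(D'_{p,\mu})^2=\Delta'_{p,\mu}$, so $\ker D'_{p,\mu}=\ker\Delta'_{p,\mu}$ and $\spec D'_{p,\mu}$ consists of the real square roots, of both signs, of the numbers~\eqref{eigenvalues of Delta'_p mu}.

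\emph{Part~(iii) and obstacles.} By part~(ii) the kernel is the line spanned by the eigenform in the block $J=\{1,\dots,\ind(p)\}$ with every $u_j=0$, i.e.\ a multiple of $e^{-\mu|x_p|^2/2}\,dx_p^1\wedge\dots\wedge dx_p^{\ind(p)}$, since $e^{-\mu x^2/2}$ is the ground state of $-(d/dx)^2+\mu^2x^2$; the normalizing factor $(\mu/\pi)^{n/4}$ is forced by $\int_{\R^n}e^{-\mu|x|^2}\,dx=(\pi/\mu)^{n/2}$. (Alternatively, one checks directly $d'_{p,\mu}e'_{p,\mu}=0=\delta'_{p,\mu}e'_{p,\mu}$, using $d'_{p,\mu}=e^{-\mu h_p}\,d\,e^{\mu h_p}$ and $\delta'_{p,\mu}=e^{\mu h_p}\,\delta\,e^{-\mu h_p}$ together with~\eqref{h_eta p}.) There is no real obstacle here: the argument is separation of variables plus the standard facts about the harmonic oscillator, and for a shorter exposition one could simply invoke the indicated sections of Roe and of Zhang. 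The only points deserving care are the analytic ones---essential selfadjointness and compactness of the resolvent, both immediate from the oscillator structure---and the convention that the eigenvalue formula~\eqref{eigenvalues of Delta'_p mu} (hence the discreteness of the spectrum) is read for $\mu>0$; this is all that is needed, since the proposition is applied as $\mu\to+\infty$.
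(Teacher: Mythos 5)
Your proof is correct and follows exactly the standard route the paper relies on: the paper gives no argument of its own but delegates to Roe and Zhang, whose treatment is precisely your computation of $D'^2_{p,\mu}$ via the Clifford relations followed by separation of variables into harmonic oscillators on each block $\C\,dx_p^J$. Your remark that the spectral statements implicitly require $\mu>0$ (item (i) alone holding for all $\mu\in\R$) is consistent with how the paper uses the proposition.
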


For any $z\in\C$ with $\mu>0$, let $\Delta'_{p,z}=e^{-i\nu h_p}\Delta'_{p,\mu}e^{i\nu h_p}$. Since the operator of multiplication by $e^{-i\nu h_p}$ is unitary, $\Delta'_{p,z}$ is also selfadjoint and non-negative in $L^2(\R^n;\Lambda)$, it has a discrete spectrum with the same eigenvalues and multiplicities as $\Delta'_{p,\mu}$, and its kernel is generated by the normalized form $e'_{p,z}:=e^{-i\nu h_p}e'_{p,\mu}$. We will also use the notation
\[
e'_{p,z}=x_p^*e'_{p,z}\in C^\infty\big(U_p;\Lambda^{\ind(p)}\big)\;.
\]
The function $x_p^*h_p\in C^\infty(U_p)$ agrees with $h_{\eta,p}$, which is also denoted by $h_p$ in this section.

Fix an even $C^\infty$ function $\rho:\R\to[0,1]$ such that $\rho=1$ on $[-r,r]$ and $\supp\rho\subset[-2r,2r]$. For every $p\in\XX$, let 
\begin{align}
\rho_p&=\rho(x_p^1)\cdots\rho(x_p^n)\in\Cinftyc(U_p)\;,\label{rho_p}\\
e_{p,\mu}&=\frac{\rho_p}{a_\mu}e'_{p,\mu}\in\Cinftyc\big(U_p;\Lambda^{\ind(p)}\big)\;,\label{e_p mu}\\
e_{p,z}&=e^{-i\nu h_p}e_{p,\mu}=\frac{\rho_p}{a_\mu}e'_{p,z}\in\Cinftyc\big(U_p;\Lambda^{\ind(p)}\big)\;,
\label{e_p z}
\end{align}
where
\begin{equation}\label{a_mu}
a_\mu=\bigg(\int_{-2r}^{2r}\rho(x)^2e^{-\mu x^2}\,dx\bigg)^{\frac n2}=\Big(\frac\pi\mu\Big)^{\frac n4}+O(e^{-c\mu})\;,
\end{equation}
as $\mu\to+\infty$. The extensions by zero of the forms $e_{p,z}$ to $M$ are also denoted by $e_{p,z}$. They form an orthonormal basis of a graded subspace $E_z\subset \Omega(M)$ with $\dim E_z=|\XX|$. Observe that $d_z$ does not preserve $E_z$, so that $E_z$ is not a subcomplex of $(\Omega(M), d_z)$. Let $P_z$ be the orthogonal projection of $L^2(M;\Lambda)$ to $E_z$.

\begin{rem}\label{r: | mu |}
For the sake of simplicity, most of our results are stated for $\mu\gg0$ or as $\mu\to+\infty$, but they have obvious versions for $\mu\ll0$ or as $\mu\to-\infty$, as follows by considering $-\eta$ and using that $\XX_k(-\eta)=\XX_{n-k}(\eta)$.
\end{rem}

\begin{prop}\label{p: | D_z beta | ge C sqrt | mu | | beta |}
If $\mu\gg0$ and $\beta\in H^1(M;\Lambda)$ with $\supp\beta\subset M\setminus U$, then
\[
\|D_z\beta\|\ge C\mu\,\|\beta\|\;.
\]
\end{prop}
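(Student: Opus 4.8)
The plan is to estimate $\|D_z\beta\|$ from below by exploiting that $\beta$ is supported away from the zero set $\XX$ of $\eta$, so that $|\eta|$ is bounded below on $\supp\beta$, while simultaneously controlling the contribution of the first-order terms in $D_z$. First I would reduce to $\nu=0$: from~\eqref{perturbed opers} we have $D_z=D_{i\nu}+\mu\hat c(\eta)$ and $\Delta_z=\Delta_{i\nu}+\mu\sH_\eta+\mu^2|\eta|^2$, so
\[
\|D_z\beta\|^2=\langle\Delta_z\beta,\beta\rangle=\|D_{i\nu}\beta\|^2+\mu\langle\sH_\eta\beta,\beta\rangle+\mu^2\||\eta|\beta\|^2\;.
\]
The key point is that $\|D_{i\nu}\beta\|^2\ge0$, the middle term is bounded by $C\mu\|\beta\|^2$ since $\sH_\eta$ is a zeroth-order operator (with $\|\sH_\eta\|$ independent of $\nu$), and the last term satisfies $\mu^2\||\eta|\beta\|^2\ge c^2\mu^2\|\beta\|^2$ because $|\eta|\ge c>0$ on the compact set $M\setminus U\supseteq\supp\beta$ (as $\eta$ vanishes only on $\XX\subset U$ and $M\setminus U$ is closed, hence compact). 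Therefore
\[
\|D_z\beta\|^2\ge(c^2\mu^2-C\mu)\|\beta\|^2\ge\tfrac12 c^2\mu^2\|\beta\|^2
\]
for $\mu\gg0$, which gives the claimed bound $\|D_z\beta\|\ge C\mu\|\beta\|$ after adjusting constants and taking square roots.

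The only subtlety is the uniformity in $\nu$: one must check that the constant in $|\langle\sH_\eta\beta,\beta\rangle|\le C\|\beta\|^2$ does not depend on $\nu$, which is immediate since $\sH_\eta=D\hat c(\eta)+\hat c(\eta)D$ is fixed (no $\nu$ appears), a bounded operator on $L^2$ because $\eta$ is smooth on the closed manifold $M$. Equivalently, one can invoke~\eqref{| hat c(eta) alpha |_m} with $m=0$ together with the identity $\Delta_z=\Delta_{i\nu}+\mu\sH_\eta+\mu^2|\eta|^2$ directly. I do not expect a genuine obstacle here; the main thing to be careful about is simply extracting the positivity $c^2\mu^2\|\beta\|^2$ from the $\mu^2|\eta|^2$ term using the support hypothesis, and absorbing the cross term for large $\mu$.

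One remark: the hypothesis $\beta\in H^1$ rather than $\beta\in\Omega(M)$ is handled by density — the inequality is closed under $H^1$-limits since both sides are continuous in the $H^1$ norm (the left side because $D_z:H^1\to L^2$ is bounded, the right side trivially), so it suffices to prove it for smooth $\beta$ with support in $M\setminus U$, where the quadratic-form computation above is rigorous.
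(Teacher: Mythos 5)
Your proof is correct and is essentially the paper's argument: the paper simply cites \cite[Proposition~4.7]{Zhang2001}, remarking that since $\sH_\eta$ has order zero its proof gives the bound with $\mu$ rather than $\sqrt\mu$, and that proof is exactly your expansion $\|D_z\beta\|^2=\|D_{i\nu}\beta\|^2+\mu\langle\sH_\eta\beta,\beta\rangle+\mu^2\||\eta|\beta\|^2$ combined with $|\eta|\ge c>0$ on $M\setminus U$. Your density remark is fine too, though it is cleaner to note that the quadratic-form identity extends to all of $H^1(M;\Lambda)$ by density (no support condition needed there), the support hypothesis being used only in the pointwise bound $\||\eta|\beta\|\ge c\|\beta\|$, which avoids any worry about mollified approximations spilling into $U$.
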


\begin{proof}
This follows like \cite[Proposition~4.7]{Zhang2001}, using that $\sH_\eta$ is of order zero in~\eqref{perturbed opers}. Actually, according to the statement of \cite[Proposition~4.7]{Zhang2001}, this inequality would hold with $\sqrt{\mu}$ instead of $\mu$, but its proof clearly shows that using $\mu$ is fine.
\end{proof}

\begin{prop}\label{p: P_z D_z P_z = 0}
The following properties hold:
\begin{enumerate}[{\rm(i)}]

\item\label{i: P_z D_z P_z = 0} $P_zD_zP_z=0$.

\item\label{i: | P_z^perp D_z alpha | le e^-c | mu | | alpha |} If $\mu\gg0$, $\alpha\in E_z$ and $\beta\in E_z^\perp\cap H^1(M;\Lambda)$, then
\[
\|P_z^\perp D_z\alpha\|\le e^{-c\mu}\|\alpha\|\;,\quad\|P_zD_z\beta\|\le e^{-c\mu}\|\beta\|\;.
\]

\item\label{i: | P_z^perp D_z beta | ge C sqrt | mu | | beta |} If $\mu\gg0$ and $\beta\in E_z^\perp\cap H^1(M;\Lambda)$, then
\[
\|P_z^\perp D_z\beta\|\ge C\sqrt{\mu}\,\|\beta\|\;.
\]

\end{enumerate}
\end{prop}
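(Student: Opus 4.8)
The plan is to establish the three inequalities in the order given, using the model operators on $\R^n$ from \Cref{p: Delta'_p mu} near the zeros of $\eta$ and \Cref{p: | D_z beta | ge C sqrt | mu | | beta |} away from them. Throughout, recall from \eqref{Witten's opers} that $\Delta_z=e^{-i\nu h_p}\Delta_\mu e^{i\nu h_p}$ and $D_z=e^{-i\nu h_p}D_\mu e^{i\nu h_p}$ locally on each $U_p$ where $\eta=dh_p$, and the same identity holds for the model operators on $\R^n$; hence unitary conjugation by $e^{i\nu h_p}$ reduces many estimates to the already-known $\mu$-real statements in \cite{Zhang2001}, \cite{Roe1998}. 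In particular, since each $e_{p,z}=e^{-i\nu h_p}e_{p,\mu}$ is obtained from the real Witten model generator by a unitary multiplier, norms of $D_z e_{p,z}$ and $\Delta_z e_{p,z}$ on $U_p$ equal the corresponding norms in the $\mu$-real case.

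For part~\ref{i: P_z D_z P_z = 0}, I would compute $\langle D_z e_{p,z}, e_{q,z}\rangle$ for $p,q\in\XX$. When $p\ne q$ the supports $U_p$, $U_q$ are disjoint, so the pairing vanishes. When $p=q$, conjugating by $e^{i\nu h_p}$ and using $\rho_p\equiv1$ near $p$ together with \Cref{p: Delta'_p mu}\ref{i: e'_p mu}, one has $D_\mu e_{p,\mu}=D_\mu(\rho_p e'_{p,\mu}/a_\mu)=\hat c(d\rho_p)e'_{p,\mu}/a_\mu$ (using \eqref{D(h alpha) = h D alpha + hat c(dh) alpha} with $D'_{p,\mu}e'_{p,\mu}=0$), which is supported where $d\rho_p\ne0$, i.e.\ away from the Gaussian's center; then $\langle D_\mu e_{p,\mu},e_{p,\mu}\rangle$ involves $\int \rho_p\,(d\rho_p\text{-stuff})\,e^{-\mu|x_p|^2}$, and in fact $D_z$ is odd for the degree grading while $E_z$ sits in a single degree at each $p$, so $\langle D_z e_{p,z},e_{p,z}\rangle=0$ because $D_z$ shifts degree by $\pm1$ and $e_{p,z}$ is homogeneous—the pairing is between forms of different degree. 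So $P_zD_zP_z=0$ identically.

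For part~\ref{i: | P_z^perp D_z alpha | le e^-c | mu | | alpha |}, it suffices to bound $\|D_z e_{p,z}\|$ for the orthonormal basis elements, since $P_z^\perp D_z e_{p,z}=D_z e_{p,z}-P_zD_z e_{p,z}=D_z e_{p,z}$ by~\ref{i: P_z D_z P_z = 0}. Conjugating by $e^{i\nu h_p}$, $\|D_z e_{p,z}\|=\|D_\mu e_{p,\mu}\|=a_\mu^{-1}\|\hat c(d\rho_p)e'_{p,\mu}\|$; since $d\rho_p$ is supported in $\{r\le|x_p^j|\le 2r\text{ for some }j\}$, the integrand is bounded by $C e^{-\mu r^2}$ there, and with $a_\mu^{-1}=O(\mu^{n/4})$ we get $\|D_z e_{p,z}\|\le e^{-c\mu}$ for $\mu\gg0$. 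This gives the first inequality; the second, $\|P_zD_z\beta\|\le e^{-c\mu}\|\beta\|$ for $\beta\in E_z^\perp\cap H^1$, follows by adjointness: $\langle P_zD_z\beta,e_{p,z}\rangle=\langle\beta,D_z e_{p,z}\rangle$ (using $D_z^*=D_z$) and $|\langle\beta,D_z e_{p,z}\rangle|\le\|\beta\|\,\|D_z e_{p,z}\|\le e^{-c\mu}\|\beta\|$, then sum over the $|\XX|$ basis vectors.

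Part~\ref{i: | P_z^perp D_z beta | ge C sqrt | mu | | beta |} is the main obstacle, being a lower bound. The strategy is a standard IMS-type localization. Fix the cutoffs $\rho_p$ and a complementary function $\rho_\infty$ with $\rho_\infty^2+\sum_p\rho_p^2$ bounded below and $\rho_\infty$ supported in $M\setminus\bigcup_p\{|x_p|<r\}$; more carefully one wants a partition of unity $\{\phi_p,\phi_\infty\}$ with $\sum\phi_p^2+\phi_\infty^2=1$, $\phi_p$ supported in $U_p$. Given $\beta\in E_z^\perp\cap H^1$, write the IMS identity $\|D_z\beta\|^2=\sum_p\|D_z(\phi_p\beta)\|^2+\|D_z(\phi_\infty\beta)\|^2-\sum\|(\text{grad }\phi)\text{-terms}\|^2$, where the error terms are $O(\|\beta\|^2)$ with a constant depending only on the fixed cutoffs (not on $\mu$ or $\nu$). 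On the $\phi_\infty$ piece, $\supp(\phi_\infty\beta)\subset M\setminus U$ (after shrinking the inner radius), so \Cref{p: | D_z beta | ge C sqrt | mu | | beta |} gives $\|D_z(\phi_\infty\beta)\|\ge C\mu\|\phi_\infty\beta\|$. On each $\phi_p$ piece, conjugate by $e^{i\nu h_p}$ to pass to $D_\mu$ and compare with the model $D'_{p,\mu}$ on $\R^n$; by \Cref{p: Delta'_p mu}\ref{i: Delta'_p mu is ...} the nonzero spectrum of $\Delta'_{p,\mu}$ is $\ge 2\mu$, so $\|D'_{p,\mu}v\|^2\ge 2\mu\|v\|^2$ for $v\perp\ker D'_{p,\mu}=\C e'_{p,z}$. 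The point is that $\phi_p\beta$ is nearly orthogonal to $e_{p,z}$: indeed $\langle\beta,e_{p,z}\rangle=0$ since $\beta\in E_z^\perp$, and $\langle\phi_p\beta,e_{p,z}\rangle=\langle\beta,(\phi_p-1)e_{p,z}\rangle$ which is $O(e^{-c\mu}\|\beta\|)$ because $(\phi_p-1)e_{p,z}$ is supported where the Gaussian is exponentially small. Projecting $\phi_p\beta$ off $e'_{p,z}$ costs only $e^{-c\mu}\|\beta\|$, so $\|D_\mu(\phi_p\beta)\|^2\ge 2\mu\|\phi_p\beta\|^2-e^{-c\mu}\mu\|\beta\|^2$. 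Summing all pieces: $\|D_z\beta\|^2\ge c\mu\sum_p\|\phi_p\beta\|^2+c\mu^2\|\phi_\infty\beta\|^2-C\|\beta\|^2-e^{-c\mu}\mu\|\beta\|^2\ge c\mu(\sum_p\|\phi_p\beta\|^2+\|\phi_\infty\beta\|^2)-C'\|\beta\|^2=c\mu\|\beta\|^2-C'\|\beta\|^2\ge (c/2)\mu\|\beta\|^2$ for $\mu\gg0$, and since $P_z^\perp D_z\beta=D_z\beta-P_zD_z\beta$ with $\|P_zD_z\beta\|\le e^{-c\mu}\|\beta\|$ by~\ref{i: | P_z^perp D_z alpha | le e^-c | mu | | alpha |}, we conclude $\|P_z^\perp D_z\beta\|\ge C\sqrt{\mu}\|\beta\|$. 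The delicate points to watch are making the IMS error constants genuinely independent of $\nu$ (which holds because conjugation by $e^{i\nu h_p}$ is unitary and the grad-$\phi$ terms are degree-zero multiplication operators unaffected by the conjugation), and checking that the model comparison on $\phi_p\beta$ is valid despite $\phi_p\beta$ living on $M$ rather than $\R^n$—but $\phi_p\beta$ has compact support in $U_p\cong(-4r,4r)^n\subset\R^n$, so it extends by zero and the operators agree there.
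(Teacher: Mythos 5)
Your proposal is correct and follows essentially the same route as the paper, which simply adapts Zhang's Propositions 4.11, 4.12 and 5.6: the degree/support argument for (i), exponential smallness of $D_ze_{p,z}$ plus self-adjointness for (ii), and a localization (model operator near each zero, \Cref{p: | D_z beta | ge C sqrt | mu | | beta |}-type bound away from the zeros, with unitary conjugation by $e^{i\nu h_p}$ giving uniformity in $\nu$) for (iii). The only point to tighten is that the spectral-gap step needs near-orthogonality of $\phi_p\beta$ to the model kernel element $e'_{p,z}$ rather than to $e_{p,z}$, which follows immediately since $\|e_{p,z}-e'_{p,z}\|=O(e^{-c\mu})$.
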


\begin{proof}
This follows like \cite[Propositions~4.11,~4.12 and~5.6]{Zhang2001}. Property~\ref{i: P_z D_z P_z = 0} is true because every $D_z e_{p,z}$ is supported in $U_p$ and has homogeneous components of degree different from $\ind(p)$; therefore it is orthogonal to $\ker\Delta_z$. The other properties are consequences of~\Cref{p: Delta'_p mu,p: | D_z beta | ge C sqrt | mu | | beta |} and~\eqref{rho_p}--\eqref{a_mu}. According to \cite[Proposition~4.11]{Zhang2001}, the inequalities of~\ref{i: | P_z^perp D_z alpha | le e^-c | mu | | alpha |} hold with $1/\mu$ instead of $e^{-c\mu}$, but its proof shows that indeed $e^{-c\mu}$ can be achieved. 
\end{proof}

\begin{prop}\label{p: | D_z^l e_p z |_m i nu}
For all $m\in\N_0$, if $\mu\gg0$, then
\[
\|D_ze_{p,z}\|_m\le |\nu|^me^{-c_m\mu}\;,\quad\|D_ze_{p,z}\|_{m,i\nu}\le e^{-c_m\mu}\;.
\]
\end{prop}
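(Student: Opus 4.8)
The plan is to reduce the estimate on $D_z e_{p,z}$ to an explicit computation in the model on $\R^n$, where everything is Gaussian. First I would observe that by \Cref{c: | alpha |_m z}, \Cref{p: | alpha |_1 z}, and iterating, the two norms $\|\ \|_m$ and $\|\ \|_{m,i\nu}$ are comparable up to a polynomial factor in $|z|$; since we are proving a bound of the form $e^{-c_m\mu}$ with $\mu\gg0$, any fixed power of $|z|=|\mu+i\nu|$ can be absorbed \emph{provided} we can first gain a factor like $e^{-c\mu}$ that is uniform in $\nu$. Here one must be slightly careful: $|z|$ is not controlled by $\mu$ alone. The right way is to use the conjugation identity $D_z e_{p,z} = e^{-i\nu h_p} D_{z}' e_{p,z}'$-type relations localized on $U_p$, i.e. the fact that multiplication by $e^{i\nu h_p}$ is a unitary intertwining $D_{i\nu}$ with $D_0$ on the support of $\rho_p$ (as in the proof of \Cref{p: perturbed Sobolev}), to kill the $\nu$-dependence of the \emph{leading} part and reduce to the purely real model.

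Concretely, the key step is the computation on $\R^n$. On $U_p$ we have $e_{p,z} = (\rho_p/a_\mu)\, e^{-i\nu h_p}\, e'_{p,\mu}$, and $D'_{p,\mu} e'_{p,\mu} = 0$ by \Cref{p: Delta'_p mu}\ref{i: e'_p mu}. Hence $D_\mu e_{p,\mu}$, computed via $x_p$, equals $a_\mu^{-1}\,\hat c(d\rho_p)\,e'_{p,\mu}$ using the Leibniz-type rule \eqref{D(h alpha) = h D alpha + hat c(dh) alpha} (with $\rho_p$ in place of $h$), because the term with $D'_{p,\mu}$ applied to $e'_{p,\mu}$ vanishes. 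Now $d\rho_p$ is supported in the region where some $|x_p^j|\ge r$, and there $e'_{p,\mu}$ has Gaussian factor $e^{-\mu|x_p|^2/2}$ of size $\le e^{-\mu r^2/2}$ pointwise; together with $a_\mu^{-1}=O(\mu^{n/4})$ from \eqref{a_mu}, this gives $\|D_\mu e_{p,\mu}\| \le e^{-c\mu}$. For the higher Sobolev norms $\|\ \|_m$, I would differentiate: each application of $D$ (equivalently each derivative $\partial/\partial x_p^j$) brings down at most a factor $\mu |x_p^j|$ from the Gaussian or hits $\rho_p$ or its derivatives; in all cases one is still integrating a quantity bounded by $(\text{polynomial in }\mu,x_p)\cdot e^{-\mu r^2/2}$ over the annular support of the cutoff derivatives, so $\|D_\mu e_{p,\mu}\|_m \le C_m \mu^{N_m} e^{-c\mu}\le e^{-c_m\mu}$ for $\mu$ large. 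This handles $\|D_z e_{p,z}\|_{m}$ in the case $\nu=0$.

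To pass to general $\nu$ and to the $\|\ \|_{m,i\nu}$ norm, I would use that on $U_p$ the operator $D_{i\nu}$ is unitarily conjugate to $D_0$ via $e^{i\nu h_p}$ (valid on the support of $\rho_p$), so $D_{i\nu}^k e_{p,i\nu} = e^{-i\nu h_p} D_0^k e_{p,0}$ pointwise on $U_p$, and $e_{p,z}$ vanishes off $U_p$; since $|e^{-i\nu h_p}|=1$ this gives $\|D_{i\nu}^k e_{p,i\nu}\| = \|D_0^k e_{p,0}\|$, whence $\|D_z e_{p,z}\|_{m,i\nu} = \|D_{i\nu} e_{p,i\nu}\|_{m,i\nu}$ is controlled by $\sum_k \|D_0^k D_0 e_{p,0}\| = \|D_0 e_{p,0}\|_m$ up to combinatorial constants, which is $\le e^{-c_m\mu}$ by the previous paragraph. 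Wait---one must insert the $\mu$-dependence correctly: the relevant object is $D_z e_{p,z} = D_{z} (\rho_p a_\mu^{-1} e'_{p,z})$ with $z=\mu+i\nu$, and $D_z = D_{i\nu} + \mu\hat c(\eta)$ by \eqref{perturbed opers}; applying the $e^{i\nu h_p}$ conjugation removes $\nu$ from $D_{i\nu}$ but $\mu\hat c(\eta)$ transforms to $\mu\hat c(\eta)$ (it commutes with the scalar $e^{i\nu h_p}$), and on $U_p$, $\mu\hat c(\eta)e'_{p,\mu}$ combines with the $D_\mu$ part exactly so that $D_\mu e'_{p,\mu}=0$; so the conjugation reduces $D_z e_{p,z}$ on $U_p$ to $e^{-i\nu h_p}$ times $D_\mu e_{p,\mu}$, and one is back to the real model. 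Then the bound on $\|\ \|_m$ follows from the real estimate together with \Cref{c: | alpha |_m z} and \Cref{p: | alpha |_1 z} (the polynomial-in-$\mu$ loss is harmless), and the bound on $\|\ \|_{m,i\nu}$ follows directly from the conjugation since $|e^{-i\nu h_p}|=1$ and the $\nu$-dependence is gone. The main obstacle is bookkeeping: making sure the conjugation argument is only used on $U_p$ where $\rho_p$-supported forms live, that the $\mu\hat c(\eta)$ term is correctly absorbed into the vanishing of $D_\mu e'_{p,\mu}$, and that the polynomial-in-$\mu$ factors from commutators and from the norm comparisons are genuinely dominated by the Gaussian gain $e^{-\mu r^2/2}$ coming from $d\rho_p$ being supported where $|x_p|\ge r$.
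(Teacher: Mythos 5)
Your argument is correct and essentially the paper's: you compute $D_ze_{p,z}$ via the Leibniz rule \eqref{D(h alpha) = h D alpha + hat c(dh) alpha} together with $D'_{p,\mu}e'_{p,\mu}=0$, leaving only the $\hat c(d\rho_p)$ term, whose Gaussian factor on the annular support $\{|x_p|\ge r\}$ dominates all polynomial-in-$\mu$ losses, and you treat $\|\ \|_{m,i\nu}$ by the same local conjugation by $e^{i\nu h_p}$ (i.e.\ \eqref{Witten's opers} applied on $U_p$) that the paper uses to reduce to the case $\nu=0$. The closing appeal to \Cref{c: | alpha |_m z} and \Cref{p: | alpha |_1 z} is unnecessary (and would reintroduce $|z|$, hence $\nu$, factors); the direct Gaussian estimate you already gave for $\|D_\mu e_{p,\mu}\|_m$ is exactly what the paper relies on.
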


\begin{proof}
From \Cref{p: Delta'_p mu}~\ref{i: e'_p mu},~\eqref{[D h] = hat c(dh)},~\eqref{e_p mu} and~\eqref{e_p z}, we get
\begin{equation}\label{D_z e_p,z}
D_ze_{p,z}=D_z\Big(\frac{\rho_p}{a_\mu}e'_{p,z}\Big)
=e^{-i\nu h_p}\frac1{a_\mu}\Big(\frac{\pi}4\Big)^{n/4}\hat c(d\rho_p)e'_{p,\mu}\;.
\end{equation}
Thus the stated estimate of $\|D_ze_{p,z}\|_m$ is true by~\eqref{e_p mu} and~\eqref{a_mu}, since $d\rho_p=0$ around $p$, and using the definition of $h_p$ and the condition $4r<1$. (When $\nu=0$, this is indicated in \cite[Eq.~(6.17)]{Zhang2001}.)

By~\eqref{Witten's opers}, for all $k\in\N_0$ and $p\in\XX$, the form $D_{i\nu}^kD_ze_{p,z}$ is the extension by zero of the form $e^{-i\nu h_p}D^kD_\mu e_{p,\mu}$ on $U_p$. Then the stated estimate of $\|D_ze_{p,z}\|_{m,i\nu}$ follows from the case $\nu=0$.
\end{proof}

\begin{prop}\label{p: | D_ze_p z |_L^infty}
If $\mu\gg0$, then
\[
\|D_ze_{p,z}\|_{L^\infty}\le e^{-c\mu}\;.
\]
\end{prop}

\begin{proof}
Apply~\eqref{e_p mu} and~\eqref{a_mu} in~\eqref{D_z e_p,z}, and use that $d\rho_p=0$ around $p$.
\end{proof}

Consider the partition of $\spec\Delta_z$ into its intersections with $[0,1]$ and $(1,\infty)$, called the \emph{small} and \emph{large spectrum}; the term \emph{small/large eigenvalues} may be also used. Let $E_{z,\text{\rm sm}}\subset \Omega(M)$ denote the graded finite dimensional subspace generated by the eigenforms of the small eigenvalues, let $E_{z,\text{\rm la}}=E_{z,\text{\rm sm}}^\perp$ in $L^2(M;\Lambda)$, and let $P_{z,\text{\rm sm/la}}$ be the orthogonal projection to $E_{z,\text{\rm sm/la}}$, called \emph{small/large projection}. Moreover $(\Omega(M),d_z)$ splits into a topological direct sum of the subcomplexes $E_{z,\text{\rm sm}}$ and $E_{z,\text{\rm la}}\cap \Omega(M)$, called the \emph{small} and \emph{large complexes}, and~\eqref{Hodge dec Novikov} gives
\begin{equation}\label{H^bullet(Omega_sm z(M C) d_z) cong H_z^*(M C)}
H^\bullet(E_{z,\text{\rm sm}},d_z)\cong H_z^\bullet(M)\;,\quad H^\bullet(E_{z,\text{\rm la}}\cap \Omega(M),d_z)=0\;.
\end{equation}
For any operator $B$ defined on $\Omega(M)$ or $L^2(M;\Lambda)$, let $B_{z,\text{\rm sm/la}}=BP_{z,\text{\rm sm/la}}$.  

\begin{prop}\label{p: | alpha - P_z sm alpha |_m i nu}
For all $m\in\N_0$, $\mu\gg0$ and $\alpha\in E_z$, 
\[
\|\alpha-P_{z,\text{\rm sm}}\alpha\|_{m,i\nu}\le e^{-c_m\mu}\|\alpha\|\;.
\]
\end{prop}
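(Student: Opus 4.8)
The plan is to estimate the large-spectral component $\alpha - P_{z,\text{\rm sm}}\alpha = P_{z,\text{\rm la}}\alpha$ of a form $\alpha \in E_z$ by combining the fact that $\alpha$ is almost in the kernel of $D_z$ (by \Cref{p: P_z D_z P_z = 0}) with the spectral gap: $\Delta_z$ has no eigenvalues in $(0,1)$, so $D_z$ is bounded below by $1$ on $E_{z,\text{\rm la}}\cap\Omega(M)$. First I would write $\beta := P_{z,\text{\rm la}}\alpha$ and use $\|D_z\beta\| \geq \|\beta\|$ (from the definition of the small/large spectrum, since $\beta$ lies in the large subspace). On the other hand, $D_z$ commutes with $P_{z,\text{\rm la}}$ (the small/large decomposition is $D_z$-invariant), so $D_z\beta = P_{z,\text{\rm la}} D_z\alpha$, whence $\|D_z\beta\| \leq \|D_z\alpha\|$. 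Now $\alpha$ is a finite linear combination $\alpha = \sum_{p\in\XX} c_p e_{p,z}$ with $\|\alpha\|^2 = \sum_p |c_p|^2$, so $\|D_z\alpha\| \leq \sum_p |c_p|\,\|D_z e_{p,z}\| \leq C e^{-c\mu}\|\alpha\|$ by \Cref{p: | D_z^l e_p z |_m i nu} (with $m=0$) and Cauchy--Schwarz. This gives the case $m=0$: $\|\beta\| \leq \|D_z\beta\| \leq e^{-c\mu}\|\alpha\|$.

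For general $m$, the idea is to promote this $L^2$-estimate to a $\|\ \|_{m,i\nu}$-estimate using elliptic regularity for the generalized Laplacian $\Delta_{i\nu}$ together with the relation $\Delta_z = \Delta_{i\nu} + \mu\sH_\eta + \mu^2|\eta|^2$ from~\eqref{perturbed opers}. The cleanest route is to bound $\|D_{i\nu}^k\beta\|$ inductively. Since $\beta = P_{z,\text{\rm la}}\alpha$ is smooth and lies in $E_{z,\text{\rm la}}$, we have $D_z^{2k}\beta = P_{z,\text{\rm la}} D_z^{2k}\alpha$, so $\|D_z^{2k}\beta\| \leq \|D_z^{2k}\alpha\| = \|\Delta_z^k\alpha\|$; iterating the computation in the proof of \Cref{p: | D_z^l e_p z |_m i nu} (which already controls all Sobolev norms $\|D_z e_{p,z}\|_m$, equivalently $\|D_z^j e_{p,z}\|$ for all $j$, each by $e^{-c_m\mu}$), one gets $\|\Delta_z^k\alpha\| \leq C_k e^{-c_k\mu}\|\alpha\|$; and on $E_{z,\text{\rm la}}$ the operator $\Delta_z$ is bounded below by $1$, so $\|\beta\|^2 \leq \|\Delta_z^k\beta\|\,\|\beta\|$ gives $\|\beta\| \leq \|\Delta_z^k\beta\| \leq C_k e^{-c_k\mu}\|\alpha\|$ for every $k$. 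To pass from the $\Delta_z$-norms to the $\|\ \|_{m,i\nu}$-norm, I would first relate $\|\ \|_{m,i\nu}$ to ordinary Sobolev norms $\|\ \|_m$ via \Cref{p: | alpha |_m i eta} applied to $\omega = i\nu\eta$ — but this introduces powers of $|\nu|$, which is a problem since the estimate must be uniform in $\nu$. The remedy is to instead control $\|D_{i\nu}^j\beta\|$ directly: write $D_{i\nu} = D_z - \mu\hat c(\eta)$ from~\eqref{perturbed opers}, so that $\|D_{i\nu}^m\beta\| \leq \sum (\text{terms with }\mu^a \text{ and }a \text{ factors of }\hat c(\eta))$; each such term, being a polynomial in $\mu$ times products of $D_z$'s and the bounded operator $\hat c(\eta)$ applied to $\beta$, is bounded by $C_m \mu^m \|D_z^{m}\beta\|$-type quantities — but we also need the $D_z$'s acting on $\beta$ to land back in $E_{z,\text{\rm la}}$, which they do, so each is $\leq C_m \mu^m e^{-c\mu}\|\alpha\|$, and absorbing $\mu^m$ into $e^{-c\mu}$ (replacing $c$ by $c/2$) finishes it.

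The main obstacle I anticipate is the uniformity in $\nu$: every naive step that diagonalizes using the relation between $D_z$, $D_{i\nu}$ and $\Delta$ tends to produce factors of $|\nu|$, and the key point is to only ever commute $\beta$ through operators that are either $D_z$ (which preserves $E_{z,\text{\rm la}}$ and is bounded below there, uniformly in $\nu$), or the zeroth-order operator $\hat c(\eta)$ whose operator norm is independent of $\nu$. Concretely, the care needed is in the last step above: one must check that $D_{i\nu}^m\beta$ can be expanded as a sum of terms each of the shape $\mu^{a} (\text{bounded}) D_z^{b}\beta$ with $a + b \leq m$ — which follows by repeatedly substituting $D_{i\nu} = D_z - \mu\hat c(\eta)$ and noting that $\hat c(\eta)$ does \emph{not} commute with $D_z$, so one actually gets terms involving $\sH_\eta = D\hat c(\eta) + \hat c(\eta)D$ and its analogues, all of which are lower order; the bookkeeping is routine but must be done carefully so that no $\nu$-dependent constant sneaks in. Once the expansion is in hand, each term is bounded using $\|D_z^b\beta\| \leq \|\Delta_z^{\lceil b/2\rceil}\beta\| + \|\Delta_z^{\lfloor b/2\rfloor}\beta\| \leq C e^{-c\mu}\|\alpha\|$ as above, and summing gives $\|\beta\|_{m,i\nu} \leq e^{-c_m\mu}\|\alpha\|$ as claimed.
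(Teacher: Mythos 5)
Your $m=0$ argument is correct, and it is in fact more elementary than the paper's (the paper derives even the $m=0$ case from the resolvent integral \eqref{P_z sm e_p z - e_p z}): from $\|D_z\beta\|\ge\|\beta\|$ for $\beta\in E_{z,\text{\rm la}}\cap\Omega(M)$, $D_zP_{z,\text{\rm la}}=P_{z,\text{\rm la}}D_z$, and $\|D_ze_{p,z}\|\le e^{-c\mu}$ one gets the $L^2$ bound directly. (Two small caveats: you should not invoke ``$\Delta_z$ has no eigenvalues in $(0,1)$''---that is essentially \Cref{t: spec Delta_z}, proved later using this very proposition, and it is not needed, since the large spectrum is by definition contained in $(1,\infty)$; and the bound $\|D_z^je_{p,z}\|\le e^{-c\mu}$ you use for higher powers is true but is not literally \Cref{p: | D_z^l e_p z |_m i nu}, which controls $D_{i\nu}^kD_ze_{p,z}$; it needs the local gauge identity $D_z^je_{p,z}=e^{-i\nu h_p}D_\mu^je_{p,\mu}$ from \eqref{Witten's opers}.) The genuine gap is in the passage to general $m$, i.e.\ exactly where the uniformity in $\nu$ must be earned. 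You propose to expand $D_{i\nu}^m\beta=(D_z-\mu\hat c(\eta))^m\beta$ and rewrite it as a sum of terms $\mu^a B\,D_z^b\beta$ with $B$ bounded uniformly in $\nu$ and $a+b\le m$. The first anticommutation is harmless, $\{D_z,\hat c(\eta)\}=\sH_\eta+2\mu|\eta|^2$ (and for $m\le2$ everything works, since $D_{i\nu}^2=\Delta_z-\mu\sH_\eta-\mu^2|\eta|^2$), but from $m\ge3$ on you must move a $D_z$ across the newly created factor $\sH_\eta$, and
\[
[D_z,\sH_\eta]=[D,\sH_\eta]+\mu\,[\hat c(\eta),\sH_\eta]+i\nu\,[c(\eta),\sH_\eta]\;,
\]
where $[c(\eta),\sH_\eta]\ne0$ in general: in the Morse model \eqref{eta around p} one has $\sH_\eta=\sum_j\epsilon_j[{dx_p^j\lrcorner},{dx_p^j\wedge}]=\sum_j\epsilon_j c(dx_p^j)\hat c(dx_p^j)$ and $[c(\eta),\sH_\eta]=-2\sum_jx_p^j\hat c(dx_p^j)$. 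So an explicit factor of $\nu$ enters; moreover $[D,\sH_\eta]$ is genuinely first order (its principal symbol is $[c(\xi),\sH_\eta]$, generally nonzero), so it is not an admissible bounded left factor and cannot be estimated by $\|D_z^b\beta\|$ alone. The same obstruction appears if you keep $D_{i\nu}$ and use $\{D_{i\nu},\hat c(\eta)\}=\sH_\eta$, since $[D_{i\nu},\sH_\eta]=[D,\sH_\eta]+i\nu[c(\eta),\sH_\eta]$. Hence the advertised bookkeeping with ``no $\nu$-dependent constant'' is precisely the step that fails.

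The paper secures the uniformity not by commutator algebra but by the estimate $\|\gamma\|_{m,i\nu}\le C_m\big(\|(w-D_z)\gamma\|_{m-1,i\nu}+\mu\|\gamma\|_{m-1,i\nu}+\|\gamma\|\big)$ (the analogue of Zhang's Eq.\ (6.18)), whose nontrivial input is that a fixed $\nu$-independent zeroth-order operator such as $\hat c(\eta)$ acts on the $\|\ \|_{k,i\nu}$-spaces with norm bounded uniformly in $\nu$; this is seen through the local unitary gauge $e^{i\nu h}$, which conjugates $D_{i\nu}$ into $D$ and makes $\|\ \|_{k,i\nu}$ locally uniformly comparable to a $\nu$-independent Sobolev norm of the gauged section. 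Iterating yields the resolvent bound \eqref{|(w - D_z)^-1 beta |_m i nu}, and the contour formula \eqref{P_z sm e_p z - e_p z} finishes the proof. If you wish to keep your projection-based setup and avoid the contour integral, you can: the same inequality with $w$ dropped iterates to $\|\beta\|_{m,i\nu}\le C_m\sum_{k=0}^m\mu^{m-k}\|D_z^k\beta\|$, and then $\|D_z^k\beta\|=\|P_{z,\text{\rm la}}D_z^k\alpha\|\le\|D_z^k\alpha\|\le e^{-c\mu}\|\alpha\|$ for $k\ge1$, together with your $m=0$ bound, gives the statement after absorbing $\mu^m$ into the exponential. But note that this repair consists in importing the paper's key intermediate inequality: the uniform-in-$\nu$ Sobolev input cannot be replaced by the algebraic expansion you describe.
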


\begin{proof}
This follows like \cite[Lemma~5.8 and Theorem~6.7]{Zhang2001}, using $\|\ \|_{m,i\nu}$ instead of $\|\ \|_m$. The following are the main steps of the proof. 

Let $\S^1=\{\,\omega\in\C\mid|\omega|=1\,\}$. With the argument of the proof of \cite[Eq.~(5.27)]{Zhang2001}, using \Cref{p: P_z D_z P_z = 0}, we get that, for all $\alpha\in H^1(M;\Lambda)$, $w\in\S^1$ and $\mu\gg0$,
\[
\|(w-D_z)\alpha\|\ge C\|\alpha\|\;.
\]
Thus $w-D_z:H^1(M;\Lambda)\to L^2(M;\Lambda)$ is bijective, and, for all $\beta\in L^2(M;\Lambda)$, $w\in\S^1$ and $\mu\gg0$,
\begin{equation}\label{| (w - D_z)^-1 beta |}
\big\|(w-D_z)^{-1}\beta\big\|\le C^{-1}\|\beta\|\;.
\end{equation}

On the other hand, arguing like in the proof of \cite[Eq.~(6.18)]{Zhang2001}, it follows that, for all $\gamma\in H^m(M;\Lambda)$, $w\in\S^1$ and $\mu\gg0$,
\[
\|\gamma\|_{m,i\nu}\le C_m\big(\|(w-D_z)\gamma\big\|_{m-1,i\nu}+\mu\|\gamma\|_{m-1,i\nu}+\|\gamma\|\big)\;.
\]
Continuing by induction on $m\in\N_0$, we obtain
\[
\|\gamma\|_{m,i\nu}\le C_m\Big(\mu^m\|\gamma\|+\sum_{k=1}^m\mu^{k-1}\|(w-D_z)\gamma\big\|_{m-k,i\nu}\Big)\;.
\]
In other words, for all $\beta\in H^{m-1}(M;\Lambda)$,
\[
\big\|(w-D_z)^{-1}\beta\big\|_{m,i\nu}\le C_m\Big(\mu^m\big\|(w-D_z)^{-1}\beta\big\|+\sum_{k=1}^m\mu^{k-1}\|\beta\|_{m-k,i\nu}\Big)\;.
\]
Applying~\eqref{| (w - D_z)^-1 beta |} to this inequality, we get, for $m\ge1$,
\begin{equation}\label{|(w - D_z)^-1 beta |_m i nu}
\big\|(w-D_z)^{-1}\beta\big\|_{m,i\nu}\le C_m\mu^m\|\beta\|_{m-1,i\nu}\;.
\end{equation}
From~\eqref{| (w - D_z)^-1 beta |},~\eqref{|(w - D_z)^-1 beta |_m i nu} and \Cref{p: | D_z^l e_p z |_m i nu}, it follows that, for $m\in\N_0$,
\begin{equation}\label{| (w-D_z)^-1 D_z e_p z |_m i nu}
\big\|(w-D_z)^{-1}D_z e_{p,z}\big\|_{m,i\nu}=O\big(e^{-c_m\mu}\big)
\end{equation}
 as $\mu\to+\infty$, uniformly on $w\in\S^1$. But, endowing $\S^1$ with the counter-clockwise orientation, basic spectral theory gives (see e.g.\ \cite[Section~VII.3]{DunfordSchwartz1988-I})
\begin{multline}\label{P_z sm e_p z - e_p z}
P_{z,\text{\rm sm}} e_{p,z}- e_{p,z}=\frac{1}{2\pi i}\int_{\S^1}\big((w-D_z)^{-1}-w^{-1}\big) e_{p,z}\,dw\\
=\frac{1}{2\pi i}\int_{\S^1}w^{-1}(w-D_z)^{-1}D_z e_{p,z}\,dw\;.
\end{multline}
The result follows using~\eqref{| (w-D_z)^-1 D_z e_p z |_m i nu} in~\eqref{P_z sm e_p z - e_p z}.
\end{proof}

\begin{cor}\label{c: | alpha - P_z sm alpha |_L^infty}
For $\mu\gg0$ and $\alpha\in E_z$, 
\[
\|\alpha-P_{z,\text{\rm sm}}\alpha\|_{L^\infty}\le e^{-c\mu}\|\alpha\|\;.
\]
\end{cor}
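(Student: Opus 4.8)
The plan is to chain together the two results immediately preceding the statement, exactly in the spirit of the proof of \Cref{c: | D_ze_p z |_L^infty}. First I would fix an integer $m$ with $m>n/2$. Since $\alpha\in E_z\subset\Omega(M)$ is smooth and $P_{z,\text{\rm sm}}\alpha$ lies in the finite-dimensional span of eigenforms of $\Delta_z$, hence is also smooth, the difference $\alpha-P_{z,\text{\rm sm}}\alpha$ belongs to $H^m(M;\Lambda)$, so \Cref{p: perturbed Sobolev} applies and yields
\[
\|\alpha-P_{z,\text{\rm sm}}\alpha\|_{L^\infty}\le C_m\,\|\alpha-P_{z,\text{\rm sm}}\alpha\|_{m,i\nu}\;.
\]
Then \Cref{p: | alpha - P_z sm alpha |_m i nu} bounds the right-hand side by $C_m\,e^{-c_m\mu}\|\alpha\|$, and for $\mu\gg0$ the factor $C_m$ is absorbed by replacing $c_m$ with a slightly smaller constant $c>0$, which gives the assertion.

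The only point deserving comment — and the reason the two inputs are phrased in terms of the perturbed Sobolev norm $\|\ \|_{m,i\nu}$ rather than $\|\ \|_m$ or $\|\ \|_{m,z}$ — is that the constant in \Cref{p: perturbed Sobolev} is independent of $\nu$; combined with the $\nu$-uniform exponential decay of \Cref{p: | alpha - P_z sm alpha |_m i nu}, this makes the final bound uniform on $\Im z$, which is what will be needed later. Beyond this bookkeeping there is no real obstacle: the argument is a two-line combination of already established estimates.
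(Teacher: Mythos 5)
Your proof is correct and is exactly the paper's argument: the paper's proof reads ``Apply \Cref{p: perturbed Sobolev,p: | alpha - P_z sm alpha |_m i nu}'', i.e.\ the same two-step combination (with $m>n/2$, applying \Cref{p: perturbed Sobolev} to $\nu\eta$ so the constant is $\nu$-independent, then invoking the exponential bound of \Cref{p: | alpha - P_z sm alpha |_m i nu}). The paper additionally sketches an alternative route via the resolvent estimate~\eqref{| (w - D_z)^-1 beta |_L^infty}, but that is only for later use and your argument needs nothing from it.
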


\begin{proof}
Apply \Cref{p: perturbed Sobolev,p: | alpha - P_z sm alpha |_m i nu}.

Alternatively, the proof of \Cref{p: | alpha - P_z sm alpha |_m i nu} can be modified as follows to get this result (some step of this alternative argument will be used later). Iterating~\eqref{|(w - D_z)^-1 beta |_m i nu}, we get
\[
\big\|(w-D_z)^{-1}\beta\big\|_{m,i\nu}\le C'_m\mu^{(m+1)m/2}\|\beta\|\;,
\]
for all $\beta\in L^2(M;\Lambda)$. Then, by \Cref{p: perturbed Sobolev},
\begin{equation}\label{| (w - D_z)^-1 beta |_L^infty}
\big\|(w-D_z)^{-1}\beta\big\|_{L^\infty}\le C\mu^{(m+1)m/2}\|\beta\|\;.
\end{equation}
Thus, by \Cref{p: | D_z^l e_p z |_m i nu},
\[
\big\|(w-D_z)^{-1}D_z e_{p,z}\big\|_{L^\infty}=O\big(e^{-c_m\mu}\big)
\]
as $\mu\to+\infty$. Finally, apply this expression in~\eqref{P_z sm e_p z - e_p z}.
\end{proof}

\begin{cor}\label{c: P_z sm : E_z -> E_z sm iso}
If $\mu\gg0$, then $P_{z,\text{\rm sm}}:E_z\to E_{z,\text{\rm sm}}$ is an isomorphism; in particular,
$\dim E_{z,\text{\rm sm}}=|\XX|$ and $\dim E_{z,\text{\rm sm}}^k=|\XX_k|$.
\end{cor}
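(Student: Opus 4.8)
The plan is to prove the two assertions ``$P_{z,\mathrm{sm}}|_{E_z}$ is injective'' and ``$\dim E_{z,\mathrm{sm}}=|\XX|$'' separately, and then conclude: an injective linear map between finite-dimensional spaces of equal dimension is an isomorphism. The degreewise statement will come out along the way, because $\Delta_z$, hence its spectral projection $P_{z,\mathrm{sm}}$, preserves the exterior degree.

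For injectivity I would take $m=0$ in \Cref{p: | alpha - P_z sm alpha |_m i nu}, recalling $\|\ \|_{0,i\nu}=\|\ \|$: for $\mu\gg0$ and any $\alpha\in E_z$ this gives $\|\alpha-P_{z,\mathrm{sm}}\alpha\|\le e^{-c\mu}\|\alpha\|$, hence
\[
\|P_{z,\mathrm{sm}}\alpha\|\ge\bigl(1-e^{-c\mu}\bigr)\|\alpha\|\ge\tfrac12\|\alpha\|\;,
\]
so $P_{z,\mathrm{sm}}\alpha=0$ forces $\alpha=0$. This already yields $\dim E_{z,\mathrm{sm}}\ge\dim E_z=|\XX|$. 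Because $\Delta_z$ maps $\Omega^k(M)$ to $\Omega^k(M)$ (see~\eqref{perturbed opers}), each eigenspace, and hence $E_{z,\mathrm{sm}}$, is graded and $P_{z,\mathrm{sm}}$ commutes with the degree projection; since $E_z=\bigoplus_k E_z^k$ with $\dim E_z^k=|\XX_k|$, the same estimate shows $P_{z,\mathrm{sm}}\colon E_z^k\hookrightarrow E_{z,\mathrm{sm}}^k$, so $\dim E_{z,\mathrm{sm}}^k\ge|\XX_k|$ for every $k$.

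For the reverse dimension bound I would apply the Courant--Fischer min--max principle to the non-negative self-adjoint operator $\Delta_z=D_z^2$ with form domain $H^1(M;\Lambda)$, writing its eigenvalues as $\lambda_1\le\lambda_2\le\cdots$. The input is \Cref{p: P_z D_z P_z = 0}~\ref{i: | P_z^perp D_z beta | ge C sqrt | mu | | beta |}: for $\mu\gg0$ and $\beta\in E_z^\perp\cap H^1(M;\Lambda)$,
\[
\langle\Delta_z\beta,\beta\rangle=\|D_z\beta\|^2\ge\|P_z^\perp D_z\beta\|^2\ge C^2\mu\,\|\beta\|^2\;.
\]
Since $E_z$ is a $|\XX|$-dimensional subspace of (smooth, hence $H^1$) forms, $E_z^\perp\cap H^1(M;\Lambda)$ has codimension $|\XX|$ in the form domain, so min--max gives $\lambda_{|\XX|+1}\ge C^2\mu>1$ for $\mu\gg0$; thus at most $|\XX|$ eigenvalues of $\Delta_z$ lie in $[0,1]$, i.e.\ $\dim E_{z,\mathrm{sm}}\le|\XX|$. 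Together with the previous paragraph, $\dim E_{z,\mathrm{sm}}=|\XX|$, and then $\sum_k\dim E_{z,\mathrm{sm}}^k=\sum_k|\XX_k|$ combined with $\dim E_{z,\mathrm{sm}}^k\ge|\XX_k|$ forces $\dim E_{z,\mathrm{sm}}^k=|\XX_k|=\dim E_z^k$ for each $k$. Hence $P_{z,\mathrm{sm}}$ restricts to a bijection $E_z^k\to E_{z,\mathrm{sm}}^k$ for every $k$, and their direct sum is the claimed isomorphism $E_z\to E_{z,\mathrm{sm}}$.

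The argument is mostly bookkeeping once \Cref{p: | alpha - P_z sm alpha |_m i nu} and \Cref{p: P_z D_z P_z = 0} are available; the only step needing some care is the min--max bound, where one must use Courant--Fischer in the form $\lambda_{|\XX|+1}=\sup\{\inf_{0\ne\beta\in W}\langle\Delta_z\beta,\beta\rangle/\|\beta\|^2:\codim W=|\XX|\}$ and verify the codimension count for $E_z^\perp$ inside the form domain. One can also bypass min--max entirely: for $\gamma\in E_{z,\mathrm{sm}}$ one has $\|D_z\gamma\|\le\|\gamma\|$, while $\|D_zP_z\gamma\|\le e^{-c\mu}\|\gamma\|$ and $\|D_zP_z^\perp\gamma\|\ge C\sqrt\mu\,\|P_z^\perp\gamma\|$ by the same two propositions, so $\|P_z^\perp\gamma\|=O(\mu^{-1/2})\|\gamma\|$ and $P_z\colon E_{z,\mathrm{sm}}\to E_z$ is injective for $\mu\gg0$, giving $\dim E_{z,\mathrm{sm}}\le|\XX|$ directly.
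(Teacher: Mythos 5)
Your proof is correct, and it runs on exactly the two inputs the paper's (very terse) proof relies on: injectivity of $P_{z,\text{\rm sm}}$ on $E_z$ from \Cref{p: | alpha - P_z sm alpha |_m i nu} with $m=0$, and the lower bound of \Cref{p: P_z D_z P_z = 0}~\ref{i: | P_z^perp D_z beta | ge C sqrt | mu | | beta |} on $E_z^\perp\cap H^1(M;\Lambda)$. The only genuine divergence is how you obtain $\dim E_{z,\text{\rm sm}}\le|\XX|$. The paper just invokes \cite[Proposition~5.5]{Zhang2001}, whose mechanism is precisely your closing ``bypass'': for $\gamma$ in the small subspace, combine $\|D_z\gamma\|\le\|\gamma\|$ with \Cref{p: P_z D_z P_z = 0} to get $\|P_z^\perp\gamma\|=O(\mu^{-1/2})\|\gamma\|$, so $P_z:E_{z,\text{\rm sm}}\to E_z$ is injective; this near-isometry is also what powers the subsequent arguments (e.g.\ the proof of \Cref{t: spec Delta_z} and \Cref{p: P_z sm P_z+tau sm}). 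Your main route instead feeds $\|D_z\beta\|\ge\|P_z^\perp D_z\beta\|\ge C\sqrt{\mu}\,\|\beta\|$ into Courant--Fischer for the quadratic form $\|D_z\cdot\|^2$ with form domain $H^1(M;\Lambda)$, getting $\lambda_{|\XX|+1}\ge C^2\mu>1$ and hence at most $|\XX|$ eigenvalues in $[0,1]$; this is a clean, standard packaging that avoids decomposing elements of $E_{z,\text{\rm sm}}$, at the modest cost of the form-domain and codimension bookkeeping, which you handle correctly. Your grading remark ($\Delta_z$ preserves degree, so $E_{z,\text{\rm sm}}$ is graded and $P_{z,\text{\rm sm}}$ is degree-preserving, while $\dim E_z^k=|\XX_k|$) is the right way to upgrade the total dimension count to $\dim E_{z,\text{\rm sm}}^k=|\XX_k|$, and no circularity is introduced since both propositions you use precede the corollary.
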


\begin{proof}
This follows from \Cref{p: P_z D_z P_z = 0,p: | alpha - P_z sm alpha |_m i nu} for $m=0$ like \cite[Proposition~5.5]{Zhang2001}.
\end{proof}

When $\mu\gg0$,~\eqref{sum_k=0^n (-1)^k | XX_k | = chi(M)} also follows from \Cref{c: P_z sm : E_z -> E_z sm iso},~\eqref{sum_k (-1)^k beta_z^k = chi(M)} and~\eqref{H^bullet(Omega_sm z(M C) d_z) cong H_z^*(M C)}.

\begin{thm}[{Cf.\ \cite[Theorem~3]{BurgheleaHaller2001}}]\label{t: spec Delta_z}
We have
\[
\spec\Delta_z\subset\big[0,e^{-c|\mu|}\big]\cup\big[C|\mu|,\infty\big)\;.
\]
\end{thm}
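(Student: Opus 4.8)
The plan is to run the standard Witten-deformation min--max argument, taking the estimates of \Cref{p: P_z D_z P_z = 0} as the only analytic input. Since $\Delta_z=D_z^2$ is a non-negative selfadjoint operator with discrete spectrum whose associated quadratic form is $q_z(\alpha)=\|D_z\alpha\|^2$ with form domain $H^1(M;\Lambda)$, its eigenvalues $\lambda_1\le\lambda_2\le\cdots$ (with multiplicity) satisfy the Courant--Fischer principle. Because $\dim E_z=|\XX|$ by \Cref{c: P_z sm : E_z -> E_z sm iso}, it suffices to show that $\lambda_{|\XX|}\le e^{-c\mu}$ and $\lambda_{|\XX|+1}\ge C\mu$ for $\mu\gg0$. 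The case $\mu\ll0$ then follows from \Cref{r: | mu |} applied to the Morse form $-\eta$ (equivalently, from $\Delta_{z\eta}=\Delta_{(-z)(-\eta)}$, visible in the formula for $\Delta_z$ in \eqref{perturbed opers}), and for bounded $|\mu|$ the two intervals on the right-hand side already cover $[0,\infty)$ once $c$ and $C$ are chosen small enough, so there is nothing to prove.

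For the upper bound I would test Courant--Fischer with the $|\XX|$-dimensional subspace $E_z$: given $0\ne\alpha\in E_z$, \Cref{p: P_z D_z P_z = 0}~\ref{i: P_z D_z P_z = 0} gives $P_zD_z\alpha=P_zD_zP_z\alpha=0$, hence $D_z\alpha=P_z^\perp D_z\alpha$, and then \Cref{p: P_z D_z P_z = 0}~\ref{i: | P_z^perp D_z alpha | le e^-c | mu | | alpha |} yields $\|D_z\alpha\|\le e^{-c\mu}\|\alpha\|$ for $\mu\gg0$. Thus $\lambda_{|\XX|}\le\max_{0\ne\alpha\in E_z}\|D_z\alpha\|^2/\|\alpha\|^2\le e^{-2c\mu}$. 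For the lower bound I would test the dual form of Courant--Fischer with the subspace $E_z^\perp$, of codimension $|\XX|$: given $0\ne\beta\in E_z^\perp\cap H^1(M;\Lambda)$, the orthogonal splitting $D_z\beta=P_zD_z\beta+P_z^\perp D_z\beta$ together with \Cref{p: P_z D_z P_z = 0}~\ref{i: | P_z^perp D_z beta | ge C sqrt | mu | | beta |} gives $\|D_z\beta\|^2\ge\|P_z^\perp D_z\beta\|^2\ge C^2\mu\|\beta\|^2$; hence $\lambda_{|\XX|+1}\ge\min_{0\ne\beta\in E_z^\perp\cap H^1}\|D_z\beta\|^2/\|\beta\|^2\ge C^2\mu$. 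Combining the two bounds shows $\spec\Delta_z\cap(e^{-2c\mu},C^2\mu)=\emptyset$ for $\mu\gg0$, which is the asserted inclusion after renaming constants.

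I do not expect a genuine obstacle here, since everything has been prepared by \Cref{p: P_z D_z P_z = 0}. The only points requiring a little care are bookkeeping ones: matching the two forms of the min--max principle (the vectors of $E_z\subset\Omega(M)$ automatically lie in the form domain, while in the dual statement one must restrict to $E_z^\perp\cap H^1(M;\Lambda)$), and then choosing the final constants $c,C$ small enough that the inclusion holds uniformly for all $\mu\in\R$, not just for $|\mu|$ large.
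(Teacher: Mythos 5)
Your argument is correct, and it reaches the theorem by a genuinely different route than the paper. The paper never invokes the min--max principle: it estimates the Rayleigh quotient directly on the true spectral subspaces, writing $\beta=P_{z,\text{\rm sm}}\alpha$ with $\alpha\in E_z$ and controlling $\|D_zP_{z,\text{\rm sm}}\alpha\|$ by means of \Cref{p: | alpha - P_z sm alpha |_m i nu}, \Cref{p: | alpha |_1 z} and \Cref{c: P_z sm : E_z -> E_z sm iso}, and then splitting any $\phi\in E_{z,\text{\rm la}}\cap H^1(M;\Lambda)$ as $P_z\phi+P_z^\perp\phi$ to get $\spec\Delta_z\cap[0,1]\subset[0,e^{-c\mu}]$ and $\spec\Delta_z\cap(1,\infty)\subset[C\mu,\infty)$ separately. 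You instead feed the quadratic-form estimates into Courant--Fischer with the explicit test spaces $E_z$ and $E_z^\perp\cap H^1(M;\Lambda)$, so the only analytic input is \Cref{p: P_z D_z P_z = 0}; the approximation machinery for $P_{z,\text{\rm sm}}$ (and hence the resolvent estimates behind it) is bypassed entirely, which makes your proof shorter and more self-contained, and as a byproduct it re-derives $\dim E_{z,\text{\rm sm}}^{\phantom{k}}=|\XX|$ for $\mu\gg0$. What the paper's organization buys is that the closeness of $P_{z,\text{\rm sm}}$ to the identity on $E_z$ is needed again later (for $\Psi_z$, \Cref{t: Phi_z+tau Psi_z}, etc.), so proving the gap through those estimates costs nothing extra there. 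Two bookkeeping remarks: $\dim E_z=|\XX|$ holds by construction of the orthonormal frame $e_{p,z}$ and should not be attributed to \Cref{c: P_z sm : E_z -> E_z sm iso}, which concerns $E_{z,\text{\rm sm}}$ and whose proof uses exactly the results your argument avoids; and your closing device for bounded $|\mu|$ (shrink $c,C$ so that $C|\mu|\le e^{-c|\mu|}$, making the two intervals cover $[0,\infty)$) is the same as the paper's final step, so nothing is lost there.
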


\begin{proof} 
First, we establish the theorem for $|\mu|\gg0$, and then the constants will be changed to cover all $\mu$.

We can assume $\mu\ge0$ according to \Cref{r: | mu |}. By \Cref{p: P_z D_z P_z = 0,p: | alpha - P_z sm alpha |_m i nu,p: | alpha |_1 z}, for all $\alpha\in E_z$,
\begin{align*}
\|D_z P_{z,\text{\rm sm}}\alpha\|
&\le\|D_z\alpha\|+\|D_z(\alpha- P_{z,\text{\rm sm}}\alpha)\|\le\|D_z\alpha\|+\|\alpha- P_{z,\text{\rm sm}}\alpha\|_{1,z}\\
&\le\| P_z^\perp D_z\alpha\|+C(\mu\|\alpha- P_{z,\text{\rm sm}}\alpha\|+\|\alpha- P_{z,\text{\rm sm}}\alpha\|_{1,i\nu})\\
&\le\big(e^{-c\mu}+C\big(\mu e^{-c_0\mu}+e^{-c_1\mu}\big)\big)\|\alpha\|\;.
\end{align*}
Hence, by \Cref{c: P_z sm : E_z -> E_z sm iso}, for all $\beta\in E_{z,\text{\rm sm}}$,
\[
0\le\langle\Delta_z\beta,\beta\rangle=\|D_z\beta\|^2\le e^{-c\mu}\,\|\beta\|^2\;.
\]
This shows that
\begin{equation}\label{spec Delta_z cap [0 1]}
\spec\Delta_z\cap[0,1]\subset\big[0,e^{-c\mu}\big]\;.
\end{equation}

Now let $\phi\in E_{z,\text{\rm la}}\cap H^1(M;\Lambda)$, and write $\alpha= P_z\phi\in E_z$ and $\beta= P_z^\perp\phi\in E_z^\perp\cap H^1(M;\Lambda)$. By \Cref{p: | alpha - P_z sm alpha |_m i nu},
\[
\|\alpha\|^2=\langle\alpha,\phi\rangle=\langle\alpha- P_{z,\text{\rm sm}}\alpha,\phi\rangle\le\|\alpha- P_{z,\text{\rm sm}}\alpha\|\|\phi\|\le e^{-c_0\mu}\|\alpha\|\|\phi\|\;,
\]
yielding
\[
\|\alpha\|\le e^{-c_0\mu}\|\phi\|\;.
\]
So
\[
\|\beta\|=\|\phi-\alpha\|\ge\|\phi\|-\|\alpha\|\ge\big(1-e^{-c_0\mu}\big)\|\phi\|\;.
\]
Then, by \Cref{p: P_z D_z P_z = 0},
\begin{align*}
\|D_z\phi\|&\ge\|D_z\beta\|-\|D_z\alpha\|
\ge\| P_z^\perp D_z\beta\|-e^{-c\mu}\|\alpha\|\\
&\ge C\sqrt{\mu}\,\|\beta\|-e^{-c\mu}\|\phi\|
\ge\big(C\sqrt{\mu}\big(1-e^{-c_0\mu}\big)-e^{-c\mu}\big)\|\phi\|\;.
\end{align*}
Therefore, for all $\phi\in E_{z,\text{\rm la}}\cap H^1(M;\Lambda)$,
\[
\langle\Delta_z\phi,\phi\rangle=\|D_z\phi\|^2\ge C\mu\|\phi\|^2\;.
\]
This proves that
\begin{equation}\label{spec Delta_z cap (1 infty)}
\spec\Delta_z\cap(1,\infty)\subset[C\mu,\infty)\;.
\end{equation}

The inclusions~\eqref{spec Delta_z cap [0 1]} and~\eqref{spec Delta_z cap (1 infty)} give the result for $\mu\gg0$. But, in those inclusions, we can take $c$ and $C$ so small that, if one of them is not true for some $\mu\ge0$, then $C\mu\le e^{-c\mu}$. 
\end{proof}


\subsection{Ranks of some projections in the small complex}\label{ss: ranks}

Recall that $(\Pi_z^\perp)_{\text{\rm sm},k}$, $\Pi^1_{z,\text{\rm sm},k}$ and $\Pi^2_{z,\text{\rm sm},k}$ denote the orthogonal projections to the images of $\Delta_{z,\text{\rm sm},k}$,  $d_{z,\text{\rm sm},k-1}$ and  $\delta_{z,\text{\rm sm},k+1}$, respectively. Let $m_{z,k}$, $m_{z,k}^1$ and $m_{z,k}^2$ be the corresponding ranks (or traces) of these projections. They satisfy
\begin{equation}\label{m_z k^j}
m_{z,k}=m_{z,k}^1+m_{z,k}^2\;,\quad m_{z,0}^1=m_{z,n}^2=0\;,\quad m_{z,k}^2=m_{z,k+1}^1\;,
\end{equation}
where the last equality is true because $d_z:\im\delta_z\to\im d_z$ is an isomorphism. For $\mu\gg0$, we have $m_{z,k},m_{z,k}^j\le|\XX_k|$ by \Cref{c: P_z sm : E_z -> E_z sm iso} and~\eqref{m_z k^j}.

\begin{lem}\label{l: m_z k^j determined by m_z k}
The numbers $m_{z,k}^j$ are determined by the numbers $m_{z,k}$:
\[
m_{z,k+1}^1=m_{z,k}^2=\sum_{p=0}^k(-1)^{k-p}m_{z,p}
=\sum_{q=k+1}^n(-1)^{q-k-1}m_{z,q}\;.
\]
\end{lem}

\begin{proof}
This follows from~\eqref{m_z k^j} with an easy induction argument on $k$.
\end{proof}

\begin{lem}\label{l: m_z k = |XX_k| - beta_z^k}
For $\mu\gg0$, we have $m_{z,k}=|\XX_k|-\beta_z^k$.
\end{lem}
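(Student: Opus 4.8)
The plan is to reduce the computation of $m_{z,k}$ to finite–dimensional Hodge theory inside the small complex. Since $\Delta_z$ commutes with both $d_z$ and $\delta_z$, every eigenspace of $\Delta_z$ is invariant under $d_z$ and $\delta_z$; hence $E_{z,\text{\rm sm}}$, being a sum of such eigenspaces, is invariant under $d_z$, $\delta_z$ and $\Delta_z$, and the spectral projection $P_{z,\text{\rm sm}}$ commutes with all three. Consequently the Hodge decomposition~\eqref{Hodge dec Novikov} restricts to an orthogonal direct sum
\[
E_{z,\text{\rm sm}}^k=\ker\Delta_{z,\text{\rm sm},k}\oplus\im d_{z,\text{\rm sm},k-1}\oplus\im\delta_{z,\text{\rm sm},k+1}\;,
\]
so that $\im\Delta_{z,\text{\rm sm},k}=\im d_{z,\text{\rm sm},k-1}\oplus\im\delta_{z,\text{\rm sm},k+1}$. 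By the very definition of $m_{z,k}$ as the rank of the projection onto $\im\Delta_{z,\text{\rm sm},k}$ (consistently with~\eqref{m_z k^j}),
\[
m_{z,k}=\dim\im\Delta_{z,\text{\rm sm},k}=\dim E_{z,\text{\rm sm}}^k-\dim\ker\Delta_{z,\text{\rm sm},k}\;.
\]

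It then remains to identify the two terms on the right. For $\mu\gg0$, \Cref{c: P_z sm : E_z -> E_z sm iso} gives $\dim E_{z,\text{\rm sm}}^k=|\XX_k|$. For the kernel term, the eigenvalue–$0$ space of $\Delta_z$ lies in $E_{z,\text{\rm sm}}$, whence $\ker\Delta_{z,\text{\rm sm},k}=\ker\Delta_{z,k}$, which has dimension $\beta_z^k$ by~\eqref{Hodge}; equivalently, the finite–dimensional Hodge isomorphism $\ker\Delta_{z,\text{\rm sm},k}\cong H^k(E_{z,\text{\rm sm}},d_z)$ together with~\eqref{H^bullet(Omega_sm z(M C) d_z) equiv H_z^*(M C)} gives the same count. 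Substituting, $m_{z,k}=|\XX_k|-\beta_z^k$.

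I do not expect a genuine obstacle here; the statement is essentially bookkeeping once the structural facts are in place. The two points that need a moment's care are: (i) justifying that $E_{z,\text{\rm sm}}$ is a subcomplex inheriting the Hodge decomposition, which is precisely the commutation remark above; and (ii) keeping track of the hypothesis $\mu\gg0$, which is exactly where $\dim E_{z,\text{\rm sm}}^k=|\XX_k|$ (and hence the stated formula) enters — without it one only obtains the weaker identity $m_{z,k}=\dim E_{z,\text{\rm sm}}^k-\beta_z^k$.
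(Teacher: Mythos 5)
Your proof is correct and follows essentially the same route as the paper, whose one-line proof invokes exactly the same ingredients: the Hodge decomposition~\eqref{Hodge dec Novikov} restricted to the small complex, the identification of $\ker\Delta_{z,k}$ (equivalently $H^k(E_{z,\text{\rm sm}},d_z)\cong H_z^k(M)$ via~\eqref{Hodge} and~\eqref{H^bullet(Omega_sm z(M C) d_z) equiv H_z^*(M C)}) with $\beta_z^k$, and \Cref{c: P_z sm : E_z -> E_z sm iso} giving $\dim E_{z,\text{\rm sm}}^k=|\XX_k|$ for $\mu\gg0$. Your remark that the hypothesis $|\mu|\gg0$ is implicitly needed (and is indeed how the lemma is used later, e.g.\ in \Cref{c: m^j_z k only depends on | XX_k | and xi}) is also accurate.
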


\begin{proof}
This is a consequence of~\eqref{Hodge dec Novikov},~\eqref{H^bullet(Omega_sm z(M C) d_z) cong H_z^*(M C)} and \Cref{c: P_z sm : E_z -> E_z sm iso}.
\end{proof}

\begin{cor}\label{c: Trs(Pi^perp_z sm) = 0}
$\Str((\Pi_z^\perp)_{\text{\rm sm}})=0$.
\end{cor}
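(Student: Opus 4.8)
The plan is to compute the supertrace $\Trs((\Pi_z^\perp)_{\text{\rm sm}}) = \sum_{k=0}^n (-1)^k m_{z,k}$ directly from the formula for $m_{z,k}$ supplied by \Cref{l: m_z k = |XX_k| - beta_z^k}, and then recognize the two resulting alternating sums as Euler characteristics that cancel. First I would write
\[
\Trs\big((\Pi_z^\perp)_{\text{\rm sm}}\big) = \sum_{k=0}^n (-1)^k m_{z,k} = \sum_{k=0}^n (-1)^k |\XX_k| - \sum_{k=0}^n (-1)^k \beta_z^k\;.
\]
Then I would invoke~\eqref{sum_k=0^n (-1)^k | XX_k | = chi(M)} to identify the first sum with $\chi(M)$, and~\eqref{sum_k (-1)^k beta_z^k = chi(M)} to identify the second sum with $\chi(M)$ as well. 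The difference is $\chi(M) - \chi(M) = 0$, which gives the claim.

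The only point requiring a word of justification is the first equality, namely that the supertrace of the projection $(\Pi_z^\perp)_{\text{\rm sm}}$ onto $\im\Delta_{z,\text{\rm sm}}$ equals $\sum_k (-1)^k m_{z,k}$; this is immediate from the definition of the supertrace on the graded space $E_{z,\text{\rm sm}}$ together with the fact that $(\Pi_z^\perp)_{\text{\rm sm},k}$ is the degree-$k$ component of this projection, with trace $m_{z,k}$ by definition (\Cref{ss: ranks}). No step here is a real obstacle: the corollary is a two-line bookkeeping consequence of \Cref{l: m_z k = |XX_k| - beta_z^k} and the two Euler-characteristic identities already established. If anything, the mild subtlety is simply making sure that both the combinatorial count $\sum (-1)^k|\XX_k|$ and the analytic count $\sum(-1)^k\beta_z^k$ compute the same $\chi(M)$, but both facts are already recorded in the excerpt, so nothing new is needed.
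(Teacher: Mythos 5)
Your proof is correct and is essentially identical to the paper's: both compute $\Trs((\Pi_z^\perp)_{\text{\rm sm}})=\sum_k(-1)^k m_{z,k}$ via \Cref{l: m_z k = |XX_k| - beta_z^k} and then cancel the two alternating sums using~\eqref{sum_k=0^n (-1)^k | XX_k | = chi(M)} and~\eqref{sum_k (-1)^k beta_z^k = chi(M)}. Nothing further is needed.
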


\begin{proof}
By~\eqref{sum_k (-1)^k beta_z^k = chi(M)},~\eqref{sum_k=0^n (-1)^k | XX_k | = chi(M)} and \Cref{l: m_z k = |XX_k| - beta_z^k}, 
\[
\Str\big((\Pi_z^\perp)_{\text{\rm sm}}\big)=\sum_k(-1)^k|\XX_k|-\sum_k(-1)^k\beta_z^k=\chi(M)-\chi(M)=0\;.\qedhere
\]
\end{proof}

\begin{lem}\label{l: m_k = m_n-k}
If $M$ is oriented, then, for $k=0,\dots,n$,
\[
m_{z,k}=m_{-\bar z,n-k}=m_{-z,n-k}\;,\quad m_{z,k}^1=m_{-\bar z,n-k}^2=m_{-z,n-k}^2\;.
\]
\end{lem}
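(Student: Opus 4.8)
The plan is to mirror the structure of the surrounding results: everything should follow from the Hodge decomposition together with the Hodge-star identities of \Cref{sss: perturbed ops on oriented mfds}, and in particular from the relations~\eqref{star z}. First I would recall that $\star$ and $\bar\star$ restrict to isomorphisms $\ker\Delta_{z,k}\to\ker\Delta_{-\bar z,n-k}$ and $\ker\Delta_{z,k}\to\ker\Delta_{-z,n-k}$, and, more importantly, that by~\eqref{star z} they intertwine $d_z$ and $\delta_{-\bar z}$ (resp.\ $d_z$ and $\delta_{-z}$) up to sign. Since $\star$ and $\bar\star$ are (anti)unitary, they send the small spectrum of $\Delta_z$ to the small spectrum of $\Delta_{-\bar z}$ (resp.\ $\Delta_{-z}$), hence they restrict to isomorphisms $E_{z,\text{\rm sm}}^k\to E_{-\bar z,\text{\rm sm}}^{n-k}$ and $E_{z,\text{\rm sm}}^k\to E_{-z,\text{\rm sm}}^{n-k}$.

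Next I would transport the orthogonal decomposition~\eqref{Hodge dec Novikov} restricted to the small complex. On $E_{z,\text{\rm sm}}^k$ we have $E_{z,\text{\rm sm}}^k=\ker\Delta_{z,k}\oplus\im d_{z,\text{\rm sm},k-1}\oplus\im\delta_{z,\text{\rm sm},k+1}$. Applying $\star$ and using $d_z\star=\pm\star\,\delta_{-\bar z}$ and $\delta_z\star=\pm\star\,d_{-\bar z}$ from~\eqref{star z}, the subspace $\im d_{z,\text{\rm sm},k-1}$ maps onto $\im\delta_{-\bar z,\text{\rm sm},n-k+1}$ and $\im\delta_{z,\text{\rm sm},k+1}$ maps onto $\im d_{-\bar z,\text{\rm sm},n-k-1}$, while $\ker\Delta_{z,k}$ maps onto $\ker\Delta_{-\bar z,n-k}$. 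Taking dimensions gives $m_{z,k}^1=m_{-\bar z,n-k}^2$, and since $m_{z,k}=m_{z,k}^1+m_{z,k}^2$ with $m_{z,k}^2=m_{z,k+1}^1$ (from~\eqref{m_z k^j}), the identity $m_{z,k}=m_{-\bar z,n-k}$ follows immediately (alternatively, directly from \Cref{l: m_z k = |XX_k| - beta_z^k} together with $\beta_z^k=\beta_{-\bar z}^{n-k}$ and $|\XX_k|=|\XX_{n-k}|$, the latter because $\eta$ and $-\eta$ have the same zeros with complementary indices). The statements with $-z$ in place of $-\bar z$ are obtained in exactly the same way using the anti-linear operator $\bar\star$ and the second row of~\eqref{star z}.

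I do not expect any serious obstacle here; the lemma is essentially bookkeeping. The one point that needs a little care is that the Hodge-star identities must be applied degreewise with the correct sign, and one must check that $\star$ genuinely preserves the small/large dichotomy — but this is immediate because $\Delta_z\star=\star\,\Delta_{-\bar z}$ shows $\star$ conjugates $\Delta_{-\bar z}$ into $\Delta_z$, so it is a spectrum-preserving (anti)unitary and in particular carries eigenspaces for eigenvalues in $[0,1]$ to eigenspaces for the same eigenvalues. The cleanest write-up is probably the second route: invoke \Cref{l: m_z k = |XX_k| - beta_z^k}, note $|\XX_k|=|\XX_{n-k}|$ and the already-recorded equalities $\beta_z^k=\beta_{-z}^{n-k}=\beta_{-\bar z}^{n-k}$ from \Cref{sss: perturbed ops on oriented mfds}, deduce the first chain of equalities, and then get the superscript-$1$/superscript-$2$ version from \Cref{l: m_z k^j determined by m_z k} (or directly from~\eqref{m_z k^j} by the same induction on $k$).
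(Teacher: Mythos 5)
Your first argument is correct and is essentially the paper's proof: the paper records precisely the intertwining relations $(\Pi_z^\perp)_{\text{\rm sm},k}\,\star=\star\,(\Pi_{-\bar z}^\perp)_{\text{\rm sm},n-k}$, $\Pi^1_{z,\text{\rm sm},k}\,\star=\star\,\Pi^2_{-\bar z,\text{\rm sm},n-k}$ and their $\bar\star$ analogues, which follow from~\eqref{star z} exactly as you argue (conjugation by an (anti)unitary operator preserves the spectrum, hence the small/large splitting, and exchanges $\im d_{z,\text{\rm sm}}$ and $\im\delta_{-\bar z,\text{\rm sm}}$ in complementary degrees), and the rank equalities are then immediate. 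One small remark: to pass from $m^1_{z,k}=m^2_{-\bar z,n-k}$ to $m_{z,k}=m_{-\bar z,n-k}$ the natural step is that $\star$ also carries $\im\delta_{z,\text{\rm sm},k+1}$ onto $\im d_{-\bar z,\text{\rm sm},n-k-1}$, i.e.\ $m^2_{z,k}=m^1_{-\bar z,n-k}$, and one adds; the relation $m^2_{z,k}=m^1_{z,k+1}$ that you quote from~\eqref{m_z k^j} is not the one needed there.

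However, the route you single out as ``the cleanest write-up'' contains a genuine error and should not be used as written. The claim $|\XX_k|=|\XX_{n-k}|$ is false in general: passing from $\eta$ to $-\eta$ turns a zero of index $k$ into a zero of index $n-k$, so what is true is $|\XX_k(\eta)|=|\XX_{n-k}(-\eta)|$, which says nothing about the index distribution of $\eta$ itself (already for $\eta=dh$ on $S^2$ with $h$ having two maxima, one saddle and one minimum, one has $|\XX_0|=1\neq2=|\XX_2|$). Relatedly, \Cref{l: m_z k = |XX_k| - beta_z^k} rests on $\dim E^k_{z,\text{\rm sm}}=|\XX_k|$ (\Cref{c: P_z sm : E_z -> E_z sm iso}), which requires $\mu\gg0$; at the parameter $-\bar z$, whose real part is $-\mu$, the degree-$(n-k)$ small subspace has dimension $|\XX_k|$ (apply the $\mu\gg0$ statement to $-\eta$, which interchanges the indices), not $|\XX_{n-k}|$. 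With that correction the counting argument does close, giving $m_{-\bar z,n-k}=|\XX_k|-\beta^{n-k}_{-\bar z}=|\XX_k|-\beta^k_z=m_{z,k}$ for $|\mu|\gg0$, with no symmetry of the numbers $|\XX_k|$ needed; but it then only covers $|\mu|\gg0$, whereas the Hodge-star argument (and the lemma as the paper uses it) is valid for every $z$. So keep the star intertwining as the actual write-up, and drop the appeal to $|\XX_k|=|\XX_{n-k}|$.
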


\begin{proof}
This is true because, by~\eqref{star z},
\begin{alignat*}{2}
(\Pi_z^\perp)_{\text{\rm sm},k}\,\star&=\star\,(\Pi_{-\bar z}^\perp)_{\text{\rm sm},n-k}\;,&\quad
\Pi^1_{z,\text{\rm sm},k}\,\star&=\star\,\Pi^2_{-\bar z,\text{\rm sm},n-k}\;,\\
(\Pi_z^\perp)_{\text{\rm sm},k}\,\bar\star&=\bar\star\,(\Pi_{-z}^\perp)_{\text{\rm sm},n-k}\;,&\quad
\Pi^1_{z,\text{\rm sm},k}\,\bar\star&=\bar\star\,\Pi^2_{-z,\text{\rm sm},n-k}\;.\;\qedhere
\end{alignat*}
\end{proof}

\begin{cor}\label{c: m^j_z k only depends on | XX_k | and xi}
For $\mu\gg0$, $m_{z,k}$ and $m^j_{z,k}$ only depend on $|\XX_k|$ and the class $\xi=[\eta]\in H^1(M,\R)$.
\end{cor}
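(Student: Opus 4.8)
The plan is to show that $m_{z,k}$ depends only on the pair $(|\XX_k|, \xi)$ for $|\mu|\gg 0$, and then invoke \Cref{l: m_z k^j determined by m_z k} to transfer the same conclusion to each $m^j_{z,k}$. First I would use \Cref{l: m_z k = |XX_k| - beta_z^k}, which writes $m_{z,k} = |\XX_k| - \beta_z^k$. Since $|\XX_k|$ is already of the desired form, the entire problem reduces to showing that $\beta_z^k$ depends only on $\xi$ for $|\mu|\gg 0$. But this is exactly the content of~\eqref{beta_z^k = betaNo^k}, which asserts $\beta_z^k = \betaNo^k$ for $|\mu|\gg 0$, together with the fact (recalled just before that equation) that the Novikov Betti number $\betaNo^k = \betaNo^k(M,\xi)$ depends only on $M$ and the cohomology class $\xi\in H^1(M,\R)$. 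Hence $m_{z,k} = |\XX_k| - \betaNo^k(M,\xi)$ for $|\mu|\gg 0$, which visibly depends only on $|\XX_k|$ and $\xi$.

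Next I would deduce the statement for $m^j_{z,k}$. By \Cref{l: m_z k^j determined by m_z k}, the numbers $m^j_{z,k}$ are determined by the numbers $m_{z,k}$ (via the relations~\eqref{m_z k^j}, by induction on $k$). Since each $m_{z,k}$ has just been shown to depend only on $|\XX_k|$ and $\xi$ when $|\mu|\gg 0$, the same is true for each $m^j_{z,k}$. This completes the proof.

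There is essentially no obstacle here: the corollary is a bookkeeping consequence of results already established. The only point requiring a word of care is the uniformity of the threshold $|\mu|\gg 0$ — one must use that~\eqref{beta_z^k = betaNo^k} holds for all $k$ simultaneously once $|\mu|$ exceeds a single constant $C$ (indeed, as noted after~\eqref{beta_z^k = betaNo^k}, the exceptional set of $z$ lies in a strip $|\mu|\le C$), so that \Cref{c: P_z sm : E_z -> E_z sm iso} and \Cref{l: m_z k = |XX_k| - beta_z^k} apply on the same range of $\mu$ as~\eqref{beta_z^k = betaNo^k}. With that in hand the proof is immediate, and I would present it in roughly three lines: cite \Cref{l: m_z k = |XX_k| - beta_z^k} and~\eqref{beta_z^k = betaNo^k} for the $m_{z,k}$ claim, then cite \Cref{l: m_z k^j determined by m_z k} for the $m^j_{z,k}$ claim.
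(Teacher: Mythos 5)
Your proof is correct and is essentially the paper's own argument: it combines \Cref{l: m_z k = |XX_k| - beta_z^k} with~\eqref{beta_z^k = betaNo^k} to see that $m_{z,k}=|\XX_k|-\betaNo^k(M,\xi)$ for $|\mu|\gg0$, and then invokes \Cref{l: m_z k^j determined by m_z k} for the $m^j_{z,k}$. The extra remark about taking a single threshold $|\mu|>C$ valid for all $k$ is a reasonable point of care, but nothing beyond the paper's cited results is needed.
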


\begin{proof}
Apply~\eqref{beta_z^k = betaNo^k} and \Cref{l: m_z k^j determined by m_z k,l: m_z k = |XX_k| - beta_z^k}.
\end{proof}

By \Cref{c: m^j_z k only depends on | XX_k | and xi}, we write $m_k=m_k(\eta)=m_{z,k}$ and $m_k^j=m_k^j(\eta)=m_{z,k}^j$ for $\mu\gg0$.

\begin{cor}\label{c: m_k = m_n-k}
If $M$ is oriented, then, for $k=0,\dots,n$,
\[
m_k(\eta)=m_{n-k}(-\eta)\;,\quad
m_k^1(\eta)=m_{n-k}^2(-\eta)=m_{n-k+1}^1(-\eta)\;.
\]
\end{cor}

\begin{proof}
Apply~\eqref{m_z k^j}, \Cref{l: m_k = m_n-k,c: m^j_z k only depends on | XX_k | and xi}. Alternatively, we can apply~\eqref{beta_z^k = betaNo^k},~\eqref{beta_z^k = beta_-z^n-k},~\eqref{m_z k^j}, \Cref{r: | mu |,l: m_z k = |XX_k| - beta_z^k}.
\end{proof}

\begin{cor}\label{c: -Trs(Pi^1_z sm) = ...}
For $\mu\gg0$,
\[
\Str(\Pi_{z,\text{\rm sm}}^1)=-\Str(\Pi_{z,\text{\rm sm}}^2)=\sum_{k=0}^n(-1)^kkm_k\;.
\]
If moreover $M$ is oriented and $n$ is even, then
\[
\sum_{k=0}^n(-1)^kkm_k=\sum_{k=0}^n(-1)^kk|\XX_k|-\frac n2\chi(M)\;.
\]
\end{cor}

\begin{proof}
\Cref{c: Trs(Pi^perp_z sm) = 0} gives the first equality. By \Cref{l: m_z k^j determined by m_z k,c: Trs(Pi^perp_z sm) = 0},
\begin{align*}
\Str(\Pi_{z,\text{\rm sm}}^1)=\sum_{k=0}^n(-1)^k\sum_{q=k}^n(-1)^{q-k}m_q
=\sum_{q=0}^n(-1)^q(q+1)m_q=\sum_{q=0}^n(-1)^qqm_q\;.
\end{align*}
Now assume $M$ is oriented and $n$ is even. Then, by~\eqref{sum_k (-1)^k beta_z^k = chi(M)},~\eqref{beta_z^k = betaNo^k} and~\eqref{beta_z^k = beta_-z^n-k},
\begin{align*}
\sum_{k=0}^n(-1)^kk\betaNo^k&=\sum_{l=0}^n(-1)^{n-l}(n-l)\betaNo^{n-l}
=\sum_{l=0}^n(-1)^l(n-l)\betaNo^l\\
&=n\chi(M)-\sum_{l=0}^n(-1)^ll\betaNo^l\;.
\end{align*}
Hence the last equality of the statement follows from \Cref{l: m_z k = |XX_k| - beta_z^k}.
\end{proof}

\subsection{Asymptotic properties of the small projection}\label{ss: asymptotic properties of the sm proj}

\begin{notation}\label{n: O(f(mu))}
Consider a function $f(x)>0$ ($x>0$). When referring to vectors in Banach spaces, the order notation $O(f(|\mu|))$ ($\mu\to\pm\infty$) will be used for a family of vectors $v=v(z)$ ($z\in\C$) with $\|v(z)\|=O(f(|\mu|))$. This notation applies e.g.\ to bounded operators between Banach spaces. We may also consider this notation when the Banach spaces depend on $z$.
\end{notation}

\begin{prop}\label{p: P_z sm P_z+tau sm}
For every $\tau\in\R$, on $L^2(M;\Lambda)$, as $\mu\to+\infty$,
\[
P_{z,\text{\rm sm}}=P_z+O\big(e^{-c\mu}\big)
=P_{z,\text{\rm sm}}P_{z+\tau,\text{\rm sm}}P_{z,\text{\rm sm}}+O\big(\mu^{-2}\big)
=P_{z+\tau,\text{\rm sm}}+O\big(\mu^{-1}\big)\;.
\]
\end{prop}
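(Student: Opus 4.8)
~The plan is to prove the three estimates in sequence, the first being essentially already in hand and the other two following by elementary projection manipulations combined with the spectral gap of \Cref{t: spec Delta_z}.

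\textbf{First estimate.} The estimate $P_{z,\text{\rm sm}}=P_z+O(e^{-c\mu})$ is just a reformulation of \Cref{p: | alpha - P_z sm alpha |_m i nu} with $m=0$: for $\alpha\in E_z$ we have $\|(P_{z,\text{\rm sm}}-\id)\alpha\|\le e^{-c\mu}\|\alpha\|$, and on $E_z^\perp$ one argues symmetrically (or uses that $P_{z,\text{\rm sm}}$ has rank $|\XX|=\dim E_z$ by \Cref{c: P_z sm : E_z -> E_z sm iso}, so $P_{z,\text{\rm sm}}-P_z$ is small in operator norm once it is small on $E_z$ and one controls $P_{z,\text{\rm sm}}P_z^\perp=(P_{z,\text{\rm sm}}-P_z)P_z^\perp$ by taking adjoints). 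I would record this cleanly first, since it is used in the remaining steps.

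\textbf{Third estimate (before the second).} I would next establish $P_{z,\text{\rm sm}}=P_{z+\tau,\text{\rm sm}}+O(\mu^{-1})$, because the second estimate is then an easy consequence. The idea is to compare both small projections with the corresponding model projections $P_z$, $P_{z+\tau}$, which by the first step are exponentially close to $P_{z,\text{\rm sm}}$, $P_{z+\tau,\text{\rm sm}}$ respectively. So it suffices to estimate $\|P_z-P_{z+\tau}\|$. Writing $e_{p,z}=\frac{\rho_p}{a_\mu}e^{-i\nu h_p}(\mu/\pi)^{n/4}e^{-\mu|x_p|^2/2}dx_p^{1}\wedge\cdots\wedge dx_p^{\ind p}$ from~\eqref{e_p z} and~\eqref{a_mu}, the $z$-dependence of $e_{p,z}$ is only through the unimodular scalar $e^{-i\nu h_p}$, whereas the $\mu$-dependence is through $(\mu/\pi)^{n/4}e^{-\mu|x_p|^2/2}/a_\mu$; changing $z$ to $z+\tau$ changes $\mu$ to $\mu+\tau$ and $\nu$ is unchanged. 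A direct computation of $\langle e_{p,z},e_{q,z+\tau}\rangle$ — which vanishes for $p\ne q$ since the supports $U_p$ are disjoint, and for $p=q$ equals $\int \rho_p^2 (\mu/\pi)^{n/4}(\mu+\tau)^{n/4}e^{-(\mu+\tau/2)|x_p|^2}/(a_\mu a_{\mu+\tau})$ — shows this inner product is $1+O(\mu^{-1})$ by a Laplace-type expansion, using~\eqref{a_mu}. Hence $\|P_z-P_{z+\tau}\|=O(\mu^{-1})$ on the ranges, and combining with the two first-step estimates gives $P_{z,\text{\rm sm}}=P_{z+\tau,\text{\rm sm}}+O(\mu^{-1})$.

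\textbf{Second estimate.} Finally, $P_{z,\text{\rm sm}}=P_{z,\text{\rm sm}}P_{z+\tau,\text{\rm sm}}P_{z,\text{\rm sm}}+O(\mu^{-2})$. Write $P:=P_{z,\text{\rm sm}}$, $Q:=P_{z+\tau,\text{\rm sm}}$, $R:=Q-P=O(\mu^{-1})$ by the third estimate. Then $PQP=P(P+R)P=P+PRP$, so I must show $PRP=O(\mu^{-2})$, i.e.\ the first-order term cancels. The point is that $R=Q-P$ where both $P$ and $Q$ are orthogonal projections, so $R=R^*$ and $PR+RP$ equals $PQ+QP-2P$; a cleaner route is to observe $PRP=P(Q-P)P$, and since $Q^2=Q$ one has $PQP\cdot$(itself) $\ge$ something — more precisely, for orthogonal projections $P,Q$ with $\|P-Q\|=\varepsilon<1$, the operator $PQP$ restricted to $\operatorname{ran}P$ is invertible with $\|(PQP)|_{\operatorname{ran}P}-\id\|=\|P(P-Q)(P-Q)P\|^{1/?}$... the clean identity is $P-PQP=P(\id-Q)P=P(\id-Q)^2P=\|(\id-Q)P\|^2$-type, hence $\|P-PQP\|=\|(\id-Q)P\|^2\le\|P-Q\|^2=O(\mu^{-2})$, using $(\id-Q)P=(P-Q)P$ since $(\id-Q)$ kills nothing relevant — indeed $(\id-Q)P=P-QP=(P-Q)P$. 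This gives $\|P-PQP\|\le\|(P-Q)P\|^2\le\|P-Q\|^2=O(\mu^{-2})$, which is exactly the claim.

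\textbf{Main obstacle.} The only genuinely computational point is the Laplace-asymptotic evaluation of $\langle e_{p,z},e_{p,z+\tau}\rangle=1+O(\mu^{-1})$, i.e.\ controlling how the normalization constants $a_\mu$ and the Gaussian factors interact under $\mu\mapsto\mu+\tau$; this is routine given~\eqref{a_mu}, but it is where care is needed to get the uniformity in $\nu$ (which is automatic here since $\nu$ only enters through the unimodular phase $e^{-i\nu h_p}$ that drops out of $|e_{p,z}|$). Everything else is formal manipulation of orthogonal projections together with the already-proven exponential closeness $P_{z,\text{\rm sm}}=P_z+O(e^{-c\mu})$ and the spectral gap.
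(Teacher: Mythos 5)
Your route has a genuine gap at the third estimate, and since you derive the second estimate from it, both are affected. The problem is the inference from the overlap asymptotics to the projection estimate: for unit vectors $u,v$ one has $\|u-v\|^2=2-2\Re\langle u,v\rangle$, so knowing only $\langle e_{p,z},e_{p,z+\tau}\rangle=1+O(\mu^{-1})$ yields $\|e_{p,z}-e_{p,z+\tau}\|=O(\mu^{-1/2})$ and hence only $\|P_z-P_{z+\tau}\|=O(\mu^{-1/2})$; the square root is unavoidable, so ``overlap $=1+O(\mu^{-1})$, hence $\|P_z-P_{z+\tau}\|=O(\mu^{-1})$'' is a non sequitur. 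The statement is still reachable by your scheme, but you must do the Gaussian computation sharply: with the normalizations of~\eqref{e_p z} and~\eqref{a_mu}, the diagonal overlap is
\[
\langle e_{p,z},e_{p,z+\tau}\rangle=\frac{(\mu(\mu+\tau))^{n/4}}{(\mu+\tau/2)^{n/2}}+O\big(e^{-c\mu}\big)=1+O\big(\mu^{-2}\big)\;,
\]
because the geometric and arithmetic means of $\mu$ and $\mu+\tau$ agree to relative error $O(\mu^{-2})$, i.e.\ the first-order term cancels; this $O(\mu^{-2})$ is exactly what the paper's proof computes. With it your chain does close: $\|e_{p,z}-e_{p,z+\tau}\|=O(\mu^{-1})$, hence (after transporting through the Gram--Schmidt orthonormalizations of $P_{z,\text{\rm sm}}e_{p,z}$ and $P_{z+\tau,\text{\rm sm}}e_{p,z+\tau}$, which are $e^{-c\mu}$-close to $e_{p,z}$, $e_{p,z+\tau}$ by \Cref{p: | alpha - P_z sm alpha |_m i nu} and \Cref{c: P_z sm : E_z -> E_z sm iso}) $P_{z,\text{\rm sm}}=P_{z+\tau,\text{\rm sm}}+O(\mu^{-1})$, and then your identity $P-PQP=\big((\id-Q)P\big)^*\big((\id-Q)P\big)$, giving $\|P-PQP\|\le\|P-Q\|^2$, recovers the $O(\mu^{-2})$ middle estimate.

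Modulo this fix, your ordering is a legitimate alternative to the paper's: the paper proves the middle estimate first, directly from the $1+O(\mu^{-2})$ overlaps applied to the orthonormalized bases, and then obtains the third estimate from $\|\tilde e_{p,z}-\tilde e_{p,z+\tau}\|^2=2-2\Re\langle\tilde e_{p,z},\tilde e_{p,z+\tau}\rangle=O(\mu^{-2})$, i.e.\ the square root working in the opposite direction; your quadratic projection inequality replaces that step and is arguably cleaner once the $O(\mu^{-1})$ comparison of the two small projections is in hand. Two smaller points: the sentence in your second step between ``invertible with'' and the final identity is garbled and should be deleted (the identity you end with is correct), and the first-step upgrade from ``small on $E_z$'' to an operator-norm estimate for $P_{z,\text{\rm sm}}-P_z$ should be spelled out (equal finite ranks plus an adjoint argument, or simply the paper's expansion of both projections in the bases $e_{p,z}$ and $\tilde e_{p,z}$).
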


\begin{proof}
By \Cref{c: P_z sm : E_z -> E_z sm iso}, for $\mu\gg0$, the elements $P_{z,\text{\rm sm}} e_{p,z}$ ($p\in\XX$) form a base of $E_{z,\text{\rm sm}}$. Applying the Gram-Schmidt process to this base, we get an orthonormal base $\tilde  e_{p,z}$. By \Cref{p: | alpha - P_z sm alpha |_m i nu},
\begin{equation}\label{bfe'_p z = bfe_p z + O(e^-c mu)}
\tilde  e_{p,z}=e_{p,z}+O\big(e^{-c\mu}\big)\;.
\end{equation}
This gives the first equality of the statement: for any $\alpha\in L^2(M;\Lambda)$,
\[
P_z\alpha=\sum_{p\in\XX}\langle\alpha, e_{p,z}\rangle  e_{p,z}=\sum_{p\in\XX}\langle\alpha,\tilde  e_{p,z}\rangle\tilde  e_{p,z}+O\big(e^{-c\mu}\big)\|\alpha\|
=P_{z,\text{\rm sm}}\alpha+O\big(e^{-c\mu}\big)\|\alpha\|\;.
\]

Since the sets $U_p$ ($p\in\XX$) are disjoint one another, for $p\ne q$ in $\XX$,
\begin{equation}\label{langle bfe_p z bfe_q z+tau rangle}
\langle  e_{p,z},e_{q,z+\tau}\rangle=0\;.
\end{equation}
On the other hand, by~\eqref{rho_p}--\eqref{a_mu}, we can also assume
\begin{align}
\langle  e_{p,z}, e_{p,z+\tau}\rangle&=\langle e^{-i\nu h_p}e_{p,\mu},e^{-i\nu h_p} e_{p,\mu+\tau}\rangle
=\langle e_{p,\mu}, e_{p,\mu+\tau}\rangle\notag\\
&=\frac{(\mu(\mu+\tau))^{n/4}}{\pi^{n/2}}\big\langle\rho_pe^{-\mu|x_p|^2/2},\rho_pe^{-(\mu+\tau)|x_p|^2/2}\big\rangle
+O\big(e^{-c\mu}\big)\notag\\
&=\frac{(\mu(\mu+\tau))^{n/4}}{\pi^{n/2}}\int_{\R^n}e^{-(\mu+\tau/2)|x_p|^2}\,dx_p+O\big(e^{-c\mu}\big)\notag\\
&=\frac{(\mu(\mu+\tau))^{n/4}}{(\mu+\tau/2)^{n/2}}+O\big(e^{-c\mu}\big)=1+O\big(\mu^{-2}\big)\;,
\label{langle e_p z e_p z+tau rangle}
\end{align}
where $dx_p=dx_p^1\dots dx_p^n=\dvol(x_p)$. 
Combining~\eqref{bfe'_p z = bfe_p z + O(e^-c mu)} for $z$ and $z+\tau$ with~\eqref{langle bfe_p z bfe_q z+tau rangle} and~\eqref{langle e_p z e_p z+tau rangle}, we obtain
\begin{align}
P_{z+\tau,\text{\rm sm}}\tilde  e_{p,z}&=\sum_{q\in\XX}\langle \tilde  e_{p,z},\tilde e_{q,z+\tau}\rangle\tilde  e_{q,z+\tau}
=\sum_{q\in\XX}\langle  e_{p,z}, e_{q,z+\tau}\rangle  e_{q,z+\tau}+O\big(e^{-c\mu}\big)\notag\\
&= e_{p,z+\tau}+O\big(\mu^{-2}\big)=\tilde  e_{p,z+\tau}+O\big(\mu^{-2}\big)\;.
\label{P_z+tau sm bfe'_p z = bfe_p z + O(e^-c mu)}
\end{align}
Repeating~\eqref{P_z+tau sm bfe'_p z = bfe_p z + O(e^-c mu)} interchanging the roles of $z$ and $z+\tau$, we get
\[
P_{z,\text{\rm sm}}P_{z+\tau,\text{\rm sm}}\tilde  e_{p,z}=P_{z,\text{\rm sm}}\tilde  e_{p,z+\tau}+O\big(\mu^{-2}\big)=\tilde  e_{p,z}+O\big(\mu^{-2}\big)\;.
\]
This gives the second equality of the statement: for any $\alpha\in L^2(M;\Lambda)$,
\begin{align*}
P_{z,\text{\rm sm}}\alpha&=\sum_{p\in\XX}\langle\alpha,\tilde e_{p,z}\rangle\tilde e_{p,z}
=P_{z,\text{\rm sm}}P_{z+\tau,\text{\rm sm}}\sum_{p\in\XX}\langle\alpha,\tilde  e_{p,z}\rangle\tilde  e_{p,z}
+O\big(\mu^{-2}\big)\|\alpha\|\\
&=P_{z,\text{\rm sm}}P_{z+\tau,\text{\rm sm}}P_{z,\text{\rm sm}}\alpha+O\big(\mu^{-2}\big)\|\alpha\|\;.
\end{align*}

By~\eqref{P_z+tau sm bfe'_p z = bfe_p z + O(e^-c mu)},
\begin{multline*}
\|\tilde  e_{p,z}-\tilde e_{p,z+\tau}\big\|^2
=\|\tilde  e_{p,z}\|^2-2\Re\langle\tilde  e_{p,z},\tilde e_{p,z+\tau}\rangle+\|\tilde e_{p,z+\tau}\|^2\\
=2-2\Re\langle P_{z+\tau,\text{\rm sm}}\tilde  e_{p,z},\tilde e_{p,z+\tau}\rangle
=2-2\Re\langle\tilde  e_{p,z+\tau},\tilde e_{p,z+\tau}\rangle+O\big(\mu^{-2}\big)=O\big(\mu^{-2}\big)\;,
\end{multline*}
which means
\begin{equation}\label{bfe'_p z = bfe''_p z + O(1-f(mu))}
\tilde  e_{p,z}=\tilde e_{p,z+\tau}+O\big(\mu^{-1}\big)\;.
\end{equation}
The last stated equality follows from~\eqref{P_z+tau sm bfe'_p z = bfe_p z + O(e^-c mu)} and~\eqref{bfe'_p z = bfe''_p z + O(1-f(mu))}: for any $\alpha\in L^2(M;\Lambda)$,
\begin{align*}
P_{z,\text{\rm sm}}\alpha
&=\sum_{p\in\XX}\langle\alpha,\tilde  e_{p,z}\rangle\tilde  e_{p,z}
=\sum_{p\in\XX}\langle\alpha,\tilde  e_{p,z+\tau}\rangle\tilde  e_{p,z+\tau}+O\big(\mu^{-1}\big)\alpha\\
&=P_{z+\tau,\text{\rm sm}}\alpha+O\big(\mu^{-1}\big)\alpha\;.\qedhere
\end{align*}
\end{proof}

\begin{cor}\label{c: d_z+tau sm - d_z+tau P_z sm}
For every $\tau\in\R$, on $L^2(M;\Lambda)$,
\[
d_{z+\tau,\text{\rm sm}}-d_{z+\tau}P_{z,\text{\rm sm}}=O\big(\mu^{-1}\big)\quad(\mu\to+\infty)\;.
\]
\end{cor}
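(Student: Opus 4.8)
The plan is to deduce this from the last equality of \Cref{p: P_z sm P_z+tau sm}, namely $P_{z,\text{\rm sm}}=P_{z+\tau,\text{\rm sm}}+O(\mu^{-1})$ on $L^2(M;\Lambda)$, together with the fact that $d_{z+\tau}$ is bounded from the small complex into $L^2$ with norm controlled uniformly in $\nu$. Indeed, by definition $d_{z+\tau,\text{\rm sm}}=d_{z+\tau}P_{z+\tau,\text{\rm sm}}$, so
\[
d_{z+\tau,\text{\rm sm}}-d_{z+\tau}P_{z,\text{\rm sm}}=d_{z+\tau}\big(P_{z+\tau,\text{\rm sm}}-P_{z,\text{\rm sm}}\big)=d_{z+\tau}\cdot O\big(\mu^{-1}\big)\;.
\]
So the whole point is to check that precomposing the $O(\mu^{-1})$ family of operators with $d_{z+\tau}$ keeps it $O(\mu^{-1})$, i.e.\ that $d_{z+\tau}$ restricted to the relevant subspaces has operator norm bounded independently of $z$ (equivalently of $\nu$, for $\mu$ in a bounded-below range, say $\mu\gg0$ with $\mu+\tau\gg0$ too).

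First I would make precise what "$O(\mu^{-1})$" means for $P_{z+\tau,\text{\rm sm}}-P_{z,\text{\rm sm}}$: by \Cref{n: O(f(mu))} it is a family of bounded operators on $L^2(M;\Lambda)$ with norm $O(\mu^{-1})$. The image of this operator lands in $E_{z,\text{\rm sm}}+E_{z+\tau,\text{\rm sm}}$, a finite-dimensional space of smooth forms on which $\Delta_z$ and $\Delta_{z+\tau}$ are both small (bounded by $e^{-c\mu}$, by \Cref{t: spec Delta_z}, up to replacing $P_{z+\tau,\text{\rm sm}}$ by $P_{z,\text{\rm sm}}$ at the cost of another $O(\mu^{-1})$). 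Hence for $\alpha\in L^2$, writing $\beta=(P_{z+\tau,\text{\rm sm}}-P_{z,\text{\rm sm}})\alpha$, I would estimate $\|d_{z+\tau}\beta\|$. The cleanest route is: $\|d_{z+\tau}\beta\|\le\|D_{z+\tau}\beta\|=\langle\Delta_{z+\tau}\beta,\beta\rangle^{1/2}$ when $\beta$ lies in $E_{z+\tau,\text{\rm sm}}$; for the genuine $\beta$ one splits $\beta=P_{z+\tau,\text{\rm sm}}\beta+P_{z+\tau,\text{\rm la}}\beta$, notes $P_{z+\tau,\text{\rm la}}\beta=(P_{z+\tau,\text{\rm sm}}-P_{z,\text{\rm sm}})\alpha-P_{z+\tau,\text{\rm sm}}\beta=-P_{z+\tau,\text{\rm sm}}P_{z,\text{\rm sm}}\alpha$ which is itself $O(\mu^{-1})\|\alpha\|$ (again by \Cref{p: P_z sm P_z+tau sm}, since $P_{z+\tau,\text{\rm sm}}P_{z,\text{\rm sm}}=P_{z,\text{\rm sm}}+O(\mu^{-1})$ is wrong in general, but $P_{z+\tau,\text{\rm la}}P_{z,\text{\rm sm}}=O(\mu^{-1})$ follows directly), and on that large part applies \Cref{p: | alpha |_1 z} / \Cref{c: | alpha |_m z} to bound $\|d_{z+\tau}\cdot\|$ by $C(\|\cdot\|_{1,i\nu}+|\mu|\|\cdot\|)$. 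The delicate factor is the $|\mu|\|\cdot\|$ term: multiplying an $O(\mu^{-1})$ bound by $\mu$ gives only $O(1)$, which is not good enough.

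Therefore I expect the main obstacle to be precisely this: a naive bound $\|d_{z+\tau}\|_{L^2\to L^2}\sim|\mu|$ on forms of unit $L^2$-norm would wreck the estimate. The fix is to stay inside the small complex as long as possible. The correct argument: $d_{z+\tau,\text{\rm sm}}-d_{z+\tau}P_{z,\text{\rm sm}}=d_{z+\tau}P_{z+\tau,\text{\rm sm}}(P_{z+\tau,\text{\rm sm}}-P_{z,\text{\rm sm}})+d_{z+\tau}P_{z+\tau,\text{\rm la}}P_{z,\text{\rm sm}}$ — wait, more cleanly, write $P_{z+\tau,\text{\rm sm}}-P_{z,\text{\rm sm}}=P_{z+\tau,\text{\rm sm}}(P_{z+\tau,\text{\rm sm}}-P_{z,\text{\rm sm}})-P_{z+\tau,\text{\rm la}}P_{z,\text{\rm sm}}$. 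The first term, after applying $d_{z+\tau}$, is $d_{z+\tau,\text{\rm sm}}\cdot O(\mu^{-1})$; and $d_{z+\tau,\text{\rm sm}}$ has $L^2\to L^2$ norm $O(e^{-c\mu})$ by \Cref{t: spec Delta_z}, since $\|d_{z+\tau}\gamma\|\le\langle\Delta_{z+\tau}\gamma,\gamma\rangle^{1/2}\le e^{-c\mu/2}\|\gamma\|$ for $\gamma\in E_{z+\tau,\text{\rm sm}}$; so this contributes $O(\mu^{-1})$ (even $O(e^{-c\mu}\mu^{-1})$). For the second term, $P_{z+\tau,\text{\rm la}}P_{z,\text{\rm sm}}=O(\mu^{-1})$: this follows from $P_{z,\text{\rm sm}}=P_{z+\tau,\text{\rm sm}}+O(\mu^{-1})$ (\Cref{p: P_z sm P_z+tau sm}) composed on the left with $P_{z+\tau,\text{\rm la}}=1-P_{z+\tau,\text{\rm sm}}$, giving $P_{z+\tau,\text{\rm la}}P_{z,\text{\rm sm}}=P_{z+\tau,\text{\rm la}}(P_{z+\tau,\text{\rm sm}}+O(\mu^{-1}))=O(\mu^{-1})$. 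Now $d_{z+\tau}P_{z+\tau,\text{\rm la}}P_{z,\text{\rm sm}}$ — the image of $P_{z,\text{\rm sm}}$ consists of the near-harmonic forms $\tilde e_{p,z}$, each a normalized Gaussian bump of $\|\cdot\|_{1,z+\tau}$-norm $O(1)$ uniformly in $\nu$ (by \Cref{p: | D_z^l e_p z |_m i nu}, \Cref{p: | alpha - P_z sm alpha |_m i nu} and \Cref{c: | alpha |_m z}, recalling the local model \eqref{Witten's opers}), hence $\|d_{z+\tau}\tilde e_{p,z}\|\le\|D_{z+\tau}\tilde e_{p,z}\|=O(1)$ uniformly in $\nu$; composing with the $O(\mu^{-1})$ projector and summing over the finitely many $p\in\XX$ gives $O(\mu^{-1})$. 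Adding the two contributions yields the claim, with all constants independent of $\nu$. I would close by remarking that everything above has the evident $\mu\to-\infty$ analogue as in \Cref{r: | mu |}.
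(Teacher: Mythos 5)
Your algebraic reduction is fine: the identity $d_{z+\tau,\text{\rm sm}}-d_{z+\tau}P_{z,\text{\rm sm}}=d_{z+\tau,\text{\rm sm}}\,(P_{z+\tau,\text{\rm sm}}-P_{z,\text{\rm sm}})-d_{z+\tau}P_{z+\tau,\text{\rm la}}P_{z,\text{\rm sm}}$ is correct, and your treatment of the first term (exponentially small factor from \Cref{t: spec Delta_z} times the $O(\mu^{-1})$ of \Cref{p: P_z sm P_z+tau sm}) is sound. The gap is in the second term. The operator is $d_{z+\tau}\circ\big(P_{z+\tau,\text{\rm la}}P_{z,\text{\rm sm}}\big)$, so the $O(1)$ bound you establish for $d_{z+\tau}$ on the vectors $\tilde e_{p,z}$ cannot be ``composed'' with the $O(\mu^{-1})$ bound for $P_{z+\tau,\text{\rm la}}P_{z,\text{\rm sm}}$: here $d_{z+\tau}$ acts \emph{after} the projector, i.e.\ on $P_{z+\tau,\text{\rm la}}\tilde e_{p,z}$, which lies in the large spectral subspace of $\Delta_{z+\tau}$, where $d_{z+\tau}$ has no uniform bound. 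The legitimate move is to commute, $d_{z+\tau}P_{z+\tau,\text{\rm la}}=P_{z+\tau,\text{\rm la}}d_{z+\tau}$, but then one can compute the term exactly: by \Cref{p: | alpha - P_z sm alpha |_m i nu} and \Cref{p: | D_z^l e_p z |_m i nu} one has $d_{z+\tau}\tilde e_{p,z}=\tau\,\eta\wedge e_{p,z}+O(e^{-c\mu})$, and $\eta\wedge e_{p,z}$ is orthogonal to $E_{z+\tau,\text{\rm sm}}$ up to $O(e^{-c\mu})$ (its support is disjoint from the bumps $e_{q,z+\tau}$, $q\in\XX_{\ind(p)+1}$), so $P_{z+\tau,\text{\rm la}}d_{z+\tau}\tilde e_{p,z}=\tau\,\eta\wedge e_{p,z}+O(e^{-c\mu})$, whose norm is $\sim|\tau|\big((n-\ind(p))/2\mu\big)^{1/2}$ by the computation in the proof of \Cref{p: P_z sm theta wedge}. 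Hence this term is genuinely of order $\mu^{-1/2}$ whenever $\tau\neq0$ and $\ind(p)<n$, and no sharpening of your estimates can bring it down to $O(\mu^{-1})$.

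The obstacle you flagged at the outset is therefore real, not just technical. Since $d_{z,\text{\rm sm}}$ and $d_{z+\tau,\text{\rm sm}}$ are both $O(e^{-c\mu})$, the whole difference equals $-\tau\,{\eta\wedge}\,P_{z,\text{\rm sm}}+O(e^{-c\mu})$, which \Cref{p: P_z sm theta wedge} bounds by $O(\mu^{-1/2})$, and testing on $\tilde e_{p,z}$ with $\ind(p)<n$ gives a matching lower bound of order $\mu^{-1/2}$. The paper's own one-line proof argues differently: it asserts that $d_{z+\tau}$ is bounded on $E_{z,\text{\rm sm}}+E_{z+\tau,\text{\rm sm}}$ uniformly in $\mu$ and multiplies by the $O(\mu^{-1})$ of \Cref{p: P_z sm P_z+tau sm}; but the unit vector proportional to $\tilde e_{p,z+\tau}-\tilde e_{p,z}$ (whose norm before normalization is of order $|\tau|\mu^{-1}$) is mapped by $d_{z+\tau}$ to $-\tau\,\eta\wedge e_{p,z}+O(e^{-c\mu})$, of norm of order $\mu^{-1/2}$, so that boundedness claim fails for the same reason your composition step does. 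In short: what you can honestly prove (in one line from \Cref{p: P_z sm theta wedge}) is $d_{z+\tau,\text{\rm sm}}-d_{z+\tau}P_{z,\text{\rm sm}}=O(\mu^{-1/2})$, and this rate is sharp; an $O(\mu^{-1})$ bound holds only after further projecting, e.g.\ $P_{z+\tau,\text{\rm sm}}\big(d_{z+\tau,\text{\rm sm}}-d_{z+\tau}P_{z,\text{\rm sm}}\big)=O(e^{-c\mu})$, so if the $\mu^{-1}$ rate is really needed downstream it must be extracted by such a mechanism rather than by your (or the printed) argument.
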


\begin{proof}
Since $d_{z+\tau}=d_z+\tau\,{\eta\wedge}$, it follows from \Cref{t: spec Delta_z} that $d_{z+\tau}$ is bounded on $E_{z,\text{\rm sm}}+E_{z+\tau,\text{\rm sm}}$, uniformly on $\mu\gg0$. Hence, by \Cref{p: P_z sm P_z+tau sm},
\[
d_{z+\tau,\text{\rm sm}}-d_{z+\tau}P_{z,\text{\rm sm}}=d_{z+\tau}(P_{z+\tau,\text{\rm sm}}-P_{z,\text{\rm sm}})=O\big(\mu^{-1}\big)\;.\qedhere
\]
\end{proof}

\begin{prop}\label{p: P_z sm theta wedge}
On $L^2(M;\Lambda)$,
\[
P_{z,\text{\rm sm}}\,{\eta\wedge},{\eta\wedge}\,P_{z,\text{\rm sm}}=O\big(\mu^{-1}\big)\quad(\mu\to+\infty)\;.
\]
\end{prop}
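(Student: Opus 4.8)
The plan is to transfer the estimate from the small projection $P_{z,\text{\rm sm}}$ to the model projection $P_z$, and then to use two features of the generators $e_{p,z}$: they form an orthonormal basis of $E_z$, whose dimension $|\XX|$ is fixed, and each is concentrated near its zero point $p$ at Gaussian width $\mu^{-1/2}$, while $\eta$ vanishes linearly at $p$.

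First I would reduce to $P_z$. Since ${\eta\wedge}$ is multiplication by a fixed smooth form it is bounded on $L^2(M;\Lambda)$, and \Cref{p: P_z sm P_z+tau sm} gives $P_{z,\text{\rm sm}}=P_z+O(e^{-c\mu})$; hence $P_{z,\text{\rm sm}}\,{\eta\wedge}=P_z\,{\eta\wedge}+O(e^{-c\mu})$ and ${\eta\wedge}\,P_{z,\text{\rm sm}}={\eta\wedge}\,P_z+O(e^{-c\mu})$, so it is enough to bound $P_z\,{\eta\wedge}$ and ${\eta\wedge}\,P_z$. For the second, writing ${\eta\wedge}\,P_z\alpha=\sum_{p\in\XX}\langle\alpha,e_{p,z}\rangle\,{\eta\wedge}e_{p,z}$ and using Cauchy--Schwarz together with the orthonormality of $\{e_{p,z}\}$ gives $\|{\eta\wedge}\,P_z\|\le\big(\sum_{p\in\XX}\|{\eta\wedge}e_{p,z}\|^2\big)^{1/2}$. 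For the first I would pass to the adjoint: $(P_z\,{\eta\wedge})^*=-{\eta\lrcorner}\,P_z$, and the same computation yields $\|P_z\,{\eta\wedge}\|=\|{\eta\lrcorner}\,P_z\|\le\big(\sum_{p\in\XX}\|{\eta\lrcorner}\,e_{p,z}\|^2\big)^{1/2}$. Since $|\XX|$ is fixed, everything reduces to showing that $\|{\eta\wedge}e_{p,z}\|$ and $\|{\eta\lrcorner}\,e_{p,z}\|$ are $O(\mu^{-1/2})$ for each $p$.

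For this, recall that $e_{p,z}$ is supported inside $U_p$, where, by~\eqref{eta around p} and~\eqref{g around p}, $g$ is Euclidean and $\eta=\sum_j\epsilon_{p,j}x_p^j\,dx_p^j$ has pointwise norm $|x_p|$. From the pointwise inequalities $|{\eta\wedge}e_{p,z}|\le|\eta|\,|e_{p,z}|$ and $|{\eta\lrcorner}\,e_{p,z}|\le|\eta|\,|e_{p,z}|$ we get that both norms are at most $\big\||x_p|\,e_{p,z}\big\|$. Now $|e_{p,z}|=|e_{p,\mu}|$ is, up to the cutoff $\rho_p$ and the normalization $a_\mu$ of~\eqref{rho_p}--\eqref{a_mu}, a normalized Gaussian of width $\mu^{-1/2}$; computing its second moment (the cutoff $\rho_p$, which equals $1$ on $[-r,r]^n$, contributing only an $O(e^{-c\mu})$ correction by~\eqref{a_mu}) gives $\big\||x_p|\,e_{p,z}\big\|^2=\frac{n}{2\mu}+O(e^{-c\mu})=O(\mu^{-1})$. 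Taking square roots and summing over the finitely many $p\in\XX$ proves the proposition, and all the bounds are independent of $\nu$ since the phase $e^{-i\nu h_p}$ does not change $|e_{p,z}|$.

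The argument is essentially routine. The one point that needs a little care is the treatment of $P_z\,{\eta\wedge}$: unlike ${\eta\wedge}\,P_z$, it cannot be expanded directly against the orthonormal basis of $E_z$ (the basis sits on the wrong side of the projection), so one passes to the adjoint and uses that ${\eta\lrcorner}$ obeys the same pointwise bound near the zeros of $\eta$. I do not expect any genuine obstacle.
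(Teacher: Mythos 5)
Your argument is correct and takes essentially the same route as the paper: reduce from $P_{z,\text{\rm sm}}$ to $P_z$ via \Cref{p: P_z sm P_z+tau sm}, then exploit that each $e_{p,z}$ is a normalized Gaussian of width $\mu^{-1/2}$ supported where $|\eta|=|x_p|$, so the relevant moments are $O(\mu^{-1/2})$, with the finiteness of $\XX$ closing the argument. The only cosmetic difference is that you treat $P_z\,{\eta\wedge}$ by passing to the adjoint $-{\eta\lrcorner}\,P_z$ and using the pointwise bound $|{\eta\lrcorner}\,e_{p,z}|\le|\eta|\,|e_{p,z}|$, whereas the paper estimates the coefficients $\langle\eta\wedge\alpha,e_{p,z}\rangle$ directly in Morse coordinates; these are the same estimate in disguise.
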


\begin{proof}
By \Cref{t: spec Delta_z}, for all $\alpha\in\Omega(M)$,
\[
\|d_zP_{z,\text{\rm sm}}\alpha\|^2=\langle\delta_zd_zP_{z,\text{\rm sm}}\alpha,P_{z,\text{\rm sm}}\alpha\rangle
\le\langle\Delta_zP_{z,\text{\rm sm}}\alpha,P_{z,\text{\rm sm}}\alpha\rangle
\le O\big(e^{-c\mu}\big)\;,
\]
yielding $d_zP_{z,\text{\rm sm}}=O\big(e^{-c\mu}\big)$. This is also true with the parameter $z+1$. So, by \Cref{c: d_z+tau sm - d_z+tau P_z sm},
\[
{\eta\wedge}\,P_{z,\text{\rm sm}}=(d_{z+1}-d_z)P_{z,\text{\rm sm}}
=d_{z+1}P_{z+1,\text{\rm sm}}-d_zP_{z,\text{\rm sm}}+O\big(\mu^{-1}\big)
=O\big(\mu^{-1}\big)\;.\qedhere
\]
\end{proof}

\subsection{Derivatives of the small projection}\label{ss: derivatives of the sm proj}

\begin{rem}\label{r: partial_bar z}
For reasons of brevity, most of the results about derivatives are stated for $\partial_z$, which may be simply denoted with a dot. But there are obvious versions of those results for $\partial_{\bar z}$ with analogous proofs.
\end{rem}

\begin{prop}\label{p: partial_z P_z sm = O(mu^-1)}
We have
\[
\rank\partial_zP_{z,\text{\rm sm}}\le2|\XX|\quad(\mu\gg0)\;,\quad\partial_zP_{z,\text{\rm sm}}=O\big(\mu^{-1}\big)\quad(\mu\to+\infty)\;.
\]
\end{prop}

\begin{proof}
By~\eqref{partial_z delta_z} and \Cref{t: spec Delta_z}, for $\mu\gg0$ and every $\omega\in\S^1$, a standard computation gives
\begin{equation}\label{partial_z((w-D_z)^-1)}
\partial_z\big((w-D_z)^{-1}\big)=(w-D_z)^{-1}\,{\eta\wedge}\,(w-D_z)^{-1}\;.
\end{equation}
Then, by~\eqref{| (w - D_z)^-1 beta |}, $\partial_z\big((w-D_z)^{-1}\big)$ defines an operator on $L^2(M;\Lambda)$, bounded uniformly on $w\in\S^1$ and $z\in\C$. By~\eqref{| (w - D_z)^-1 beta |} and \Cref{p: P_z sm theta wedge}, we also get
\begin{multline*}
P_{z,\text{\rm la/sm}}\partial_z\big((w-D_z)^{-1}\big)\,P_{z,\text{\rm sm/la}}\\
=(w-D_z)^{-1}P_{z,\text{\rm la/sm}}\,{\eta\wedge}\,P_{z,\text{\rm sm/la}}(w-D_z)^{-1}
=O\big(\mu^{-1}\big)\;,
\end{multline*}
uniformly on $w\in\S^1$.

On the other hand, applying again basic spectral theory, we obtain
\[
P_{z,\text{\rm sm}}=\frac{1}{2\pi i}\int_{\S^1}(w-D_z)^{-1}\,dw
\]
for $\mu\gg0$, yielding
\begin{equation}\label{dot P_z sm = ... -1}
\dot P_{z,\text{\rm sm}}=\frac{1}{2\pi i}\int_{\S^1}\partial_z\big((w-D_z)^{-1}\big)\,dw\;,
\end{equation}
which defines an operator on $L^2(M;\Lambda)$, bounded uniformly on $z$.

Using that $P_{z,\text{\rm sm}}$ is an orthogonal projection, the argument of the proof of \cite[Proposition~9.37]{BerlineGetzlerVergne2004} shows that
\begin{equation}\label{dot P_z sm = ...-2}
\dot P_{z,\text{\rm sm}}=P_{z,\text{\rm la}}\dot P_{z,\text{\rm sm}}P_{z,\text{\rm sm}}
+P_{z,\text{\rm sm}}\dot P_{z,\text{\rm sm}}P_{z,\text{\rm la}}\;.
\end{equation}
So $\rank\dot P_{z,\text{\rm sm}}\le2\rank P_{z,\text{\rm sm}}\le2|\XX|$ by \Cref{c: P_z sm : E_z -> E_z sm iso}, and
\begin{align*}
\dot P_{z,\text{\rm sm}}&=\frac{1}{2\pi i}\int_{\S^1}P_{z,\text{\rm la}}\partial_z\big((w-D_z)^{-1}\big)\,P_{z,\text{\rm sm}}\,dw\\
&\phantom{=\text{}}\text{}+\frac{1}{2\pi i}\int_{\S^1}P_{z,\text{\rm sm}}\partial_z\big((w-D_z)^{-1}\big)\,P_{z,\text{\rm la}}\,dw
=O\big(\mu^{-1}\big)\;.\;\qedhere
\end{align*}
\end{proof}

\begin{lem}\label{l: partial_z e_p z}
For all $p\in\XX$,
\[
\partial_ze_{p,z}=\bigg(\frac n{8\mu}-\frac{|x^+_p|^2}{2}+O(e^{-c\mu})\bigg)e_{p,z}\quad(\mu\to+\infty)\;.
\]
\end{lem}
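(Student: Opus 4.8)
The plan is to differentiate the explicit Gaussian expression for $e_{p,z}$ directly. Fix $p\in\XX$ and set $k=\ind(p)$. On the Morse chart $U_p$, where $g$ is Euclidean, $h_p=h_{\eta,p}=\tfrac12\bigl(|x_p^+|^2-|x_p^-|^2\bigr)$ and $|x_p|^2=|x_p^+|^2+|x_p^-|^2$, the defining formulas for $e_{p,z}$, $e'_{p,z}$, $e'_{p,\mu}$ present $e_{p,z}$ as the $L^2$-normalization of $\rho_p\,e^{-i\nu h_p}e^{-\mu|x_p|^2/2}\,dx_p^1\wedge\cdots\wedge dx_p^k$. Since $g$ is Euclidean on $U_p$, $\bigl|e^{-i\nu h_p}\bigr|\equiv1$, and $\int_{\R^n}\rho_p(x)^2e^{-\mu|x|^2}\,dx=I(\mu)^n$ where $I(\mu):=\int_{-2r}^{2r}\rho(x)^2e^{-\mu x^2}\,dx$, the normalizing factor (forced by $\|e_{p,z}\|=1$) is $I(\mu)^{-n/2}$, so
\[
e_{p,z}=I(\mu)^{-n/2}\,e^{-i\nu h_p-\mu|x_p|^2/2}\cdot\bigl(\rho_p\,dx_p^1\wedge\cdots\wedge dx_p^k\bigr)\;,
\]
and the last factor does not depend on $z=\mu+i\nu$. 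Hence, by the product rule and $\partial_z=\tfrac12(\partial_\mu-i\partial_\nu)$, $\partial_z e_{p,z}$ equals $e_{p,z}$ times the sum of the logarithmic $z$-derivatives of the scalar $I(\mu)^{-n/2}$ and of the exponential $e^{-i\nu h_p-\mu|x_p|^2/2}$.

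Both logarithmic derivatives are elementary. For the exponential, $\partial_z\bigl(-i\nu h_p-\tfrac12\mu|x_p|^2\bigr)=\tfrac12\bigl(-\tfrac12|x_p|^2-h_p\bigr)$, and the two identities above collapse this to $-\tfrac12|x_p^+|^2$; this cancellation between the Gaussian weight $e^{-\mu|x_p|^2/2}$ and the oscillatory phase $e^{-i\nu h_p}$ is exactly why $|x_p^+|^2$, rather than $|x_p|^2$, appears in the statement. For the scalar, $\tfrac{1}{I(\mu)^{-n/2}}\,\partial_z\bigl(I(\mu)^{-n/2}\bigr)=\tfrac12\,\tfrac{d}{d\mu}\bigl(-\tfrac n2\log I(\mu)\bigr)=-\tfrac n4\,\tfrac{I'(\mu)}{I(\mu)}$. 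Therefore
\[
\partial_z e_{p,z}=\Bigl(-\tfrac n4\,\tfrac{I'(\mu)}{I(\mu)}-\tfrac12|x_p^+|^2\Bigr)\,e_{p,z}\;,
\]
and the lemma reduces to the one-variable asymptotics $\tfrac{I'(\mu)}{I(\mu)}=-\tfrac1{2\mu}+O(e^{-c\mu})$, i.e.\ $-\tfrac n4\,\tfrac{I'(\mu)}{I(\mu)}=\tfrac n{8\mu}+O(e^{-c\mu})$.

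This last step is a routine one-dimensional Gaussian estimate. Since $\rho\equiv1$ on $[-r,r]$ and $0\le\rho\le1$, the difference between $I(\mu)$ and $\int_\R e^{-\mu x^2}\,dx=\sqrt{\pi/\mu}$, and between $-I'(\mu)=\int_{-2r}^{2r}x^2\rho(x)^2e^{-\mu x^2}\,dx$ and $\int_\R x^2e^{-\mu x^2}\,dx=\tfrac12\sqrt\pi\,\mu^{-3/2}$, are each dominated by the tail of the same integrand over $\{|x|\ge r\}$, which is $O(e^{-c\mu})$ for every $c<r^2$ (via $\int_r^\infty\phi(x)e^{-\mu x^2}\,dx\le r^{-1}\int_r^\infty x\,\phi(x)e^{-\mu x^2}\,dx$ with $\phi=1,x^2$). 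Dividing $I'(\mu)=-\tfrac12\sqrt\pi\,\mu^{-3/2}+O(e^{-c\mu})$ by $I(\mu)=\sqrt{\pi/\mu}+O(e^{-c\mu})$ gives the asymptotics above. I expect no real obstacle here; the only points to get right are the $\mu$-dependence of the normalizing constant $I(\mu)^{-n/2}$ — which is precisely what generates the $\tfrac n{8\mu}$ term, so a computation that ignores it or treats $\|e_{p,z}\|$ as exactly $\mu$-independent would give the wrong coefficient — and the sign bookkeeping behind $-\tfrac12|x_p|^2-h_p=-|x_p^+|^2$, which depends on keeping the phase $e^{-i\nu h_p}$ in play.
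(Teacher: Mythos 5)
Your proof is correct and follows essentially the same route as the paper: differentiate the explicit normalized cut-off Gaussian, so that the $\mu$-dependence of the normalizing constant yields the $\tfrac n{8\mu}$ term while $\partial_z$ of the exponent $-i\nu h_p-\tfrac\mu2|x_p|^2$ collapses via $h_p=\tfrac12(|x_p^+|^2-|x_p^-|^2)$ to $-\tfrac12|x_p^+|^2$. The only cosmetic difference is that you handle the one-dimensional asymptotics of $I(\mu)$ and $I'(\mu)$ by Gaussian tail estimates, whereas the paper uses integration by parts exploiting that $\rho'$ vanishes on $[-r,r]$; both give the same $O(e^{-c\mu})$ error.
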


\begin{proof}
Using integration by parts, and since $\rho$ is an even function and $\rho'$ vanishes on $[-r,r]$, we obtain
\begin{multline}\label{int_-2r^2r rho(x)^2 x^2 e^-mu x^2 dx}
\int_{-2r}^{2r}\rho(x)^2x^2e^{-\mu x^2}\,dx
=\frac1{2\mu}\int_{-2r}^{2r}(2\rho(x)\rho'(x)x+\rho(x)^2)e^{-\mu x^2}\,dx\\
=\frac1{2\mu}\Big(\frac\pi\mu\Big)^{\frac12}+O(e^{-c\mu})\;.
\end{multline}
So
\begin{align*}
\partial_\mu a_\mu&=\partial_\mu\bigg(\bigg(\int_{-2r}^{2r}\rho(x)^2e^{-\mu x^2}\,dx\bigg)^{\frac n2}\bigg)\\
&=-\frac n2\bigg(\int_{-2r}^{2r}\rho(x)^2e^{-\mu x^2}\,dx\bigg)^{\frac n2-1}
\int_{-2r}^{2r}\rho(x)^2x^2e^{-\mu x^2}\,dx\\
&=-\frac n2\Big(\frac\pi\mu\Big)^{\frac n4-\frac12}\frac1{2\mu}\Big(\frac\pi\mu\Big)^{\frac12}+O(e^{-c\mu})
=-\frac n{4\mu}\Big(\frac\pi\mu\Big)^{\frac n4}+O(e^{-c\mu})\;.
\end{align*}
Hence, by~\eqref{a_mu},
\begin{equation}\label{partial_mu(1/a_mu)}
\partial_\mu\Big(\frac1{a_\mu}\Big)
=-\frac{\partial_\mu a_\mu}{a_\mu^2}
=\frac n{4\mu}\Big(\frac\pi\mu\Big)^{\frac n4}
\Big(\frac\mu\pi\Big)^{\frac n2}+O(e^{-c\mu})
=\frac n{4\mu}\Big(\frac\mu\pi\Big)^{\frac n4}+O(e^{-c\mu})\;.
\end{equation}
It also follows from \Cref{p: Delta'_p mu}~\ref{i: e'_p mu},~\eqref{e_p mu},~\eqref{a_mu}  and~\eqref{partial_mu(1/a_mu)} that
\begin{multline}\label{partial_mu e_p mu}
\partial_\mu e_{p,\mu}
=\partial_\mu\Big(\frac{\rho_p}{a_\mu}e^{-\mu|x_p|^2/2}\,dx_p^1\wedge\dots\wedge dx_p^{\ind(p)}\Big)\\
=\bigg(\partial_\mu\Big(\frac1{a_\mu}\Big)a_\mu-\frac{|x_p|^2}2\bigg)e_{p,\mu}
=\bigg(\frac n{4\mu}-\frac{|x_p|^2}2+O(e^{-c\mu})\bigg)e_{p,\mu}\;.
\end{multline}
So, by~\eqref{e_p z},
\begin{equation}\label{partial_mu e_p z}
\partial_\mu e_{p,z}=\bigg(\frac n{4\mu}-\frac{|x_p|^2}2+O(e^{-c\mu})\bigg)e_{p,z}\;,\quad
\partial_\nu e_{p,z}=-ih_pe_{p,z}\;.
\end{equation}
Then the result follows using the right-hand side of~\eqref{h - h(p) around p}.
\end{proof}

\begin{prop}\label{p: | partial_z(D_z e_p z) |_L^infty}
For all $p\in\XX$,
\[
\|\partial_z(D_ze_{p,z})\|_{L^\infty}=O(e^{-c\mu})\quad(\mu\to+\infty)\;.
\]
\end{prop}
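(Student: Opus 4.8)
The plan is to differentiate the explicit expression for $D_ze_{p,z}$ obtained in the proof of \Cref{p: | D_z^l e_p z |_m i nu}, namely~\eqref{D_z e_p,z}, and then to invoke \Cref{c: | D_ze_p z |_L^infty}. The key point is that in~\eqref{D_z e_p,z} the only $z$-dependent factors of $D_ze_{p,z}$ are \emph{scalar}: neither the operator $\hat c(d\rho_p)$ nor the form $dx_p^1\wedge\dots\wedge dx_p^{\ind(p)}$ depends on $z$. Since multiplication by $e^{-i\nu h_p}$ commutes with the pointwise operator $\hat c(d\rho_p)$, formula~\eqref{D_z e_p,z} can be rewritten as $D_ze_{p,z}=(\pi/4)^{n/4}\,\hat c(d\rho_p)\,\alpha_{p,z}$, where $\alpha_{p,z}:=\frac1{a_\mu}e'_{p,z}\in C^\infty\big(U_p;\Lambda^{\ind(p)}\big)$, and therefore $\partial_z(D_ze_{p,z})=(\pi/4)^{n/4}\,\hat c(d\rho_p)\,\partial_z\alpha_{p,z}$.

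It thus suffices to show that $\partial_z\alpha_{p,z}=\psi_{p,z}\,\alpha_{p,z}$ for a scalar function $\psi_{p,z}$ on $U_p$ with $\|\psi_{p,z}\|_{L^\infty}\le C$ for $\mu\gg0$, uniformly in $\nu$. This is immediate from the computation in the proof of \Cref{l: partial_z e_p z}: since $e_{p,z}=\rho_p\,\alpha_{p,z}$ with $\rho_p$ independent of $z$, one has $\partial_z\alpha_{p,z}/\alpha_{p,z}=\partial_ze_{p,z}/e_{p,z}$, so by \Cref{l: partial_z e_p z} one may take $\psi_{p,z}=\frac n{8\mu}-\frac{|x_p^+|^2}{2}+O(e^{-c\mu})$, which is bounded on $U_p$ because $|x_p^+|<4r\sqrt n$ there; the only step requiring attention is the bound $\partial_\mu(1/a_\mu)\cdot a_\mu=\frac n{4\mu}+O(e^{-c\mu})$ (from~\eqref{a_mu} and~\eqref{partial_mu(1/a_mu)}), which keeps $\psi_{p,z}$ polynomially — in fact uniformly — bounded. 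Since multiplication by the scalar $\psi_{p,z}$ commutes with the pointwise operator $\hat c(d\rho_p)$, it follows that
\[
\partial_z(D_ze_{p,z})=\psi_{p,z}\,(\pi/4)^{n/4}\,\hat c(d\rho_p)\,\alpha_{p,z}=\psi_{p,z}\,D_ze_{p,z}\;,
\]
whence $\|\partial_z(D_ze_{p,z})\|_{L^\infty}\le\|\psi_{p,z}\|_{L^\infty}\,\|D_ze_{p,z}\|_{L^\infty}=O(e^{-c\mu})$ by \Cref{c: | D_ze_p z |_L^infty}.

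I do not expect any serious obstacle here; the argument is essentially bookkeeping around~\eqref{D_z e_p,z}, the only subtlety being that $\partial_z$ must not produce a factor growing faster than polynomially in $\mu$, which is guaranteed by~\eqref{partial_mu(1/a_mu)}. An alternative route, which uses \Cref{l: partial_z e_p z} even more directly and avoids~\eqref{D_z e_p,z}, is to write $\partial_z(D_ze_{p,z})={\eta\wedge}\,e_{p,z}+D_z(\partial_ze_{p,z})$ via $\partial_zD_z={\eta\wedge}$ (see~\eqref{partial_z delta_z}), then substitute $\partial_ze_{p,z}=\big(\frac n{8\mu}-\frac{|x_p^+|^2}{2}+O(e^{-c\mu})\big)e_{p,z}$ and expand $D_z$ by~\eqref{D(h alpha) = h D alpha + hat c(dh) alpha}. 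One checks that $\hat c\big(d(|x_p^+|^2/2)\big)e_{p,z}={\eta\wedge}\,e_{p,z}$ — because $e_{p,z}$ is a scalar multiple of $dx_p^1\wedge\dots\wedge dx_p^{\ind(p)}$, so the interior-product part of $\hat c$ annihilates it, while $\eta=\sum_j\epsilon_{p,j}x_p^j\,dx_p^j$ contributes only the $j>\ind(p)$ terms — so this term cancels ${\eta\wedge}\,e_{p,z}$, leaving $\big(\frac n{8\mu}-\frac{|x_p^+|^2}{2}+O(e^{-c\mu})\big)D_ze_{p,z}$, which is $O(e^{-c\mu})$ again by \Cref{c: | D_ze_p z |_L^infty} (the $O(e^{-c\mu})$ remainder in \Cref{l: partial_z e_p z} being constant in $x_p$, it contributes nothing to the differentiated term). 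The $\partial_{\bar z}$ version is identical (\Cref{r: partial_bar z}).
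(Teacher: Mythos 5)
Your proof is correct and takes essentially the same route as the paper: both differentiate the explicit local formula~\eqref{D_z e_p,z}, and both rest on the same elementary computations behind \Cref{l: partial_z e_p z} (i.e.~\eqref{partial_mu(1/a_mu)},~\eqref{partial_mu e_p mu}) together with the fact that $d\rho_p$ vanishes near $p$; the only difference is bookkeeping, since you package $\partial_z(D_ze_{p,z})$ as a uniformly bounded scalar multiple $\psi_{p,z}\,D_ze_{p,z}$ and quote \Cref{c: | D_ze_p z |_L^infty}, whereas the paper expands the product rule into three terms and bounds each via~\eqref{partial_mu(1/a_mu (pi/mu)^n/4)}. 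The factor $(\pi/4)^{n/4}$ you carry along is an inherited misprint in~\eqref{D_z e_p,z} (with the paper's literal definitions the prefactor is $1$, and under the normalized reading it is $(\pi/\mu)^{n/4}$, whose logarithmic $\mu$-derivative is $O(\mu^{-1})$), so in every reading your bounded-scalar factorization, and hence the estimate, is unaffected.
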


\begin{proof}
From~\eqref{D_z e_p,z}, we get
\begin{multline}\label{partial_z(D_ze_p z)}
\partial_z(D_ze_{p,z})
=\frac12\bigg(e^{-i\nu h_p}
\partial_\mu\Big(\frac1{a_\mu}\Big(\frac\pi\mu\Big)^{\frac n4}\Big)\hat c(d\rho_p)e_{p,\mu}\\
{}+e^{-i\nu h_p}\frac1{a_\mu}\Big(\frac\pi\mu\Big)^{\frac n4}\hat c(d\rho_p)\partial_\mu e_{p,\mu}
-h_pe^{-i\nu h_p}\frac1{a_\mu}\Big(\frac\pi\mu\Big)^{\frac n4}\hat c(d\rho_p)e_{p,\mu}\bigg)\;.
\end{multline}
By~\eqref{a_mu} and~\eqref{partial_mu(1/a_mu)},
\begin{multline}\label{partial_mu(1/a_mu (pi/mu)^n/4)}
\partial_\mu\Big(\frac1{a_\mu}\Big(\frac\pi\mu\Big)^{\frac n4}\Big)
=\partial_\mu\Big(\frac1{a_\mu}\Big)\Big(\frac\pi\mu\Big)^{\frac n4}
-\frac{n\pi}{4a_\mu\mu^2}\Big(\frac\pi\mu\Big)^{\frac n4-1}\\
=\frac n{4\mu}\Big(\frac\mu\pi\Big)^{\frac n4}\Big(\frac\pi\mu\Big)^{\frac n4}
-\frac{n\pi}{4\mu^2}\Big(\frac\pi\mu\Big)^{\frac n4-1}\Big(\frac\mu\pi\Big)^{\frac n4}
+O(e^{-c\mu})
=O(e^{-c\mu})\;.
\end{multline}
The result follows applying \Cref{p: Delta'_p mu}~\ref{i: e'_p mu},~\eqref{e_p mu},~\eqref{a_mu},~\eqref{partial_mu e_p mu} and~\eqref{partial_mu(1/a_mu (pi/mu)^n/4)} to~\eqref{partial_z(D_ze_p z)}, and using that $d\rho_p=0$ around $p$.
\end{proof}

\begin{prop}\label{p: | partial_z(P_z sm e_p z - e_p z) |_m i nu}
For every $p\in\XX$,
\[
\|\partial_z(P_{z,\text{\rm sm}}e_{p,z}-e_{p,z})\|_{L^\infty}=O(e^{-c\mu})\quad(\mu\to+\infty)\;.
\]
\end{prop}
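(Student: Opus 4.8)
The plan is to differentiate the integral representation \eqref{P_z sm e_p z - e_p z} of $P_{z,\text{\rm sm}}e_{p,z}-e_{p,z}$ in $z$ and then estimate the resulting terms in $L^\infty$, reusing bounds already obtained in the proofs of \Cref{p: | alpha - P_z sm alpha |_m i nu} and \Cref{c: | alpha - P_z sm alpha |_L^infty}. For $\mu\gg0$ the circle $\S^1$ is disjoint from $\spec D_z$ by \Cref{t: spec Delta_z}, so $z\mapsto(w-D_z)^{-1}D_ze_{p,z}$ is a smooth family of smooth forms, uniformly on $w\in\S^1$, and we may differentiate \eqref{P_z sm e_p z - e_p z} under the integral sign. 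Using the product rule together with \eqref{partial_z((w-D_z)^-1)} yields
\begin{multline*}
\partial_z(P_{z,\text{\rm sm}}e_{p,z}-e_{p,z})
=\frac{1}{2\pi i}\int_{\S^1}w^{-1}(w-D_z)^{-1}\,{\eta\wedge}\,(w-D_z)^{-1}D_ze_{p,z}\,dw\\
+\frac{1}{2\pi i}\int_{\S^1}w^{-1}(w-D_z)^{-1}\partial_z(D_ze_{p,z})\,dw\;.
\end{multline*}

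It then suffices to bound each integrand by $O(e^{-c\mu})$ in $L^\infty$, uniformly on $w\in\S^1$. For the first integrand, set $\gamma=(w-D_z)^{-1}D_ze_{p,z}$; then $\|\gamma\|\le\|\gamma\|_{m,i\nu}=O(e^{-c\mu})$ by \eqref{| (w-D_z)^-1 D_z e_p z |_m i nu}, hence $\|{\eta\wedge}\,\gamma\|=O(e^{-c\mu})$ since ${\eta\wedge}$ is a bounded operator (cf.\ \eqref{| hat c(eta) alpha |_m}), and \eqref{| (w - D_z)^-1 beta |_L^infty}, with a fixed $m>n/2$, gives $\|(w-D_z)^{-1}({\eta\wedge}\,\gamma)\|_{L^\infty}\le C\mu^{(m+1)m/2}O(e^{-c\mu})$, which is $O(e^{-c'\mu})$ after absorbing the polynomial factor into the exponential. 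For the second integrand, \Cref{p: | partial_z(D_z e_p z) |_L^infty} together with the compactness of $M$ gives $\|\partial_z(D_ze_{p,z})\|\le\vol(M)^{1/2}\|\partial_z(D_ze_{p,z})\|_{L^\infty}=O(e^{-c\mu})$, so \eqref{| (w - D_z)^-1 beta |_L^infty} again yields $\|(w-D_z)^{-1}\partial_z(D_ze_{p,z})\|_{L^\infty}=O(e^{-c'\mu})$. Integrating over $\S^1$ gives the result.

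I do not expect a genuine obstacle here: the argument is the $\partial_z$-differentiated analogue of the alternative proof of \Cref{c: | alpha - P_z sm alpha |_L^infty}, and the parenthetical remark there (that some step of that argument will be used later) points precisely at \eqref{| (w - D_z)^-1 beta |_L^infty}. The two points requiring a little care are the justification of differentiation under the integral sign, which rests on $\S^1$ missing $\spec D_z$ for $\mu\gg0$, and the bookkeeping showing that the polynomial prefactors $\mu^{(m+1)m/2}$ produced by \eqref{| (w - D_z)^-1 beta |_L^infty} are harmless against $e^{-c\mu}$ after shrinking $c$ slightly. An analogous $\partial_{\bar z}$-version holds by the same proof, now using $\partial_{\bar z}D_z=-{\eta\lrcorner}$ in place of $\partial_zD_z={\eta\wedge}$ (cf.\ \Cref{r: partial_bar z}).
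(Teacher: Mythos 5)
Your proof is correct and follows essentially the same route as the paper: differentiate the contour-integral representation \eqref{P_z sm e_p z - e_p z} under the integral sign, expand $\partial_z\big((w-D_z)^{-1}\big)$ via \eqref{partial_z((w-D_z)^-1)}, and estimate the two resulting terms with the $L^\infty$ resolvent bound \eqref{| (w - D_z)^-1 beta |_L^infty} together with the exponential smallness of $D_ze_{p,z}$ and of $\partial_z(D_ze_{p,z})$. The only cosmetic difference is that the paper quotes \Cref{c: | D_ze_p z |_L^infty} where you invoke \eqref{| (w-D_z)^-1 D_z e_p z |_m i nu} plus boundedness of ${\eta\wedge}$, which amounts to the same estimate.
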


\begin{proof}
By~\eqref{P_z sm e_p z - e_p z},
\begin{multline*}
\partial_z(P_{z,\text{\rm sm}}e_{p,z}-e_{p,z})
=\frac1{2\pi i}\int_{\S^1}w^{-1}\partial_z\big((w-D_z)^{-1}\big)D_ze_{p,z}\,dw\\
{}+\frac1{2\pi i}\int_{\S^1}w^{-1}(w-D_z)^{-1}\partial_z(D_ze_{p,z})\,dw\;.
\end{multline*}
Now apply~\eqref{| (w - D_z)^-1 beta |_L^infty},~\eqref{partial_z((w-D_z)^-1)}, \Cref{p: | D_ze_p z |_L^infty,p: | partial_z(D_z e_p z) |_L^infty}.
\end{proof}

\section{Small and large zeta invariants of Morse forms}

\subsection{Small and large zeta invariants}\label{ss: sm/la zeta invariants}

According to \Cref{ss: prelim zeta,ss: sm and la spec}, if $B$ is an operator in $L^2(M;\Lambda)$ so that $\zeta(s,\Delta_z,B)$ is defined, we have
\[
\zeta(s,\Delta_z,B)=\zetasm(s,\Delta_z,B)+\zetala(s,\Delta_z,B)\;,
\]
where
\[
\zeta_{\text{\rm sm/la}}(s,\Delta_z,B)=\zeta(s,\Delta_z,B_{z,\text{\rm sm/la}})\;.
\]
These are the contributions from the small/large spectrum to $\zeta(s,\Delta_z,B)$, which are called the \emph{small/large zeta functions} of $(\Delta_z,B)$. In particular, we can write
\[ 
\zeta(s,z)=\zetasm(s,z)+\zetala(s,z)\;,
\]
where $\zeta_{\text{\rm sm/la}}(s,z)=\zeta_{\text{\rm sm/la}}(s,z,\eta)$ is the small/large zeta function of $(\Delta_z,{\eta\wedge}\,D_z\sw)$. Since $\zetasm(s,z)$ is an entire function, $\zetala(s,z)$ has the same poles as $\zeta(s,z)$ (\Cref{r: regularity}), with the same residues. The value $\zeta_{\text{\rm sm/la}}(1,z)$ will be called the \emph{small/large zeta invariant} of $(M,g,\eta,z)$. The following results follow like \Cref{c: zeta(s z) = ...,c: zeta(1 z) = ...}.

\begin{cor}\label{c: zetala(s z) = ...}
If $\Re s>1/2$, then
\[
\zetala(s,z)=\frac{1}{\Gamma(s)}\int_0^\infty t^{s-1}\Str\big({\eta\wedge}\,D_ze^{-t\Delta_z}P_{z,\text{\rm la}}\big)\,dt\;,
\]
where the integral is absolutely convergent.
\end{cor}

\begin{cor}\label{c: zetala(1 z) = ...}
We have
\begin{align*}
\zetasm(1,z)&=\Str({\eta\wedge}\,D_z^{-1}(\Pi_z^\perp)_{\text{\rm sm}})\;,\\
\zetala(1,z)&=\lim_{t\downarrow0}\Str\big({\eta\wedge}\,D_z^{-1}e^{-t\Delta_z}P_{z,\text{\rm la}}\big)\;.
\end{align*}
\end{cor}

\subsection{Truncated heat invariants of perturbed operators}\label{ss: truncated heat invs of perturbed ops}

For $k=0,\dots,n$, let $K'_{z,k,t}(x,y)$ and $\widetilde K_{z,k,t}(x,y)$ denote the Schwartz kernels of $e^{-t\Delta_{z,k}}\Pi_z^\perp$ and $e^{-t\Delta_{z,k}}P_{z,\text{\rm la},k}$, respectively. According to \Cref{ss: heat}, their restrictions to the diagonal have asymptotic expansions (as $t\downarrow0$),
\begin{equation}\label{widetilde K_z k t(x x)}
K'_{z,k,t}(x,x)\sim\sum_{l=0}^\infty e'_{k,l}(x,z)t^{(l-n)/2}\;,\quad
\widetilde K_{z,k,t}(x,x)\sim\sum_{l=0}^\infty\tilde e_{k,l}(x,z)t^{(l-n)/2}\;.
\end{equation}
We have
\begin{align}
e'_{k,l}(x,z)&=
\begin{cases}
e_{k,l}(x,z) & \text{if $l<n$}\\
e_{k,n}(x,z)-\beta_z^k & \text{if $l=n$}\;,
\end{cases}\notag\\
\tilde e_{k,l}(x,z)&=
\begin{cases}
e_{k,l}(x,z) & \text{if $l<n$}\\
e_{k,n}(x,z)-H_{z,k,0}(x,x) & \text{if $l=n$}\;,
\end{cases}
\label{tilde e_k l(x z)}
\end{align}
where $H_{z,k,t}(x,y)$ is the Schwartz kernel of $e^{-t\Delta_{z,k}}P_{z,\text{\rm sm},k}$, which is defined for all $t\in\R$ and is smooth. We also have asymptotic expansions
\begin{align}
h'_k(t,z)&:=\Tr\big(e^{-t\Delta_{z,k}}\Pi_z^\perp\big)\sim\sum_{l=0}^\infty a'_{k,l}(z)t^{(l-n)/2}\;,
\label{asymptotic expansion of h'_k(t z)}\\
\tilde h_k(t,z)&:=\Tr\big(e^{-t\Delta_{z,k}}P_{z,\text{\rm la},k}\big)\sim\sum_{l=0}^\infty\tilde a_{k,l}(z)t^{(l-n)/2}\;.
\label{asymptotic expansion of tilde h_k(t z)}
\end{align}
By~\eqref{H_0(x y)},~\eqref{Tr(BP_A lambda)} and~\eqref{tilde a_l}, 
\begin{align}
a'_{k,l}(z)&=\int_M\str e'_{k,l}(x,z)\,\dvol(x) =
\begin{cases}
a_{k,l}(z) & \text{if $l<n$}\\
a_{k,l}(z)-\beta_z^k & \text{if $l=n$}\;.
\end{cases}
\label{a'_k l(z)}\\
\tilde a_{k,l}(z)&=\int_M\str\tilde e_{k,l}(x,z)\,\dvol(x)=
\begin{cases}
a_{k,l}(z) & \text{if $l<n$}\\
a_{k,l}(z)-\dim E_{z,\text{\rm sm}}^k & \text{if $l=n$}\;.
\end{cases}
\label{tilde a_k l(z)}
\end{align}

The operators $e^{-t\Delta_z}\Pi_z^\perp\sw$ and $e^{-t\Delta_z}P_{z,\text{\rm la}}\sw$ have Schwartz kernels 
\[
K'_{z,t}(x,y)=\sum_{k=0}^n(-1)^kK'_{z,k,t}(x,y)\;,\quad
\widetilde K_{z,t}(x,y)=\sum_{k=0}^n(-1)^k\widetilde K_{z,k,t}(x,y)\;,
\]
with induced asymptotic expansions
\[
K'_{z,t}(x,x)\sim\sum_{l=0}^\infty e'_l(x,z)t^{(l-n)/2}\;,\quad
\widetilde K_{z,t}(x,x)\sim\sum_{l=0}^\infty\tilde e_l(x,z)t^{(l-n)/2}\;,
\]
where
\begin{gather*}
e'_l(x,z)=\sum_{k=0}^n(-1)^ke'_{k,l}(x,z)\;,\quad
\tilde e_l(x,z)=\sum_{k=0}^n(-1)^k\tilde e_{k,l}(x,z)\;.
\end{gather*}
We also have induced asymptotic expansions,
\begin{gather*}
h'(t,z):=\Str\big(e^{-t\Delta_z}\Pi_z^\perp\big)\sim\sum_{l=0}^\infty a'_l(z)t^{(l-n)/2}\;,\\
\tilde h(t,z):=\Str\big(e^{-t\Delta_z}P_{z,\text{\rm la}}\big)\sim\sum_{l=0}^\infty\tilde a_l(z)t^{(l-n)/2}\;,
\end{gather*}
where
\[
a'_l(z)=\sum_{k=0}^n(-1)^ka'_{k,l}(z)\;,\quad
\tilde a_l(z)=\sum_{k=0}^n(-1)^k\tilde a_{k,l}(z)\;.
\]

If $\mu\gg0$, by~\eqref{beta_z^k = betaNo^k}, \Cref{c: P_z sm : E_z -> E_z sm iso,t: spec Delta_z}, $e'_{k,l}(x,z)$ and $\tilde e_{k,l}(x,z)$ depend smoothly on $z$ (\Cref{ss: heat}), and therefore so do $h'_k(t,z)$, $\tilde h_k(t,z)$,  $a'_{k,l}(z)$, $\tilde a_{k,l}(z)$, $e'_l(x,z)$, $\tilde e_l(x,z)$, $h'(t,z)$, $\tilde h(t,z)$, $a'_l(z)$ and $\tilde a_l(z)$.

\subsection{Truncated derived heat invariants of perturbed operators}\label{ss: truncated derived heat invs of perturbed ops}

For $k=0,\dots,n$ and $j=1,2$, let
\begin{gather*}
h^j_k(t,z)=\Tr\big(e^{-t\Delta_{z,k}}\Pi^j_{z,k}\big)\;,\quad
\tilde h^j_k(t,z)=\Tr\big(e^{-t\Delta_{z,k}}\Pi^j_{z,\text{\rm la},k}\big)\;.
\end{gather*}

\begin{lem}\label{l: h^j_k(t z)}
We have
\[
h_{k+1}^1(t,z)=h_k^2(t,z)=\sum_{p=0}^k(-1)^{k-p}h'_p(t,z)
=\sum_{q=k+1}^n(-1)^{q-k-1}h'_q(t,z)\;.
\]
\end{lem}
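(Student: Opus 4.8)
The plan is to deduce both assertions from the degreewise Hodge decomposition~\eqref{Hodge dec Novikov} together with the commutative square~\eqref{CD im delta_z k+1 ...}, which says that $d_{z,k}$ restricts to a topological isomorphism $\im\delta_{z,k+1}\to\im d_{z,k}$ intertwining the corresponding restrictions of $\Delta_{z,k}$ and $\Delta_{z,k+1}$.

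First I would prove $h^2_k(t,z)=h^1_{k+1}(t,z)$. Both $\im\delta_{z,k+1}\subset\Omega^k(M)$ and $\im d_{z,k}\subset\Omega^{k+1}(M)$ are invariant under the self-adjoint operators $\Delta_{z,k}$ and $\Delta_{z,k+1}$, which have discrete spectrum; and by~\eqref{CD im delta_z k+1 ...} the isomorphism $d_{z,k}$ conjugates $\Delta_{z,k}\big|_{\im\delta_{z,k+1}}$ to $\Delta_{z,k+1}\big|_{\im d_{z,k}}$. Since conjugation by a (not necessarily unitary) bounded bijection preserves eigenvalues together with their finite multiplicities, these two restricted Laplacians have the same heat trace; as $h^2_k(t,z)=\Tr\big(e^{-t\Delta_{z,k}}\big|_{\im\delta_{z,k+1}}\big)$ and $h^1_{k+1}(t,z)=\Tr\big(e^{-t\Delta_{z,k+1}}\big|_{\im d_{z,k}}\big)$, this is the claimed equality.

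Next I would run the telescoping. Restricting~\eqref{Hodge dec Novikov} to degree $k$ gives $\Omega^k(M)=\ker\Delta_{z,k}\oplus\im d_{z,k-1}\oplus\im\delta_{z,k+1}$ with $\Pi^0_{z,k}+\Pi^1_{z,k}+\Pi^2_{z,k}=\id$; as $e^{-t\Delta_{z,k}}$ commutes with these orthogonal projections and restricts to the identity on $\ker\Delta_{z,k}$, taking traces gives
\[
h_k(t,z)=\dim H^k_z(M)+h^1_k(t,z)+h^2_k(t,z)\;.
\]
Since there are no forms of degree $-1$ or $n+1$, one has $h^1_0(t,z)=0=h^2_n(t,z)$, and the first step yields $h^1_p(t,z)=h^2_{p-1}(t,z)$ for all $p\ge0$, with the convention $h^2_{-1}=0$. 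Substituting $h^1_p=h^2_{p-1}$ in the displayed identity and solving the recursion by induction on $k$, starting from $h_0=\dim H^0_z(M)+h^2_0$, expresses $h^2_k(t,z)$ as $\sum_{p=0}^k(-1)^{k-p}h_p(t,z)$; solving the same recursion downward from $h^2_n=0$, using $h^2_q=h^1_{q+1}$ instead, gives $h^2_k(t,z)=\sum_{q=k+1}^n(-1)^{q-k-1}h_q(t,z)$. Combined with $h^1_{k+1}=h^2_k$ from the first step, this is the full statement.

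The argument has essentially one substantive step — that $d_z$ conjugates the coexact Laplacian in degree $k$ to the exact Laplacian in degree $k+1$ — everything else being index bookkeeping with the Hodge decomposition; so I do not expect a serious obstacle. The only delicate point is the treatment of the $t$-independent harmonic contributions $\dim H^k_z(M)$ in the telescoping, which must be tracked carefully so that they reproduce the two stated closed forms.
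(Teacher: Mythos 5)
Your route is the same as the paper's: the only substantive input is $h^2_k(t,z)=h^1_{k+1}(t,z)$, coming from the commutativity of~\eqref{CD im delta_z k+1 ...} (your conjugation argument, that $d_{z,k}$ intertwines the restriction of $\Delta_{z,k}$ to $\im\delta_{z,k+1}$ with the restriction of $\Delta_{z,k+1}$ to $\im d_{z,k}$ and hence preserves eigenvalues with multiplicities, is a correct way to spell out what the paper only cites), and the rest is the telescoping induction anchored at $h^1_0=h^2_n=0$, which is exactly the paper's three-line proof.

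The gap is in your final bookkeeping step. From your displayed identity $h_p(t,z)=\dim H^p_z(M)+h^1_p(t,z)+h^2_p(t,z)$ together with $h^1_p=h^2_{p-1}$, the induction yields
\[
h^2_k(t,z)=\sum_{p=0}^k(-1)^{k-p}\big(h_p(t,z)-\dim H^p_z(M)\big)\;,
\]
and the alternating sum of harmonic contributions does not cancel: already for $k=0$ it gives $h^2_0=h_0-\beta_z^0$, which differs from the claimed $h^2_0=h_0$ whenever $\beta_z^0\ne0$ (e.g.\ for $\eta$ exact); the downward recursion likewise picks up $\sum_{q>k}(-1)^{q-k-1}\beta_z^q$. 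So the closing remark that the harmonic terms ``reproduce the two stated closed forms'' is precisely where your argument fails. The paper's proof instead uses the identity $h_k=h^1_k+h^2_k$, i.e.\ it works with the heat trace with the zero modes removed (equivalently, with $\Pi^\perp_{z,k}$ inserted); that is the reading under which the lemma and the subsequent \Cref{c: h(t z) = 0} are literally true, since with the kernel included one has $\Trs\big(e^{-t\Delta_z}\big)=\sum_k(-1)^k\beta_z^k=\chi(M)$, not $0$. So either insert $\Pi^\perp_{z,k}$ throughout (then your telescoping is exactly the paper's argument), or keep your identity and accept that what you actually prove is the lemma with $h_p$ replaced by $\Tr\big(e^{-t\Delta_{z,p}}\Pi^\perp_{z,p}\big)=h_p-\dim H^p_z(M)$; what is not available is the statement as you derived it, with the harmonic parts both present and silently discarded.
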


\begin{proof}
This follows by induction on $k$, using that 
\[
h_0^1(t,z)=h_n^2(t,z)=0\;,\quad
h'_k(t,z)=h_k^1(t,z)+h_k^2(t,z)\;,\quad
h_k^2(t,z)=h_{k+1}^1(t,z)\;.
\]
The last equality holds because~\eqref{CD im delta_z k+1 ...} is commutative.
\end{proof}

Let
\begin{gather*}
h^j(t,z)=\Str\big(e^{-t\Delta_z}\Pi^j_z\big)=\sum_{k=0}^n(-1)^kh^j_k(t,z)\;,\\
\tilde h^j(t,z)=\Str\big(e^{-t\Delta_z}\Pi^j_{z,\text{\rm la}}\big)=\sum_{k=0}^n(-1)^k\tilde h^j_k(t,z)\;.
\end{gather*}
Thus
\begin{equation}\label{h'(t z)}
h'(t,z)=h^1(t,z)+h^2(t,z)\;,\quad\tilde h(t,z)=\tilde h^1(t,z)+\tilde h^2(t,z)\;.
\end{equation}

\begin{cor}\label{c: h(t z) = 0}
We have $h'(t,z)=0$.
\end{cor}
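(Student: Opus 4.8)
The plan is to read the claim through \eqref{h(t z)}: since $h(t,z)=h^1(t,z)+h^2(t,z)$, it suffices to show that the contributions of $\im d_z$ and $\im\delta_z$ to the heat supertrace cancel, i.e.\ that $h^2(t,z)=-h^1(t,z)$. Conceptually this is the McKean--Singer mechanism restricted to $\im\Delta_z$: inside each nonzero eigenspace of $\Delta_z$, the operator $d_z$ identifies the part lying in $\im\delta_z$ with the part of the next degree lying in $\im d_z$, so the corresponding heat traces agree and the alternating sum telescopes to $0$. The substantive input is already available, namely the commutativity of the square \eqref{CD im delta_z k+1 ...}, which is precisely what feeds \Cref{l: h^j_k(t z)}.

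Concretely, I would first expand $h^1(t,z)=\sum_{k=0}^n(-1)^kh^1_k(t,z)$ and $h^2(t,z)=\sum_{k=0}^n(-1)^kh^2_k(t,z)$, then invoke \Cref{l: h^j_k(t z)}, which gives $h^2_k(t,z)=h^1_{k+1}(t,z)$ for every $k$, together with the boundary identities $h^1_0(t,z)=0$ and $h^2_n(t,z)=h^1_{n+1}(t,z)=0$. Substituting and reindexing by $j=k+1$,
\[
h^2(t,z)=\sum_{k=0}^{n}(-1)^kh^1_{k+1}(t,z)=\sum_{j=1}^{n}(-1)^{j-1}h^1_j(t,z)=-\sum_{j=0}^{n}(-1)^jh^1_j(t,z)=-h^1(t,z),
\]
where the final sum merely reinserts the vanishing $j=0$ term. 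Hence $h(t,z)=h^1(t,z)+h^2(t,z)=0$.

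There is no analytic difficulty here; the only thing to watch is the sign and summation-range bookkeeping in the index shift, and checking that the boundary terms $h^1_0(t,z)$ and $h^1_{n+1}(t,z)=h^2_n(t,z)$ vanish (immediate, since $\Omega^{n+1}=0$ and $\im d_z$ is trivial in degree $0$). The identical argument run on the large complex also yields $\tilde h(t,z)=0$. As a consistency remark, extracting the small-$t$ heat coefficients of the identically-zero function $h(t,z)$ reproduces the vanishing $e_l(x,z)=0$ for $l<n$ of \Cref{t: e_l(x z)}; and letting $t\downarrow0$ recovers (in regularized form) the identity $\Trs(\Pi_z^\perp)=0$, which together with \eqref{sum_k (-1)^k beta_z^k = chi(M)} is the usual $\Trs(e^{-t\Delta_z})=\chi(M)$, in the same spirit as \Cref{c: Trs(Pi^perp_z sm) = 0} for the small complex.
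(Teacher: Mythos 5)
Your argument is correct and coincides with the paper's proof, which obtains the corollary as a direct consequence of \Cref{l: h^j_k(t z)} and \eqref{h(t z)}; writing it out as the telescoping $h^2_k(t,z)=h^1_{k+1}(t,z)$ together with the vanishing boundary terms $h^1_0(t,z)=h^2_n(t,z)=0$ is exactly the intended computation. (Only your closing aside overstates slightly: vanishing of the supertrace function yields vanishing of the integrated coefficients $a_l(z)$, not of the local densities $e_l(x,z)$ appearing in \Cref{t: e_l(x z)}, which is a strictly stronger pointwise statement.)
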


\begin{proof}
This is a direct consequence of \Cref{l: h^j_k(t z)} and~\eqref{h'(t z)}.
\end{proof}

\begin{cor}\label{c: h^1(t z)}
We have
\[
h^1(t,z)=-h^2(t,z)=\sum_{k=0}^n(-1)^kkh'_k(t,z)=\Str\big(\sN e^{-t\Delta_z}\Pi_z^\perp\big)\;.
\]
\end{cor}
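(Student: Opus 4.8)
The plan is to obtain both identities by elementary bookkeeping from \Cref{l: h^j_k(t z)}, \Cref{c: h(t z) = 0}, and the definitions of the heat traces involved; there is no new analysis.

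I would first treat the rightmost equality, which is essentially definitional. Since the number operator $\sN$ acts by the scalar $k$ on $\Omega^k(M)$ and both $e^{-t\Delta_z}$ and $\Pi_z^\perp$ preserve the grading, evaluating the supertrace degree by degree gives $\Trs(\sN e^{-t\Delta_z}\Pi_z^\perp)=\sum_{k=0}^n(-1)^kk\,\Tr(e^{-t\Delta_{z,k}}\Pi^\perp_{z,k})$. Using $\Pi_z^\perp=\Pi^1_z+\Pi^2_z$, the definition of $h^j_k(t,z)$, and the relation $h_k(t,z)=h^1_k(t,z)+h^2_k(t,z)$ recorded in the proof of \Cref{l: h^j_k(t z)}, each summand equals $(-1)^kk\,h_k(t,z)$, which is the second equality.

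For the first equality it suffices to handle $j=1$, since by~\eqref{h(t z)} and \Cref{c: h(t z) = 0} we have $h^1(t,z)=-h^2(t,z)$, whence $-(-1)^2h^2(t,z)=-h^2(t,z)=h^1(t,z)=-(-1)^1h^1(t,z)$. To prove $h^1(t,z)=\sum_{k=0}^n(-1)^kk\,h_k(t,z)$, I would start from the right-hand side, substitute $h_k=h^1_k+h^2_k$, and rewrite the second contribution via $h^2_k=h^1_{k+1}$ from \Cref{l: h^j_k(t z)}; reindexing $\sum_k(-1)^kk\,h^1_{k+1}$ and discarding the boundary terms (the term with $k=n$ vanishes because $h^1_{n+1}=h^2_n=0$, and $h^1_0=0$) turns it into $\sum_k(-1)^{k-1}(k-1)h^1_k$. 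Adding the two contributions, the coefficient of $h^1_k$ becomes $(-1)^kk+(-1)^{k-1}(k-1)=(-1)^k$, so $\sum_{k=0}^n(-1)^kk\,h_k(t,z)=\sum_{k=0}^n(-1)^kh^1_k(t,z)=h^1(t,z)$, as desired.

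There is essentially no obstacle: the analytic content lies entirely in the earlier results (the definitions of $h^j_k$ in terms of the Hodge projections, \Cref{l: h^j_k(t z)}, and \Cref{c: h(t z) = 0}), and what is left is purely combinatorial. The only point requiring a moment's care is keeping track of the degree shift $h^2_k=h^1_{k+1}$ together with the vanishing boundary traces $h^1_0=h^2_n=0$, after which the alternating sums telescope through the trivial identity $(-1)^kk+(-1)^{k-1}(k-1)=(-1)^k$.
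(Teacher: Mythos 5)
Your proof is correct and follows essentially the same route as the paper: both are elementary resummations resting on \Cref{l: h^j_k(t z)} (the relations $h_k=h^1_k+h^2_k$, $h^2_k=h^1_{k+1}$, $h^1_0=h^2_n=0$) together with \Cref{c: h(t z) = 0}. The only cosmetic difference is that the paper substitutes the closed form $h^1_k=\sum_{q\ge k}(-1)^{q-k}h_q$ and reindexes a double sum, using $h(t,z)=0$ to drop one term, whereas you telescope the recursion directly and need $h(t,z)=0$ only to pass from $j=1$ to $j=2$.
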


\begin{proof}
\Cref{c: h(t z) = 0} and~\eqref{h'(t z)} give the first equality. By \Cref{l: h^j_k(t z),c: h(t z) = 0},
\begin{align*}
h^1(t,z)&=\sum_{k=0}^n(-1)^k\sum_{q=k}^n(-1)^{q-k}h'_q(t,z)=\sum_{q=0}^n(-1)^q(q+1)h'_q(t,z)\\
&=h'(t,z)+\sum_{q=0}^n(-1)^qqh'_q(t,z)=\sum_{q=0}^n(-1)^qqh'_q(t,z)\;.\;\qedhere
\end{align*}
\end{proof}

\begin{rem}\label{r: similarity between -Trs(Pi^1_z sm) = ... and h^1(t z)}
Note the similarity between \Cref{c: -Trs(Pi^1_z sm) = ...,c: h^1(t z)}.
\end{rem}

Applying~\eqref{asymptotic expansion of h'_k(t z)} and \Cref{l: h^j_k(t z)}, we get
\begin{equation}\label{h^1(t z) sim ...}
h^j_k(t,z)\sim\sum_{l=0}^\infty a_{k,l}^j(z)t^{(l-n)/2}\;,\quad
h^j(t,z)\sim\sum_{l=0}^\infty a^j_l(z)t^{(l-n)/2}\;,
\end{equation}
where
\begin{gather*}
a^1_{k+1,l}(z)=a^2_{k,l}(z)=\sum_{p=0}^k(-1)^{k-p}a'_{p,l}(t,z)
=\sum_{q=k+1}^n(-1)^{q-k-1}a'_{q,l}(t,z)\;,\\
a^1_l(z)=-a^2_l(z)=\sum_{k=0}^n(-1)^kka'_{k,l}(z)\;.
\end{gather*}

\Cref{l: h^j_k(t z)}, \Cref{c: h(t z) = 0} and~\eqref{h^1(t z) sim ...} have obvious versions for $\tilde h^j_k(t,z)$ and $\tilde h^j(t,z)$, with similar proofs. The coefficients of the corresponding asymptotic expansions are denoted by $\tilde a^j_{k,l}(z)$ and $\tilde a^j_l(z)$.

\begin{cor}\label{c: tilde a^1_l(z) is independent of z}
For all $l\le n$ and $\mu\gg0$, $a^1_l(z)$ and $\tilde a^1_l(z)$ are independent of $z$.
\end{cor}

\begin{proof}
Apply~\eqref{beta_z^k = betaNo^k},~\eqref{a'_k l(z)},~\eqref{tilde a_k l(z)}, \Cref{c: P_z sm : E_z -> E_z sm iso,t: spec Delta_z,t: fa_l(z) is independent of z}.
\end{proof}

\subsection{Zeta function vs theta function}\label{ss: zeta vs theta}

Consider also the meromorphic function
\begin{equation}\label{theta(s,z)}
\theta(s,z)=\theta(s,z,\eta)=-\zeta(s,\Delta_z,\sN\sw)\;,
\end{equation}
called theta function of $\Delta_z$, and write
\[
\theta(s,z)=\thetasm(s,z)+\thetala(s,z)\;,\\
\]
where
\begin{equation}\label{theta_sm/la(s,z)}
\theta_{\text{\rm sm/la}}(s,z)=\theta_{\text{\rm sm/la}}(s,z,\eta)=-\zeta_{\text{\rm sm/la}}(s,\Delta_z,\sN\sw)\;.
\end{equation}
By \Cref{c: h^1(t z)},
\begin{gather}
-\zeta(s,\Delta_z,\Pi_z^1\sw)=\zeta(s,\Delta_z,\Pi_z^2\sw)=\theta(s,z)\;,\notag\\
-\zeta_{\text{\rm sm/la}}(s,\Delta_z,\Pi_z^1\sw)=\zeta_{\text{\rm sm/la}}(s,\Delta_z,\Pi_z^2\sw)=\theta_{\text{\rm sm/la}}(s,z)\;.
\label{-zeta_sm/la(s Delta_z Pi_z^1 sw) = theta_sm/la(s z)}
\end{gather}


Recall that $\zeta(s,z)$ is smooth at $s=1$ (\Cref{c: zeta(s z) = ...}). Moreover $\theta(s,z)$ is smooth at $s=0$ \cite{Seeley1967}. The same is true for $\zetala(s,z)$ and $\thetala(s,z)$.

\begin{prop}\label{p: partial_z thetala(s z)}
If $\mu\gg0$, then
\[
\partial_z\thetala(s,z)=s\zetala(s+1,z)\;.
\]
\end{prop}

\begin{proof}
Recall that a dot may be used to denote $\partial_z$. Like in~\eqref{dot P_z sm = ...-2},
\[
\dot\Pi^1_z=\big(\Pi^1_z\big)^\perp\dot\Pi^1_z\Pi^1_z+\Pi^1_z\dot\Pi^1_z\big(\Pi^1_z\big)^\perp\;.
\]
Therefore, since $\Pi^1_z$ and $(\Pi^1_z)^\perp$ commute with $\Delta_z^{-s}$ and $P_{z,\text{\rm la}}$, for $\Re s\gg0$,
\[
\zetala(s,\Delta_z,\dot\Pi^1_z\sw)=\Str\big(\dot\Pi^1_z\Delta_z^{-s}P_{z,\text{\rm la}}\big)=0\;,
\]
yielding $\zetala(s,\Delta_z,\dot\Pi^1_z\sw)=0$ for all $s$ because this is a meromorphic function. Hence, since $\Delta_z$ and $\Pi^1_{z,\text{\rm la}}\sw$ commute, \Cref{p: zeta functions}~\ref{i: zeta(s P Q)},\ref{i: partial/partial epsilon zeta(s P_epsilon Q_epsilon)} gives
\begin{equation}\label{partial_z zetala(s Delta_z Pi^1_z bfw)}
\partial_z\zetala(s,\Delta_z,\Pi^1_z\sw)=-s\zetala(s+1,\Delta_z,\dot\Delta_z\Pi^1_z\sw)
=-s\Str\big(\dot\Delta_z\Delta_z^{-s-1}\Pi^1_{z,\text{\rm la}}\big)\;.
\end{equation}
Next, by~\eqref{partial_z delta_z},
\begin{equation}\label{dot Delta_z Pi^1_z la}
\dot\Delta_z\Pi^1_{z,\text{\rm la}}
=({\eta\wedge}\,\delta_z+\delta_z\,{\eta\wedge})\Pi^1_{z,\text{\rm la}}
={\eta\wedge}\,\delta_z\Pi^1_{z,\text{\rm la}}+\delta_z\,{\eta\wedge}\,\Pi^1_{z,\text{\rm la}}\;.
\end{equation}
But, since $\Pi^1_z\delta_z=0$,
\begin{equation}\label{Str(delta_z theta wedge delta_z^-1 d_z^-1 Pi^1_z Delta_z^-s Pi^perp_z)}
\Str\big(\delta_z\,{\eta\wedge}\,\Delta_z^{-s-1}\Pi^1_{z,\text{\rm la}}\big)
=-\Str\big({\eta\wedge}\,\Delta_z^{-s-1}\Pi^1_{z,\text{\rm la}}\delta_z\big)=0\;.
\end{equation}
From~\eqref{-zeta_sm/la(s Delta_z Pi_z^1 sw) = theta_sm/la(s z)}--\eqref{Str(delta_z theta wedge delta_z^-1 d_z^-1 Pi^1_z Delta_z^-s Pi^perp_z)} and \Cref{p: zeta functions}~\ref{i: zeta(s P Q)}, we get
\begin{align*}
\partial_z\thetala(s,z)&=-\partial_z\zetala(s,\Delta_z,\Pi^1_z\sw)
=s\Str\big({\eta\wedge}\,\delta_z\Delta_z^{-s-1}\Pi^1_{z,\text{\rm la}}\big)\\
&=s\Str\big({\eta\wedge}\,D_z\Delta_z^{-s-1}\Pi^1_{z,\text{\rm la}}\big)
=s\zetala(s+1,z)\;.\qedhere
\end{align*}
\end{proof}

\begin{rem}\label{r: regularity - 2}
In the case where $\eta$ is a Morse form and $\mu\gg0$, the regularity of $\zeta(s,z)$ indicated in \Cref{r: regularity} also follows from \Cref{c: tilde a^1_l(z) is independent of z,p: partial_z thetala(s z)}.
\end{rem}

\begin{cor}\label{c: zetala(1 z) = partial_z thetala'(0 z)}
If $\mu\gg0$, then~\eqref{zetala(1 z) = partial_z thetala'(0 z)} is true.
\end{cor}

\begin{proof}
Apply \Cref{p: partial_z thetala(s z),c: zetala(s z) = ...}.
\end{proof}

\subsection{The case of the differential of a Morse function}\label{ss: differential of a Morse function}

Let us consider the special case where $\eta=dh$ for a Morse function $h$. The following four results follow like \Cref{l: Trs(eta wedge d_z^-1 e^-t Delta_z Pi^1_z) = -Trs(h e^-t Delta_z Pi^perp_z),c: zeta(1 z) = - lim_t->0 Str(h e^-t Delta_z Pi_z^perp),c: zeta(1 Delta_z theta wedge D_z bfw) in R,c: zeta(1 Delta_z theta wedge D_z bfw) = zeta(1 Delta_-bar z theta wedge D_- bar z bfw)}.

\begin{lem}\label{l: Trs(eta wedge d_z^-1 Pi^1_z sm) = -Trs(h Pi^perp_z sm)}
For $\mu\gg0$,
\begin{align*}
\Str\big({\eta\wedge}\,d_z^{-1}\Pi^1_{z,\text{\rm sm}}\big)&=-\Str\big(h\,(\Pi_z^\perp)_{\text{\rm sm}}\big)\;,\\
\Str\big({\eta\wedge}\,d_z^{-1}e^{-t\Delta_z}\Pi^1_{z,\text{\rm la}}\big)&=-\Str\big(h\,e^{-t\Delta_z}P_{z,\text{\rm la}}\big)\;.
\end{align*}
\end{lem}

\begin{cor}\label{c: zetasm(1 Delta_z eta wedge D_z bfw) = - Str (h (Pi_z^perp)_sm)}
For $\mu\gg0$, 
\begin{align*}
\zetasm(1,z)&=-\Str\big(h\,(\Pi_z^\perp)_{\text{\rm sm}}\big)\;,\\
\zetala(1,z)&=-\lim_{t\downarrow0}\Str\big(h\,e^{-t\Delta_z}P_{z,\text{\rm la}}\big)\;.
\end{align*}
\end{cor}

\begin{cor}\label{c: zeta_sm/la(1 Delta_z theta wedge D_z bfw) in R}
If $\mu\gg0$, then $\zeta_{\text{\rm sm/la}}(1,z)\in\R$.
\end{cor}

\begin{cor}\label{c: zeta_sm/la(1 Delta_z theta wedge D_z bfw) = zeta_sm/la(1 Delta_-bar z theta wedge D_- bar z bfw)}
If $M$ is oriented and $|\mu|\gg0$, then
\[
\zeta_{\text{\rm sm/la}}(1,z)=\zeta_{\text{\rm sm/la}}(1,-\bar z)
=\zeta_{\text{\rm sm/la}}(1,-z)=\zeta_{\text{\rm sm/la}}(1,\bar z)\;.
\]
\end{cor}

\begin{cor}\label{c: zetasm(1 Delta_z theta wedge D_z bfw) is uniformly bd}
The value $\zetasm(1,z)$ is uniformly bounded on $z$ for $\mu\gg0$.
\end{cor}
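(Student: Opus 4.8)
The plan is to exploit the first identity of \Cref{c: zetasm(1 Delta_z theta wedge D_z bfw) = - Str (h (Pi_z^perp)_sm)}, namely $\zetasm(1,z)=-\Trs\big(h\,(\Pi_z^\perp)_{\text{\rm sm}}\big)$, valid for $\mu\gg0$ in the present exact case $\eta=dh$. The point is that $(\Pi_z^\perp)_{\text{\rm sm}}$ is a finite-rank operator acting on the finite-dimensional space $E_{z,\text{\rm sm}}$, whose dimension is $|\XX|$ by \Cref{c: P_z sm : E_z -> E_z sm iso} and is in particular bounded independently of $z$. So the task reduces to bounding $\bigl|\Trs\big(h\,(\Pi_z^\perp)_{\text{\rm sm}}\big)\bigr|$ uniformly in $z$ for $\mu\gg0$.

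First I would estimate the trace by the operator norm of $h$ (which is just $\|h\|_{L^\infty}$, a fixed constant) times a bound on the trace norm of $(\Pi_z^\perp)_{\text{\rm sm}}$. Since $(\Pi_z^\perp)_{\text{\rm sm}}$ is an orthogonal projection restricted to $E_{z,\text{\rm sm}}$, its trace norm equals its rank, which is at most $\dim E_{z,\text{\rm sm}}=|\XX|$. More precisely, writing $(\Pi_z^\perp)_{\text{\rm sm}}$ in the orthonormal basis $P_{z,\text{\rm sm}}e_{p,z}$ (after Gram--Schmidt, the $\tilde e_{p,z}$ of the proof of \Cref{p: P_z sm P_z+tau sm}) of $E_{z,\text{\rm sm}}$, one has
\[
\Trs\big(h\,(\Pi_z^\perp)_{\text{\rm sm}}\big)=\sum_{p\in\XX}(-1)^{\ind(p)}\big\langle h\,(\Pi_z^\perp)_{\text{\rm sm}}\tilde e_{p,z},\tilde e_{p,z}\big\rangle\;,
\]
and each summand is bounded in absolute value by $\|h\|_{L^\infty}\,\|(\Pi_z^\perp)_{\text{\rm sm}}\tilde e_{p,z}\|\,\|\tilde e_{p,z}\|\le\|h\|_{L^\infty}$. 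Hence $|\zetasm(1,z)|\le|\XX|\,\|h\|_{L^\infty}$ for all $z$ with $\mu\gg0$, which is the claimed uniform bound. An even cleaner route is to write $h\,(\Pi_z^\perp)_{\text{\rm sm}}=h\,P_{z,\text{\rm sm}}(\Pi_z^\perp)_{\text{\rm sm}}$ and note $\|P_{z,\text{\rm sm}}\| \le 1$, $\|(\Pi_z^\perp)_{\text{\rm sm}}\|\le 1$, so the trace over the $|\XX|$-dimensional space $E_{z,\text{\rm sm}}$ of an operator of norm $\le\|h\|_{L^\infty}$ is bounded by $|\XX|\,\|h\|_{L^\infty}$.

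I do not anticipate a genuine obstacle here: the content of the corollary is essentially that $\zetasm(1,z)$ is a trace over a space of $z$-independent (finite) dimension of an operator built from the fixed bounded multiplication operator $h$ and orthogonal projections. The only mild subtlety is making sure the bound on $\dim E_{z,\text{\rm sm}}$ holds uniformly, but this is exactly \Cref{c: P_z sm : E_z -> E_z sm iso}, which asserts $\dim E_{z,\text{\rm sm}}=|\XX|$ for all $z$ with $\mu\gg0$. So the proof is short: invoke \Cref{c: zetasm(1 Delta_z theta wedge D_z bfw) = - Str (h (Pi_z^perp)_sm)}, then bound the supertrace by $\dim E_{z,\text{\rm sm}}\cdot\|h\|_{L^\infty}=|\XX|\,\|h\|_{L^\infty}$ using \Cref{c: P_z sm : E_z -> E_z sm iso}.
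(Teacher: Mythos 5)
Your proposal is correct and follows essentially the same route as the paper: it invokes the identity $\zetasm(1,z)=-\Trs\big(h\,(\Pi_z^\perp)_{\text{\rm sm}}\big)$ and bounds the supertrace using that $h\,(\Pi_z^\perp)_{\text{\rm sm}}$ is uniformly bounded in norm and of uniformly bounded rank (via $\dim E_{z,\text{\rm sm}}=|\XX|$). The extra detail with the orthonormal basis $\tilde e_{p,z}$ just makes explicit the bound $|\zetasm(1,z)|\le|\XX|\,\|h\|_{L^\infty}$ that the paper states more briefly.
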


\begin{proof}
The operator $h\,(\Pi_z^\perp)_{\text{\rm sm}}$ is uniformly bounded and, for $\mu\gg0$, has uniformly bounded rank. So $\Str(h\,(\Pi_z^\perp)_{\text{\rm sm}})$ is uniformly bounded on $z$ for $\mu\gg0$, and therefore the result follows from \Cref{c: zetasm(1 Delta_z eta wedge D_z bfw) = - Str (h (Pi_z^perp)_sm)}.
\end{proof}

\begin{thm}\label{t: zeta_la(1 z) approx ... mu -> infty exact form}
The following limit holds uniformly on $\nu$: 
\[
\lim_{\mu\to+\infty}\zeta_{\text{\rm la}}(1,z)=-\int_Mh\,e(M,\nabla^M)\,\dvol
+\sum_{p\in\XX}^n(-1)^{\ind(p)}h(p)\;.
\]
\end{thm}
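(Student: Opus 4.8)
The plan is to combine the heat-invariant results of \Cref{t: e_l(x z),t: tilde a^1_l(z) is independent of z} with the third equality of \Cref{c: zetasm(1 Delta_z theta wedge D_z bfw) = - Str (h (Pi_z^perp)_sm)}, using the splitting of the heat supertrace into its small and large parts. The starting point is
\[
\zetala(1,z)=-\lim_{t\downarrow0}\Trs\big(h\,e^{-t\Delta_z}P_{z,\text{\rm la}}\big)\;.
\]
Writing $e^{-t\Delta_z}=e^{-t\Delta_z}P_{z,\text{\rm sm}}+e^{-t\Delta_z}P_{z,\text{\rm la}}$ and using \Cref{c: h(t z) = 0}-type bookkeeping is not quite what is needed; instead I would use that $\Trs(h\,e^{-t\Delta_z})$ has, by \Cref{t: e_l(x z)}, the asymptotic expansion $\Trs(h\,e^{-t\Delta_z})\sim\sum_{l\ge n}(\int_M h\,\trs e_l(x,z)\,\vol)\,t^{(l-n)/2}$, whose constant term (the $l=n$ term) is $\int_M h\,e(M,\nabla^M)\,\vol$ when $n$ is even and $0$ when $n$ is odd (in the odd case there is no integer power $t^0$ at all, but one must still argue the limit exists — which it does by \Cref{t: regularity}). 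On the other hand $\Trs(h\,e^{-t\Delta_z}P_{z,\text{\rm sm}})\to\Trs(h\,(\Pi_z^\perp)_{\text{\rm sm}})+\Trs(h\,P_{z,\text{\rm sm}}\Pi_z)$ as $t\downarrow0$; but $\Delta_z P_{z,\text{\rm sm}}$ is not literally zero, so one has to be a little careful: $e^{-t\Delta_z}P_{z,\text{\rm sm}}\to P_{z,\text{\rm sm}}$ as $t\downarrow 0$ and $\Trs(h\,P_{z,\text{\rm sm}})=\Trs(h\,(\Pi_z^\perp)_{\text{\rm sm}})+\Trs(h\,\Pi_{z,\text{\rm sm}})$ where $\Pi_{z,\text{\rm sm}}$ projects onto $\ker\Delta_z$.

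So the first main step is to identify $\lim_{t\downarrow0}\Trs(h\,e^{-t\Delta_z}P_{z,\text{\rm sm}})$ with $-\Trs(h\,(\Pi_z^\perp)_{\text{\rm sm}})-\Trs(h\,\Pi_{z,\text{\rm sm}})$ — wait, signs: from $\Trs(h\,e^{-t\Delta_z}P_{z,\text{\rm la}})=\Trs(h\,e^{-t\Delta_z})-\Trs(h\,e^{-t\Delta_z}P_{z,\text{\rm sm}})$, taking $t\downarrow0$ and using \Cref{t: e_l(x z)}, I get
\[
\lim_{t\downarrow0}\Trs\big(h\,e^{-t\Delta_z}P_{z,\text{\rm la}}\big)
=\int_M h\,e(M,\nabla^M)\,\vol-\Trs\big(h\,P_{z,\text{\rm sm}}\big)
\]
(with the convention $e(M,\nabla^M)=0$ for $n$ odd), hence $\zetala(1,z)=-\int_M h\,e(M,\nabla^M)\,\vol+\Trs(h\,P_{z,\text{\rm sm}})$. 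The second main step is then to compute $\lim_{\mu\to+\infty}\Trs(h\,P_{z,\text{\rm sm}})$. Here I use \Cref{p: P_z sm P_z+tau sm} to replace $P_{z,\text{\rm sm}}$ by $P_z=\sum_{p\in\XX}\langle\,\cdot\,,e_{p,z}\rangle e_{p,z}$ up to $O(e^{-c\mu})$, so that $\Trs(h\,P_{z,\text{\rm sm}})=\sum_{p\in\XX}(-1)^{\ind(p)}\langle h\,e_{p,z},e_{p,z}\rangle+O(e^{-c\mu})$. Since $e_{p,z}=(\rho_p/a_\mu)e^{-i\nu h_p}e'_{p,\mu}$ is concentrated near $p$ with Gaussian profile of width $\mu^{-1/2}$, and $h$ is continuous with $h(p)=h_{\eta,p}(p)$ in the local model (the function $h$ restricted near $p$ equals $h(p)+h_{\eta,p}$), a Laplace-type estimate gives $\langle h\,e_{p,z},e_{p,z}\rangle=h(p)+O(\mu^{-1})$: the $|e_{p,z}|^2$ are normalized, the phase $e^{-i\nu h_p}$ cancels in $|e_{p,z}|^2$ so the estimate is uniform in $\nu$, and the $\mu^{-1}$ correction comes from the quadratic part of $h-h(p)$ integrated against the Gaussian (each $\int x^2 e^{-\mu x^2}$ is $O(\mu^{-1})$ relative to the mass, cf.\ \eqref{int_-2r^2r rho(x)^2 x^2 e^-mu x^2 dx}). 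Summing over $\XX$ yields $\Trs(h\,P_{z,\text{\rm sm}})=\sum_{p\in\XX}(-1)^{\ind(p)}h(p)+O(\mu^{-1})$, uniformly on $\nu$, which gives the claimed limit.

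The main obstacle I anticipate is making the first step rigorous and $\nu$-uniform: one must justify that $\lim_{t\downarrow0}\Trs(h\,e^{-t\Delta_z}P_{z,\text{\rm la}})$ really equals the constant term of the full heat expansion minus $\Trs(h\,P_{z,\text{\rm sm}})$, i.e.\ that the small-spectrum part contributes exactly $\Trs(h\,P_{z,\text{\rm sm}})$ in the limit with no leftover. This is where one needs that the small eigenvalues are genuinely small (they are $O(e^{-c\mu})$ by \Cref{t: spec Delta_z}, so $e^{-t\Delta_z}P_{z,\text{\rm sm}}\to P_{z,\text{\rm sm}}$ in operator norm on the finite-dimensional space $E_{z,\text{\rm sm}}$ as $t\downarrow0$, with rank bounded by $|\XX|$ uniformly, cf.\ \Cref{c: P_z sm : E_z -> E_z sm iso}) and that the $l<n$ heat coefficients of $\Trs(h\,e^{-t\Delta_z})$ vanish (\Cref{t: e_l(x z)}\ref{i: e_l(x,z) = 0}), so that the divergent part of $\Trs(h\,e^{-t\Delta_z}P_{z,\text{\rm la}})$ as $t\downarrow0$ is fully accounted for and the limit exists and is uniform in $\nu$ by the $\nu$-independence of the heat coefficients (\Cref{r: e_l(x z) - 1}) together with the uniform bounds of \Cref{t: spec Delta_z}. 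A secondary, purely bookkeeping point is treating the $n$ odd and $n$ even cases uniformly by adopting the convention that $e(M,\nabla^M)$ denotes the Euler form when $n$ is even and $0$ otherwise, so that the single formula in the statement is correct in both parities.
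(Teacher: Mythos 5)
Your proposal is correct and takes essentially the same route as the paper's proof: the starting identity $\zetala(1,z)=-\lim_{t\downarrow0}\Trs\big(h\,e^{-t\Delta_z}P_{z,\text{\rm la}}\big)$ from \Cref{c: zetasm(1 Delta_z theta wedge D_z bfw) = - Str (h (Pi_z^perp)_sm)}, the use of \Cref{t: e_l(x z)} on the heat expansion (with the small-spectrum part contributing exactly $\Trs(hP_{z,\text{\rm sm}})$) to get $\zetala(1,z)=-\int_Mh\,e(M,\nabla^M)\,\vol+\Trs(hP_{z,\text{\rm sm}})$, and a Gaussian concentration estimate, uniform in $\nu$ because the phase cancels in $|e_{p,z}|^2$, to show $\Trs(hP_{z,\text{\rm sm}})\to\sum_p(-1)^{\ind(p)}h(p)$. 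The only cosmetic difference is that you replace $P_{z,\text{\rm sm}}$ by $P_z$ via \Cref{p: P_z sm P_z+tau sm}, whereas the paper works with the Gram--Schmidt frame and \Cref{p: | alpha - P_z sm alpha |_m i nu}; this is the same estimate.
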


\begin{proof}
By~\eqref{widetilde K_z k t(x x)},~\eqref{tilde e_k l(x z)}, \Cref{t: e_l(x z),c: zetasm(1 Delta_z eta wedge D_z bfw) = - Str (h (Pi_z^perp)_sm)}, for $\mu\gg0$,
\begin{align*}
\zetala(1,z)&=-\lim_{t\downarrow0}\Str\big(h\,e^{-t\Delta_z}P_{z,\text{\rm la}}\big)
=-\int_Mh(x)\,\str\tilde e_n(x,z)\,\dvol(x)\\
&=-\int_Mh(x)\,\str e_n(x,z)\,\dvol(x)+\Str(hP_{z,\text{\rm sm}})\\
&=-\int_Mh\,e(M,\nabla^M)\,\dvol+\Str(hP_{z,\text{\rm sm}})\;.
\end{align*}

According to \Cref{c: P_z sm : E_z -> E_z sm iso}, the elements $P_{z,\text{\rm sm}} e_{p,z}$ ($p\in\XX$) form a base of $E_{z,\text{\rm sm}}^k$ when $\mu\gg0$. Applying the Gram-Schmidt process to this base, we get an orthonormal frame $\tilde  e_{p,z}$ ($p\in\XX$) of $E_{z,\text{\rm sm}}$. By \Cref{p: | alpha - P_z sm alpha |_m i nu} for $m=0$ and~\eqref{rho_p}--\eqref{a_mu},
\[
\lim_{\mu\to+\infty}\langle h\,\tilde  e_{p,z},\tilde e_{q,z}\rangle=\lim_{\mu\to+\infty}\langle h e_{p,z},e_{q,z}\rangle
=h(p)\delta_{pq}\;.
\]
Hence
\[
\lim_{\mu\to+\infty}\Str(hP_{z,\text{\rm sm}})=\sum_{k=0}^n(-1)^k\sum_{p\in\XX_k}h(p)\;.\qedhere
\]
\end{proof}

\section{The small complex vs the Morse complex}

\subsection{Preliminaries on Morse and Smale vector fields}\label{ss: Smale}

\subsubsection{Vector fields with Morse-type zeros}\label{sss: Morse-type zeros} 

Let $X$ be a real smooth vector field on $M$ with flow $\phi=\{\phi^t\}$.  Let $\YY=\Zero(X)$ denote the set of zeros of $X$ (or rest points $\phi$). It is said that a zero $p$ of $X$ is of \emph{Morse type} with (\emph{Morse}) \emph{index} of $\ind(p)$ if, using the notation~\eqref{epsilon_p,j},
\begin{equation}\label{X around p}
X=-\sum_{j=1}^n\epsilon_{p,j}x_p^j\,\frac{\partial}{\partial x_p^j}
\end{equation}
on the domain $U_p$ of some coordinates $x_p=(x_p^1,\dots,x_p^n)$ at $p$, also called \emph{Morse coordinates}. This condition means that $X=-\grad_gh_{X,p}$ on $U_p$, where $h_{X,p}$ and $g$ are given on $U_p$ by the center and right-hand side of~\eqref{h - h(p) around p} and~\eqref{g around p}. The coordinates $x_p$ used in~\eqref{X around p} are not unique; that expression is invariant by taking positive multiples of the coordinates (contrary to the expressions~\eqref{h - h(p) around p},~\eqref{g around p} and~\eqref{eta around p}). But $\ind(p)$ is independent of $x_p$. Note that the Hopf index of $-X$ at $p$ is $(-1)^{\ind(p)}$. 

Let us consider $\eta\in Z^1(M,\mathbb R)$ and use the notation of \Cref{ss: Morse forms}. For $p\in\XX\cap\YY$, if~\eqref{g around p},~\eqref{eta around p} and~\eqref{X around p} hold with the same coordinates, then $\eta$ and $g$ are said to be in \emph{standard form} with respect to $X$ around $p$. In this case, $C\eta$ and $Cg$ ($C>0$) are also in standard form with respect to $X$ around $p$; indeed, $C\eta$, $X$ and $Cg$ satisfy~\eqref{g around p},~\eqref{eta around p} and~\eqref{X around p} with the coordinates $\sqrt{C}x_p$. If $\XX=\YY$, and $\eta$ and $g$ are in standard form with respect to $X$ around every $p\in\XX$, then $\eta$ and $g$ are said to be in \emph{standard form} with respect to $X$. This concept is also applied to any Morse function $h$ on $M$ referring to $dh$ and $g$. The reference to $g$ may be omitted in this terminology. 

Unless otherwise indicated, we assume from now on that $X$ has Morse-type zeros. Then $\YY$ is finite, and the sets $\YY_k$, $\YY_+$ and $\YY_{<k}$ are defined like in \Cref{ss: Morse forms}. 

\subsubsection{Stable/unstable manifolds}\label{sss: stable/unstable mds}

For $k=0,\dots,n$ and $p\in\YY_k$, the \emph{stable/unstable manifolds} of $p$ are smooth injective immersions, $\iota^\pm_p:W^\pm_p\to M$, where the images $\iota^\pm_p(W^\pm_p)$ consist of the points satisfying $\phi^t(x)\to p$ as $t\to\pm\infty$, and the manifolds $W^+_p$ and $W^-_p$ are diffeomorphic to $\R^{n-k}$ and $\R^k$, respectively \cite[Theorem~9.1]{Smale1963}. In particular, $p\in\iota^\pm_p(W^\pm_p)$, and the maps $\iota^+_p$ and $\iota^-_p$ meet transversely at $p$. Let $p^\pm=(\iota^\pm_p)^{-1}(p)$. Assume every $U_p$ is connected, and let $U^\pm_p$ be the connected component of $(\iota^\pm_p)^{-1}(U_p)$ that contains $p^\pm$. The restriction $\iota^\pm_p:U^\pm_p\to(x_p^\pm)^{-1}(0)$ is a diffeomorphism, and therefore $(U^\pm_p,x^\pm_p\iota^\pm_p)$ is a chart of $W^\pm_p$ at $p^\pm$.

\subsubsection{Gradient-like vector fields}\label{sss: gradient-like}

Given a Morse function $h$ on $M$ in standard form with respect to $X$, we have $X=-\grad_gh$ on $M$ for some Riemannian metric $g$ if and only if $Xh<0$ on $M\setminus\YY$ \cite[Lemma~2.1]{BurgheleaFriedlanderKappeler2010}, \cite[Section~6.1.3]{Laudenbach2012}; in this case, $X$ is said to be \emph{gradient-like} (with respect to $h$). If $X$ is gradient-like, then the maps $\iota^\pm_p$ are embeddings \cite[Lemma~3.8]{Smale1960a}, \cite[Lemma 2.2]{BurgheleaFriedlanderKappeler2010}, and their images cover $M$ \cite[Theorem~B and Lemma~1.1]{Smale1961}, \cite[Corollary~2.5]{BurgheleaFriedlanderKappeler2010}. Thus, in this case, the $\alpha$- and $\omega$-limits of the orbits of $X$ are zero points, we can write $W^\pm_p=\iota^\pm_p(W^\pm_p)$ and $p^\pm=p$, and $\iota^\pm_p$ becomes the inclusion map.

Unless otherwise indicated, we also assume in the rest of the paper that $X$ is gradient-like.

\subsubsection{Smale vector fields}\label{sss: integrals along instantons}

$X$ is said to be \emph{Smale} if $W^+_p\pitchfork W^-_q$ for all $p,q\in\YY$. Then $\MM(p,q):=W^+_p\cap W^-_q$ is a $\phi$-saturated smooth submanifold of dimension $\ind(p)-\ind(q)$. If $p=q$, we have $\MM(p,p)=\{p\}$; in this case, define $\TT(p,p)=\emptyset$. If $p\ne q$, the induced $\R$-action on $\MM(p,q)$ is free and proper; in this case, define $\TT(p,q)=\MM(p,q)/\R$, which is a smooth manifold of dimension $\ind(p)-\ind(q)-1$. The elements of $\TT(p,q)$ are the (unparameterized) trajectories with $\alpha$-limit $p$ and $\omega$-limit $q$, which are oriented by $X$. If $\ind(p)\le \ind(q)$, then $\TT(p,q)=\emptyset$. If $\ind(p)-\ind(q)=1$, then $\TT(p,q)$ consists of isolated points,
each of them representing a trajectory in $M$. Let
\[
\TT=\bigcup_{p,q\in\XX}\TT(p,q)\;,\quad\TT^1_p=\bigcup_{q\in\XX_{\ind(p)-1}}\TT(p,q)\;,\quad\TT^1_k=\bigcup_{p\in\XX_k}\TT^1_p\;,\quad\TT^1=\bigcup_{k=0}^n\TT^1_k\;.
\]
The elements of $\TT^1$ are called \emph{instantons}.\footnote{In \cite{Bott1988}, the elements of $\TT$ are called \emph{instantons}, and the elements of $\TT^1$ \emph{proper instantons}.}

$X$ can be $C^\infty$-approximated by gradient-like Smale vector fields that agree with $X$ around $\XX$ \cite[Proposition~2.4]{BurgheleaHaller2008} (this follows from \cite[Theorem~A]{Smale1961}). A well known consequence is that, for any Morse function $h$, there is a $C^\infty$-dense set of Riemannian metrics $g$ on $M$ such that $-\grad_gh$ is Smale; this density is also true in the subspace of metrics that are Euclidean with respect to Morse coordinates on given neighborhoods of the critical points.

Unless otherwise indicated, besides the above conditions, we assume from now on that $X$ is Smale; i.e., we assume~\ref{i-a:  X ...} (\Cref{ss: intro-Witten}).

\subsubsection{Lyapunov forms}\label{sss: Lyapunov}

Any $\eta\in Z^1(M,\R)$ is said to be \emph{Lyapunov} for $X$ if $\eta(X)<0$ on $M\setminus\YY$ \cite[Definition~2.3]{BurgheleaHaller2008}. Note that this condition implies that $\Zero(\eta)=\YY$. By~\ref{i-a:  X ...}, every class in $H^1(M,\R)$ has a representative $\eta$ which is Lyapunov for $X$ and $\eta^\sharp=-X$ for some Riemannian metric $g$ on $M$, with $\eta$ and $g$ in standard form with respect to $X$  \cite[Proposition~16~(i)]{BurgheleaHaller2004}, \cite[Observations~2.5 and~2.6]{BurgheleaHaller2008}, \cite[Lemma~3.7]{HarveyMinervini2006}, \cite[Section~6.1.3]{Laudenbach2012}.

\subsubsection{Completion of the unstable manifolds}\label{sss: completion}

\begin{prop}[{\cite[Appendix by F.~Laudenbach, Proposition~2]{BismutZhang1992}, \cite[Chapter~2]{Latour1994}, \cite[Theorem~2.1]{Burghelea1997}, \cite[Theorem~1]{BurgheleaHaller2001}, \cite[Theorem~4.4]{BurgheleaFriedlanderKappeler2010}, \cite[Sections~A.2 and~A.8]{Laudenbach2012}, \cite[Corollary~2.3.2]{Minervini2015}}]
\label{p: completion}
The following holds for every $p\in\YY_k\ (k=0,\dots,n)$:
\begin{enumerate}[{\rm(i)}]

\item\label{i: overline W^-_q cap overline W^-_p} $\overline{W^-_p}$ is a $C^1$ submanifold with conic singularities\footnote{In the sense of \cite[Appendix by F.~Laudenbach, Section~a)]{BismutZhang1992} and \cite[Appendix~A.1]{Laudenbach2012}.} and a Whitney stratified subspace\footnote{Introduced by H.~Whitney \cite{Whitney1965a,Whitney1965b}, and the definition was simplified by J.~Mather \cite{Mather1970}.}. Its strata are the submanifolds $W^-_q$ for $q\in\YY_{<k}$ with $\TT(p,q)\ne\emptyset$. As a consequence, $W^-_p$ has finite volume, and
\[
\overline{W^-_q}\cap\overline{W^-_p}\subset\bigcup_{x\in\YY_{<k}}W_x^-
\]
if $q\ne p$ in $\YY_k$; in particular, $p\notin\overline{W^-_q}$.

\item\label{i: partial_l widehat W^-_p}
 There is a compact $k$-manifold with corners\footnote{In the sense of \cite[Section~1.1.8]{Melrose1996}.} $\widehat W^-_p$ whose $l$-corner\footnote{The union of the interiors of the boundary faces of codimension $l$.} is
\[
\partial_l\widehat W^-_p=\bigsqcup_{(q_0,\dots,q_l)\in\{p\}\times\YY^l}\bigg(\prod_{j=1}^l\TT(q_{j-1},q_j)\bigg)\times W_{q_l}^-
\quad(0\le l\le k)\;.
\]
In particular, the interior of $\widehat W^-_p$ is $\partial_0\widehat W^-_p=W^-_p$, and the set $\TT(p,q)$ is finite if $q\in\YY_{k-1}$.

\item\label{i: hat iota^-_p} There is a smooth map $\hat\iota^-_p:\widehat W^-_p\to M$ whose restriction to every component of $\partial_l\widehat W^-_p$ is given by the factor projection to $W_{q_l}^-$, according to~\ref{i: partial_l widehat W^-_p}. In particular, $\hat\iota^-_p=\iota^-_p$ on $W^-_p$, and $\hat\iota_p^-:\widehat W^-_p\to\overline{W^-_p}$ is a stratified map.

\end{enumerate}
\end{prop}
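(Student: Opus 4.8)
The plan is to establish the three assertions simultaneously, by induction on $k=\ind(p)$, constructing $\widehat W^-_p$ and $\hat\iota^-_p$ explicitly and then reading the stratified structure of $\overline{W^-_p}=\hat\iota^-_p(\widehat W^-_p)$ off the corner structure of $\widehat W^-_p$. For $k=0$ everything is trivial since $W^-_p=\{p\}$, so fix $p\in\XX_k$ with $k>0$ and assume the statement for all rest points of index $<k$. Since $X$ is gradient-like, fix a Morse function $h$ in standard form with $Xh<0$ on $M\sm\XX$; after a small perturbation of $h$ (keeping $X$) its critical values are distinct. On $W^-_p$ the function $-h\circ\iota^-_p$ is Morse with the single critical point $p$, so for a regular value $c$ slightly below $h(p)$ the sublevel set where $h\circ\iota^-_p\ge c$ is a compact $k$-disk, and the flow identifies the rest of $W^-_p$ with $\Sigma_c\times(0,\infty)$, where $\Sigma_c$ is its boundary $(k-1)$-sphere and $\iota^-_p$ is the flow $\phi$. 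Thus $\overline{W^-_p}$ is governed by $\{\phi^t(\Sigma_c)\}_{t\ge0}$ and $\widehat W^-_p$ will be obtained by compactifying this cylindrical end.

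The local input is the behaviour near an intermediate zero $q\in\XX_l$, $l<k$, with $\TT(p,q)\ne\emptyset$: by the Smale condition $\ind(p)-\ind(q)\ge1$ and $\MM(p,q)=W^-_p\cap W^+_q$ is a $\phi$-saturated submanifold of dimension $\ind(p)-\ind(q)$, with orbit space $\TT(p,q)$ of one lower dimension. In Morse coordinates at $q$ the flow is linear hyperbolic, and the Inclination (Lambda) Lemma of Palis describes how the immersed disk $W^-_p$ flattens onto the local unstable slice $W^-_q$ as one approaches $q$; combined with the inductive control of $\overline{W^-_q}$ this shows that near an interior point of $W^-_q\subset\overline{W^-_p}$ the set $\overline{W^-_p}$ is $C^1$-modelled on the product $\TT(p,q)\times W^-_q$, the unstable directions at $q$ supplying a half-open collar. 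This local product picture, iterated, is the source both of the corner structure and of the ``$C^1$ submanifold with conic singularities'' behaviour, and it is the technical heart of the argument.

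Iterating over all chains $p=q_0,q_1,\dots,q_l$ with $\TT(q_{j-1},q_j)\ne\emptyset$ and gluing the local product models along the flow yields a compact manifold with corners $\widehat W^-_p$; compactness is the usual compactness of spaces of broken gradient trajectories (a closed subset of a compact cylinder cut out by the flow), and the $l$-corner is the disjoint union over such chains of $\big(\prod_{j=1}^l\TT(q_{j-1},q_j)\big)\times W^-_{q_l}$, exactly as asserted; in particular $\TT(p,q)$ is finite for $q\in\XX_{k-1}$. Define $\hat\iota^-_p$ to be $\iota^-_p$ on the interior and, on the component indexed by $(q_0,\dots,q_l)$, the factor projection onto $W^-_{q_l}$ followed by $\iota^-_{q_l}$; smoothness across the corners comes from the exponential convergence of $\phi^t$ to each $q_j$ in Morse coordinates, and $\hat\iota^-_p\colon\widehat W^-_p\to\overline{W^-_p}$ is then a stratified map onto a Whitney stratified space whose strata are the $W^-_q$ with $\TT(p,q)\ne\emptyset$ (Whitney's conditions are transparent in the Morse coordinates, where $W^-_p$ is $C^1$-close to translates of the unstable plane; alternatively one invokes that a Morse--Smale flow stratifies $M$). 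Finiteness of $\vol(W^-_p)$ follows since $\hat\iota^-_p$ is smooth on the compact $\widehat W^-_p$, and $p\notin\overline{W^-_q}$ for $q\ne p$ in $\XX_k$ because a stratum $W^-_q\subset\overline{W^-_p}$ would force $\ind(q)<\ind(p)$.

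The main obstacle is the $C^1$ regularity of $\overline{W^-_p}$ along the lower strata, equivalently the $C^\infty$ smoothness of $\hat\iota^-_p$ at the corners: this requires uniform Lambda-Lemma estimates for the flattening of $W^-_p$ onto $W^-_q$ near $q$, together with a careful choice of collar coordinates at the rest points making the gluings over different chains compatible. This is precisely the content of Laudenbach's appendix to \cite{BismutZhang1992} and of the later treatments cited above, which I would follow rather than reprove in full.
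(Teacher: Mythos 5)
Your outline is correct and matches how the paper treats this statement: the paper gives no proof at all, simply quoting the result from Laudenbach's appendix to \cite{BismutZhang1992} and the later accounts (\cite{Latour1994,Burghelea1997,BurgheleaHaller2001,BurgheleaFriedlanderKappeler2010,Laudenbach2012,Minervini2015}), whose standard argument (induction on the index, $\lambda$-lemma flattening near intermediate zeros, compactness of broken trajectories, local product models giving the corner structure) is exactly what you sketch. Deferring the genuinely hard step --- the $C^1$ regularity along lower strata and smoothness of $\hat\iota^-_p$ at the corners --- to those same references is therefore consistent with the paper's own treatment.
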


By \Cref{p: completion}~\ref{i: overline W^-_q cap overline W^-_p}, we can choose the open sets $U_p$ ($p\in\YY_k$, $k=0,\dots,n$) so small that $U_p\cap\overline{W^-_q}=\emptyset$ if $q\ne p$ in $\YY_k$.

For every $q\in\YY_{k-1}$ and $\gamma\in\TT(p,q)$, the closure $\bar\gamma$ in $M$ is a compact oriented submanifold with boundary of dimension one, and $\partial\bar\gamma=\{p,q\}$. We may also consider $\bar\gamma$ as the closure of $\gamma$ in $\widehat W^-_p$.

\subsection{Preliminaries on the Morse complex}\label{ss: prelim Morse}

\subsubsection{The Morse complex when $M$ is oriented}\label{sss: Morse complex - M oriented}

For reasons of clarity, assume first that $M$ is oriented. Fix an orientation $\OO_p^-$ of every unstable manifold $W^-_p$ ($p\in\YY_k$, $k=0,\dots,n$), which can be also considered as an orientation of $\widehat W^-_p$. Then $W^-_p\equiv(W^-_p,\OO^-_p)$ defines a current of dimension $k$ on $M$, also denoted by $W^-_p$; namely, for $\alpha\in \Omega^k(M)$,
\begin{equation}\label{langle W^-_p alpha rangle}
\langle W^-_p,\alpha\rangle=\int_{W^-_p}\alpha=\int_{\widehat W^-_p}(\hat\iota_p^-)^*\alpha\;.
\end{equation}

Let  $\partial_1\OO_p^-$ be the orientation of $\partial_1\widehat W^-_p$ induced by $\OO_p^-$ like in the Stokes' theorem; precisely, it is determined by $\OO_p^-=\nu_p^-\otimes\partial_1\OO_p^-$ along $\partial_1\widehat W^-_p$ for any outward-pointing normal vector $\nu_p^-$. The restriction of $\partial_1\OO_p^-$ to every component $\TT(p,q)\times W^-_q$ ($q\in\YY_{k'}$) of $\partial_1\widehat W^-_p$ is of the form $\OO_{p,q}\otimes\OO_q^-$ for a unique orientation $\OO_{p,q}$ of $\TT(p,q)$. If $k'=k-1$, then $\OO_{p,q}$ can be represented by a unique function $\epsilon_{p,q}:\TT(p,q)\to\{\pm1\}$; combining these functions, we get a map $\epsilon:\TT^1\to\{\pm1\}$. By the descriptions of $\partial_1\widehat W^-_p$ and $\hat\iota^-_p:\partial_1\widehat W^-_p\to M$, and by the Stokes' theorem for manifolds with corners, we have \cite[Appendix by F.~Laudenbach]{BismutZhang1992}, \cite[Remark~1.9]{HarveyMinervini2006}, \cite[Theorem~3.6 and Proposition~5.3]{BurgheleaFriedlanderKappeler2010}, \cite[Section~6.5.3]{Laudenbach2012}
\begin{equation}\label{partial W^-_p}
\partial W^-_p=\sum_{q\in\YY_{k-1},\ \gamma\in\TT(p,q)}\epsilon(\gamma)\,W^-_q\;.
\end{equation}
Thus the currents $W^-_p$ ($p\in\XX$) generate over $\C$ a finite dimensional subcomplex $(C_\bullet(X,W^-),\partial)$ of the complex $(\Omega(M)',\partial)$ of currents on $M$, called the \emph{Morse complex}.
The simpler notation $\bfC_\bullet=\bfC_\bullet(X)=C_\bullet(X,W^-)$ may be also used. Moreover $\bfC_\bullet\hookrightarrow \Omega(M)'$ is a quasi-isomorphism,\footnote{Actually, $H_\bullet(M,\Z)$ is isomorphic to the homology of the complex of free Abelian groups generated by the currents $W^-_p$.} $H_\bullet(\bfC_\bullet,\partial)\cong H_\bullet(M,\C)$ \cite{Thom1949,Smale1960b,Milnor1965} (see also \cite{Floer1989,Schwarz1993,Schwarz1999}, \cite[Theorem~0.1]{HelfferSjostrand1985}, \cite[Appendix by F.~Laudenbach, Proposition~7]{BismutZhang1992},  \cite[Section~6.6.5]{Laudenbach2012}). 

Let $(C_\bullet(X,W^+),\partial)=(C_\bullet(-X,W^-),\partial)$, involving the stable Morse cells $W^+_p$. If $M$ is oriented by $\OO_M$ and the orientation $\OO_p^+$ of every $W^+_p$ is choosen so that $\OO_p^+\otimes\OO_p^-=\OO_M$ at $p$, then the canonical pairing
\begin{equation}\label{pairing between C_bullet(X W^-) and C_n-bullet(X W^+)}
\langle{\cdot},{\cdot}\rangle:C_\bullet(X,W^-)\times C_{n-\bullet}(X,W^+)\to\K\;,\quad\langle W^-_p,W^+_q\rangle=\delta_{pq}\;,
\end{equation}
satisfies \cite[Section~6.6.2]{Laudenbach2012}
\begin{equation}\label{langle partial W^-_p W^+_q rangle}
\langle\partial W^-_p,W^+_q\rangle=(-1)^k\,\langle W^-_p,\partial W^+_q\rangle\quad(p\in\XX_k,\ q\in\XX_{k-1})\;.
\end{equation}

\subsubsection{The Morse complex when $M$ may not be oriented}\label{sss: Morse complex - M non-oriented}

When $M$ is not assumed to be oriented, the concepts of \Cref{sss: Morse complex - M oriented} can be extended as follows. We fix an orientation $N\OO_p^-$ of every normal bundle $NW^-_p$, which can be also considered as an orientation of $N\widehat W^-_p$ (the normal bundle of the immersion $\hat\iota_p^-$). Then we can consider $W^-_p\equiv(W^-_p,N\OO^-_p)\in\Omega^k(M,o(M))'$, by using $N\OO_p^-\otimes\alpha$ as integrand in~\eqref{langle W^-_p alpha rangle} for every $\alpha\in \Omega^k(M,o(M))$; note that $N\OO_p^-\otimes\alpha\in\Omega^k(\widehat W^-_p,o(\widehat W^-_p))=\Omega^k(\widehat W^-_p)$. With the notation of \Cref{sss: Morse complex - M oriented}, $\partial_1N\OO_p^-:=N\OO_p^-\otimes\nu_p^-$ describes an orientation of $N\partial_1\widehat W^-_p$, and the Stokes theorem has the extension (see \cite[Theorem~7.7]{BottTu1982} for the case without boundary)
\begin{equation}\label{Stokes - non-orientable}
\int_{\widehat W^-_p}N\OO_p^-\otimes d\beta=\int_{\partial_1\widehat W^-_p}\partial_1N\OO_p^-\otimes\beta\quad
\big(\beta\in\Omega^{k-1}(M,o(M))\big)\;.
\end{equation}

If $M$ is oriented by $O_M$, then $N\OO_p^-$ and $\OO_p^-$ determine each other by the condition $O_M=N\OO_p^-\otimes\OO_p^-$. Then $\partial_1N\OO_p^-$ and $\partial_1\OO_p^-$ determine each other in the same way:
\[
O_M=N\OO_p^-\otimes\OO_p^-=N\OO_p^-\otimes\nu^-_p\otimes\partial_1\OO_p^-
=\partial_1N\OO_p^-\otimes\partial_1\OO_p^-\;.
\]
So~\eqref{Stokes - non-orientable} agrees with the usual Stokes' theorem in this way. 

If $M$ is not oriented, by using local orientations of $M$, the above argument shows that~\eqref{Stokes - non-orientable} also agrees with the usual Stokes' theorem for $o(M)$-valued forms $\beta$ with small enough support. Then, like in \Cref{sss: Morse complex - M oriented}, we get the same map $\epsilon:\TT^1\to\{\pm1\}$, and therefore the same definition of $(\bfC_\bullet,\partial)$.

\subsubsection{The dual Morse complex}\label{sss: dual Morse complex}

Let $C^k(X,W^-)=(\bfC_k)^*\equiv\C^{\YY_k}$ ($k=0,\dots,n$) and $\bfd=\partial^*$. The simpler notation $\bfC^\bullet=\bfC^\bullet(X)$ will be preferred. It is said that $(\bfC^\bullet,\bfd)$ is the \emph{dual Morse complex}. Boldface notation is also used for elements of $\bfC^\bullet$ and other operators on $\bfC^\bullet$. Let $\bfe_p$ ($p\in\YY$) denote the elements of the canonical base of $\bfC^\bullet$, determined by $\bfe_p(q)=\delta_{pq}$. By~\eqref{partial W^-_p}, for $q\in\YY_{k-1}$,
\begin{equation}\label{bfd bfe_q}
\bfd\bfe_q=\sum_{p\in\YY_k,\ \gamma\in\TT(p,q)}\epsilon(\gamma)\,\bfe_p\;.
\end{equation}
Comparing~\eqref{partial W^-_p} and~\eqref{bfd bfe_q}, we see that $(C^\bullet(X,W^-),\bfd)\equiv(C_\bullet(-X,W^+),\partial)$. Thus, from now on, $(\bfC^\bullet,\bfd)$ will be also called a \emph{Morse complex}. If $M$ is oriented, it also follows from~\eqref{pairing between C_bullet(X W^-) and C_n-bullet(X W^+)} and~\eqref{langle partial W^-_p W^+_q rangle} that $(C^\bullet(X,W^-),\sw\bfd)\equiv(C_{n-\bullet}(X,W^+),\partial)$.

\subsubsection{The perturbed Morse complex}\label{sss: perturbed Morse complex}

Take any $\eta\in Z^1(M,\R)$ defining a class $\xi\in H^1(M,\R)$ (there is no need of any condition on $\eta$ or $g$ in \Cref{sss: perturbed Morse complex,sss: Morse complex with F,sss: bfDelta_z}). For reasons of brevity, write $\eta(\gamma)=\int_\gamma\eta$ for every $\gamma\in\TT^1$. According to \cite{BurgheleaHaller2001,BurgheleaHaller2004,BurgheleaHaller2008}, $(\bfC^\bullet,\bfd)$ has an analog of the Witten's perturbation, $(\bfC^\bullet,\bfd_z=\bfd_{z\eta})$ ($z\in\C$), where, for $q\in\YY_{k-1}$ ($k=1,\dots,n$),
\begin{equation}\label{bfd_z bfe_q}
\bfd_z\bfe_q=\sum_{p\in\YY_k,\ \gamma\in\TT(p,q)}\epsilon(\gamma)e^{z\eta(\gamma)}\bfe_p\;.
\end{equation}
If $\eta=dh$ for some $h\in C^\infty(M,\R)$, then $\bfd_z=e^{-zh}\bfd e^{zh}$ on $\bfC^\bullet$ because $\eta(\gamma)=h(q)-h(p)$ for $p\in\YY_k$, $q\in\YY_{k-1}$ and $\gamma\in\TT(p,q)$; here, $e^{\pm zh}$ also denotes the operator of multiplication by the restriction of this function to $\YY$. It will be said that $(\bfC^\bullet,\bfd_z)$ ($z\in\C$) is the \emph{perturbed dual Morse complex} defined by $X$ and $\eta$. A perturbation $(\bfC_\bullet,\partial^z)$ is similarly defined, multiplying by $e^{z\eta(\gamma)}$ the terms of the right-hand side of~\eqref{partial W^-_p}.

Since $W^-_p$ ($p\in\YY_k$, $k=0,\dots,n$) is diffeomorphic to $\R^k$, there is a unique $h_{\eta,p}^-\in C^\infty(W^-_p,\R)$ such that $h_{\eta,p}^-(\hat p^-)=0$ and $dh_{\eta,p}^-=(\iota_p^-)^*\eta$, where $\hat p^-\in W_p^-\subset\widehat W_p^-$ is determined by $\iota_p^-(\hat p^-)=p$. Indeed $h_{\eta,p}^-$ has a smooth extension $\hat h_{\eta,p}^-$ to $\widehat W^-_p$ because $\widehat W^-_p$ is contractile. By \Cref{p: completion}~\ref{i: partial_l widehat W^-_p}, for all $q\in\YY_{k-1}$ and $\gamma\in\TT(p,q)$, we have $\hat h_{\eta,p}^-(\gamma,\hat q^-)=\eta(\gamma)$ at $(\gamma,\hat q^-)\in\{\gamma\}\times\widehat W^-_q\subset\partial_1\widehat W^-_p$. Therefore $\hat h_{\eta,q}^-$ corresponds to the restriction of $\hat h_{\eta,p}^--\eta(\gamma)$ via the canonical diffeomorphism $\widehat W^-_q\approx\{\gamma\}\times\widehat W^-_q$.

According to \cite[Proposition~4]{BurgheleaHaller2001}, \cite[Proposition~10]{BurgheleaHaller2004}, \cite[Propositions~2.15 and~2.16 and Section~6.2]{BurgheleaHaller2008}, a sujective homomorphism of complexes,
\[
\Phi_z:(\Omega(M),d_z)\to(\bfC^\bullet,\bfd_z)\;,
\]
is defined by 
\[
\Phi_z(\omega)(p)=\int_{W^-_p}e^{zh_{\eta,p}^-}\omega=\int_{\widehat W^-_p}e^{z\hat h_{\eta,p}^-}(\hat\iota_p^-)^*\omega\;.
\]
Moreover $\Phi_z$ is a quasi-isomorphism for all $z\in\C$ \cite[Proposition~7 in the Appendix by F.~Laudenbach]{BismutZhang1992} (see also \cite[Theorem~2.9]{BismutZhang1992}, \cite[Theorem~1.6]{BismutZhang1994}, \cite[Proposition~2.17 and Section~6.2]{BurgheleaHaller2008}). If $\eta$ and $g$ satisfy~\ref{i-a:  eta Morse}, then, by~\eqref{H^bullet(Omega_sm z(M C) d_z) cong H_z^*(M C)},
\[
\Phi_z:(E_{z,\text{\rm sm}},d_z)\to(\bfC^\bullet,\bfd_z)
\]
is also a quasi-isomorphism. Since a direct adaptation of \cite[Appendix~A]{BurgheleaHaller2004} shows that, for $k=0,\dots,n$, $\dim H^k(\bfC^\bullet,\bfd_z)$ is independent of $z\in\C$ with $|\mu|\gg0$, we get~\eqref{beta_z^k = betaNo^k} because any $\xi\in H^1(M,\R)$ is represented by a Morse form.

\subsubsection{Morse complex with coefficients in a flat vector bundle}\label{sss: Morse complex with F}

With more generality, for a flat vector bundle $F$, we may consider $(C^\bullet(X,W^-,F),\bfd^F)$, where $C^k(X,W^-,F)=\bigoplus_{p\in\YY_k}F_p$, and $\bfd^F\bfe$ ($\bfe\in F_q$, $q\in\YY_{k-1}$) is defined like in the right-hand side of~\eqref{bfd bfe_q}, replacing $\bfe_p$ with the parallel transport of $\bfe$ along $\bar\gamma^{-1}$ \cite[Section~1c)]{BismutZhang1992}. This is the dual of the complex $(C_\bullet(X,W^-,F^*),\partial^{F^*})$, where $C_k(X,W^-,F^*)=\bigoplus_{p\in\YY_k}F_p^*$, and $\partial^Ff$ ($f\in F_p^*$, $p\in\XX_k$) is defined like in the right-hand side of~\eqref{partial W^-_p}, replacing $W^-_q$ with the parallel transport of $f$ along $\bar\gamma$. A quasi-isomorphism
\[
\Phi^F=\Phi^{X,F}:(\Omega(M,F),d)\to\big(C^\bullet(X,W^-,F),\bfd^F\big)
\]
can be defined like $\Phi_z$ \cite[Theorem~2.9]{BismutZhang1992}, using the isomorphism
\[
\Omega^\bullet\big(\widehat W^-_p,(\hat\iota^-_p)^*F\big)
\cong\Omega^\bullet\big(\widehat W^-_p\big)\otimes F_p
\]
given by the parallel transport of $(\hat\iota^-_p)^*F$. If $F=\LL^z$ (\Cref{sss: LL^z}), then
\[
\big(C^\bullet(X,W^-,\LL^z),\bfd^{\LL^z}\big)\equiv(\bfC^\bullet,\bfd_z)\;,\quad
\Phi^{\LL^z}\equiv\Phi_z\;.
\]

\subsubsection{Hodge theory of the Morse complex}\label{sss: bfDelta_z}

Consider the Hermitian scalar product on $\bfC^\bullet$ so that the canonical base $\bfe_p$ ($p\in\YY$) is orthonormal. All operators induced by $\bfd_z$ and this Hermitian structure are called \emph{perturbed Morse operators}. For instance, besides $\bfd_z$, we have the perturbed Morse operators
\begin{gather*}
\bfdelta_z=\bfd_z^*\;,\quad\bfD_z=\bfd_z+\bfdelta_z\;,\quad\bfDelta_z=\bfD_z^2=\bfd_z\bfdelta_z+\bfdelta_z\bfd_z\;.
\end{gather*}
In particular, it will be said that $\bfDelta_z$ is the \emph{perturbed Morse Laplacian}, and its eigenvalues will be called \emph{perturbed Morse eigenvalues}. If $z=0$, we omit the subscript ``$z$'' and the word ``perturbed''. From~\eqref{bfd_z bfe_q}, we easily get
\begin{equation}\label{bfdelta_z bfe_p}
\bfdelta_z\bfe_p=\sum_{q\in\YY_{k-1},\ \gamma\in\TT(p,q)}e^{\bar z\eta(\gamma)}\epsilon(\gamma)\,\bfe_q\;,
\end{equation}
for $p\in\YY_k$. We also have
\begin{gather*}
\bfC^\bullet=\ker\bfDelta_z\oplus\im\bfd_z\oplus\im\bfdelta_z\;,\\
\ker\bfDelta_z=\ker\bfD_z=\ker\bfd_z\cap\ker\bfdelta_z\;,\quad
\im\bfDelta_z=\im\bfD_z=\im\bfd_z\oplus\im\bfdelta_z\;.
\end{gather*}
The orthogonal projections of $\bfC^\bullet$ to $\ker\bfDelta_z$, $\im\bfd_z$ and $\im\bfdelta_z$ are denoted by $\bfPi_z=\bfPi^0_z$, $\bfPi_z^1$ and $\bfPi_z^2$, respectively. The compositions $\bfd_z^{-1}\bfPi_z^1$, $\bfdelta_z^{-1}\bfPi_z^2$ and $\bfD_z^{-1}\bfPi_z^\perp$ are defined like in \Cref{sss: perturbations}, and there is an obvious version of the commutative diagram~\eqref{CD im delta_z k+1 ...}.

\subsection{The small complex vs the Morse complex}\label{ss: small complex vs Morse complex}

Our main objects of interest are the form $\eta\in Z^1(M;\R)$ and the Riemannian metric $g$; $X$ plays an auxiliary role. As indicated in \Cref{sss: Lyapunov}, by~\ref{i-a:  X ...}, we can choose some $\eta\in\xi$ and $g$ satisfying~\ref{i-a:  eta Morse} and~\ref{i-a:  eta Lyapunov}  (\Cref{ss: intro-Witten}). Thus, unless otherwise indicated, assume from now on that $X$, $\eta$ and $g$ satisfy~\ref{i-a:  eta Lyapunov}, besides~\ref{i-a:  eta Morse} and~\ref{i-a:  X ...}. In particular, $\YY=\Zero(\eta)$.

For every $p\in\YY$, consider the functions $h_{\eta,p}$, $h_{X,p}$, $h_{\eta,p}^-$ and $\hat h_{\eta,p}^-$ defined in \Cref{ss: Morse forms,sss: Morse-type zeros,sss: perturbed Morse complex}. By~\ref{i-a:  eta Lyapunov}, we have 
\begin{alignat}{2}
h_{\eta,p}&=h_{X,p}&\quad\text{on}\quad&U_p\;,\notag\\
h_{\eta,p}^-&=h_{\eta,p}=-\frac12|x_p^-|^2&\quad\text{on}\quad&U_p^-\;,\label{h_eta p^-}\\
h_{\eta,p}^-&<0&\quad\text{on}\quad&W^-_p\setminus\{p\}\;.\label{h_eta p^- < 0}
\end{alignat}
From now on, the subscripts $X$ and $\eta$ will be dropped from the notation of these functions.

Continuing with the notation of \Cref{sss: perturbed Morse complex}, let $J_z:\bfC^\bullet\to E_z$ be the $\C$-linear isometry given by $J_z(\bfe_p)= e_{p,z}$, and let $\Psi_z=P_{z,\text{\rm sm}}J_z:\bfC^\bullet\to E_{z,\text{\rm sm}}$, which is an isomorphism for $\mu\gg0$ (\Cref{c: P_z sm : E_z -> E_z sm iso}). By \Cref{p: | alpha - P_z sm alpha |_m i nu},
\[
\|\Psi_z\bfe\|=\big(1+O\big(e^{-c\mu}\big)\big)\|\bfe\|\quad(\mu\to+\infty)
\]
for all $\bfe\in\bfC^\bullet$. Using polarization (see e.g.\ \cite[Section~I.6.2]{Kato1995}) and conjugation, this means that, as $\mu\to+\infty$,
\begin{equation}\label{Psi_z^* Psi_z = 1 + O(e^-c mu)}
\Psi_z^*\Psi_z=1+O\big(e^{-c\mu}\big)\;,\quad\Psi_z\Psi_z^*=1+O\big(e^{-c\mu}\big)\;.
\end{equation}

\begin{notation}\label{n: asymp_0}
Consider functions $u(z)$ and $v(z)$ ($z\in\C)$ with values in Banach spaces. The notation $u(z)\asymp_0 v(z)$ ($\mu\to\pm\infty$) means
\[
u(z)=v(z)+O\big(e^{-c|\mu|}\big)\quad(\mu\to\pm\infty)\;.
\]
This notation may be used even when the Banach spaces depend on $z$.
\end{notation}

\begin{thm}[{Cf.\ \cite[Theorem 6.11]{BismutZhang1994}, \cite[Theorem~6.9]{Zhang2001}, \cite[Theorem~4]{BurgheleaHaller2001}}]\label{t: Phi_z+tau Psi_z}
For every $\tau\in\R$, as $\mu\to+\infty$,
\[
\Phi_{z+\tau}\Psi_z
\asymp_0\Big(\frac{\pi}{\mu+\tau/2}\Big)^{\sN/2}\Big(\frac{\mu}{\pi}\Big)^{n/4}\;.
\]
\end{thm}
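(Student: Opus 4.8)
The plan is to compute $\Phi_{z+\tau}\Psi_z$ on the canonical basis $\bfe_p$ ($p\in\XX$) and show that the off-diagonal entries are $O(e^{-c\mu})$ while the diagonal entry at $p\in\XX_k$ is $(\pi/(\mu+\tau/2))^{k/2}(\mu/\pi)^{n/4}+O(e^{-c\mu})$. First I would expand $\Psi_z\bfe_p=P_{z,\text{\rm sm}}e_{p,z}$. By \Cref{p: | alpha - P_z sm alpha |_m i nu} (with $m=0$), $P_{z,\text{\rm sm}}e_{p,z}\asymp_0 e_{p,z}$ in $L^2$; but to integrate over the noncompact completed unstable manifolds $\widehat W^-_q$ I need $L^\infty$ (or better, pointwise exponential-decay) control, which is exactly what \Cref{c: | alpha - P_z sm alpha |_L^infty} provides: $\|P_{z,\text{\rm sm}}e_{p,z}-e_{p,z}\|_{L^\infty}\le e^{-c\mu}\|e_{p,z}\|=O(e^{-c\mu})$. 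Since $\overline{W^-_q}$ has finite volume (\Cref{p: completion}~\ref{i: overline W^-_q cap overline W^-_p}) and $e^{(z+\tau)\hat h^-_q}$ is bounded on $\widehat W^-_q$ by~\eqref{h_p^- < 0} for $\mu+\tau>0$ (here $h^-_q\le0$, so $|e^{(z+\tau)\hat h^-_q}|=e^{(\mu+\tau)\hat h^-_q}\le1$), the error term contributes
\[
\big|\Phi_{z+\tau}(P_{z,\text{\rm sm}}e_{p,z}-e_{p,z})(q)\big|
\le\vol(W^-_q)\,\|P_{z,\text{\rm sm}}e_{p,z}-e_{p,z}\|_{L^\infty}=O(e^{-c\mu})\;.
\]
So it suffices to compute $\Phi_{z+\tau}(e_{p,z})(q)=\int_{\widehat W^-_q}e^{(z+\tau)\hat h^-_q}(\hat\iota^-_q)^*e_{p,z}$.

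Next I would analyze this integral. The form $e_{p,z}$ is supported in $U_p$, and by the choice of the $U_x$'s (made just after \Cref{p: completion}) we have $U_p\cap\overline{W^-_q}=\emptyset$ when $q\ne p$ in $\XX_k$; moreover $\TT(q,p)=\emptyset$ unless $\ind(q)>\ind(p)$, so for $q\in\XX_k$ with $q\ne p$ the stratified set $\overline{W^-_q}$ does not meet $\supp e_{p,z}$ at all, giving $\Phi_{z+\tau}(e_{p,z})(q)=0$. (For $\ind(q)\ne\ind(p)$ the entry vanishes for degree reasons since $e_{p,z}$ is an $\ind(p)$-form and we integrate over the $\ind(q)$-dimensional cycle $W^-_q$.) This reduces everything to the diagonal entry $q=p$, $k=\ind(p)$. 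On $U^-_p$ I use the coordinates $x^-_p=(x^1_p,\dots,x^k_p)$, in which $h^-_p=-\tfrac12|x^-_p|^2$ by~\eqref{h_eta p^-}, while $(\hat\iota^-_p)^*e_{p,z}$ pulls back (using~\eqref{e_p mu},~\eqref{e_p z},~\eqref{a_mu} and that $x^+_p=0$ on $U^-_p$) to $\tfrac1{a_\mu}e^{-i\nu h^-_p}\rho(x^1_p)\cdots\rho(x^k_p)\,e^{-\mu|x^-_p|^2/2}\,dx^1_p\wedge\dots\wedge dx^k_p$, with $a_\mu=(\pi/\mu)^{n/4}+O(e^{-c\mu})$. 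Thus, up to $O(e^{-c\mu})$ (from the cutoffs $\rho$ and from $a_\mu$), the diagonal entry equals
\[
\frac1{a_\mu}\int_{\R^k}e^{(\mu+\tau)\cdot(-|x^-_p|^2/2)}\,e^{i\nu|x^-_p|^2/2}\,e^{-i\nu|x^-_p|^2/2}\,e^{-\mu|x^-_p|^2/2}\,dx^-_p
=\frac1{a_\mu}\int_{\R^k}e^{-(\mu+\tau/2)|x^-_p|^2}\,dx^-_p\;,
\]
where the two $e^{\pm i\nu|x^-_p|^2/2}$ factors cancel. This Gaussian integral is $(\pi/(\mu+\tau/2))^{k/2}$, so the entry is $(\mu/\pi)^{n/4}(\pi/(\mu+\tau/2))^{k/2}+O(e^{-c\mu})$, matching the operator $(\pi/(\mu+\tau/2))^{\sN/2}(\mu/\pi)^{n/4}$ on $\bfe_p\in\bfC^k$.

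Assembling these: $\Phi_{z+\tau}\Psi_z\bfe_p=\Phi_{z+\tau}e_{p,z}+O(e^{-c\mu})=(\pi/(\mu+\tau/2))^{\ind(p)/2}(\mu/\pi)^{n/4}\bfe_p+O(e^{-c\mu})$, uniformly in $\nu$ and in $p\in\XX$; since $\XX$ is finite this gives the operator asymptotics in the $\asymp_0$ sense. The main obstacle is the interchange of the $L^2$-level estimate \Cref{p: | alpha - P_z sm alpha |_m i nu} with integration over the \emph{noncompact} manifolds $W^-_q$: one must genuinely pass through the sup-norm bound \Cref{c: | alpha - P_z sm alpha |_L^infty} and use finiteness of $\vol(W^-_q)$ together with the boundedness $|e^{(z+\tau)\hat h^-_q}|\le1$ coming from the Lyapunov condition~\eqref{h_p^- < 0} (valid for $\mu+\tau>0$, i.e.\ $\mu\gg0$). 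A secondary technical point is making sure, via the geometry of the completed unstable manifold (\Cref{p: completion} and the disjointness $U_p\cap\overline{W^-_q}=\emptyset$ for $q\ne p$ in $\XX_k$), that $\hat\iota^-_q$ does not carry lower strata of $\widehat W^-_q$ into $\supp e_{p,z}$, so no boundary-stratum contributions survive; the cancellation of the $\nu$-dependent phases $e^{\pm i\nu h^-_p}$ is then what yields uniformity in $\nu$.
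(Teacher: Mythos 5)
Your proposal is correct and follows essentially the same route as the paper's proof: expand $\Phi_{z+\tau}\Psi_z\bfe_p$ over $q\in\XX_k$, kill the error $P_{z,\text{\rm sm}}e_{p,z}-e_{p,z}$ via the sup-norm bound of \Cref{c: | alpha - P_z sm alpha |_L^infty} together with $\hat h^-_q\le0$ and finite volume of $W^-_q$, kill the off-diagonal terms using $U_p\cap\overline{W^-_q}=\emptyset$, and evaluate the diagonal term by the Gaussian integral with the $e^{\pm i\nu h_p}$ phases canceling, exactly as in~\eqref{int_widehat W^-_p e^(z+tau) hat h_p^- (hat iota_p^-)^* e_p z}. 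Your explicit justification of the error estimate (bounded weight plus $\vol(W^-_q)<\infty$) is just a spelled-out version of what the paper invokes implicitly.
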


\begin{proof}
We adapt the proof of \cite[Theorem~6.9]{Zhang2001} to the case of complex parameter. For every $p\in\YY_k$,
\begin{equation}\label{Phi_z + tau Psi_z bfe_p}
\Phi_{z+\tau}\Psi_z\bfe_p=\sum_{q\in\YY_k}\bfe_q
\int_{\widehat W^-_q}e^{(z+\tau)\hat h_q^-}(\hat\iota_q^-)^*P_{z,\text{\rm sm}} e_{p,z}\;.
\end{equation}
Then the result follows by checking the asymptotics of these integrals using the compactness of $\widehat W^-_q$.

In the case $q=p$, by~\eqref{h_eta p^- < 0} and \Cref{c: | alpha - P_z sm alpha |_L^infty},
\[
\int_{\widehat W^-_p}e^{(z+\tau)\hat h_p^-}(\hat\iota_p^-)^*(P_{z,\text{\rm sm}}-1) e_{p,z}\asymp_00\;.
\]
But, by \Cref{p: Delta'_p mu}~\ref{i: e'_p mu},~\eqref{rho_p}--\eqref{a_mu} and~\eqref{h_eta p^-},
\begin{multline}\label{int_widehat W^-_p e^(z+tau) hat h_p^- (hat iota_p^-)^* e_p z}
\int_{\widehat W^-_p}e^{(z+\tau)\hat h_p^-}(\hat\iota_p^-)^* e_{p,z}
=\int_{\widehat W^-_p}e^{(z+\tau)\hat h_p^-}(\hat\iota_p^-)^*\big(e^{-i\nu h_p}e_{p,\mu}\big)\\
=\int_{\widehat W^-_p}e^{(\mu+\tau)\hat h_p^-}(\hat\iota_p^-)^*e_{p,\mu}
=\frac1{a_\mu}\Big(\int_{-2r}^{2r}\rho(x)e^{-(2\mu+\tau)x^2/2}\,dx\Big)^k\\
=\Big(\frac{\pi}{\mu+\tau/2}\Big)^{k/2}\Big(\frac\mu\pi\Big)^{n/4}\big(1+O\big(e^{-c\mu}\big)\big)\;.
\end{multline}
(When $\tau=0$, the last equality is the same as \cite[Eq.~(6.30)]{Zhang2001}.)

For $q\ne p$ in $\YY_k$, since $e_{p,z}=0$ on $\overline{W^-_q}$ because $U_p\cap\overline{W^-_q}=\emptyset$ (\Cref{sss: completion}), like in the previous case, we get
\[
\int_{\widehat W^-_q}e^{(z+\tau)\hat h_q^-}(\hat\iota_q^-)^*P_{z,\text{\rm sm}} e_{p,z}\asymp_00\;.\qedhere
\]
\end{proof}

\begin{cor}\label{c: Phi_z: E_z sm^bullet to bfC^bullet iso}
For every $\tau\in\R$, if $\mu\gg0$, then $\Phi_{z+\tau}:E_{z,\text{\rm sm}}\to\bfC^\bullet$ is a linear\footnote{It is an isomorphism of complexes if $\tau=0$.} isomorphism.
\end{cor}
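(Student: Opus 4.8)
The plan is to derive \Cref{c: Phi_z: E_z sm^bullet to bfC^bullet iso} directly from \Cref{t: Phi_z+tau Psi_z}, exactly in the spirit of the classical ``almost isometry implies isomorphism'' argument of Bismut--Zhang and Zhang. First I would observe that both $E_{z,\text{\rm sm}}$ and $\bfC^\bullet$ are finite-dimensional graded vector spaces of the same dimension: indeed $\dim E_{z,\text{\rm sm}}^k=|\XX_k|=\dim\bfC^k$ for $\mu\gg0$ by \Cref{c: P_z sm : E_z -> E_z sm iso} and the definition of $\bfC^\bullet$ in \Cref{sss: Morse complex}. Hence it suffices to show that $\Phi_{z+\tau}\colon E_{z,\text{\rm sm}}\to\bfC^\bullet$ is injective for $\mu\gg0$, and since $\Psi_z\colon\bfC^\bullet\to E_{z,\text{\rm sm}}$ is already known to be an isomorphism for $\mu\gg0$ (\Cref{c: P_z sm : E_z -> E_z sm iso}, via $\Psi_z=P_{z,\text{\rm sm}}J_z$), it is enough to prove that the composite $\Phi_{z+\tau}\Psi_z\colon\bfC^\bullet\to\bfC^\bullet$ is invertible for $\mu\gg0$.

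The key step is then just to read off invertibility from the asymptotic formula in \Cref{t: Phi_z+tau Psi_z}. On the degree-$k$ component, that theorem gives
\[
\Phi_{z+\tau}\Psi_z\big|_{\bfC^k}=\Big(\frac{\pi}{\mu+\tau/2}\Big)^{k/2}\Big(\frac{\mu}{\pi}\Big)^{n/4}\id_{\bfC^k}+O\big(e^{-c\mu}\big)\quad(\mu\to+\infty)\;.
\]
Writing $\lambda_{k}(\mu)=\big(\pi/(\mu+\tau/2)\big)^{k/2}(\mu/\pi)^{n/4}$, which is a strictly positive scalar for $\mu$ large, I would factor $\Phi_{z+\tau}\Psi_z\big|_{\bfC^k}=\lambda_k(\mu)\big(\id_{\bfC^k}+\lambda_k(\mu)^{-1}O(e^{-c\mu})\big)$. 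Since $\lambda_k(\mu)$ decays only polynomially in $\mu$ (it is bounded below by $c'\mu^{(n-2k)/4}\ge c'\mu^{-n/4}$ up to constants for $k\le n$), the error term $\lambda_k(\mu)^{-1}O(e^{-c\mu})$ still tends to $0$ as $\mu\to+\infty$, uniformly in $\nu$. Therefore $\id_{\bfC^k}+\lambda_k(\mu)^{-1}O(e^{-c\mu})$ is invertible by the Neumann series for $\mu\gg0$, hence so is $\Phi_{z+\tau}\Psi_z\big|_{\bfC^k}$, and summing over $k$ gives that $\Phi_{z+\tau}\Psi_z$ is invertible for $\mu\gg0$. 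Consequently $\Phi_{z+\tau}\colon E_{z,\text{\rm sm}}\to\bfC^\bullet$ is surjective (it is already known to be a surjective chain map) and, by equality of dimensions, an isomorphism.

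There is essentially no obstacle here: the whole content has been packed into \Cref{t: Phi_z+tau Psi_z}. The only point requiring a modicum of care is the bookkeeping of the degree-dependent scalar factor $\lambda_k(\mu)$ and the verification that dividing the exponentially small error by it still leaves something tending to zero as $\mu\to+\infty$; this is immediate because $e^{-c\mu}$ beats any fixed power of $\mu$. (Implicitly one also uses \Cref{r: | mu |} to pass to the case $\mu\ll0$, or equivalently one runs the same argument with $-\eta$.) I would note that the uniformity in $\nu$ is inherited automatically, since the asymptotics in \Cref{t: Phi_z+tau Psi_z} are uniform in $\nu$ and the scalars $\lambda_k(\mu)$ do not depend on $\nu$.
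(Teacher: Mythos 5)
Your argument is correct and is essentially the paper's own proof, which simply applies \Cref{t: Phi_z+tau Psi_z} together with \Cref{c: P_z sm : E_z -> E_z sm iso}: the asymptotic formula shows $\Phi_{z+\tau}\Psi_z$ is a positive degree-dependent scalar plus an exponentially small error, hence invertible for $\mu\gg0$, and composing with $\Psi_z^{-1}$ gives the isomorphism. Your extra bookkeeping with $\lambda_k(\mu)$ and the dimension count is fine (if slightly redundant), so there is nothing to correct.
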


\begin{proof}
Apply \Cref{t: Phi_z+tau Psi_z,c: P_z sm : E_z -> E_z sm iso}.
\end{proof}

\begin{rem}\label{r: Phi_z: E_z to bfC^bullet iso}
The argument of the proof of \Cref{t: Phi_z+tau Psi_z} shows that
\[
\Phi_zJ_z
=\Big(\frac\pi{\mu}\Big)^{\sN/2-n/4}+O\big(e^{-c\mu}\big)
\quad(\mu\to+\infty)\;.
\]
So $\Phi_z:E_z\to\bfC^\bullet$ is an isomorphism for $\mu\gg0$ (see also \cite[Lemma~5.2]{BurgheleaHaller2008}).
\end{rem}

Let
\[
\widetilde\Psi_z=\Big(\frac{\mu}{\pi}\Big)^{\sN/2-n/4}\Psi_z:
\bfC^\bullet\to E_{z,\text{\rm sm}}\;.
\]

\begin{cor}\label{c: widetilde Psi_z^* widetilde Psi_z}
Consider $\widetilde\Psi_z^*:E_{z,\text{\rm sm}}\to\bfC^\bullet$. As $\mu\to+\infty$,
\[
\widetilde\Psi_z^*\widetilde\Psi_z=\Big(\frac\mu\pi\Big)^{\sN-n/2}+O\big(e^{-c\mu}\big)\;,\quad
\widetilde\Psi_z\widetilde\Psi_z^*=\Big(\frac\mu\pi\Big)^{\sN-n/2}+O\big(e^{-c\mu}\big)\;.
\]
\end{cor}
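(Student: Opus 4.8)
The plan is to reduce the claim to the two-sided isometry estimate \eqref{Psi_z^* Psi_z = 1 + O(e^-c mu)} already established for $\Psi_z$, together with the elementary observation that $\Psi_z=P_{z,\text{\rm sm}}J_z$ is homogeneous of degree $0$. Indeed $J_z$ maps $\bfe_p\in\bfC^{\ind(p)}$ to $e_{p,z}\in\Omega^{\ind(p)}(M)$, and $P_{z,\text{\rm sm}}$ preserves degree, so $\Psi_z$ sends $\bfC^k$ into $E_{z,\text{\rm sm}}^k$; hence $\Psi_z$ intertwines the number operators of $\bfC^\bullet$ and $E_{z,\text{\rm sm}}$, and therefore commutes with any fixed function of $\sN$, and the same holds for $\Psi_z^*$.

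First I would record that $(\mu/\pi)^{\sN/2-n/4}$ is a positive self-adjoint operator, so taking adjoints in $\widetilde\Psi_z=(\mu/\pi)^{\sN/2-n/4}\Psi_z$ gives $\widetilde\Psi_z^*=\Psi_z^*(\mu/\pi)^{\sN/2-n/4}$. Then, using degree-homogeneity of $\Psi_z$ in the last step,
\[
\widetilde\Psi_z^*\widetilde\Psi_z=\Psi_z^*\Big(\frac\mu\pi\Big)^{\sN-n/2}\Psi_z=\Psi_z^*\Psi_z\Big(\frac\mu\pi\Big)^{\sN-n/2}\;,\qquad
\widetilde\Psi_z\widetilde\Psi_z^*=\Big(\frac\mu\pi\Big)^{\sN/2-n/4}\Psi_z\Psi_z^*\Big(\frac\mu\pi\Big)^{\sN/2-n/4}\;.
\]
Substituting \eqref{Psi_z^* Psi_z = 1 + O(e^-c mu)} into both right-hand sides yields
\[
\widetilde\Psi_z^*\widetilde\Psi_z=\Big(\frac\mu\pi\Big)^{\sN-n/2}+O\big(e^{-c\mu}\big)\Big(\frac\mu\pi\Big)^{\sN-n/2}\;,\qquad
\widetilde\Psi_z\widetilde\Psi_z^*=\Big(\frac\mu\pi\Big)^{\sN-n/2}+\Big(\frac\mu\pi\Big)^{\sN/2-n/4}O\big(e^{-c\mu}\big)\Big(\frac\mu\pi\Big)^{\sN/2-n/4}\;.
\]

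The only point needing care — and the closest thing to an obstacle — is that the error terms still carry factors that are powers of $(\mu/\pi)^{\pm}$, whose operator norms on the finite-dimensional space $\bfC^\bullet$ grow, but only polynomially: $\|(\mu/\pi)^{\sN-n/2}\|=(\mu/\pi)^{n/2}$ and $\|(\mu/\pi)^{\sN/2-n/4}\|=(\mu/\pi)^{n/4}$ for $\mu>\pi$. Hence both error terms have norm $O\big(\mu^{n/2}e^{-c\mu}\big)=O\big(e^{-c'\mu}\big)$ for any $0<c'<c$. After replacing $c$ by such a $c'$, this gives exactly the two stated estimates (and shows that the constant $c$ here may be taken to be any number strictly less than the one in \eqref{Psi_z^* Psi_z = 1 + O(e^-c mu)}).
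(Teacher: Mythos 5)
Your proof is correct and is essentially the paper's argument: the paper simply declares the corollary a direct consequence of \eqref{Psi_z^* Psi_z = 1 + O(e^-c mu)}, and you have filled in the routine details (degree preservation of $\Psi_z$ gives commutation with the scaling factor $(\mu/\pi)^{\sN/2-n/4}$, and the resulting polynomial factors are absorbed into the exponential error by shrinking $c$, which is consistent with the paper's convention that constants may change from line to line).
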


\begin{proof}
This is a direct consequence of~\eqref{Psi_z^* Psi_z = 1 + O(e^-c mu)}.
\end{proof}

\begin{cor}\label{c: Phi_z+tau widetilde Psi_z}
For any $\tau\in\R$, consider $\Phi_{z+\tau}:E_{z,\text{\rm sm}}\to\bfC^\bullet$. As $\mu\to+\infty$,
\[
\Phi_{z+\tau}\widetilde\Psi_z\asymp_0\Big(\frac{\mu}{\mu+\tau/2}\Big)^{\sN/2}\;,\quad
\widetilde\Psi_z\Phi_{z+\tau}\asymp_0\Big(\frac{\mu}{\mu+\tau/2}\Big)^{\sN/2}\;.
\]
\end{cor}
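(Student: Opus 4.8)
The plan is to deduce both estimates directly from \Cref{t: Phi_z+tau Psi_z}, using only that $\Phi_{z+\tau}$ and $\Psi_z$ are grading-preserving (so they commute with every function of $\sN$) and that $\widetilde\Psi_z$ is an isomorphism for $\mu\gg0$ with $\|\widetilde\Psi_z\|$ and $\|\widetilde\Psi_z^{-1}\|$ of order $O(\mu^{n/4})$, the latter bounds being read off from \Cref{c: widetilde Psi_z^* widetilde Psi_z}.

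First I would handle $\Phi_{z+\tau}\widetilde\Psi_z$. Since $\widetilde\Psi_z=(\mu/\pi)^{\sN/2-n/4}\Psi_z$ and $\Phi_{z+\tau}$ preserves the grading, it commutes with $(\mu/\pi)^{\sN/2-n/4}$, whence $\Phi_{z+\tau}\widetilde\Psi_z=(\mu/\pi)^{\sN/2-n/4}\,\Phi_{z+\tau}\Psi_z$, where $\sN$ now acts on $\bfC^\bullet$. Multiplying the relation of \Cref{t: Phi_z+tau Psi_z} on the left by the diagonal operator $(\mu/\pi)^{\sN/2-n/4}$, whose operator norm is $O(\mu^{n/4})$, turns the error term $O(e^{-c\mu})$ into $O(\mu^{n/4}e^{-c\mu})$, i.e.\ again into an $\asymp_0$-error after a harmless decrease of $c$; thus
\[
\Phi_{z+\tau}\widetilde\Psi_z\asymp_0\Big(\frac{\mu}{\pi}\Big)^{\sN/2-n/4}\Big(\frac{\pi}{\mu+\tau/2}\Big)^{\sN/2}\Big(\frac{\mu}{\pi}\Big)^{n/4}=\Big(\frac{\mu}{\mu+\tau/2}\Big)^{\sN/2},
\]
the last equality following from the cancellation of the scalar factors $(\mu/\pi)^{\pm n/4}$ and from $(\mu/\pi)^{\sN/2}(\pi/(\mu+\tau/2))^{\sN/2}=(\mu/(\mu+\tau/2))^{\sN/2}$ degree by degree.

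For $\widetilde\Psi_z\Phi_{z+\tau}$ I would write $\widetilde\Psi_z\Phi_{z+\tau}=\widetilde\Psi_z(\Phi_{z+\tau}\widetilde\Psi_z)\widetilde\Psi_z^{-1}$ (valid since $\widetilde\Psi_z$ is invertible for $\mu\gg0$, being a rescaling of the isomorphism $\Psi_z$ of \Cref{c: P_z sm : E_z -> E_z sm iso}) and substitute the first part. Writing $\Phi_{z+\tau}\widetilde\Psi_z=(\mu/(\mu+\tau/2))^{\sN/2}+R$ with $\|R\|=O(e^{-c\mu})$, the leading term is $\widetilde\Psi_z(\mu/(\mu+\tau/2))^{\sN/2}\widetilde\Psi_z^{-1}=(\mu/(\mu+\tau/2))^{\sN/2}$ (now on $E_{z,\text{\rm sm}}$), because $\widetilde\Psi_z$ commutes with functions of $\sN$; the remainder $\widetilde\Psi_z R\widetilde\Psi_z^{-1}$ is bounded in operator norm by $\|\widetilde\Psi_z\|\cdot O(e^{-c\mu})\cdot\|\widetilde\Psi_z^{-1}\|=O(\mu^{n/2}e^{-c\mu})$, which is again $\asymp_0\,0$. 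This yields the second estimate.

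I do not expect a genuine obstacle here: every step is a formal manipulation of \Cref{t: Phi_z+tau Psi_z}. The only points requiring attention are the bookkeeping of the exponential rate in the $\asymp_0$ symbol of \Cref{n: asymp_0} — each multiplication by an operator of polynomial size in $\mu$ forces the constant $c$ to shrink slightly, which that notation permits — and keeping track of whether $\sN$ acts on $\bfC^\bullet$ or on $E_{z,\text{\rm sm}}$, the two occurrences being intertwined by the grading-preserving maps $\Phi_{z+\tau}$ and $\widetilde\Psi_z$.
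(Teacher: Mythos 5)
Your proof is correct and follows essentially the same route as the paper: the first relation is obtained by rescaling \Cref{t: Phi_z+tau Psi_z} with the degree operator $(\mu/\pi)^{\sN/2-n/4}$, and the second by conjugating the first by $\widetilde\Psi_z$, with \Cref{c: widetilde Psi_z^* widetilde Psi_z} supplying the polynomial bounds on $\|\widetilde\Psi_z\|$ and $\|\widetilde\Psi_z^{-1}\|$ that keep the error exponentially small. Your extra bookkeeping (shrinking $c$ after polynomial factors, tracking on which space $\sN$ acts) is exactly the content the paper leaves implicit.
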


\begin{proof}
The first relation is a restatement of \Cref{t: Phi_z+tau Psi_z}. The second relation follows by conjugating the first one by $\widetilde\Psi_z$ and using~\Cref{c: widetilde Psi_z^* widetilde Psi_z}.
\end{proof}

\begin{cor}\label{c: widetilde Psi_z^-1 approx Phi_z}
As $\mu\to+\infty$, $\widetilde\Psi_z^{-1}\asymp_0\Phi_z$ on $E_{z,\text{\rm sm}}$.
\end{cor}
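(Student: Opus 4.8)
The plan is to read off the statement $\widetilde\Psi_z^{-1}\asymp_0\Phi_z$ on $E_{z,\text{\rm sm}}$ directly from \Cref{c: Phi_z+tau widetilde Psi_z} by specializing $\tau=0$. First I would note that by \Cref{c: P_z sm : E_z -> E_z sm iso} (and the definition of $\widetilde\Psi_z$ as $\Psi_z$ composed with an invertible diagonal rescaling), both $\widetilde\Psi_z:\bfC^\bullet\to E_{z,\text{\rm sm}}$ and $\Phi_z:E_{z,\text{\rm sm}}\to\bfC^\bullet$ are isomorphisms for $\mu\gg0$, so the composite $\Phi_z\widetilde\Psi_z$ is an automorphism of $\bfC^\bullet$ and $\widetilde\Psi_z^{-1}$ makes sense.

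Next, take $\tau=0$ in \Cref{c: Phi_z+tau widetilde Psi_z}. The factor $\bigl(\frac{\mu}{\mu+\tau/2}\bigr)^{\sN/2}$ becomes the identity operator on $\bfC^\bullet$, so the first relation of that corollary reads $\Phi_z\widetilde\Psi_z\asymp_0\id$, i.e.\ $\Phi_z\widetilde\Psi_z=\id+O(e^{-c\mu})$ as $\mu\to+\infty$. Composing on the left with $\widetilde\Psi_z^{-1}$ gives $\Phi_z=\widetilde\Psi_z^{-1}+\widetilde\Psi_z^{-1}O(e^{-c\mu})$, so it remains only to check that $\widetilde\Psi_z^{-1}$ is bounded uniformly in $z$ for $\mu\gg0$, which turns \Cref{n: asymp_0}'s error term $O(e^{-c\mu})$ into another error term of the same type (possibly shrinking $c$).

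To get that uniform bound on $\|\widetilde\Psi_z^{-1}\|$: from \Cref{c: widetilde Psi_z^* widetilde Psi_z} we have $\widetilde\Psi_z^*\widetilde\Psi_z=\bigl(\frac\mu\pi\bigr)^{\sN-n/2}+O(e^{-c\mu})$ on $\bfC^\bullet$. The operator $\bigl(\frac\mu\pi\bigr)^{\sN-n/2}$ is diagonal in the degree grading with entries $(\mu/\pi)^{k-n/2}$ for $k=0,\dots,n$; restricted to $\bfC^\bullet$ it is invertible with inverse of norm $O(\mu^{n/2})$ (a fixed power of $\mu$, independent of $\nu$). Hence for $\mu\gg0$ the operator $\widetilde\Psi_z^*\widetilde\Psi_z$ is invertible with $\|(\widetilde\Psi_z^*\widetilde\Psi_z)^{-1}\|=O(\mu^{n/2})$, and therefore $\|\widetilde\Psi_z^{-1}\|^2=\|(\widetilde\Psi_z^*\widetilde\Psi_z)^{-1}\|=O(\mu^{n/2})$, a polynomial bound in $\mu$ uniform in $\nu$. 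Multiplying the exponentially small error $O(e^{-c\mu})$ by this polynomial factor still yields a term that is $O(e^{-c'\mu})$ for any $0<c'<c$, which is exactly the meaning of $\asymp_0$ in \Cref{n: asymp_0}. This establishes $\widetilde\Psi_z^{-1}\asymp_0\Phi_z$.

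The argument is essentially bookkeeping with the $\asymp_0$ notation, so there is no real obstacle; the only mild subtlety is the point just made, namely that one must verify the polynomial-in-$\mu$ growth of $\|\widetilde\Psi_z^{-1}\|$ is harmless against the exponential decay (and that all these bounds are uniform in $\nu$, which they are since every estimate invoked—\Cref{c: P_z sm : E_z -> E_z sm iso}, \Cref{c: widetilde Psi_z^* widetilde Psi_z}, \Cref{c: Phi_z+tau widetilde Psi_z}—is already uniform in $\nu$). One could equally well derive the uniform bound on $\|\widetilde\Psi_z^{-1}\|$ from $\Phi_z\widetilde\Psi_z\asymp_0\id$ together with the boundedness of $\Phi_z$ (which follows from \Cref{r: Phi_z: E_z to bfC^bullet iso} and \Cref{p: | alpha - P_z sm alpha |_m i nu}), giving an alternative route that avoids \Cref{c: widetilde Psi_z^* widetilde Psi_z}; either way the proof is short.
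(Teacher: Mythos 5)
Your proposal is correct and is essentially the paper's own argument: the paper likewise specializes \Cref{c: Phi_z+tau widetilde Psi_z} to $\tau=0$ (using the relation $\widetilde\Psi_z\Phi_z\asymp_0 1$ rather than $\Phi_z\widetilde\Psi_z\asymp_0 1$, an immaterial difference) and invokes \Cref{c: widetilde Psi_z^* widetilde Psi_z} for exactly the control of $\widetilde\Psi_z^{-1}$ that you make explicit. Your bookkeeping that the polynomial growth $\|\widetilde\Psi_z^{-1}\|=O(\mu^{n/4})$ is absorbed by the $O(e^{-c\mu})$ error (uniformly in $\nu$) is the point the paper leaves implicit, and it is right; just note that the error term should be written $O(e^{-c\mu})\,\widetilde\Psi_z^{-1}$ (composition on the right), a purely notational slip.
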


\begin{proof}
By \Cref{c: Phi_z+tau widetilde Psi_z,c: widetilde Psi_z^* widetilde Psi_z}, on $E_{z,\text{\rm sm}}$,
\[
\widetilde\Psi_z^{-1}\asymp_0\widetilde\Psi_z^{-1}\widetilde\Psi_z\Phi_z=\Phi_z\;.\qedhere
\]
\end{proof}

In the rest of this section, consider $\Phi_z:E_{z,\text{\rm sm}}\to\bfC^\bullet$ unless otherwise indicated.

\begin{cor}\label{c: Phi_z^* Phi_z}
As $\mu\to+\infty$,
\[
\Phi_z^*\Phi_z\asymp_0\Big(\frac\pi\mu\Big)^{\sN-n/2}\;,\quad
\Phi_z\Phi_z^*\asymp_0\Big(\frac\pi\mu\Big)^{\sN-n/2}\;.
\]
\end{cor}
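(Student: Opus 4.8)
The plan is to deduce \Cref{c: Phi_z^* Phi_z} directly from the two preceding corollaries, \Cref{c: widetilde Psi_z^* widetilde Psi_z} and \Cref{c: widetilde Psi_z^-1 approx Phi_z}, which already package all the analytic work (the almost-isometry of $\Psi_z$ from \Cref{t: Phi_z+tau Psi_z} with $\tau=0$, and the comparison $\widetilde\Psi_z^{-1}\asymp_0\Phi_z$). First I would recall that $\widetilde\Psi_z=(\mu/\pi)^{\sN/2-n/4}\Psi_z$, so $\widetilde\Psi_z^{-1}=\Psi_z^{-1}(\mu/\pi)^{-\sN/2+n/4}=\Psi_z^{-1}(\pi/\mu)^{\sN/2-n/4}$, and that by \Cref{c: widetilde Psi_z^-1 approx Phi_z} we have $\Phi_z\asymp_0\widetilde\Psi_z^{-1}$ on $E_{z,\text{\rm sm}}$ as $\mu\to+\infty$, with the operators involved uniformly bounded on $z$ (which is what makes the product of two $\asymp_0$-relations again an $\asymp_0$-relation; the relevant boundedness follows from \Cref{c: widetilde Psi_z^* widetilde Psi_z} and the power of $\mu$ being a bounded diagonal operator since $0\le\sN\le n$).

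The key computation is then simply
\[
\Phi_z^*\Phi_z\asymp_0(\widetilde\Psi_z^{-1})^*\,\widetilde\Psi_z^{-1}
=(\widetilde\Psi_z^*)^{-1}\widetilde\Psi_z^{-1}
=\big(\widetilde\Psi_z\widetilde\Psi_z^*\big)^{-1}
\asymp_0\Big(\Big(\frac\mu\pi\Big)^{\sN-n/2}\Big)^{-1}
=\Big(\frac\pi\mu\Big)^{\sN-n/2}\;,
\]
using \Cref{c: widetilde Psi_z^* widetilde Psi_z} for the last $\asymp_0$ and the fact that inversion is Lipschitz on a neighbourhood of an invertible operator that stays uniformly bounded below (here $\widetilde\Psi_z\widetilde\Psi_z^*$ is, up to $O(e^{-c\mu})$, the diagonal operator $(\mu/\pi)^{\sN-n/2}$, whose inverse $(\pi/\mu)^{\sN-n/2}$ is uniformly bounded since $-n/2\le\sN-n/2\le n/2$, so $(\pi/\mu)^{\sN-n/2}\le\pi^{n/2}$ when $\mu\ge1$). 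For the second relation one repeats the same argument with the factors transposed: $\Phi_z\Phi_z^*\asymp_0\widetilde\Psi_z^{-1}(\widetilde\Psi_z^{-1})^*=(\widetilde\Psi_z^*\widetilde\Psi_z)^{-1}\asymp_0(\pi/\mu)^{\sN-n/2}$, again by \Cref{c: widetilde Psi_z^* widetilde Psi_z}.

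I do not expect a genuine obstacle here; this corollary is bookkeeping. The one point requiring a line of care is that composing or inverting $\asymp_0$-relations is only legitimate when the operators in play are uniformly bounded (and, for inversion, uniformly bounded below) as $\mu\to+\infty$, uniformly in $\nu$. All of this is supplied by \Cref{c: widetilde Psi_z^* widetilde Psi_z}: it shows $\widetilde\Psi_z\widetilde\Psi_z^*$ and $\widetilde\Psi_z^*\widetilde\Psi_z$ differ from the fixed diagonal operator $(\mu/\pi)^{\sN-n/2}$ by $O(e^{-c\mu})$, hence for $\mu$ large they are invertible with inverses bounded by a constant independent of $z$, and the estimate $\|A^{-1}-B^{-1}\|\le\|A^{-1}\|\,\|B^{-1}\|\,\|A-B\|$ then converts the $O(e^{-c\mu})$ perturbation of $\widetilde\Psi_z\widetilde\Psi_z^*$ into an $O(e^{-c\mu})$ perturbation of its inverse. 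So the proof is: invoke \Cref{c: widetilde Psi_z^-1 approx Phi_z}, substitute, use $(\widetilde\Psi_z^{-1})^*\widetilde\Psi_z^{-1}=(\widetilde\Psi_z\widetilde\Psi_z^*)^{-1}$, and finish with \Cref{c: widetilde Psi_z^* widetilde Psi_z}.

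\begin{proof}
Recall $\widetilde\Psi_z=\big(\frac{\mu}{\pi}\big)^{\sN/2-n/4}\Psi_z$, so that
\[
\widetilde\Psi_z^{-1}=\Psi_z^{-1}\Big(\frac\pi\mu\Big)^{\sN/2-n/4}\;.
\]
By \Cref{c: widetilde Psi_z^-1 approx Phi_z}, $\Phi_z\asymp_0\widetilde\Psi_z^{-1}$ on $E_{z,\text{\rm sm}}$ as $\mu\to+\infty$. Since $0\le\sN\le n$ on $\bfC^\bullet$, the diagonal operators $\big(\frac\mu\pi\big)^{\pm(\sN-n/2)}$ are uniformly bounded for $\mu\ge1$; together with~\eqref{Psi_z^* Psi_z = 1 + O(e^-c mu)} and \Cref{c: widetilde Psi_z^* widetilde Psi_z}, this makes $\Phi_z$, $\widetilde\Psi_z$, $\widetilde\Psi_z^*$ and their inverses uniformly bounded as $\mu\to+\infty$, uniformly on $\nu$. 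Hence products of $\asymp_0$-relations between such operators are again $\asymp_0$-relations, and
\[
\Phi_z^*\Phi_z\asymp_0\big(\widetilde\Psi_z^{-1}\big)^*\widetilde\Psi_z^{-1}
=\big(\widetilde\Psi_z^*\big)^{-1}\widetilde\Psi_z^{-1}
=\big(\widetilde\Psi_z\widetilde\Psi_z^*\big)^{-1}\;,\qquad
\Phi_z\Phi_z^*\asymp_0\widetilde\Psi_z^{-1}\big(\widetilde\Psi_z^{-1}\big)^*
=\big(\widetilde\Psi_z^*\widetilde\Psi_z\big)^{-1}\;.
\]
By \Cref{c: widetilde Psi_z^* widetilde Psi_z}, $\widetilde\Psi_z\widetilde\Psi_z^*=\big(\frac\mu\pi\big)^{\sN-n/2}+O\big(e^{-c\mu}\big)$ and likewise for $\widetilde\Psi_z^*\widetilde\Psi_z$; since $\big(\frac\mu\pi\big)^{\sN-n/2}$ is invertible with inverse $\big(\frac\pi\mu\big)^{\sN-n/2}$ bounded uniformly in $z$ for $\mu$ large, the identity $\|A^{-1}-B^{-1}\|\le\|A^{-1}\|\,\|B^{-1}\|\,\|A-B\|$ gives
\[
\big(\widetilde\Psi_z\widetilde\Psi_z^*\big)^{-1}=\Big(\frac\pi\mu\Big)^{\sN-n/2}+O\big(e^{-c\mu}\big)\;,\qquad
\big(\widetilde\Psi_z^*\widetilde\Psi_z\big)^{-1}=\Big(\frac\pi\mu\Big)^{\sN-n/2}+O\big(e^{-c\mu}\big)\;.
\]
Combining the last two displays yields $\Phi_z^*\Phi_z\asymp_0\big(\frac\pi\mu\big)^{\sN-n/2}$ and $\Phi_z\Phi_z^*\asymp_0\big(\frac\pi\mu\big)^{\sN-n/2}$.
\end{proof}
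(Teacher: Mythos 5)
Your proof is correct and follows the paper's own argument essentially verbatim: the paper likewise writes $\Phi_z^*\Phi_z\asymp_0(\widetilde\Psi_z^{-1})^*\widetilde\Psi_z^{-1}=(\widetilde\Psi_z\widetilde\Psi_z^*)^{-1}\asymp_0(\pi/\mu)^{\sN-n/2}$ using \Cref{c: widetilde Psi_z^-1 approx Phi_z,c: widetilde Psi_z^* widetilde Psi_z}, and treats the second relation as analogous. The only inaccuracy is in your boundedness bookkeeping: $(\pi/\mu)^{\sN-n/2}$ is not bounded by $\pi^{n/2}$ on degrees $k<n/2$ (there it grows like $\mu^{n/2-k}$), and likewise $\Phi_z$, $\widetilde\Psi_z$ and their inverses are only polynomially bounded in $\mu$ rather than uniformly bounded; this is harmless, since the $O(e^{-c\mu})$ errors absorb any polynomial factor in $\mu$ (after shrinking $c$), so every step in your chain of $\asymp_0$ relations remains valid.
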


\begin{proof}
We show the first relation, the other one being similar. By~\Cref{c: widetilde Psi_z^* widetilde Psi_z,c: widetilde Psi_z^-1 approx Phi_z}, on $E_{z,\text{\rm sm}}$,
\[
\Phi_z^*\Phi_z\asymp_0\big(\widetilde\Psi_z^{-1}\big)^*\widetilde\Psi_z^{-1}
=\big(\widetilde\Psi_z^*\big)^{-1}\widetilde\Psi_z^{-1}
=\big(\widetilde\Psi_z\widetilde\Psi_z^*\big)^{-1}
\asymp_0\Big(\frac{\pi}{\mu}\Big)^{\sN-n/2}\;.\qedhere
\]
\end{proof}

\begin{cor}\label{c: widetilde Psi_z approx Phi_z^*}
As $\mu\to+\infty$,
\[
\widetilde\Psi_z\asymp_0\Big(\frac\mu\pi\Big)^{\sN-n/2}\Phi_z^*\;.
\]
\end{cor}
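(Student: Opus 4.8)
The plan is to derive this purely from the $\asymp_0$-relations already established, together with the elementary observation that every operator and every inverse appearing below is polynomially bounded in $\mu$ as $\mu\to+\infty$, so that a product of the form $(\text{polynomial in }\mu)\times O(e^{-c\mu})$ is again of the form $O(e^{-c'\mu})$ for any $c'<c$, hence still covered by the notation of \Cref{n: asymp_0}, whose exponential rate is left unspecified. This is exactly the bookkeeping already used in the proofs of \Cref{c: widetilde Psi_z^* widetilde Psi_z,c: Phi_z^* Phi_z}.

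First I would take Hilbert-space adjoints in \Cref{c: widetilde Psi_z^-1 approx Phi_z}. Since adjunction is an isometry for the operator norm, the relation $\widetilde\Psi_z^{-1}\asymp_0\Phi_z$ on $E_{z,\text{\rm sm}}$ yields
\[
\big(\widetilde\Psi_z^*\big)^{-1}=\big(\widetilde\Psi_z^{-1}\big)^*\asymp_0\Phi_z^*
\]
as maps $\bfC^\bullet\to E_{z,\text{\rm sm}}$. Next I would compose on the left with the self-adjoint operator $\widetilde\Psi_z\widetilde\Psi_z^*$ on $E_{z,\text{\rm sm}}$, which by \Cref{c: widetilde Psi_z^* widetilde Psi_z} satisfies $\widetilde\Psi_z\widetilde\Psi_z^*\asymp_0\big(\frac\mu\pi\big)^{\sN-n/2}$ and in particular is polynomially bounded in $\mu$. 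Using the identity $\widetilde\Psi_z=\big(\widetilde\Psi_z\widetilde\Psi_z^*\big)\big(\widetilde\Psi_z^*\big)^{-1}$ together with the previous display, the relation $\widetilde\Psi_z\widetilde\Psi_z^*\asymp_0\big(\frac\mu\pi\big)^{\sN-n/2}$, and the polynomial boundedness of $\Phi_z^*$ (indeed $\|\Phi_z^*\|=\|\Phi_z\|=O(\mu^{n/4})$ by \Cref{r: Phi_z: E_z to bfC^bullet iso}), one obtains
\[
\widetilde\Psi_z=\big(\widetilde\Psi_z\widetilde\Psi_z^*\big)\big(\widetilde\Psi_z^*\big)^{-1}
\asymp_0\big(\widetilde\Psi_z\widetilde\Psi_z^*\big)\Phi_z^*
\asymp_0\Big(\frac\mu\pi\Big)^{\sN-n/2}\Phi_z^*\;,
\]
which is the assertion. (Alternatively, one may first invert the relation $\widetilde\Psi_z^{-1}\asymp_0\Phi_z$ to get $\widetilde\Psi_z\asymp_0\Phi_z^{-1}$ and then check, via \Cref{c: Phi_z^* Phi_z}, that $\Phi_z^{-1}\asymp_0\big(\frac\mu\pi\big)^{\sN-n/2}\Phi_z^*$.)

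I do not expect a genuine obstacle here: the only point requiring a little attention is the tracking of the exponential rate through the polynomial-in-$\mu$ factors, exactly as in the surrounding corollaries. All the substance is already contained in \Cref{c: widetilde Psi_z^* widetilde Psi_z,c: widetilde Psi_z^-1 approx Phi_z} and in the uniform polynomial-in-$\mu$ bounds on $\widetilde\Psi_z\widetilde\Psi_z^*$, on $\Phi_z^*$, and on $\big(\widetilde\Psi_z^*\big)^{-1}$.
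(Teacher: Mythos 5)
Your proposal is correct and amounts to essentially the paper's own argument: the paper obtains the relation in one line as $\widetilde\Psi_z\asymp_0\big(\frac\mu\pi\big)^{\sN-n/2}\widetilde\Psi_z\Phi_z\Phi_z^*\asymp_0\big(\frac\mu\pi\big)^{\sN-n/2}\Phi_z^*$, using \Cref{c: Phi_z+tau widetilde Psi_z,c: Phi_z^* Phi_z}, while you take adjoints in \Cref{c: widetilde Psi_z^-1 approx Phi_z} and compose with $\widetilde\Psi_z\widetilde\Psi_z^*$ from \Cref{c: widetilde Psi_z^* widetilde Psi_z} --- the same formal bookkeeping with the same previously established $\asymp_0$ relations and polynomial-in-$\mu$ bounds. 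The only cosmetic point is that the bound $\|\Phi_z^*\|=\|\Phi_z\|$ on $E_{z,\text{\rm sm}}$ is most directly read off from \Cref{c: Phi_z^* Phi_z} (or \Cref{t: Phi_z+tau Psi_z}) rather than from \Cref{r: Phi_z: E_z to bfC^bullet iso}, which concerns $\Phi_z$ on $E_z$.
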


\begin{proof}
By \Cref{c: Phi_z+tau widetilde Psi_z,c: Phi_z^* Phi_z},
\[
\widetilde\Psi_z\asymp_0\Big(\frac\mu\pi\Big)^{\sN-n/2}\widetilde\Psi_z\Phi_z\Phi_z^*
\asymp_0\Big(\frac\mu\pi\Big)^{\sN-n/2}\Phi_z^*\;.\qedhere
\]
\end{proof}

\begin{cor}\label{c: Phi_z+tau P_z+tau sm widetilde Psi_z}
For every $\tau\in\R$, as $\mu\to+\infty$,
\[
\Phi_{z+\tau}P_{z+\tau,\text{\rm sm}}\widetilde\Psi_z\asymp_0\Big(\frac{\mu}{\mu+\tau/2}\Big)^{\sN/2}
+O\big(\mu^{-1}\big)\;.
\]
\end{cor}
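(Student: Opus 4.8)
The plan is to reduce to \Cref{c: widetilde Psi_z^-1 approx Phi_z}, applied with $z$ replaced by $z+\tau$ (so with $\widetilde\Psi_{z+\tau}=((\mu+\tau)/\pi)^{\sN/2-n/4}\Psi_{z+\tau}$ and $\Psi_{z+\tau}=P_{z+\tau,\text{\rm sm}}J_{z+\tau}$), rather than estimating the error $\Phi_{z+\tau}(P_{z+\tau,\text{\rm sm}}-P_{z,\text{\rm sm}})\widetilde\Psi_z$ head on. First I would note that $P_{z+\tau,\text{\rm sm}}\widetilde\Psi_z$ takes values in $E_{z+\tau,\text{\rm sm}}$, and that, by the definition of $\widetilde\Psi_z$ together with $\|\Psi_z\|\le1$, its norm grows at most like a fixed power of $\mu$; since $\Phi_{z+\tau}\asymp_0\widetilde\Psi_{z+\tau}^{-1}$ on $E_{z+\tau,\text{\rm sm}}$ by \Cref{c: widetilde Psi_z^-1 approx Phi_z}, composing this relation on the right with $P_{z+\tau,\text{\rm sm}}\widetilde\Psi_z$ preserves the exponentially small error, whence
\[
\Phi_{z+\tau}P_{z+\tau,\text{\rm sm}}\widetilde\Psi_z\asymp_0\widetilde\Psi_{z+\tau}^{-1}P_{z+\tau,\text{\rm sm}}\widetilde\Psi_z\;.
\]

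Next I would unwind the definitions. Because $\Psi_z$, $\Psi_{z+\tau}$, $P_{z,\text{\rm sm}}$ and $P_{z+\tau,\text{\rm sm}}$ all preserve the degree, they commute with the scalar operators $(\,\cdot\,/\pi)^{\sN/2-n/4}$, and the factors of $\pi$ cancel, giving
\[
\widetilde\Psi_{z+\tau}^{-1}P_{z+\tau,\text{\rm sm}}\widetilde\Psi_z=\Big(\frac{\mu}{\mu+\tau}\Big)^{\sN/2-n/4}\Psi_{z+\tau}^{-1}P_{z+\tau,\text{\rm sm}}\Psi_z\;.
\]
From~\eqref{Psi_z^* Psi_z = 1 + O(e^-c mu)} (for $z+\tau$) I would replace $\Psi_{z+\tau}^{-1}$ by $\Psi_{z+\tau}^{*}$ up to $\asymp_0$, and then evaluate the matrix of $\Psi_{z+\tau}^{*}P_{z+\tau,\text{\rm sm}}\Psi_z$ in the canonical bases: its $(q,p)$-entry is $\langle P_{z,\text{\rm sm}}e_{p,z},P_{z+\tau,\text{\rm sm}}e_{q,z+\tau}\rangle$, which by \Cref{p: | alpha - P_z sm alpha |_m i nu} (with $m=0$) equals $\langle e_{p,z},e_{q,z+\tau}\rangle+O(e^{-c\mu})$, hence $\delta_{pq}+O(\mu^{-2})$ by~\eqref{langle bfe_p z bfe_q z+tau rangle} and~\eqref{langle e_p z e_p z+tau rangle}. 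Therefore $\Psi_{z+\tau}^{*}P_{z+\tau,\text{\rm sm}}\Psi_z=1+O(\mu^{-2})$, and since the diagonal operator $(\mu/(\mu+\tau))^{\sN/2-n/4}$ is bounded and equal to $1+O(\mu^{-1})$,
\[
\widetilde\Psi_{z+\tau}^{-1}P_{z+\tau,\text{\rm sm}}\widetilde\Psi_z=\Big(\frac{\mu}{\mu+\tau}\Big)^{\sN/2-n/4}+O\big(\mu^{-2}\big)\;.
\]

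Finally, both $(\mu/(\mu+\tau))^{\sN/2-n/4}$ and $(\mu/(\mu+\tau/2))^{\sN/2}$ equal $1+O(\mu^{-1})$ as $\mu\to+\infty$, so they coincide up to $O(\mu^{-1})$; combining the three displays gives the assertion. The genuinely delicate point is the reduction in the first step: one cannot bound $\Phi_{z+\tau}(P_{z+\tau,\text{\rm sm}}-P_{z,\text{\rm sm}})\widetilde\Psi_z$ simply by a product of operator norms, since the degree-dependent weights $(\mu/\pi)^{\pm(\sN/2-n/4)}$ carried by $\Phi_{z+\tau}$ and $\widetilde\Psi_z$ fail to cancel when $\Phi_{z+\tau}$ is applied to an element of $E_{z,\text{\rm sm}}+E_{z+\tau,\text{\rm la}}$ whose two natural summands are of size $\sim\mu^{n/4}$ while their sum is only $O(\mu^{-1})$; routing through $\widetilde\Psi_{z+\tau}^{-1}$, which lives on $E_{z+\tau,\text{\rm sm}}$ and carries exactly the matching weight, is what makes the estimate go through. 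One could instead start from \Cref{c: Phi_z+tau widetilde Psi_z} and the splitting $\Phi_{z+\tau}P_{z+\tau,\text{\rm sm}}\widetilde\Psi_z=\Phi_{z+\tau}\widetilde\Psi_z+\Phi_{z+\tau}(P_{z+\tau,\text{\rm sm}}-P_{z,\text{\rm sm}})\widetilde\Psi_z$, but the second term would then have to be controlled by the same device.
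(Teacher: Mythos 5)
Your argument is correct, but it is organized differently from the paper's. The paper proves this corollary in two lines by the very splitting you mention at the end: it writes $\Phi_{z+\tau}P_{z+\tau,\text{\rm sm}}\widetilde\Psi_z=\Phi_{z+\tau}(P_{z+\tau,\text{\rm sm}}-P_{z,\text{\rm sm}})\widetilde\Psi_z+\Phi_{z+\tau}\widetilde\Psi_z$, handles the second term by \Cref{c: Phi_z+tau widetilde Psi_z}, and declares the first term $O(\mu^{-1})$ by combining \Cref{p: P_z sm P_z+tau sm} with the norm asymptotics of \Cref{c: widetilde Psi_z^* widetilde Psi_z,c: Phi_z^* Phi_z}; the degree-by-degree matching of the weights $(\mu/\pi)^{\pm(\sN/2-n/4)}$ on the nearly coincident subspaces $E_{z,\text{\rm sm}}$ and $E_{z+\tau,\text{\rm sm}}$ is left implicit there, and your observation that a bare product of operator norms does not by itself yield the $O(\mu^{-1})$ bound for the cross term is a fair one. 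Your route avoids this issue altogether: since $P_{z+\tau,\text{\rm sm}}\widetilde\Psi_z$ lands in $E_{z+\tau,\text{\rm sm}}$ and has only polynomially growing norm, you may replace $\Phi_{z+\tau}$ by $\widetilde\Psi_{z+\tau}^{-1}$ up to an exponentially small error (\Cref{c: widetilde Psi_z^-1 approx Phi_z} at $z+\tau$), then by $\Psi_{z+\tau}^{*}$ via~\eqref{Psi_z^* Psi_z = 1 + O(e^-c mu)}, and reduce everything to the matrix of $\Psi_{z+\tau}^{*}P_{z+\tau,\text{\rm sm}}\Psi_z$, whose entries you compute from \Cref{p: | alpha - P_z sm alpha |_m i nu} and the Gaussian overlaps~\eqref{langle bfe_p z bfe_q z+tau rangle},~\eqref{langle e_p z e_p z+tau rangle}; this is in effect a re-run, in the weighted setting, of the computation behind \Cref{p: P_z sm P_z+tau sm}. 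What your approach buys is that every weight cancellation is explicit and the delicate point is handled inside the single subspace $E_{z+\tau,\text{\rm sm}}$, at the cost of being somewhat longer and of redoing an overlap estimate the paper has already packaged; it even yields the slightly sharper intermediate form $(\mu/(\mu+\tau))^{\sN/2-n/4}+O(\mu^{-2})$, which agrees with the stated main term $(\mu/(\mu+\tau/2))^{\sN/2}$ to the required order $O(\mu^{-1})$. All the individual steps (degree preservation, cancellation of the $\pi$'s, polynomial norm control ensuring $\asymp_0$ is preserved under composition) check out, so the proposal stands as a valid alternative proof.
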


\begin{proof}
By \Cref{c: Phi_z+tau widetilde Psi_z,c: widetilde Psi_z^* widetilde Psi_z,c: Phi_z^* Phi_z,p: P_z sm P_z+tau sm},
\begin{align*}
\Phi_{z+\tau}P_{z+\tau,\text{\rm sm}}\widetilde\Psi_z
&=\Phi_{z+\tau}(P_{z+\tau,\text{\rm sm}}-P_{z,\text{\rm sm}})\widetilde\Psi_z+\Phi_{z+\tau}\widetilde\Psi_z\\
&\asymp_0 O\big(\mu^{-1}\big)+\Big(\frac{\mu}{\mu+\tau/2}\Big)^{\sN/2}\;.\qedhere
\end{align*}
\end{proof}

\begin{cor}\label{c: d_z sm approx ...}
As $\mu\to+\infty$,
\[
d_{z,\text{\rm sm}}\asymp_0\widetilde\Psi_z\bfd_z\Phi_z\;,\quad
\delta_{z,\text{\rm sm}}\asymp_0\widetilde\Psi_z\bfdelta_z\Phi_z\;.
\]
\end{cor}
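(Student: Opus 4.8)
The plan is to prove the first relation $d_{z,\text{\rm sm}}\asymp_0\widetilde\Psi_z\bfd_z\Phi_z$, the second being entirely analogous (or deducible by taking adjoints together with the remark that $\delta_{z,\text{\rm sm}}=(d_{z,\text{\rm sm}})^*$ on the finite-dimensional space $E_{z,\text{\rm sm}}$ and $\bfdelta_z=\bfd_z^*$). The essential point is that $\Phi_z:(E_{z,\text{\rm sm}},d_z)\to(\bfC^\bullet,\bfd_z)$ is a \emph{chain map}, i.e.\ $\Phi_z d_z=\bfd_z\Phi_z$ on $E_{z,\text{\rm sm}}$ (recorded in \Cref{sss: perturbed Morse complex}), so the strategy is to ``invert'' $\Phi_z$ up to an exponentially small error using \Cref{c: widetilde Psi_z^-1 approx Phi_z} and then chase the equality $d_{z,\text{\rm sm}}=d_z$ on $E_{z,\text{\rm sm}}$.

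First I would write, on $E_{z,\text{\rm sm}}$, $d_{z,\text{\rm sm}}=d_z=\widetilde\Psi_z^{-1}\widetilde\Psi_z d_z$ exactly, using that $\widetilde\Psi_z:\bfC^\bullet\to E_{z,\text{\rm sm}}$ is an isomorphism for $\mu\gg0$ (\Cref{t: Phi_z+tau Psi_z} and \Cref{c: P_z sm : E_z -> E_z sm iso}). By \Cref{c: widetilde Psi_z^-1 approx Phi_z} we have $\widetilde\Psi_z^{-1}\asymp_0\Phi_z$ on $E_{z,\text{\rm sm}}$; since $\widetilde\Psi_z d_z$ is a bounded operator from $E_{z,\text{\rm sm}}$ to $\bfC^\bullet$ uniformly in $\mu\gg0$ (the operator norm of $d_z$ on $E_{z,\text{\rm sm}}$ is $O(\sqrt{e^{-c\mu}})$ by \Cref{t: spec Delta_z}, in particular bounded, and $\|\widetilde\Psi_z\|$ grows only polynomially in $\mu$ by \Cref{c: widetilde Psi_z^* widetilde Psi_z}), composing an $\asymp_0$ estimate on the left with a polynomially bounded operator on the right still gives an $\asymp_0$ estimate. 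Hence $d_{z,\text{\rm sm}}\asymp_0\Phi_z\widetilde\Psi_z d_z$. Now I use the chain-map identity: $\widetilde\Psi_z d_z=\widetilde\Psi_z d_{z,\text{\rm sm}}$ on $E_{z,\text{\rm sm}}$, and then I want to commute $d_z$ past $\widetilde\Psi_z$. Since $\widetilde\Psi_z=(\mu/\pi)^{\sN/2-n/4}P_{z,\text{\rm sm}}J_z$ and $J_z\bfd_z=d_z J_z+O(e^{-c\mu})$ would have to be verified directly from \eqref{bfd_z bfe_q}, \eqref{D_z e_p,z} and the fact that $d_z e_{p,z}$ is exponentially small (\Cref{p: | D_z^l e_p z |_m i nu}), it is cleaner to instead run the computation on the other side.

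The cleaner route: start from the chain map identity $\Phi_z d_z=\bfd_z\Phi_z$ on $E_{z,\text{\rm sm}}$. Compose on the left with $\widetilde\Psi_z$ and on the right with nothing, getting $\widetilde\Psi_z\Phi_z d_z=\widetilde\Psi_z\bfd_z\Phi_z$ on $E_{z,\text{\rm sm}}$. By \Cref{c: Phi_z+tau widetilde Psi_z} with $\tau=0$, $\widetilde\Psi_z\Phi_z\asymp_0\id$ on $\bfC^\bullet$; applying this with the roles arranged so that $\widetilde\Psi_z\Phi_z$ acts on $E_{z,\text{\rm sm}}$ after $d_z$, and using again that $d_z$ is bounded on $E_{z,\text{\rm sm}}$ uniformly in $\mu$, we get $\widetilde\Psi_z\Phi_z d_z\asymp_0 d_z=d_{z,\text{\rm sm}}$ on $E_{z,\text{\rm sm}}$. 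Combining the two displayed relations yields $d_{z,\text{\rm sm}}\asymp_0\widetilde\Psi_z\bfd_z\Phi_z$, as desired. For the coderivative relation one repeats the argument with $\bfdelta_z$ and $\delta_z$ in place of $\bfd_z$ and $d_z$, using that $\Phi_z$ is \emph{not} a chain map for the coderivatives but that taking adjoints of the first relation together with \Cref{c: widetilde Psi_z approx Phi_z^*} and \Cref{c: Phi_z^* Phi_z} converts $\asymp_0$ estimates for $d$'s into $\asymp_0$ estimates for $\delta$'s (the powers of $\mu/\pi$ from the various corollaries telescope).

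The main obstacle I anticipate is bookkeeping the polynomial-in-$\mu$ prefactors: the operators $\widetilde\Psi_z$, $\Phi_z$, $\Phi_z^*$ carry factors $(\mu/\pi)^{\pm(\sN-n/2)/2}$ which individually blow up or vanish as $\mu\to+\infty$, so one must check at each composition that the combination that multiplies the exponentially small remainder is only polynomially large, so that the product is still $o(e^{-c'\mu})$ for a slightly smaller $c'>0$ — which is exactly what the notation $\asymp_0$ tolerates. This requires being careful that every ``$O(e^{-c\mu})$'' appearing is in operator norm between the correctly normalized spaces, and that the grading-dependent prefactors never conspire to produce a factor worse than polynomial. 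Provided one tracks this (it follows from \Cref{c: widetilde Psi_z^* widetilde Psi_z,c: Phi_z^* Phi_z,c: Phi_z+tau widetilde Psi_z,c: widetilde Psi_z approx Phi_z^*} and the uniform boundedness of $d_z,\delta_z$ on $E_{z,\text{\rm sm}}$ from \Cref{t: spec Delta_z}), the argument is routine.
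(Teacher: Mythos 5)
Your proposal is correct and takes essentially the same route as the paper: the first relation is obtained, exactly as in the paper's proof, from the chain-map identity $\Phi_zd_z=\bfd_z\Phi_z$ on $E_{z,\text{\rm sm}}$, the relation $\widetilde\Psi_z\Phi_z\asymp_01$ (\Cref{c: Phi_z+tau widetilde Psi_z} with $\tau=0$) and the uniform bound on $d_{z,\text{\rm sm}}$ coming from \Cref{t: spec Delta_z}. The second relation is likewise handled as in the paper, by taking adjoints and invoking \Cref{c: widetilde Psi_z^* widetilde Psi_z,c: Phi_z^* Phi_z,c: widetilde Psi_z approx Phi_z^*}; the degree-shift of $\bfdelta_z$ leaves a stray polynomial factor rather than an exact telescoping, but this is harmless for an $\asymp_0$ statement since the operators involved are themselves exponentially small.
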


\begin{proof}
By \Cref{t: spec Delta_z,c: Phi_z+tau widetilde Psi_z},
\[
d_{z,\text{\rm sm}}\asymp_0\widetilde\Psi_z\Phi_zd_{z,\text{\rm sm}}
=\widetilde\Psi_z\bfd_z\Phi_z\;.
\]
Now, taking adjoints and using \Cref{c: widetilde Psi_z^* widetilde Psi_z,c: Phi_z^* Phi_z,c: widetilde Psi_z approx Phi_z^*}, we obtain
\[
\delta_{z,\text{\rm sm}}=\Phi_{z,\text{\rm sm}}^*\bfdelta_z\widetilde\Psi_z^*
\asymp_0\widetilde\Psi_z\bfdelta_z\Phi_z\;.\qedhere
\]
\end{proof}

Let $\widetilde\bfPi_z=\widetilde\bfPi^0_z$, $\widetilde\bfPi^1_z$ and $\widetilde\bfPi^2_z$ be the orthogonal projections of $\bfC^\bullet$ to $\Phi_z(\ker\Delta_{z,\text{\rm sm}})$, $\Phi_z(\im d_{z,\text{\rm sm}})$ and $\Phi_z(\im\delta_{z,\text{\rm sm}})$, respectively. Note that $\widetilde\bfPi^1_z=\widetilde\bfPi^1_z\bfPi^1_z$.

\begin{cor}\label{c: Pi^1_z sm}
For $j=0,1,2$, as $\mu\to+\infty$,
\[
\Phi_z\Pi^j_{z,\text{\rm sm}}\asymp_0\widetilde\bfPi^j_z\Phi_z\;,\quad
\Pi^j_{z,\text{\rm sm}}\asymp_0\widetilde\Psi_z\widetilde\bfPi^j_z\Phi_z\;,\quad
\Pi^j_{z,\text{\rm sm}}\widetilde\Psi_z\asymp_0\widetilde\Psi_z\widetilde\bfPi^j_{z,\text{\rm sm}}\;.
\]
\end{cor}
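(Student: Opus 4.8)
The plan is to obtain \Cref{c: Pi^1_z sm} as a direct consequence of the intertwining properties of $\Phi_z$ and $\widetilde\Psi_z$ with the perturbed Morse operators, established in \Cref{c: d_z sm approx ...}, together with the asymptotic (near-)unitarity of $\widetilde\Psi_z$ (\Cref{c: widetilde Psi_z^* widetilde Psi_z}) and $\Phi_z$ (\Cref{c: Phi_z^* Phi_z}, \Cref{c: widetilde Psi_z^-1 approx Phi_z}, \Cref{c: widetilde Psi_z approx Phi_z^*}). The key observation is that the orthogonal projections $\Pi^j_{z,\text{\rm sm}}$ onto $\ker\Delta_{z,\text{\rm sm}}$, $\im d_{z,\text{\rm sm}}$, $\im\delta_{z,\text{\rm sm}}$ can be written as universal (noncommutative-polynomial, actually rational but with a fixed integral/contour representation) expressions in $d_{z,\text{\rm sm}}$ and $\delta_{z,\text{\rm sm}}$, and likewise $\widetilde\bfPi^j_z$ is the \emph{same} universal expression in $\widetilde\Psi_z\bfd_z\Phi_z$ and $\widetilde\Psi_z\bfdelta_z\Phi_z$ by definition of the subspaces $\Phi_z(\im d_{z,\text{\rm sm}})$ etc. — except that one must be careful that $\Phi_z$ is not exactly an isometry, only asymptotically so.

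Concretely, first I would record that on the finite-dimensional space $E_{z,\text{\rm sm}}$ one has the Hodge decomposition with projections $\Pi^0_{z,\text{\rm sm}}$ onto $\ker d_{z,\text{\rm sm}}\cap\ker\delta_{z,\text{\rm sm}}$, and $\Pi^1_{z,\text{\rm sm}}$, $\Pi^2_{z,\text{\rm sm}}$ onto the images of $d_{z,\text{\rm sm}}$ and $\delta_{z,\text{\rm sm}}$; these are determined by $d_{z,\text{\rm sm}}$ and $\delta_{z,\text{\rm sm}}$ via the resolvent of $\Delta_{z,\text{\rm sm}}=(d_{z,\text{\rm sm}}+\delta_{z,\text{\rm sm}})^2$ restricted to the relevant summand, i.e.\ by a contour integral around the small nonzero eigenvalues (which are bounded below by \Cref{t: estimates of the small eigenvalues}-type bounds, or simply because everything is finite-dimensional and $z$-dependence is smooth). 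Since the nonzero small eigenvalues of $\Delta_z$ stay in a fixed annulus after rescaling, these contour integrals are uniformly bounded. Then, conjugating by $\Phi_z$ and using $\Phi_z d_{z,\text{\rm sm}}\asymp_0\bfd_z\Phi_{z,\text{\rm sm}}$, $\Phi_z\delta_{z,\text{\rm sm}}\asymp_0\bfdelta_z\Phi_{z,\text{\rm sm}}$ (which is \Cref{c: d_z sm approx ...} pushed through $\Phi_z$ via \Cref{c: Phi_z^* Phi_z} and \Cref{c: widetilde Psi_z^-1 approx Phi_z}), the universal expression for $\Pi^j_{z,\text{\rm sm}}$ transforms, up to $O(e^{-c\mu})$, into the same expression built from $\bfd_z|_{\Phi_z(E_{z,\text{\rm sm}})}$ and $\bfdelta_z|_{\Phi_z(E_{z,\text{\rm sm}})}$, which by the very definition of $\widetilde\bfPi^j_z$ (projections onto $\Phi_z$ of the corresponding subspaces) equals $\widetilde\bfPi^j_z\Phi_{z,\text{\rm sm}}$. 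This gives the first relation $\Phi_z\Pi^j_{z,\text{\rm sm}}\asymp_0\widetilde\bfPi^j_z\Phi_{z,\text{\rm sm}}$.

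For the second relation, apply $\widetilde\Psi_z$ on the left to the first relation; since $\widetilde\Psi_z\Phi_z\asymp_0 1$ on $E_{z,\text{\rm sm}}$ by \Cref{c: widetilde Psi_z^-1 approx Phi_z} (giving $\widetilde\Psi_z\asymp_0\Phi_z^{-1}$ on the image, in the appropriate sense via \Cref{c: Phi_z+tau widetilde Psi_z} with $\tau=0$), we get $\Pi^j_{z,\text{\rm sm}}\asymp_0\widetilde\Psi_z\widetilde\bfPi^j_z\Phi_{z,\text{\rm sm}}$; here one absorbs the error coming from $\|\widetilde\Psi_z\|$ being $O(1)$ (uniformly on $\nu$) after rescaling — strictly, one should use $\widetilde\Psi_z\Phi_z=1+O(e^{-c\mu})$ on $E_{z,\text{\rm sm}}$ and $\|\widetilde\Psi_z\Phi_z\Pi^j_{z,\text{\rm sm}}-\Pi^j_{z,\text{\rm sm}}\|=O(e^{-c\mu})$, combined with the first relation. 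For the third relation, take adjoints in a suitable reformulation or apply the first relation on the right after replacing $\Phi_{z,\text{\rm sm}}$ by $\widetilde\Psi_z$: compose the first relation on the right with $\widetilde\Psi_z$ and use $\Phi_{z}\widetilde\Psi_z\asymp_0 1$ on $\bfC^\bullet$ (this time the other composition from \Cref{c: Phi_z+tau widetilde Psi_z}/\Cref{c: widetilde Psi_z^* widetilde Psi_z}), getting $\Phi_z\Pi^j_{z,\text{\rm sm}}\widetilde\Psi_z\asymp_0\widetilde\bfPi^j_z$, hence $\Pi^j_{z,\text{\rm sm}}\widetilde\Psi_z\asymp_0\widetilde\Psi_z\widetilde\bfPi^j_z$ (modulo rewriting $\widetilde\bfPi^j_z$ as $\widetilde\bfPi^j_{z,\text{\rm sm}}$, which holds because $\widetilde\bfPi^j_z$ lands in $\Phi_z(E_{z,\text{\rm sm}})$). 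The main obstacle I anticipate is being careful about the universal-expression argument: $\Phi_z$ is only asymptotically an isometry, so conjugating the spectral-projection contour integral requires controlling the resolvent $(\lambda-\Delta_{z,\text{\rm sm}})^{-1}$ versus its $\Phi_z$-conjugate, which in turn needs the uniform (in $\nu$) lower bound on the nonzero small eigenvalues of $\Delta_z$ — so this corollary genuinely depends on \Cref{t: estimates of the small eigenvalues}; alternatively one can bypass the contour integral entirely and argue directly that $\widetilde\bfPi^j_z\Phi_{z,\text{\rm sm}}$ and $\Phi_z\Pi^j_{z,\text{\rm sm}}$ have the same range (exactly) and asymptotically the same kernel (via \Cref{c: Phi_z^* Phi_z}), which is the cleanest route and the one I would ultimately write up.
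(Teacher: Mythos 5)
Your reductions of the second and third relations to the first one (via $\widetilde\Psi_z\Phi_z\asymp_01$, $\Phi_z\widetilde\Psi_z\asymp_01$ and the polynomial bounds on $\|\Phi_z\|$, $\|\widetilde\Psi_z\|$) are fine and coincide with what the paper does. The problem is your primary argument for the first relation. You want to realize $\Pi^j_{z,\text{\rm sm}}$ by a resolvent/contour expression in $d_{z,\text{\rm sm}},\delta_{z,\text{\rm sm}}$ and conjugate it through the approximate intertwining relations of \Cref{c: d_z sm approx ...}, claiming the contour integrals are ``uniformly bounded'' (after rescaling). They are not, in any usable sense: the nonzero small eigenvalues are only known to lie in $[0,e^{-c\mu}]$ (\Cref{t: spec Delta_z}), and under~\ref{i-a:  tight} they are of size $\mu e^{-2a_k\mu}$ (\Cref{t: estimates of the small eigenvalues}), so any contour separating them from $0$ carries a resolvent of norm of order $e^{2a_k\mu}$ (similarly, the formulas singling out $\im d_{z,\text{\rm sm}}$ versus $\im\delta_{z,\text{\rm sm}}$ involve $\Delta_{z,\text{\rm sm}}^{-1}$ on the image, which is exponentially large). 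The intertwining errors you feed into this are only $O(e^{-c\mu})$, and nothing gives $c>2a_k$: the constants $c$ come from Gaussian/cutoff estimates and are unrelated to the instanton integrals $a_k$, which can be arbitrarily large. So the conjugation does not close. Moreover, invoking \Cref{t: estimates of the small eigenvalues} imports hypothesis~\ref{i-a:  tight}, which this corollary does not assume (it is stated under (a)--(c) only, before the numbers $a_k$ are even defined), whereas in the paper the corollary is proved earlier and independently of that theorem; finite-dimensionality and smoothness in $z$ do not rescue uniformity as $\mu\to+\infty$.

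Your fallback (``same range exactly, asymptotically the same kernel via \Cref{c: Phi_z^* Phi_z}'') is the right idea and is in substance the paper's proof, but as written it is an assertion rather than an argument: the whole content is the quantitative step converting ``asymptotically the same kernel'' into an $O(e^{-c\mu})$ operator estimate. The paper does this directly and without any spectral input: take an orthonormal frame $\alpha_{z,a}$ of the relevant subspace (say $\im\delta_{z,\text{\rm sm},k+1}\subset E^k_{z,\text{\rm sm}}$); by \Cref{c: Phi_z: E_z sm^bullet to bfC^bullet iso} its $\Phi_z$-image is a basis of the range of $\widetilde\bfPi^j_z$, and by \Cref{c: Phi_z^* Phi_z} the Gram--Schmidt orthonormalization $\bff_{z,a}$ satisfies $\bff_{z,a}\asymp_0(\mu/\pi)^{k/2-n/4}\Phi_z\alpha_{z,a}$; expanding $\widetilde\bfPi^j_z\Phi_z\beta=\sum_a\langle\Phi_z\beta,\bff_{z,a}\rangle\bff_{z,a}$ and using $\Phi_z^*\Phi_z\asymp_0(\pi/\mu)^{k-n/2}$ on degree $k$ gives $\widetilde\bfPi^j_z\Phi_z\beta\asymp_0\Phi_z\Pi^j_{z,\text{\rm sm}}\beta$. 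The key structural point your sketch misses is that each of the three subspaces lies in a single degree, so the rescaling of $\Phi_z$ there is by a fixed scalar and the near-isometry argument is elementary linear algebra, needing no lower bound on the nonzero small spectrum. If you write up the fallback along these lines it becomes a complete proof; the resolvent route should be dropped.
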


\begin{proof}
We only prove the case of $\widetilde\bfPi^2_z$, the other cases being similar. Let $\alpha_{z,1},\dots,\alpha_{z,p_z}$ be an orthonormal frame of $\delta_z(E_{z,\text{\rm sm}}^{k+1})$. So $\Phi_z\alpha_{z,1},\dots,\Phi_z\alpha_{z,p_z}$ is a base of $\Phi_z\delta_z(E_{z,\text{\rm sm}}^{k+1})$ for $\mu\gg0$ by \Cref{c: Phi_z: E_z sm^bullet to bfC^bullet iso}. Applying the Gram-Schmidt process to this base, we get an orthonormal base $\bff_{z,1},\dots,\bff_{z,p_z}$ of $\Phi_z\delta_z(E_{z,\text{\rm sm}}^{k+1})$. By \Cref{c: Phi_z^* Phi_z}, 
\[
\langle\Phi_z\alpha_{z,a},\Phi_z\alpha_{z,b}\rangle\asymp_0\Big(\frac\pi\mu\Big)^{k-n/2}\delta_{ab}\;,
\]
for $1\le a,b\le p_z$. So
\[
\bff_{z,a}\asymp_0\Big(\frac\mu\pi\Big)^{k/2-n/4}\Phi_z\alpha_{z,a}\;.
\]
Hence, by \Cref{c: Phi_z^* Phi_z}, for any $\beta\in E_{z,\text{\rm sm}}^k$,
\begin{align*}
\widetilde\bfPi^2_z\Phi_z\beta&=\sum_{a=1}^{p_z}\langle\Phi_z\beta,\bff_{z,a}\rangle\bff_{z,a}
\asymp_0\Big(\frac\mu\pi\Big)^{k-n/2}\sum_{a=1}^{p_z}\langle\Phi_z\beta,\Phi_z\alpha_{z,a}\rangle\Phi_z\alpha_{z,a}\\
&\asymp_0\sum_{a=1}^m\langle\beta,\alpha_{z,a}\rangle\Phi_z\alpha_{z,a}
=\Phi_z\Pi^2_{z,\text{\rm sm}}\beta\;.
\end{align*}
This shows the first relation of the statement because $\dim E_{z,\text{\rm sm}}^k<\infty$. Then the other stated relations follow using \Cref{c: Phi_z+tau widetilde Psi_z,c: widetilde Psi_z^* widetilde Psi_z,c: Phi_z^* Phi_z}.
\end{proof}

According to \Cref{c: Phi_z: E_z sm^bullet to bfC^bullet iso}, in the following corollaries, we take $\mu\gg0$ so that $\Phi_z:E_{z,\text{\rm sm}}\to\bfC^\bullet$ is an isomorphism. 

\begin{cor}\label{c: (Phi_z^-1)^* Phi_z^-1}
As $\mu\to+\infty$,
\[
(\Phi_z^{-1})^*\Phi_z^{-1}\asymp_0\Big(\frac\mu\pi\Big)^{\sN-n/2}\;,\quad
\Phi_z^{-1}(\Phi_z^{-1})^*\asymp_0\Big(\frac\mu\pi\Big)^{\sN-n/2}\;.
\]
\end{cor}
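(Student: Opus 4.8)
The plan is to deduce \Cref{c: (Phi_z^-1)^* Phi_z^-1} directly from the estimates already established for $\Phi_z$ itself, namely \Cref{c: Phi_z^* Phi_z}, by a purely algebraic manipulation of the $\asymp_0$-relation. Since $\Phi_z:E_{z,\text{\rm sm}}\to\bfC^\bullet$ is an isomorphism for $\mu\gg0$ (\Cref{c: Phi_z: E_z sm^bullet to bfC^bullet iso}), the operator $(\Phi_z^{-1})^*\Phi_z^{-1}$ on $\bfC^\bullet$ equals $(\Phi_z\Phi_z^*)^{-1}$, and $\Phi_z^{-1}(\Phi_z^{-1})^*$ on $E_{z,\text{\rm sm}}$ equals $(\Phi_z^*\Phi_z)^{-1}$. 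So the content of the corollary is exactly that taking inverses turns the asymptotics $\Phi_z^*\Phi_z\asymp_0(\pi/\mu)^{\sN-n/2}$ and $\Phi_z\Phi_z^*\asymp_0(\pi/\mu)^{\sN-n/2}$ into $(\Phi_z^*\Phi_z)^{-1}\asymp_0(\mu/\pi)^{\sN-n/2}$ and likewise for $\Phi_z\Phi_z^*$.

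First I would record that the diagonal operator $(\pi/\mu)^{\sN-n/2}$ on the finite-dimensional graded space $\bfC^\bullet$ (resp.\ $E_{z,\text{\rm sm}}$) has, on its degree-$k$ component, the scalar $(\pi/\mu)^{k-n/2}$; both this operator and its inverse $(\mu/\pi)^{\sN-n/2}$ have norm that is polynomially bounded in $\mu$ (indeed bounded by $\max(\pi/\mu,\mu/\pi)^{n/2}$), and crucially the spectral gap away from $0$ is bounded below by $(\pi/\mu)^{n/2}$ times a constant, i.e.\ the smallest eigenvalue of $(\pi/\mu)^{\sN-n/2}$ is $\ge c\mu^{-n/2}$ for $\mu\gg0$. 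Next, writing $A_z=\Phi_z^*\Phi_z$ and $B=(\pi/\mu)^{\sN-n/2}$, \Cref{c: Phi_z^* Phi_z} says $A_z=B+R_z$ with $\|R_z\|=O(e^{-c\mu})$. Then $A_z^{-1}-B^{-1}=-A_z^{-1}R_zB^{-1}$; since $\|B^{-1}\|=O(\mu^{n/2})$ and, for $\mu\gg0$, $A_z$ is invertible with $\|A_z^{-1}\|\le 2\|B^{-1}\|=O(\mu^{n/2})$ (because $R_z$ is exponentially small while the least eigenvalue of $B$ is only polynomially small, so $A_z\ge\frac12 B$ as positive operators once $\mu$ is large), we get $\|A_z^{-1}-B^{-1}\|=O(\mu^{n}e^{-c\mu})=O(e^{-c'\mu})$ for a slightly smaller $c'>0$. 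This is precisely $A_z^{-1}\asymp_0 B^{-1}$, which together with $A_z^{-1}=\Phi_z^{-1}(\Phi_z^{-1})^*$ gives the second stated relation. The first relation follows verbatim with $A_z$ replaced by $\Phi_z\Phi_z^*$ and using the other half of \Cref{c: Phi_z^* Phi_z}, together with $(\Phi_z^{-1})^*\Phi_z^{-1}=(\Phi_z\Phi_z^*)^{-1}$.

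The only subtlety, and the one step deserving care, is the absorption of the polynomial factor $\mu^{n/2}$ coming from $\|B^{-1}\|$ into the exponential in the definition of $\asymp_0$ (\Cref{n: asymp_0}): one must note that $\mu^{n}e^{-c\mu}\le e^{-c'\mu}$ for any $0<c'<c$ and $\mu\gg0$, and that shrinking the constant $c$ in an $\asymp_0$ estimate is harmless. I do not expect this to be a genuine obstacle, merely a bookkeeping point; the essential input — that $\Phi_z$ is asymptotically, up to an exponentially small error, the explicit invertible diagonal operator $(\pi/\mu)^{\sN/2-n/4}$ — is already packaged in \Cref{c: Phi_z^* Phi_z}. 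Hence the proof is a two-line algebraic consequence, exactly parallel to how \Cref{c: Phi_z^* Phi_z} itself was derived from \Cref{c: widetilde Psi_z^* widetilde Psi_z} and \Cref{c: widetilde Psi_z^-1 approx Phi_z}.

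\begin{proof}
We show the first relation, the second one being analogous. For $\mu\gg0$, $\Phi_z:E_{z,\text{\rm sm}}\to\bfC^\bullet$ is an isomorphism by \Cref{c: Phi_z: E_z sm^bullet to bfC^bullet iso}, so $(\Phi_z^{-1})^*\Phi_z^{-1}=(\Phi_z\Phi_z^*)^{-1}$ on $\bfC^\bullet$. Write $B=(\pi/\mu)^{\sN-n/2}$, a positive diagonal operator on $\bfC^\bullet$ whose least eigenvalue is $\ge c_0\mu^{-n/2}$ and with $\|B^{-1}\|\le C_0\mu^{n/2}$ for $\mu\gg0$. By \Cref{c: Phi_z^* Phi_z}, $\Phi_z\Phi_z^*=B+R_z$ with $\|R_z\|=O(e^{-c\mu})$. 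Hence, for $\mu$ large, $\|R_z\|\le\tfrac12 c_0\mu^{-n/2}$, so $\Phi_z\Phi_z^*\ge\tfrac12 B$ as positive operators, giving $\|(\Phi_z\Phi_z^*)^{-1}\|\le 2\|B^{-1}\|\le 2C_0\mu^{n/2}$. Therefore
\[
(\Phi_z\Phi_z^*)^{-1}-B^{-1}=-(\Phi_z\Phi_z^*)^{-1}R_zB^{-1}\;,
\]
whose norm is $O(\mu^{n}e^{-c\mu})=O(e^{-c'\mu})$ for any $0<c'<c$. Thus $(\Phi_z^{-1})^*\Phi_z^{-1}=(\Phi_z\Phi_z^*)^{-1}\asymp_0 B^{-1}=(\mu/\pi)^{\sN-n/2}$. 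The relation $\Phi_z^{-1}(\Phi_z^{-1})^*\asymp_0(\mu/\pi)^{\sN-n/2}$ follows in the same way from $\Phi_z^{-1}(\Phi_z^{-1})^*=(\Phi_z^*\Phi_z)^{-1}$ and the other half of \Cref{c: Phi_z^* Phi_z}.
\end{proof}
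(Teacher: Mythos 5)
Your proof is correct, and it establishes the corollary in essentially the same way the paper does: as a short algebraic consequence of \Cref{c: Phi_z^* Phi_z}, using that $\Phi_z$ is an isomorphism for $\mu\gg0$ (\Cref{c: Phi_z: E_z sm^bullet to bfC^bullet iso}) and that polynomial factors in $\mu$ are absorbed by the exponential error allowed in $\asymp_0$. The only difference is one of presentation: the paper estimates $\|\Phi_z^{-1}\bfe\|$ for unit $\bfe\in\bfC^k$ degree by degree (leaving the passage from these quadratic-form values to the operator statement implicit), whereas you invert the operator relation $\Phi_z\Phi_z^*=(\pi/\mu)^{\sN-n/2}+O(e^{-c\mu})$ directly, via the identity $A^{-1}-B^{-1}=-A^{-1}(A-B)B^{-1}$ and the lower bound $\Phi_z\Phi_z^*\ge\tfrac12(\pi/\mu)^{\sN-n/2}$, which makes the bookkeeping fully explicit.
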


\begin{proof}
By \Cref{c: Phi_z^* Phi_z}, for $\bfe\in\bfC^k$ with $\|\bfe\|=1$,
\[
\big\|\Phi_z^{-1}\bfe\big\|\asymp_0\Big(\frac\mu\pi\Big)^{k/2-n/4}\big\|\Phi_z\Phi_z^{-1}\bfe\big\|=\Big(\frac\mu\pi\Big)^{k/2-n/4}\;,
\]
yielding the first stated relation. The second one has a similar proof.
\end{proof}

\begin{cor}\label{c: Phi_z^-1}
As $\mu\to+\infty$,
\[
\Phi_z^*\asymp_0\Big(\frac\pi\mu\Big)^{\sN-n/2}\Phi_z^{-1}\;,\quad
\widetilde\Psi_z\asymp_0\Phi_z^{-1}\;.
\]
\end{cor}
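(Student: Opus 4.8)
The plan is to deduce both relations directly from the asymptotics already established for $\Phi_z$, $\Phi_z^*$ and $\widetilde\Psi_z$, keeping careful track of the fact that $\Phi_z:E_{z,\text{\rm sm}}\to\bfC^\bullet$ is an isomorphism for $\mu\gg0$ (\Cref{c: Phi_z: E_z sm^bullet to bfC^bullet iso}), so that $\Phi_z^{-1}$ is defined. The two ingredients are \Cref{c: Phi_z^* Phi_z}, which gives $\Phi_z^*\Phi_z\asymp_0(\pi/\mu)^{\sN-n/2}$ on $E_{z,\text{\rm sm}}$, and \Cref{c: widetilde Psi_z approx Phi_z^*}, which gives $\widetilde\Psi_z\asymp_0(\mu/\pi)^{\sN-n/2}\Phi_z^*$. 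One must also invoke \Cref{c: (Phi_z^-1)^* Phi_z^-1} to know that $\Phi_z^{-1}$ stays of polynomially controlled norm, so that composing an $\asymp_0$-relation with $\Phi_z^{-1}$ preserves the exponentially small error (an $O(e^{-c\mu})$ term composed with an operator of norm $O(\mu^N)$ is still $O(e^{-c\mu})$ after possibly shrinking $c$).

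First I would prove the first relation. Starting from \Cref{c: Phi_z^* Phi_z}, write $\Phi_z^*\Phi_z\asymp_0(\pi/\mu)^{\sN-n/2}$ as operators on $E_{z,\text{\rm sm}}$. Postcompose with $\Phi_z^{-1}:\bfC^\bullet\to E_{z,\text{\rm sm}}$ on the right; since $\Phi_z\Phi_z^{-1}=\id$ on $\bfC^\bullet$ and $\Phi_z^{-1}$ has norm $O(\mu^{(n-2\sN)/2}\vee 1)$ uniformly in $\nu$ by \Cref{c: (Phi_z^-1)^* Phi_z^-1}, we get
\[
\Phi_z^*=\Phi_z^*\Phi_z\Phi_z^{-1}\asymp_0\Big(\frac\pi\mu\Big)^{\sN-n/2}\Phi_z^{-1}\;,
\]
which is the first claimed relation. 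Here one should note that $(\pi/\mu)^{\sN-n/2}$ commutes with nothing in general, but it is a fixed diagonal operator (in the $\sN$-grading) with norm a fixed power of $\mu$, so multiplying an $O(e^{-c\mu})$ error by it on the left leaves an $O(e^{-c\mu})$ error after adjusting $c$.

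For the second relation, combine \Cref{c: widetilde Psi_z approx Phi_z^*} with the first relation just proved:
\[
\widetilde\Psi_z\asymp_0\Big(\frac\mu\pi\Big)^{\sN-n/2}\Phi_z^*
\asymp_0\Big(\frac\mu\pi\Big)^{\sN-n/2}\Big(\frac\pi\mu\Big)^{\sN-n/2}\Phi_z^{-1}=\Phi_z^{-1}\;,
\]
using that the two diagonal factors $(\mu/\pi)^{\sN-n/2}$ and $(\pi/\mu)^{\sN-n/2}$ are mutually inverse and multiplying the error terms by a bounded power of $\mu$ again only changes the constant $c$. This transitivity of $\asymp_0$ through bounded (polynomially bounded) operators is exactly the kind of bookkeeping that \Cref{n: asymp_0} and \Cref{n: O(f(mu))} are set up to handle. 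The only real point requiring a moment's care — and the step I expect to be the main (mild) obstacle — is verifying uniformity in $\nu=\Im z$: the exponentially small constants and the polynomial bounds in \Cref{c: Phi_z^* Phi_z,c: widetilde Psi_z approx Phi_z^*,c: (Phi_z^-1)^* Phi_z^-1} are all uniform on $\nu$, so the composition is too; no new estimate is needed, but this should be stated explicitly.
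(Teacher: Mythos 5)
Your proof is correct and essentially the paper's own argument: the first relation is obtained exactly as in the paper by writing $\Phi_z^*=\Phi_z^*\Phi_z\Phi_z^{-1}$ and combining \Cref{c: Phi_z^* Phi_z} with the polynomial control of $\Phi_z^{-1}$ from \Cref{c: (Phi_z^-1)^* Phi_z^-1}. For the second relation the paper uses the equally short identity $\widetilde\Psi_z=\widetilde\Psi_z\Phi_z\Phi_z^{-1}\asymp_0\Phi_z^{-1}$ (via $\widetilde\Psi_z\Phi_z\asymp_0 1$ from \Cref{c: Phi_z+tau widetilde Psi_z} with $\tau=0$), whereas you chain \Cref{c: widetilde Psi_z approx Phi_z^*} with the first relation; this is only a cosmetic difference, and your bookkeeping of the exponentially small errors against the polynomially bounded diagonal factors is exactly what is needed.
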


\begin{proof}
By \Cref{c: Phi_z^* Phi_z,c: (Phi_z^-1)^* Phi_z^-1},
\[
\Phi_z^*=\Phi_z^*\Phi_z\Phi_z^{-1}\asymp_0\Big(\frac\pi\mu\Big)^{\sN-n/2}\Phi_z^{-1}\;,\quad
\widetilde\Psi_z=\widetilde\Psi_z\Phi_z\Phi_z^{-1}\asymp_0\Phi_z^{-1}\;.\qedhere
\]
\end{proof}

\begin{cor}\label{c: widetilde bfPi^j_z}
We have $\widetilde\bfPi^1_z=\bfPi^1_z$ for $\mu\gg0$, and $\widetilde\bfPi^2_z\asymp_0\bfPi^2_z$ as $\mu\to+\infty$.
\end{cor}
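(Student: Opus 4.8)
The plan is to prove the two assertions of \Cref{c: widetilde bfPi^j_z} separately, exploiting the fact that $\widetilde\bfPi^1_z$ and $\bfPi^1_z$ are both \emph{exact} orthogonal projections onto subspaces of $\im\bfd_z$, whereas the $j=2$ case only needs an asymptotic comparison. First I would record the elementary linear-algebra principle I intend to use: if $P$ and $Q$ are orthogonal projections on a finite-dimensional Hermitian space with $P=PQ$ (equivalently $\im P\subset\im Q$) and $\rank P=\rank Q$, then $P=Q$. This is immediate since $\im P\subset\im Q$ together with equality of dimensions forces $\im P=\im Q$, and an orthogonal projection is determined by its image.

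For the first claim, $\widetilde\bfPi^1_z$ projects onto $\Phi_z(\im d_{z,\mathrm{sm}})$ and $\bfPi^1_z$ onto $\im\bfd_z$. The inclusion $\Phi_z(\im d_{z,\mathrm{sm}})\subset\im\bfd_z$ is exactly~\eqref{widetilde bfPi^1_z = widetilde bfPi^1_z bfPi^1_z}, i.e.\ $\widetilde\bfPi^1_z=\widetilde\bfPi^1_z\bfPi^1_z$, because $\Phi_z\colon(E_{z,\mathrm{sm}},d_z)\to(\bfC^\bullet,\bfd_z)$ is a chain map, so it sends $d_z(E_{z,\mathrm{sm}}^{k-1})$ into $\bfd_z(\bfC^{k-1})$. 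Hence it remains to check the ranks agree. For $\mu\gg0$, $\Phi_z\colon E_{z,\mathrm{sm}}\to\bfC^\bullet$ is an isomorphism of complexes by \Cref{c: Phi_z: E_z sm^bullet to bfC^bullet iso}, so it restricts to an isomorphism $d_z(E_{z,\mathrm{sm}}^{k-1})\xrightarrow{\ \sim\ }\bfd_z(\bfC^{k-1})$ in each degree; therefore $\rank\widetilde\bfPi^1_z=\dim\bfd_z(\bfC^{k-1})=\rank\bfPi^1_z$ in each degree, and the linear-algebra principle gives $\widetilde\bfPi^1_z=\bfPi^1_z$ for $\mu\gg0$.

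For the second claim I would instead argue asymptotically. By \Cref{c: Phi_z: E_z sm^bullet to bfC^bullet iso} the isomorphism $\Phi_z$ also identifies $\delta_z(E_{z,\mathrm{sm}}^{k+1})$ with a subspace of $\bfC^k$ of the same dimension as $\bfdelta_z(\bfC^{k+1})$ (using that $\Phi_z$ intertwines the small complex with $(\bfC^\bullet,\bfd_z)$, hence conjugates $\im\delta_{z,\mathrm{sm}}$ to a complement of $\ker\bfDelta_z\oplus\im\bfd_z$ of the right dimension). Then $\widetilde\bfPi^2_z$ is the orthogonal projection onto $\Phi_z(\im\delta_{z,\mathrm{sm}})$ while $\bfPi^2_z$ projects onto $\im\bfdelta_z$; these two subspaces have the same dimension but need not coincide, so one controls the difference of the projections by the angle between the subspaces. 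Concretely, \Cref{c: d_z sm approx ...} gives $\delta_{z,\mathrm{sm}}\asymp_0\widetilde\Psi_z\bfdelta_z\Phi_z$, and combining with $\widetilde\Psi_z\asymp_0\Phi_z^{-1}$ (\Cref{c: Phi_z^-1}) yields $\Phi_z\delta_{z,\mathrm{sm}}\Phi_z^{-1}\asymp_0\bfdelta_z$ on $\bfC^\bullet$; hence $\Phi_z(\im\delta_{z,\mathrm{sm}})$ is $O(e^{-c\mu})$-close (in the gap metric on the Grassmannian) to $\im\bfdelta_z$, whence $\widetilde\bfPi^2_z\asymp_0\bfPi^2_z$. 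Alternatively, and more in the style of the surrounding corollaries, one can subtract: $\widetilde\bfPi^0_z+\widetilde\bfPi^1_z+\widetilde\bfPi^2_z=1=\bfPi^0_z+\bfPi^1_z+\bfPi^2_z$, $\widetilde\bfPi^1_z=\bfPi^1_z$ by the first part, and $\widetilde\bfPi^0_z\asymp_0\bfPi^0_z$ follows from \Cref{c: Pi^1_z sm} (the $j=0$ case) together with $\Phi_z(\ker\Delta_{z,\mathrm{sm}})=\ker\bfDelta_z$ being $\asymp_0$-close to its Hodge-theoretic counterpart via \Cref{c: Phi_z^* Phi_z}; subtracting gives $\widetilde\bfPi^2_z\asymp_0\bfPi^2_z$.

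The main obstacle is the bookkeeping in the $j=2$ case: one must make precise that an $\asymp_0$-small perturbation of a projection-like operator whose image has the correct fixed dimension produces an $\asymp_0$-small perturbation of the corresponding orthogonal projection. This is the standard fact that the orthogonal projection onto a subspace depends real-analytically (with locally bounded derivative) on the subspace as long as the dimension is locally constant—e.g.\ via the Riesz integral $\tfrac{1}{2\pi i}\oint (w-A)^{-1}\,dw$ around a spectral gap, exactly as used already in \Cref{p: | alpha - P_z sm alpha |_m i nu} and \Cref{p: partial_z P_z sm = O(|mu|^-n/2)}. Since all the relevant dimensions are constant for $\mu\gg0$ by \Cref{c: m^j_z k only depends on | XX_k | and xi}, and all the operators involved are uniformly bounded by \Cref{t: spec Delta_z}, this transfer is routine, and I would present it briefly rather than in full detail.
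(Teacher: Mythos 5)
Your treatment of $j=1$ is essentially the paper's own argument: for $\mu\gg0$ the chain isomorphism $\Phi_z:(E_{z,\text{\rm sm}},d_z)\to(\bfC^\bullet,\bfd_z)$ of \Cref{c: Phi_z: E_z sm^bullet to bfC^bullet iso} gives $\Phi_z(\im d_{z,\text{\rm sm}})=\im\bfd_z$ outright, which is exactly your ``inclusion plus equal rank'' step; that part is fine.

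The $j=2$ part has a genuine gap. Granting your operator relation $\Phi_z\delta_{z,\text{\rm sm}}\Phi_z^{-1}\asymp_0\bfdelta_z$ (the conjugation only costs polynomial factors, so this is correct), the inference ``hence the images are $O(e^{-c\mu})$-close in the gap metric, hence $\widetilde\bfPi^2_z\asymp_0\bfPi^2_z$, by continuity of projections via a Riesz integral over a spectral gap'' does not follow. Closeness of two finite-rank operators of equal rank controls the position of their images only up to the size of their smallest nonzero singular values, and here those are themselves exponentially small: every entry of $\bfdelta_z$ carries a factor $e^{\bar z\eta(\gamma)}$ with $\eta(\gamma)<0$; in the quantitative setting of \Cref{p: eigenvalues of bfDelta_z,t: estimates of the small eigenvalues} the nonzero spectrum of $\bfDelta_z$ on $\im\bfd_z\oplus\im\bfdelta_z$ is of size $e^{-2a_k\mu}$ with $a_k$ arbitrarily large (cf.\ \Cref{t: extension of Smale}), while the constants $c$ in the $\asymp_0$ estimates of \Cref{c: d_z sm approx ...} are fixed by the geometry of the Morse charts; and in the generality of this corollary, \Cref{t: spec Delta_z} gives only the upper bound $e^{-c\mu}$ on the nonzero small spectrum, no lower bound at all. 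So an admissible $O(e^{-c\mu})$ perturbation can rotate $\im\bfdelta_z$ by an angle of order one, and there is no contour separating $0$ from this part of the spectrum at a controlled distance; the Riesz-integral device of \Cref{p: | alpha - P_z sm alpha |_m i nu,p: partial_z P_z sm = O(|mu|^-n/2)} works there precisely because $\S^1$ sits in the genuine gap between the small and large spectrum, which has no analogue here. Your alternative ``subtraction'' route is also defective: $\widetilde\bfPi^0_z+\widetilde\bfPi^1_z+\widetilde\bfPi^2_z\ne1$, since $\Phi_z$ is not unitary and the three image subspaces are complementary but not mutually orthogonal; and the input $\widetilde\bfPi^0_z\asymp_0\bfPi^0_z$ is a statement of exactly the same nature and difficulty as the one to be proved, not a consequence of \Cref{c: Pi^1_z sm}, which compares $\widetilde\bfPi^j_z$ with $\Pi^j_{z,\text{\rm sm}}$, not with $\bfPi^j_z$.

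The paper avoids all spectral-gap considerations by comparing bases rather than operators: it takes an orthonormal basis $\alpha_{z,a}=\delta_z\beta_{z,a}$ of $\delta_z(E_{z,\text{\rm sm}}^{k+1})$, with $\beta_{z,a}$ a basis of $\im d_{z,\text{\rm sm},k}$, shows $\Phi_z\alpha_{z,a}\asymp_0\bfdelta_z\Phi_z\beta_{z,a}$ using \Cref{c: d_z sm approx ...,c: Phi_z+tau widetilde Psi_z,c: Phi_z^* Phi_z}, observes that the vectors $\bfdelta_z\Phi_z\beta_{z,a}$ form a basis of $\im\bfdelta_{z,k+1}$ (because $\Phi_z(\im d_{z,\text{\rm sm}})=\im\bfd_z$ and $\bfdelta_z:\im\bfd_z\to\im\bfdelta_z$ is bijective), and then runs Gram--Schmidt on both families---stable because the Gram matrices are $\asymp_0$ scalar multiples of the identity by \Cref{c: Phi_z^* Phi_z}---to produce $\asymp_0$-close orthonormal bases of $\Phi_z(\im\delta_{z,\text{\rm sm}})$ and of $\im\bfdelta_z$, from which the two orthogonal projections are compared term by term. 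If you want to salvage your write-up, argue at this basis level; the ``small perturbation of the subspace, constant dimension'' heuristic cannot be made to work at these exponential scales.
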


\begin{proof}
Since $\Phi_z(\im d_{z,\text{\rm sm}})=\im\bfd_z$ for $\mu\gg0$, we get $\widetilde\bfPi^1_z=\bfPi^1_z$. 

To prove $\widetilde\bfPi^2_z\asymp_0\bfPi^2_z$ as $\mu\to+\infty$, consider the notation of the proof of \Cref{c: Pi^1_z sm}. We have $\alpha_{z,a}=\delta_z\beta_{z,a}$ ($a=1,\dots,p_z$) for some base $\beta_{z,1},\dots,\beta_{z,p_z}$ of $\im d_{z,\text{\rm sm},k}$. Hence, by \Cref{c: Phi_z+tau widetilde Psi_z,c: Phi_z^* Phi_z,c: d_z sm approx ...},
\begin{equation}\label{Phi_z alpha_z a approx bfdelta_z Phi_z beta_z a}
\Phi_z\alpha_{z,a}=\Phi_z\delta_z\beta_{z,a}
\asymp_0\Phi_z\widetilde\Psi_z\bfdelta_z\Phi_z\beta_{z,a}
\asymp_0\bfdelta_z\Phi_z\beta_{z,a}\;,
\end{equation}
and $\bfdelta_z\Phi_z\beta_{z,1},\dots,\bfdelta_z\Phi_z\beta_{z,p_z}$ is a base of $\im\bfdelta_{z,k+1}$. Applying the Gram-Schmidt process to this base, we get an orthonormal base $\bfg_{z,1},\dots,\bfg_{z,p_z}$ of $\im\bfdelta_{z,k+1}$ satisfying $\bfg_{z,a}\asymp_0\bff_{z,a}$ by~\eqref{Phi_z alpha_z a approx bfdelta_z Phi_z beta_z a}. Then, for any $\bfe\in\bfC^k$ with $\|\bfe\|=1$,
\[
\widetilde\bfPi^2_z\bfe=\sum_{a=1}^{p_z}\langle\bfe,\bfg_{z,a}\rangle\bfg_{z,a}
\asymp_0\sum_{a=1}^{p_z}\langle\bfe,\bff_{z,a}\rangle\bff_{z,a}
=\bfPi^2_z\bfe\;.\qedhere
\]
\end{proof}

\begin{cor}\label{c: d_z sm = ...}
We have
\[
d_{z,\text{\rm sm}}=\Phi_z^{-1}\bfd_z\Phi_z\;,\quad
d_{z,\text{\rm sm}}^{-1}\Pi^1_{z,\text{\rm sm}}
=\Pi^2_{z,\text{\rm sm}}\Phi_z^{-1}\bfd_z^{-1}\Phi_z\Pi^1_{z,\text{\rm sm}}\;.
\]
\end{cor}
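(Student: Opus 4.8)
The plan is to exploit that $\Phi_z$ is a morphism of differential complexes which, for $\mu\gg0$, restricts to an isomorphism $\Phi_z\colon E_{z,\text{\rm sm}}\to\bfC^\bullet$ by \Cref{c: Phi_z: E_z sm^bullet to bfC^bullet iso}. Recall that $E_{z,\text{\rm sm}}$ is invariant under $d_z$, $\delta_z$ and $\Delta_z$ (being spanned by eigenforms of $\Delta_z$), so it carries its own Hodge decomposition, and that $\Phi_zd_z=\bfd_z\Phi_z$ on $\Omega(M)$, hence on $E_{z,\text{\rm sm}}$. For the first equality I would simply note that invertibility of $\Phi_z$ on $E_{z,\text{\rm sm}}$ turns the intertwining relation into $d_z=\Phi_z^{-1}\bfd_z\Phi_z$ on $E_{z,\text{\rm sm}}$; composing on the right with $P_{z,\text{\rm sm}}$, and using that $d_{z,\text{\rm sm}}=d_zP_{z,\text{\rm sm}}$ has range in $E_{z,\text{\rm sm}}$, gives $d_{z,\text{\rm sm}}=\Phi_z^{-1}\bfd_z\Phi_zP_{z,\text{\rm sm}}=\Phi_z^{-1}\bfd_z\Phi_{z,\text{\rm sm}}$.

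For the second equality, I would argue elementwise. Fix $\alpha\in L^2(M;\Lambda)$ and set $\gamma=d_{z,\text{\rm sm}}^{-1}\Pi^1_{z,\text{\rm sm}}\alpha\in\im\delta_{z,\text{\rm sm}}$, so that $d_z\gamma=\Pi^1_{z,\text{\rm sm}}\alpha$. Applying $\Phi_z$ and using $\Phi_zd_z=\bfd_z\Phi_z$ gives $\Phi_z\Pi^1_{z,\text{\rm sm}}\alpha=\bfd_z\Phi_z\gamma\in\im\bfd_z$, so $\bfd_z^{-1}$ (the honest inverse of $\bfd_z\colon\im\bfdelta_z\to\im\bfd_z$, which here coincides with $\bfd_z^{-1}\bfPi^1_z$) applied to it produces the unique element of $\im\bfdelta_z$ sent by $\bfd_z$ to $\bfd_z\Phi_z\gamma$, namely $\bfPi^2_z\Phi_z\gamma$ — here one uses $\ker\bfDelta_z\oplus\im\bfd_z\subseteq\ker\bfd_z$. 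It then remains to show $\Pi^2_{z,\text{\rm sm}}\Phi_z^{-1}\bfPi^2_z\Phi_z\gamma=\gamma$. Writing $\bfPi^2_z=1-\bfPi^0_z-\bfPi^1_z$ and using that $\Phi_z$ is a chain isomorphism for $\mu\gg0$ (so $\Phi_z(\ker d_{z,\text{\rm sm}})=\ker\bfd_z$ and $\Phi_z(\im d_{z,\text{\rm sm}})=\im\bfd_z$), the terms $\Phi_z^{-1}\bfPi^0_z\Phi_z\gamma$ and $\Phi_z^{-1}\bfPi^1_z\Phi_z\gamma$ both lie in $\ker d_{z,\text{\rm sm}}$, which is orthogonal to $\im\delta_{z,\text{\rm sm}}$ and hence annihilated by $\Pi^2_{z,\text{\rm sm}}$; since $\Pi^2_{z,\text{\rm sm}}\gamma=\gamma$, the claim follows, giving $\Pi^2_{z,\text{\rm sm}}\Phi_z^{-1}\bfd_z^{-1}\Phi_z\Pi^1_{z,\text{\rm sm}}\alpha=\gamma=d_{z,\text{\rm sm}}^{-1}\Pi^1_{z,\text{\rm sm}}\alpha$.

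The computation is essentially formal, and the only point requiring care — also the only potential pitfall — is that $\Phi_z$ intertwines the differentials but not the codifferentials or the harmonic projections, so $\Phi_z^{-1}$ need not carry $\im\bfdelta_z$ to $\im\delta_{z,\text{\rm sm}}$; the projection $\Pi^2_{z,\text{\rm sm}}$ inserted at the end of the formula is exactly what compensates, and the argument above is organized precisely to show that the spurious components (which all lie in $\ker d_{z,\text{\rm sm}}$) are killed by it.
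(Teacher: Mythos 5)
Your proof is correct and takes essentially the same route as the paper's: the first identity is the intertwining relation $\Phi_zd_z=\bfd_z\Phi_z$ combined with the invertibility of $\Phi_z$ on $E_{z,\text{\rm sm}}$, exactly as in the text. For the second identity you run the same formal verification in the opposite direction—starting from $\gamma=d_{z,\text{\rm sm}}^{-1}\Pi^1_{z,\text{\rm sm}}\alpha$ and checking that the right-hand side reproduces it, while the paper applies $d_z$ to the candidate $\Pi^2_{z,\text{\rm sm}}\Phi_z^{-1}\bfd_z^{-1}\Phi_z\alpha$ and invokes uniqueness of the preimage in $\im\delta_{z,\text{\rm sm}}$—but the ingredients (the chain isomorphism, the compatibility of the Hodge decomposition with $E_{z,\text{\rm sm}}$, and the fact that $\Pi^2_{z,\text{\rm sm}}$ annihilates $\ker d_z\cap E_{z,\text{\rm sm}}$) are the same.
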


\begin{proof}
The first equality follows like the first relation of \Cref{c: d_z sm approx ...}, using $\Phi_z^{-1}$ instead of $\widetilde\Psi_z$. To prove the second one, take any $\alpha\in\im d_{z,\text{\rm sm}}$. Since
\[
d_z\Pi^2_{z,\text{\rm sm}}\Phi_z^{-1}\bfd_z^{-1}\Phi_z\alpha
=d_z\Phi_z^{-1}\bfd_z^{-1}\Phi_z\alpha
=\Phi_z^{-1}\bfd_z\bfd_z^{-1}\Phi_z\alpha
=\alpha
\]
with $\Pi^2_{z,\text{\rm sm}}\Phi_z^{-1}\bfd_z^{-1}\Phi_z\alpha\in\im\delta_{z,\text{\rm sm}}$, we obtain
\[
\Pi^2_{z,\text{\rm sm}}\Phi_z^{-1}\bfd_z^{-1}\Phi_z\alpha
=d_{z,\text{\rm sm}}^{-1}\alpha\;.\qedhere
\]
\end{proof}

\subsection{Derivatives of some homomorphisms}\label{ss: derivatives}

\begin{thm}\label{t: partial_z(Phi_z Psi_z)}
As $\mu\to+\infty$,
\[
\partial_z(\Phi_z\Psi_z),\partial_{\bar z}(\Phi_z\Psi_z)
\asymp_0\Big(\frac n{8\mu}-\frac \sN{4\mu}\Big)\Big(\frac{\pi}{\mu}\Big)^{\sN/2-n/4}\;.
\]
\end{thm}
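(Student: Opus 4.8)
The plan is to compute the matrix of $\Phi_z\Psi_z=\Phi_zP_{z,\text{\rm sm}}J_z$ on $\bfC^\bullet$ in the canonical basis $\{\bfe_p\}$ and to differentiate under the integral sign. Fix $\mu\gg0$ so that $P_{z,\text{\rm sm}}$ depends smoothly on $z$ (\Cref{t: spec Delta_z}) and $\Phi_z\colon E_{z,\text{\rm sm}}\to\bfC^\bullet$ is an isomorphism (\Cref{c: Phi_z: E_z sm^bullet to bfC^bullet iso}). For $p\in\XX_k$, the integral formula for $\Phi_z$ (\Cref{sss: perturbed Morse complex}) gives
\[
\Phi_z\Psi_z\bfe_p=\sum_{q\in\XX_k}\bfe_q\int_{\widehat W^-_q}e^{z\hat h_q^-}\,(\hat\iota_q^-)^*P_{z,\text{\rm sm}}e_{p,z}\;,
\]
and since each $\widehat W^-_q$ is compact and the integrand depends smoothly on $z$, both $\partial_z$ and $\partial_{\bar z}$ pass inside the integral. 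Throughout I will use that $|e^{z\hat h_q^-}|\le1$ on $\widehat W^-_q$, that $\widehat W^-_q$ has finite volume, and that $P_{z,\text{\rm sm}}e_{p,z}-e_{p,z}$ together with its derivatives are $O(e^{-c\mu})$ in $L^\infty$ (\Cref{c: | alpha - P_z sm alpha |_L^infty}, \Cref{p: | partial_z(P_z sm e_p z - e_p z) |_m i nu}, and their $\partial_{\bar z}$ analogues from \Cref{r: partial_bar z}), so these differences contribute only $\asymp_0 0$ to all the integrals below.

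For $\partial_z$, the derivative either hits $e^{z\hat h_q^-}$, contributing $\hat h_q^-e^{z\hat h_q^-}(\hat\iota_q^-)^*e_{p,z}$ up to $\asymp_0 0$, or hits $P_{z,\text{\rm sm}}e_{p,z}$, which by the above and \Cref{l: partial_z e_p z} may be replaced by $\bigl(\tfrac{n}{8\mu}-\tfrac12|x_p^+|^2\bigr)e_{p,z}$ up to $\asymp_0 0$. The geometric input is now that $e_{p,z}$ is supported in $U_p$ and $U_p\cap\overline{W^-_q}=\emptyset$ for $q\ne p$ in $\XX_k$ (\Cref{sss: completion}); hence all the integrals over $\widehat W^-_q$ with $q\ne p$ vanish, while for $q=p$ the pullback is supported in the local sheet $U_p^-$, where $x_p^+\equiv0$ and $\hat h_p^-=-\tfrac12|x_p^-|^2$ by \eqref{h_eta p^-}. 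Therefore the $|x_p^+|^2$-term drops out and
\[
\partial_z\bigl(\Phi_z\Psi_z\bfe_p\bigr)\asymp_0\Bigl(\frac{n}{8\mu}\int_{\widehat W^-_p}e^{z\hat h_p^-}(\hat\iota_p^-)^*e_{p,z}-\frac12\int_{\widehat W^-_p}|x_p^-|^2\,e^{z\hat h_p^-}(\hat\iota_p^-)^*e_{p,z}\Bigr)\bfe_p\;.
\]
The first integral is the $(p,p)$-entry of $\Phi_zJ_z$, equal to $(\pi/\mu)^{k/2-n/4}+O(e^{-c\mu})$ by \Cref{r: Phi_z: E_z to bfC^bullet iso}. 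The second is the integral evaluated in the proof of \Cref{t: Phi_z+tau Psi_z} (case $q=p$, $\tau=0$), now with an extra factor $|x_p^-|^2=\sum_{j=1}^k(x_p^j)^2$ inside the Gaussian; using $\int_{\R}x^2e^{-\mu x^2}\,dx=\tfrac1{2\mu}\int_{\R}e^{-\mu x^2}\,dx$ for each summand, it equals $\tfrac{k}{2\mu}(\pi/\mu)^{k/2-n/4}(1+O(e^{-c\mu}))$. Combining, $\partial_z(\Phi_z\Psi_z\bfe_p)\asymp_0\bigl(\tfrac{n}{8\mu}-\tfrac{k}{4\mu}\bigr)(\pi/\mu)^{k/2-n/4}\bfe_p$, which is the claimed identity since $\sN=k$ on $\bfC^k$.

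For $\partial_{\bar z}$ the computation is the same, but simpler: $e^{z\hat h_q^-}$ is holomorphic in $z$, so only $\partial_{\bar z}(P_{z,\text{\rm sm}}e_{p,z})$ survives, and combining \Cref{r: partial_bar z} with the $\partial_\mu$ and $\partial_\nu$ formulas for $e_{p,z}$ from the proof of \Cref{l: partial_z e_p z} gives $\partial_{\bar z}e_{p,z}=\bigl(\tfrac{n}{8\mu}-\tfrac12|x_p^-|^2\bigr)e_{p,z}$ up to $\asymp_0 0$. Restricted to $\widehat W^-_p$, the term $\tfrac12|x_p^-|^2$ coincides with $-\hat h_p^-$, so the resulting integral is identical to the one already computed for $\partial_z$, and $\partial_{\bar z}(\Phi_z\Psi_z)$ has the same asymptotics. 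I expect the only delicate points to be the bookkeeping of the $O(e^{-c\mu})$ errors—each is $e^{-c\mu}$ times the $L^1$-norm of $(\hat\iota_p^-)^*e_{p,z}$ or a similar density, which grows only polynomially in $\mu$—and the observation that $(\hat\iota_p^-)^*e_{p,z}$ is supported in the local sheet $U_p^-$ on which $x_p^+\equiv0$; granting these, everything reduces to the Gaussian estimate already carried out in \Cref{t: Phi_z+tau Psi_z}.
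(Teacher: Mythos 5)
Your proof is correct and follows essentially the same route as the paper's: differentiate the matrix coefficients of $\Phi_z\Psi_z$ under the integral, discard the $q\ne p$ terms and the $P_{z,\text{\rm sm}}$-corrections via the exponential $L^\infty$ estimates (\Cref{c: | alpha - P_z sm alpha |_L^infty}, \Cref{p: | partial_z(P_z sm e_p z - e_p z) |_m i nu}), apply \Cref{l: partial_z e_p z} and its $\partial_{\bar z}$ analogue, and reduce to the Gaussian integral on the unstable sheet where $x_p^+\equiv0$ and $\hat h_p^-=-\tfrac12|x_p^-|^2$. The only cosmetic difference is that you recompute the Gaussian integral of $|x_p^-|^2$ directly rather than quoting the paper's evaluation in~\eqref{int_widehat W^-_p hat h_p^- e^z hat h_p^- (hat iota_p^-)^*e_p z}, with the same outcome.
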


\begin{proof}
By~\eqref{Phi_z + tau Psi_z bfe_p},
\begin{multline}\label{partial_z(Phi_z Psi_z)}
\partial_z(\Phi_z\Psi_z\bfe_p)=\\
\sum_{q\in\YY_k}\bfe_q
\bigg(\int_{\widehat W^-_q}\hat h_q^-e^{z\hat h_q^-}(\hat\iota_q^-)^*P_{z,\text{\rm sm}} e_{p,z}
+\int_{\widehat W^-_q}e^{z\hat h_q^-}(\hat\iota_q^-)^*\partial_z(P_{z,\text{\rm sm}} e_{p,z})\bigg)\;,
\end{multline}
for every $p\in\YY_k$ ($k=0,\dots,n$). We estimate each of these integrals.

Like in the proof of \Cref{t: Phi_z+tau Psi_z}, we get, for any $q\ne p$ in $\YY_k$,
\begin{align}
\int_{\widehat W^-_p}\hat h_p^-e^{z\hat h_p^-}(\hat\iota_p^-)^*(P_{z,\text{\rm sm}}-1) e_{p,z}
&\asymp_00\;,
\label{int_widehat W^-_p hat h_p^- e^z hat h_p^- (hat iota_p^-)^*(P_z sm - 1) e_p z}\\
\int_{\widehat W^-_q}\hat h_q^-e^{z\hat h_q^-}(\hat\iota_q^-)^*P_{z,\text{\rm sm}} e_{p,z}
&\asymp_00\;.
\label{int_widehat W^-_q hat h_q^- e^z hat h_q^- (hat iota_q^-)^* P_z sm e_p z}
\end{align}
Moreover, by \Cref{p: Delta'_p mu}~\ref{i: e'_p mu},~\eqref{rho_p}--\eqref{a_mu} and~\eqref{int_-2r^2r rho(x)^2 x^2 e^-mu x^2 dx},
\begin{multline}\label{int_widehat W^-_p hat h_p^- e^z hat h_p^- (hat iota_p^-)^*e_p z}
\int_{\widehat W^-_p}\hat h_p^-e^{z\hat h_p^-}(\hat\iota_p^-)^*e_{p,z}\\
=-\frac k{2a_\mu}\bigg(\int_{-2r}^{2r}\rho(x)e^{-\mu x^2/2}\,dx\bigg)^{k-1}
\int_{-2r}^{2r}\rho(x)x^2e^{-\mu x^2/2}\,dx\\
=-\frac k{4\mu}\Big(\frac\pi\mu\Big)^{\frac k2-\frac n4}+O(e^{-c\mu})\;.
\end{multline}

On the other hand, by~\eqref{h_eta p^- < 0} and \Cref{p: | partial_z(P_z sm e_p z - e_p z) |_m i nu} ,
\[
\int_{\widehat W^-_q}e^{z\hat h_q^-}(\hat\iota_q^-)^*\partial_z(P_{z,\text{\rm sm}}e_{p,z}-e_{p,z})
\asymp_00\;,
\]
for all $q\in\YY_k$. In the case $q=p$, by~\eqref{int_widehat W^-_p e^(z+tau) hat h_p^- (hat iota_p^-)^* e_p z} and \Cref{l: partial_z e_p z},
\begin{multline}\label{int_widehat W^-_p e^z hat h_q^- (hat iota_q^-)^* partial_z e_p z}
\int_{\widehat W^-_p}e^{z\hat h_p^-}(\hat\iota_p^-)^*\partial_ze_{p,z}
=\Big(\frac n{8\mu}+O(e^{-c\mu})\Big)\int_{\widehat W^-_p}e^{z\hat h_p^-}(\hat\iota_p^-)^*e_{p,z}\\
=\Big(\frac n{8\mu}+O(e^{-c\mu})\Big)\bigg(\Big(\frac \pi\mu\Big)^{\frac k2-\frac n4}+O(e^{-c\mu})\bigg)
=\frac n{8\mu}\Big(\frac \pi\mu\Big)^{\frac k2-\frac n4}+O(e^{-c\mu})\;.
\end{multline}
In the case $q\ne p$, using \Cref{l: partial_z e_p z} and arguing again like in the proof of \Cref{t: Phi_z+tau Psi_z}, we get
\begin{equation}\label{int_widehat W^-_q e^z hat h_q^- (hat iota_p^-)^* partial_z e_p z}
\int_{\widehat W^-_q}e^{z\hat h_q^-}(\hat\iota_q^-)^*\partial_ze_{p,z}\asymp_00\quad(\mu\to+\infty)\;.
\end{equation}
Now the result for $\partial_z$ follows from~\eqref{partial_z(Phi_z Psi_z)}--\eqref{int_widehat W^-_p hat h_p^- e^z hat h_p^- (hat iota_p^-)^*e_p z},~\eqref{int_widehat W^-_p e^z hat h_q^- (hat iota_q^-)^* partial_z e_p z} and~\eqref{int_widehat W^-_q e^z hat h_q^- (hat iota_p^-)^* partial_z e_p z}.

If we consider $\partial_{\bar z}$, the proof has to be modified as follows. In the analogue of~\eqref{partial_z(Phi_z Psi_z)}, the first term of the right-hand side must be removed. In the analogue of \Cref{l: partial_z e_p z}, we get $|x_p^-|^2$ instead of $|x_p^+|^2$ by the right-hand side of~\eqref{h - h(p) around p} and~\eqref{partial_mu e_p z}. So $\partial_{\bar z}(\Phi_z\Psi_z)$ has the same final expression as $\partial_z(\Phi_z\Psi_z)$ by~\eqref{int_widehat W^-_p hat h_p^- e^z hat h_p^- (hat iota_p^-)^*e_p z}.
\end{proof}

\begin{thm}\label{t: partial_z((Psi_z^* Psi_z)^pm1)}
As $\mu\to+\infty$,
\[
\partial_z\big((\Psi_z^*\Psi_z)^{\pm1}\big),\partial_{\bar z}\big((\Psi_z^*\Psi_z)^{\pm1}\big)=O\big(\mu^{-1}\big)\;.
\]
\end{thm}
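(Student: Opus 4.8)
The plan is to differentiate the operator identity $\Psi_z=P_{z,\text{\rm sm}}J_z$ and use the asymptotics of $\partial_z P_{z,\text{\rm sm}}$ already established. First I would write $\partial_z\Psi_z = (\partial_z P_{z,\text{\rm sm}})J_z + P_{z,\text{\rm sm}}(\partial_z J_z)$, where $\partial_z J_z$ is the operator $\bfe_p\mapsto \partial_z e_{p,z}$. By \Cref{l: partial_z e_p z}, $\partial_z e_{p,z} = \bigl(\tfrac{n}{8\mu}-\tfrac{|x_p^+|^2}{2}+O(e^{-c\mu})\bigr)e_{p,z}$, so $\partial_z J_z$ is a composition of $J_z$ with multiplication by the locally-supported functions $\tfrac{n}{8\mu}-\tfrac{|x_p^+|^2}{2}$; these are uniformly bounded on $\supp\rho_p$ (since $|x_p|\le 2r$ there), hence $\partial_z J_z = O(1)$ as $\mu\to+\infty$, uniformly on $\nu$. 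Combined with $\partial_z P_{z,\text{\rm sm}} = O(\mu^{-1/2})$ and $\|P_{z,\text{\rm sm}}\|\le 1$ from \Cref{p: partial_z P_z sm = O(|mu|^-n/2)}, this gives $\partial_z\Psi_z = O(\mu^{-1/2})\cdot O(1) + O(1)\cdot O(1) = O(1)$, which is not yet good enough — the naive bound on $\partial_z\Psi_z$ is only $O(1)$.

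The point is that the $O(1)$ contributions cancel at leading order when one passes to $\Psi_z^*\Psi_z$. Using the product rule, $\partial_z(\Psi_z^*\Psi_z) = (\partial_{\bar z}\Psi_z)^*\Psi_z + \Psi_z^*(\partial_z\Psi_z)$ — here I must be careful: $\partial_z(\Psi_z^*) = (\partial_{\bar z}\Psi_z)^*$ by conjugation (cf.\ \Cref{r: partial_bar z}), and analogous formulas hold with $\partial_{\bar z}$. By~\eqref{Psi_z^* Psi_z = 1 + O(e^-c mu)} we have $\Psi_z^*\Psi_z = 1+O(e^{-c\mu})$, which already shows $\partial_z(\Psi_z^*\Psi_z)$ has no $O(1)$ obstruction in principle, but to extract the $O(\mu^{-1/2})$ rate I would instead argue directly. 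Write $\Psi_z = P_{z,\text{\rm sm}}P_z J_z + P_{z,\text{\rm sm}}(1-P_z)J_z$; since $J_z$ maps onto $E_z$ and $P_z$ is the projection to $E_z$, actually $P_zJ_z=J_z$, so $\Psi_z = P_{z,\text{\rm sm}}J_z$ with $J_z:\bfC^\bullet\to E_z$ isometric. Then $\Psi_z^*\Psi_z = J_z^* P_{z,\text{\rm sm}}J_z$, and differentiating, $\partial_z(\Psi_z^*\Psi_z) = (\partial_z J_z^*)P_{z,\text{\rm sm}}J_z + J_z^*(\partial_z P_{z,\text{\rm sm}})J_z + J_z^* P_{z,\text{\rm sm}}(\partial_z J_z)$, where $\partial_z J_z^* = (\partial_{\bar z}J_z)^*$. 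The middle term is $O(\mu^{-1/2})$ by \Cref{p: partial_z P_z sm = O(|mu|^-n/2)}. For the outer two terms, the key is that $\partial_z J_z = M_z J_z$ where $M_z$ is multiplication by $\tfrac{n}{8\mu}-\tfrac{|x_p^+|^2}{2}+O(e^{-c\mu})$ restricted to the supports; one then computes $J_z^*P_{z,\text{\rm sm}}M_zJ_z$ using $P_{z,\text{\rm sm}} = P_z + O(e^{-c\mu}) = J_zJ_z^* + O(e^{-c\mu})$ (from \Cref{p: P_z sm P_z+tau sm}), so $J_z^*P_{z,\text{\rm sm}}M_zJ_z \asymp_0 J_z^*J_zJ_z^*M_zJ_z = J_z^*M_zJ_z$, an explicit Gaussian-weighted matrix.

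The computation then reduces to evaluating $\langle M_z e_{p,z}, e_{q,z}\rangle$, which vanishes for $p\ne q$ (disjoint supports) and for $p=q$ equals $\tfrac{n}{8\mu} - \tfrac12\langle |x_p^+|^2 e_{p,z},e_{p,z}\rangle + O(e^{-c\mu})$. The Gaussian integral $\langle |x_p^+|^2 e_{p,\mu},e_{p,\mu}\rangle$ evaluates, via~\eqref{int_-2r^2r rho(x)^2 x^2 e^-mu x^2 dx} applied to the $n-k$ factors of $x_p^+$, to $\tfrac{n-k}{2\mu}+O(e^{-c\mu})$, so $J_z^*M_zJ_z = \bigl(\tfrac{n}{8\mu} - \tfrac{n-\sN}{4\mu}\bigr) + O(e^{-c\mu}) = O(\mu^{-1})$. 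The same holds for the $\partial_{\bar z}J_z$ outer term and for $\partial_{\bar z}(\Psi_z^*\Psi_z)$. Summing the three contributions, $\partial_z(\Psi_z^*\Psi_z) = O(\mu^{-1/2}) + O(\mu^{-1}) + O(\mu^{-1}) = O(\mu^{-1/2})$. For the inverse, I would use $\partial_z\bigl((\Psi_z^*\Psi_z)^{-1}\bigr) = -(\Psi_z^*\Psi_z)^{-1}\bigl(\partial_z(\Psi_z^*\Psi_z)\bigr)(\Psi_z^*\Psi_z)^{-1}$ together with $(\Psi_z^*\Psi_z)^{-1} = 1+O(e^{-c\mu})$, giving the same $O(\mu^{-1/2})$ bound. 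The main obstacle is bookkeeping: ensuring the conjugation identities for $\partial_z(\Psi_z^*)$ are applied correctly and that all the $O(\cdot)$ estimates are uniform in $\nu$ — the latter follows because every ingredient (\Cref{l: partial_z e_p z}, \Cref{p: partial_z P_z sm = O(|mu|^-n/2)}, \Cref{p: P_z sm P_z+tau sm}) is already stated uniformly on $\nu$, and the Gaussian integrals do not involve $\nu$ at all since the $e^{-i\nu h_p}$ factors cancel in the Hermitian products.
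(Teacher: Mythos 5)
Your argument is correct and follows essentially the same route as the paper's proof: both differentiate $\Psi_z^*\Psi_z=J_z^*P_{z,\text{\rm sm}}J_z$ into three terms, bound the $\partial_zP_{z,\text{\rm sm}}$ term by $O(\mu^{-1/2})$ via \Cref{p: partial_z P_z sm = O(|mu|^-n/2)}, evaluate the frame-derivative terms with \Cref{l: partial_z e_p z} (and its $\partial_{\bar z}$ analogue) together with the Gaussian integral~\eqref{int_-2r^2r rho(x)^2 x^2 e^-mu x^2 dx} to get $O(\mu^{-1})$, and handle the inverse through $\partial_z\big((\Psi_z^*\Psi_z)^{-1}\big)=-(\Psi_z^*\Psi_z)^{-1}\partial_z(\Psi_z^*\Psi_z)(\Psi_z^*\Psi_z)^{-1}$ with $(\Psi_z^*\Psi_z)^{\pm1}=1+O(e^{-c\mu})$. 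The only cosmetic difference is your operator-level use of $P_z=J_zJ_z^*$ and $P_{z,\text{\rm sm}}=P_z+O(e^{-c\mu})$, where the paper computes the matrix entries $\langle P_{z,\text{\rm sm}}e_{p,z},e_{q,z}\rangle$ directly.
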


\begin{proof}
We only show the case of $\partial_z$. Consider $P_{z,\text{\rm sm}}:E_z\to E_{z,\text{\rm sm}}$, whose adjoint is $P_z:E_{z,\text{\rm sm}}\to E_z$. Then, since $J_z:\bfC^\bullet\to E_z$ is an isometry,
\[
\Psi_z^*\Psi_z=(P_{z,\text{\rm sm}}J_z)^*P_{z,\text{\rm sm}}J_z=J_z^{-1}P_zP_{z,\text{\rm sm}}J_z\;.
\]
It follows that, for every $p\in\YY_k$ ($k=0,\dots,n$), 
\[
\Psi_z^*\Psi_z\bfe_p=\sum_{q\in\YY_k}\langle P_{z,\text{\rm sm}}e_{p,z},e_{q,z}\rangle\bfe_q\;.
\]
Therefore
\begin{multline*}
\partial_z(\Psi_z^*\Psi_z)\bfe_p=\\
\sum_{q\in\YY_k}\big(\langle\partial_z(P_{z,\text{\rm sm}})e_{p,z},e_{q,z}\rangle
+\langle P_{z,\text{\rm sm}}\partial_z(e_{p,z}),e_{q,z}\rangle
+\langle P_{z,\text{\rm sm}}e_{p,z},\partial_{\bar z}(e_{q,z})\rangle\big)\bfe_q\;.
\end{multline*}
Then, by \Cref{p: P_z sm P_z+tau sm,p: partial_z P_z sm = O(mu^-1)}, \Cref{l: partial_z e_p z} and its analogue for $\partial_{\bar z}$,
\begin{align*}
\partial_z(\Psi_z^*\Psi_z)\bfe_p
&=O\big(\mu^{-1}\big)+\Big(\frac n{8\mu}-\frac12\big\langle|x_p^+|^2e_{p,z},e_{p,z}\big\rangle\Big)\bfe_p
+O\big(e^{-c\mu}\big)\\
&=\Big(\frac n{8\mu}-\frac12\big\langle|x_p^+|^2e_{p,z},e_{p,z}\big\rangle\Big)\bfe_p+O\big(\mu^{-1}\big)\;.
\end{align*}
But, by~\eqref{a_mu} and~\eqref{int_-2r^2r rho(x)^2 x^2 e^-mu x^2 dx},
\begin{align*}
\big\langle|x_p|^2e_{p,z},e_{p,z}\big\rangle
&=\Big(\int_{-2r}^{2r}\rho(x)^2e^{-\mu x^2}\,dx\Big)^{n-1}(n-k)\int_{-2r}^{2r}y^2\rho(y)^2e^{-\mu y^2}\,dy\\
&=\frac{n-k}{2\mu}\Big(\frac\pi\mu\Big)^{\frac n2}+O\big(e^{-c\mu}\big)\;.
\end{align*}
Hence
\[
\partial_z(\Psi_z^*\Psi_z)\bfe_p
=\Big(\frac n{8\mu}-\frac{n-k}{4\mu}\Big(\frac\pi\mu\Big)^{\frac n2}\Big)\bfe_p+O\big(\mu^{-1}\big)
=O\big(\mu^{-1}\big)\;,
\]
yielding the stated expression for $\partial_z\big(\Psi_z^*\Psi_z)$.

Now, arguing like in the proof of~\eqref{partial_z((w-D_z)^-1)} and using~\eqref{Psi_z^* Psi_z = 1 + O(e^-c mu)}, we get
\begin{align*}
\partial_z\big((\Psi_z^*\Psi_z)^{-1}\big)
&=-(\Psi_z^*\Psi_z)^{-1}\partial_z(\Psi_z^*\Psi_z)(\Psi_z^*\Psi_z)^{-1}\\
&=-\big(1+O\big(e^{-c\mu}\big)\big)O\big(\mu^{-1}\big)\big(1+O\big(e^{-c\mu}\big)\big)
=O\big(\mu^{-1}\big)\;.\;\qedhere
\end{align*}
\end{proof}

\section{Asymptotics of the large zeta invariant}

\subsection{Preliminaries on Quillen metrics}\label{ss: prelim Quillen}

\subsubsection{Case of a finite dimensional complex}\label{sss: Quillen metrics finite dim}

All vector spaces considered here are over $\C$. For a line $\lambda$, its dual $\lambda^*$ is also denoted by $\lambda^{-1}$. For a vector space $V$ of finite dimension, recall that $\det V=\bigwedge^{\dim V}V$. For a graded vector space $V^\bullet$ of finite dimension, let $\det V^\bullet=\bigotimes_k(\det V^k)^{(-1)^k}$.

Now consider a finite dimensional cochain complex $(V^\bullet,\partial)$, whose cohomology is denoted by $H^\bullet(V)$. Then there is a canonical isomorphism \cite{KnudsenMumford1976}, \cite[Section~1~a)]{BismutGilletSoule1988}
\begin{equation}\label{det V^bullet cong det H^bullet(V)}
\det V^\bullet\cong\det H^\bullet(V)\;.
\end{equation}
Given a Hermitian metric on $V^\bullet$ so that the homogeneous components $V^k$ are orthogonal one another, the corresponding norm $\|\ \|_{V^\bullet}$ on $V^\bullet$ induces a metric $\|\ \|_{\det V^\bullet}$ on $\det V^\bullet$, which corresponds to a metric $\|\ \|_{\det H^\bullet(V)}$ on $\det H^\bullet(V)$ via~\eqref{det V^bullet cong det H^bullet(V)}. 

On the other hand, consider the induced Laplacian, $\square=(\partial+\partial^*)^2=\partial\partial^*+\partial^*\partial$, whose kernel is a graded vector subspace $\HH^\bullet$. Then finite dimensional Hodge theory gives an isomorphism $H^\bullet(V)\cong\HH^\bullet$, which induces an isomorphism
\begin{equation}\label{det H^bullet(V) cong det HH^bullet}
\det H^\bullet(V)\cong\det\HH^\bullet\;.
\end{equation}
The restriction of $\|\ \|_{V^\bullet}$ to $\HH^\bullet$ induces a metric $\|\ \|_{\det\HH^\bullet}$ on $\det\HH^\bullet$, which corresponds to another metric $|\ |_{\det H^\bullet(V)}$ on $\det H^\bullet(V)$ via~\eqref{det H^bullet(V) cong det HH^bullet}. 

Let $\square'$ denote the restriction $\square:\im\square\to\im\square$. For $s\in\C$, let
\begin{equation}\label{theta(s)}
\theta(s)=\theta(s,\square)=-\Str(\sN(\square')^{-s})\;.
\end{equation}
This defines a holomorphic function on $\C$. Then the above metrics on $\det H^\bullet(V)$ satisfy \cite[Proposition~1.5]{BismutGilletSoule1988}, \cite[Theorem~1.1]{BismutZhang1992}, \cite[Theorem~1.4]{BismutZhang1994}
\begin{equation}\label{| |_det H^bullet(V) = | |_det H^bullet(V) exp(frac12 theta'(0))}
\|\ \|_{\det H^\bullet(V)}=|\ |_{\det H^\bullet(V)}e^{\theta'(0)/2}\;.
\end{equation}
If $ H^\bullet(V)=0$, then $\det H^\bullet(V)\equiv\C$ is canonically generated by 1, and we have $\|1\|_{\det H^\bullet(V)}=e^{\theta'(0)/2}$. Using the orthogonal projection $\Pi^1:V\to\im\partial$, we can write~\eqref{theta(s)} as
\begin{equation}\label{Trs(check square^-s Pi^1)}
\theta(s)=-\Str\big((\square')^{-s}\Pi^1\big)\;.
\end{equation}

Let $(\widetilde V^\bullet,\tilde\partial)$ be another finite dimensional cochain complex, endowed with a Hermitian metric so that the homogeneous components are orthogonal to each other, and let $\phi:(V,\partial)\to(\widetilde V^\bullet,\tilde\partial)$ be an isomorphism of cochain complexes, which may not be unitary. Then (see the proof of \cite[Theorem~6.17]{BismutZhang1994})
\begin{equation}\label{... = Trs(log(phi^* phi)}
\log\bigg(\frac{\|\ \|_{\det H^\bullet(\widetilde V)}}{\|\ \|_{\det H^\bullet(V)}}\bigg)^2=\Str(\log(\phi^*\phi))\;.
\end{equation}

\subsubsection{Case of an elliptic complex}\label{sss: Quillen metrics elliptic complex}

Some of the concepts of \Cref{sss: Quillen metrics finite dim} extend to the case where $V^\bullet=C^\infty(M;E^\bullet)$, for some graded Hermitian vector bundle $E^\bullet$ over $M$, and $\partial$ is an elliptic differential complex of order one. Then $\det H^\bullet(V)$ is defined because $\dim H^\bullet(V)<\infty$. Moreover Hodge theory for the Laplacian $\square$ gives the isomorphism~\eqref{det H^bullet(V) cong det HH^bullet}. Thus at least the norm $|\ |_{\det H^\bullet(V)}$ is defined in this setting. Now the expression~\eqref{theta(s)} only defines $\theta(s)=\theta(s,\square)$ when $\Re s>n/2$, but it has a meromorphic extension to $\C$, denoted in the same way; indeed,~\eqref{theta(s)} becomes
\[
\theta(s)=\theta(s,\square)=-\zeta(s,\square,\sN\sw)\;,
\]
for $\Re s>n/2$, and therefore this equality also holds for the meromorphic extensions. Furthermore $\theta(s)$ is smooth at $s=0$ \cite{Seeley1967}, and $\theta'(0)$ can be considered as a renormalized version of the super-trace of the operator $\sN\log(\square')$, which is not of trace class. Thus the right-hand side of~\eqref{| |_det H^bullet(V) = | |_det H^bullet(V) exp(frac12 theta'(0))} is defined in this way and plays the role of an analytic version of the metric $\|\ \|_{\det H^\bullet(V)}$, which is not directly defined. This kind of metrics were introduced by D.~Quillen \cite{Quillen1985} for the case of the Dolbeault complex. The expression~\eqref{Trs(check square^-s Pi^1)} also holds in this case for $\Re s\gg0$; in fact, it becomes
\[
\theta(s)=-\zeta\big(s,\square,\Pi^1\sw\big)\;,
\]
where this zeta function can be shown to define a meromorphic function on $\C$, even though $\Pi^1$ is not a differential operator, and this equality holds as meromorphic functions.

\subsubsection{Reidemeister, Milnor and Ray-Singer metrics}\label{sss: Ray-Singer metric}

Let $F$ be a flat vector bundle over $M$, defined by a representation $\rho$ of $\pi_1M$, and let $\nabla^F$ denote its covariant derivative. Consider a smooth triangulation $K$ of $M$ and the corresponding cochain complex $C^\bullet(K,F)$ with coefficients in $F$, whose cohomology is isomorphic to $H^\bullet(M,F)$ via the quasi-isomorphism
\[
\Omega(M;F)\to C^\bullet(K,F)=C_\bullet(K,F^*)^*
\]
defined by integration of differential forms on smooth simplices. Given a Hermitian structure $g^F$ on $F$, its restriction to the fibers over the barycenters of the simplices induces a metric on $C^\bullet(K,F)$, and the concepts of \Cref{sss: Quillen metrics finite dim} can be applied. In this case, the left-hand side of~\eqref{| |_det H^bullet(V) = | |_det H^bullet(V) exp(frac12 theta'(0))} is called the \emph{Reidemeister metric}, denoted by $\|\ \|^{\text{\rm R}}_{\det H^\bullet(M,F)}$. 

If $\nabla^Fg^F=0$ ($\rho$ is unitary) and $H^\bullet(M,F)=0$, then the Reidemeister torsion $\tau_M(\rho)$ is defined using $K$, and it is a topological invariant of $M$ \cite{Franz1935, Reidemeister1935,deRham1950}. Moreover $\tau_M(\rho)=\|1\|^{\text{\rm R}}_{\det H^\bullet(M,F)}$ is the exponential factor of the right-hand side of~\eqref{| |_det H^bullet(V) = | |_det H^bullet(V) exp(frac12 theta'(0))} \cite[Proposition~1.7]{RaySinger1971}. If we only assume $\nabla^Fg^F=0$, then $\|\ \|^{\text{\rm R}}_{\det H^\bullet(M,F)}$ is still a topological invariant of $M$.

Next, given a vector field $X$ on $M$ satisfying~\ref{i-a:  X ...},  $H^\bullet(M,F)$ is also isomorphic to the cohomology of $(C^\bullet(-X,W^-,F),\bfd^F)$ via the quasi-isomorphism
\[
\Phi^{-X,F}:\Omega(M,F)\to C^\bullet(-X,W^-,F)=C_\bullet(-X,W^-,F^*)^*\;.
\]
This complex has a metric induced by $g^F$, like in \Cref{sss: perturbed Morse complex}, and the concepts of \Cref{sss: Quillen metrics finite dim} can be also applied. In this case, the left-hand side of~\eqref{| |_det H^bullet(V) = | |_det H^bullet(V) exp(frac12 theta'(0))} is called the \emph{Milnor metric}, denoted by $\|\ \|^{\text{\rm M},-X}_{\det H^\bullet(M,F)}$, and the metric factor of the right-hand side of~\eqref{| |_det H^bullet(V) = | |_det H^bullet(V) exp(frac12 theta'(0))} is denoted by $|\ |^{\text{\rm M},-X}_{\det H^\bullet(M,F)}$. If $\nabla^Fg^F=0$, then $\|\ \|^{\text{\rm M},-X}_{\det H^\bullet(M,F)}=\|\ \|^{\text{\rm R}}_{\det H^\bullet(M,F)}$ \cite[Theorem~9.3]{Milnor1966}.

Finally, the concepts of \Cref{sss: Quillen metrics elliptic complex} can be applied to $(\Omega(M,F),d^F)$, whose cohomology is again $H^\bullet(M,F)$. In this case, the right-hand side of~\eqref{| |_det H^bullet(V) = | |_det H^bullet(V) exp(frac12 theta'(0))} is called the \emph{Ray-Singer metric}, denoted by $\|\ \|^{\text{\rm RS}}_{\det H^\bullet(M,F)}$, and the metric factor of the right-hand side of~\eqref{| |_det H^bullet(V) = | |_det H^bullet(V) exp(frac12 theta'(0))} is denoted by $|\ |^{\text{\rm RS}}_{\det H^\bullet(M,F)}$. If $H^\bullet(M,F)=0$, then the exponential factor of the right-hand side of~\eqref{| |_det H^bullet(V) = | |_det H^bullet(V) exp(frac12 theta'(0))} is called the \emph{analytic torsion} or \emph{Ray-Singer torsion}, denoted by $T_M(\rho)$. These concepts were introduced by Ray and Singer \cite{RaySinger1971}, who conjectured that $T_M(\rho)=\tau_M(\rho)$ if $\nabla^Fg^F=0$ and $H^\bullet(M,F)=0$. Independent proofs of this conjecture were given by Cheeger \cite{Cheeger1979} and M\"{u}ller \cite{Muller1978}. This conjecture still holds true if the induced Hermitian structure $g^{\det F}$ on $\det F$ is flat, as shown at the same time by Bismut and Zhang \cite{BismutZhang1992} and M\"{u}ller \cite{Muller1978}. Actually, in [10], Bismut and Zhang reformulated the conjecture in the form $\|\ \|^{\text{\rm RS}}_{\det H^\bullet(M,F)}=\|\ \|^{\text{\rm R}}_{\det H^\bullet(M,F)}$. Moreover, they also considered the case where $g^{\det F}$ is not assumed to be flat \cite{BismutZhang1992,BismutZhang1994}, extending the above results by introducing an additional term. The first ingredient of this extra term  is the 1-form
\begin{equation}\label{theta(F g^F)}
\theta(F,g^F)=\tr\big((g^F)^{-1}\nabla^Fg^F\big)\;,
\end{equation}
which vanishes if and only if $g^{\det F}$ is flat. Moreover $\theta(F,g^F)$ is closed and its cohomology class of $\theta(F,g^F)$ is independent of the choice of $g^F$ \cite[Proposition~4.6]{BismutZhang1992}; this class measures the obstruction to the existence of a flat Hermitian structure on $\det F$.

Let $e(M,\nabla^M)$ be the representative of the Euler class of $M$ given by the Chern-Weil theory using $g^M$; it belongs to $\Omega^n(M,o(M))$ because $M$ may not be oriented. Let $\psi(M,\nabla^M)$ be the current of degree $n-1$ on $TM$ constructed in \cite{MathaiQuillen1986} (see also \cite[Section~3]{BismutGilletSoule1990}, \cite[Section~3]{BismutZhang1992}, \cite[Section~2]{BurgheleaHaller2006}, \cite[Section~4]{BurgheleaHaller2008}). Identify the image of the zero section of $TM$ with $M$, and identify the conormal bundle of $M$ in $TM$ with $T^*M$. Let $\delta_M$ be the current on $TM$ defined by integration on $M$, and let $\pi:TM\to M$ be the vector bundle projection.

\begin{prop}[{Bismut-Zhang \cite[Theorem~3.7]{BismutZhang1992}}] \label{p: psi(M nabla^M)}
The following holds:
\begin{enumerate}[{\rm(i)}]
\item\label{i: psi -> (pm1)^n psi} For any smooth function $\lambda:TM\to\R^\pm$, under the mapping $v\mapsto\lambda v$, $\psi(M,\nabla^M)$ is changed into $(\pm1)^n\psi(M,\nabla^M)$.
\item\label{i: wave front set of psi} The current $\psi(M,\nabla^M)$ is locally integrable, and its wave front set is contained in $T^*M$. Thus $\psi(M,\nabla^M)$ is smooth on $TM\setminus M$.
\item\label{i: solid angle} The restriction of $-\psi(M,\nabla^M)$ to the fibers of $TM\setminus M$ coincides with the solid angle defined by $g^M$.
\item\label{i: d psi} We have
\[
d\psi(M,\nabla^M)=\pi^*e(M,\nabla^M)-\delta_M\;.
\]
\end{enumerate}
\end{prop}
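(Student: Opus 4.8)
The statement to prove is \Cref{p: psi(M nabla^M)}, the list of four properties of the Mathai--Quillen current $\psi(M,\nabla^M)$. Since this is attributed verbatim to Bismut--Zhang \cite[Theorem~3.7]{BismutZhang1992}, the ``proof'' is really a matter of recalling the Mathai--Quillen construction and extracting the relevant facts; the plan below sketches how one would carry this out from scratch.

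\textit{Setup and the Mathai--Quillen form.} First I would recall the construction of the Mathai--Quillen Thom form $U=U(TM,\nabla^M)$ on the total space of $TM\to M$. Working over a neighborhood and using the Berezin/Grassmann formalism, one writes $U$ as a Gaussian-type expression $U=\pi^{-n/2}e^{-|v|^2}\,\mathrm{Pf}(\cdots)$ involving the curvature $R^{TM}$ of $\nabla^M$, the tautological section $v$, and its covariant differential $\nabla v$; this form is closed, rapidly decreasing along the fibers, and integrates to $1$ on each fiber, so it represents the Thom class. Its pullback by the zero section is $e(M,\nabla^M)$. The key scaling observation is that for $t>0$, the rescaled form $U_t$ obtained by replacing $v$ with $tv$ (equivalently pulling back $U$ under fiber dilation by $t$) satisfies $U_t\to\delta_M$ (the current of integration over the zero section) as $t\to\infty$ in the sense of currents, while $U_0=\pi^*e(M,\nabla^M)$; moreover one has a transgression formula $\partial_t U_t = -d\beta_t$ for an explicit form $\beta_t$ (the ``horizontal'' part of $U_t$). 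Then $\psi(M,\nabla^M)$ is defined as $\int_1^\infty \beta_t\,\frac{dt}{t}$ (up to normalization), i.e.\ the transgression current interpolating between $\pi^*e$ and $\delta_M$. Integrating $\partial_t U_t=-d\beta_t$ in $t$ from $1$ to $\infty$ and then from $0$ to $1$, and assembling, yields exactly $d\psi(M,\nabla^M)=\pi^*e(M,\nabla^M)-\delta_M$, which is \ref{i: d psi}.

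\textit{The remaining three properties.} For \ref{i: psi -> (pm1)^n psi}: the Berezin integral picks out the top Grassmann component, so under $v\mapsto\lambda v$ with $\lambda$ a nowhere-zero function the form $\beta_t$, hence $\psi$, transforms by $(\operatorname{sign}\lambda)^n$ — this is a direct consequence of the homogeneity of the fiber integration in the $n$ odd variables, and I would check it by tracking the dependence of $\beta_t$ on $v$. For \ref{i: wave front set of psi} and \ref{i: solid angle}: away from the zero section, $v\neq0$, and one can explicitly integrate the transgression, obtaining that $\psi(M,\nabla^M)\big|_{TM\setminus M}$ is a smooth form equal (up to sign and normalization) to the solid-angle form $v^*\sigma$, where $\sigma$ is the Gauss/solid-angle form on the fibers determined by $g^M$; in particular its only singularity is along $M$, and a standard computation of the wave front set of the pullback of the $L^1$ solid-angle kernel on $\mathbb R^n$ (whose singular support is $\{0\}$ and whose $\mathrm{WF}$ is the conormal) shows $\mathrm{WF}(\psi)\subset T^*M$. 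Local integrability near $M$ follows because the solid angle is homogeneous of degree $0$ along the fibers and $n-1$ forms in $n$ fiber variables integrate against $|v|^{-(n-1)}$-type singularities, which are locally $L^1$ in dimension $n$.

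\textit{Orientation bookkeeping and the main obstacle.} Throughout I would carry the orientation line bundle $o(M)$ along: $e(M,\nabla^M)\in\Omega^n(M,o(M))$, $\delta_M$ and $\psi$ are $o(M)$-valued currents, and the transgression identities hold in $\Omega(M,o(M))'$. The main obstacle is genuinely bookkeeping rather than conceptual: getting all the normalization constants, signs, and the Berezin-integral conventions consistent so that \ref{i: d psi} comes out with exactly the stated signs, and verifying the convergence $U_t\to\delta_M$ and the $t\to 0,\infty$ boundary terms rigorously as currents (uniform estimates on the Gaussian tails, dominated convergence on each fiber). Rather than redo all of this, the honest approach — and the one the paper takes — is to cite \cite[Theorem~3.7]{BismutZhang1992} for the precise normalization and refer to \cite{MathaiQuillen1986} and the surveys in \cite[Section~3]{BismutGilletSoule1990}, \cite[Section~2]{BurgheleaHaller2006}, \cite[Section~4]{BurgheleaHaller2008} for the construction, noting only that the four listed properties are precisely what that construction delivers.
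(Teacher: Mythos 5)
The paper offers no proof of this proposition—it is stated as a direct quotation of Bismut--Zhang \cite[Theorem~3.7]{BismutZhang1992}—and your proposal correctly recognizes this and ends with the same citation, while the Mathai--Quillen sketch you give (Gaussian Thom form, fiberwise rescaling, transgression between $\pi^*e(M,\nabla^M)$ and $\delta_M$, and the solid-angle identification away from the zero section, with the orientation bundle $o(M)$ carried along) is an accurate outline of how that reference establishes the four properties. So your approach is essentially the same as the paper's, and nothing further is needed.
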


\begin{rem}\label{r: psi(M nabla^M)}
In \Cref{p: psi(M nabla^M)}, observe that~\ref{i: psi -> (pm1)^n psi} and~\ref{i: d psi} are compatible because $e(M,\nabla^M)=0$ if $n$ is odd. By~\ref{i: wave front set of psi}--\ref{i: d psi}, the restriction of $\psi(M,\nabla^M)$ to $TM\setminus M$ is induced by a smooth differential form on the sphere bundle which transgresses $e(M,\nabla^M)$ (such a differential form was already defined and used in \cite{Chern1944}).
\end{rem}

\begin{thm}[{Bismut-Zhang \cite[Theorem~0.2]{BismutZhang1992}, \cite[Theorem~0.2]{BismutZhang1994}}]
\label{t: Bismut-Zhang}
We have
\[
\log\Bigg(\frac{\|\ \|^{\text{\rm RS}}_{\det H^\bullet(M,F)}}{\|\ \|^{\text{\rm M},-X}_{\det H^\bullet(M,F)}}\Bigg)^2
=-\int_M\theta(F,g^F)\wedge(-X)^*\psi(M,\nabla^M)\;.
\]
\end{thm}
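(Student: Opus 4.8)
The statement to be proved is the Bismut--Zhang anomaly formula (\Cref{t: Bismut-Zhang}), comparing the Ray--Singer metric with the Milnor metric. This is a deep theorem whose complete proof occupies a substantial part of the monograph \cite{BismutZhang1992}, so I will only outline the strategy rather than attempt a self-contained argument.

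\emph{The plan is to} interpolate between the two metrics through a one-parameter family of Morse-type data and to control the variation analytically. First I would fix the flat bundle $F$ with a Hermitian structure $g^F$ and, using \Cref{p: completion} and the standard-form constructions of \Cref{ss: Smale}, replace $X$ by a gradient-like Smale vector field together with a Morse function $h$ (or, more generally, a Lyapunov form) so that the unstable cells $W^-_p$ are well-behaved. The Milnor metric $\|\ \|^{\text{\rm M},X}_{\det H^\bullet(M,F)}$ is attached to the Morse complex $(C^\bullet(-X,W^-,F),\bfd^F)$ via the quasi-isomorphism $\Phi^{-X,F}$, while the Ray--Singer metric is attached to the de~Rham complex. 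The core idea is to introduce Witten's deformation: set $d_{T}=e^{-Th}d\,e^{Th}$ (equivalently the perturbation $d_{z}$ of \Cref{sss: perturbations} with $z=T$ real), let $\|\ \|^{\text{\rm RS}}_{T}$ be the Ray--Singer metric computed with the deformed Laplacian $\Delta_T$, and observe that $\|\ \|^{\text{\rm RS}}_{T}$ differs from $\|\ \|^{\text{\rm RS}}_0$ by an explicit local term (an anomaly computed from $\theta(F,g^F)$ and the rescaling of the metric), because conjugation by $e^{Th}$ changes $g^F$ into $e^{2Th}g^F$ on the determinant line.

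The key steps, in order, would be: \textbf{(1)} Establish the variation formula for the Ray--Singer metric under a change of $g^F$ within a fixed cohomology class of $\theta(F,g^F)$ — this is the infinitesimal anomaly formula and uses the transgression of the Euler form, i.e.\ the Mathai--Quillen current $\psi(M,\nabla^M)$ from \Cref{p: psi(M nabla^M)}, together with \Cref{t: e_l(x z)} to identify the relevant heat-kernel coefficient as the Euler density. \textbf{(2)} Show that as $T\to+\infty$ the small complex $(E_{T,\text{\rm sm}},d_T)$ converges, after the rescaling $\widetilde\Psi_z$ of \Cref{ss: small complex vs Morse complex}, to the Morse complex; the precise asymptotics are exactly \Cref{t: Phi_z+tau Psi_z} and its corollaries, which give $\Phi_T\widetilde\Psi_T\asymp_0 (\mu/(\mu+\tau/2))^{\sN/2}\to 1$, hence the metric on $\det H^\bullet$ induced through the small complex converges to the Milnor metric. \textbf{(3)} Use \Cref{c: h(t z) = 0} and the derived-heat-invariant independence (\Cref{t: tilde a^1_l(z) is independent of z}) to see that the large-complex contribution to $\theta'(0,T)$ varies with $T$ only through the local anomaly term and a term that vanishes in the limit, so that collecting steps (1)--(3) and integrating the variation formula in $T$ from $0$ to $\infty$ yields precisely $-\int_M\theta(F,g^F)\wedge X^*\psi(M,\nabla^M)$. \textbf{(4)} Finally, invoke formula~\eqref{... = Trs(log(phi^* phi))} to convert the comparison of Hodge metrics on $\det H^\bullet(M,F)$ into the statement of the theorem, using that $\Phi_T$ is an isomorphism (\Cref{c: Phi_z: E_z sm^bullet to bfC^bullet iso}).

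\emph{The hard part will be} step (1) together with the delicate interchange of the $T\to\infty$ limit with the small-$t$ limit defining $\theta'(0,T)$: one must show that the large spectrum contributes no anomalous term in the limit (the uniform lower bound $\spec\Delta_T\cap(1,\infty)\subset[C\mu,\infty)$ from \Cref{t: spec Delta_z} is essential here), and that the transgression current $X^*\psi(M,\nabla^M)$ — which is only well-defined because the wave front set of $\psi(M,\nabla^M)$ avoids the conormal of the section $X$ (\Cref{p: psi(M nabla^M)}\ref{i: wave front set of psi}) — arises as the exact boundary term when one pushes the Euler density from $TM$ down to $M$ along $X$. This pushforward/transgression computation, carried out uniformly in the deformation parameter, is the analytic heart of the Bismut--Zhang argument and is precisely where their Clifford-algebra/Berezin-integral machinery enters; I would cite \cite[Sections~3--7]{BismutZhang1992} and \cite{BismutZhang1994} for the full details rather than reproduce it.
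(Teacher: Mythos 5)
The paper does not prove this statement at all: it is imported verbatim from Bismut and Zhang \cite{BismutZhang1992,BismutZhang1994}, so there is no internal proof to compare yours against, and deferring the analytic core to those references, as you do, is exactly what the paper does. As a summary of the original argument your outline is broadly faithful (Witten deformation, anomaly formulas, convergence of the rescaled small complex to the Morse complex, identification of the local term via the Mathai--Quillen transgression), but two caveats are worth recording. First, your ``hard part'' is slightly misplaced: the anomaly formula for varying $g^F$ at fixed $g^{TM}$ is the comparatively routine part and involves the Euler form rather than $\psi(M,\nabla^M)$ directly; the genuinely difficult content of \cite{BismutZhang1992} is the uniform large-$T$ asymptotics of the analytic torsion, which requires splitting the time integral into several regimes (roughly $t\le 1/T$, $1/T\le t\le 1$, $t\ge 1$), uniform heat-kernel and spectral-gap estimates in each, and the cancellation of divergent terms in $\log T$ against the rescaling of the small complex --- all of which your step (3) compresses into ``integrate the variation formula in $T$''. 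Second, the results of this paper that you invoke in step (2) (\Cref{t: Phi_z+tau Psi_z}, \Cref{c: Phi_z: E_z sm^bullet to bfC^bullet iso}, \Cref{t: spec Delta_z}) are themselves adaptations of Bismut--Zhang and Zhang to the complex-parameter Morse-form setting, so using them inside a proof of \Cref{t: Bismut-Zhang} would be circular if the goal were an independent proof; as a guide to the cited literature, however, your sketch is acceptable and consistent with how the paper treats this theorem.
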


\begin{rem}\label{r: X = -grad_g' h}
By~\ref{i-a:  X ...}, $X=-\grad_{g'}h$ for some Morse function $h$ and some Riemannian metric $g'$ on $M$, which may not be the given metric $g^M$. If we fix $h$, the right-hand side of the equality in~\Cref{t: Bismut-Zhang} is independent of the choice of $X$ satisfying $X=-\grad_{g'}h$ for some $g'$ \cite[Proposition~6.1]{BismutZhang1992}.
\end{rem}

\Cref{t: Bismut-Zhang} will be applied to the case of the flat complex line bundle $\LL^z$ with a Hermitian structure $g^{\LL^z}$ (\Cref{sss: perturbations}). By~\eqref{nabla^LL^z g^LL^z = -2 mu eta otimes g^LL^z} and~\eqref{theta(F g^F)},
\begin{equation}\label{theta(LL^z g^LL^z) = -2 eta}
\theta(\LL^z,g^{\LL^z})=-2\mu\eta\;.
\end{equation}

\subsection{Asymptotics of the large zeta invariant}\label{ss: large zeta invariant}

We prove \Cref{t: zeta_sm/la(1 z)}~\ref{i: zetala(1 z)} here. With the notation of \Cref{sss: Quillen metrics elliptic complex}, consider the meromorphic function $\theta(s,z)=\theta(s,\Delta_z)$, also defined in~\eqref{theta(s,z)}, as well as its components $\theta_{\text{\rm sm/la}}(s,z)$ defined in~\eqref{theta_sm/la(s,z)}. Consider also the current $\psi(M,\nabla^M)$ of degree $n-1$ on $TM$ (\Cref{sss: Ray-Singer metric}). By \Cref{p: psi(M nabla^M)}~\ref{i: psi -> (pm1)^n psi},
\begin{equation}\label{-bfz_la(-eta)}
-\bfz_{\text{\rm la}}(-\eta)=(-1)^n\bfz_{\text{\rm la}}(\eta)\;.
\end{equation}

\begin{notation}\label{n: asymp_2}
Let $\asymp_1$ be defined like $\asymp_0$ in \Cref{n: asymp_0}, using $O(|\mu|^{-1})$ instead of $O(e^{-c|\mu|})$.
\end{notation}

Take some Morse function $h$ on $M$ such that $Xh<0$ on $M\setminus\YY$, and $h$ is in standard form with respect to $X$. Then $X=-\grad_{g'}h$ for some Riemannian metric $g'$ (\Cref{sss: gradient-like}), which may not be the given metric $g$. Consider the flat complex line bundle $\LL_{z\eta-dh}$ with the Hermitian structure $g^{\LL_{z\eta-dh}}$ (\Cref{sss: perturbations}). Note that $\bfd^{\LL_{z\eta-dh}}_{dh}\equiv\bfd_{z\eta}$ on $C^\bullet(X,W^-,\LL_{z\eta-dh})\equiv\bfC^\bullet(X)$. So, by~\eqref{theta(LL^z g^LL^z) = -2 eta}, \Cref{t: Bismut-Zhang,r: X = -grad_g' h},
\begin{equation}\label{... = int_M (z eta - dh) wedge X^*psi(M nabla^M)}
\log\frac{\|\ \|^{\text{\rm RS}}_{\det H^\bullet_z(M)}}{\|\ \|^{\text{\rm M},-X}_{\det H^\bullet_z(M)}}
=\int_M(\mu\eta-dh)\wedge (-X)^*\psi(M,\nabla^M)\;,
\end{equation}
where $H^\bullet_z(M)=H^\bullet_{z\eta}(M)$. With the notation of \Cref{sss: Ray-Singer metric}, let
\[
\|\ \|^{\text{\rm RS,sm}}_{\det H^\bullet_z(M)}
=|\ |^{\text{\rm RS}}_{\det H^\bullet_z(M)}e^{\thetasm'(0,z)/2}\;.
\]
By~\eqref{| |_det H^bullet(V) = | |_det H^bullet(V) exp(frac12 theta'(0))},
\begin{equation}\label{... thetala'(0) ...}
\log\frac{\|\ \|^{\text{\rm RS}}_{\det H^\bullet_z(M)}}{\|\ \|^{\text{\rm M},-X}_{\det H^\bullet_z(M)}}
=\log\frac{\|\ \|^{\text{\rm RS,sm}}_{\det H^\bullet_z(M)}}{\|\ \|^{\text{\rm M},-X}_{\det H^\bullet_z(M)}}
+\frac{\thetala'(0,z)}{2}\;.
\end{equation}

By~\eqref{... = Trs(log(phi^* phi)} and \Cref{c: Phi_z: E_z sm^bullet to bfC^bullet iso}, for $\mu\gg0$,
\begin{multline}\label{... = Trs(log((Psi_z^* Psi_z)^-1 (Phi_z Psi_z)^* Phi_z Psi_z))}
\log\Bigg(\frac{\|\ \|^{\text{\rm RS,sm}}_{\det H^\bullet_z(M)}}{\|\ \|^{\text{\rm M},-X}_{\det H^\bullet_z(M)}}\Bigg)^2
=-\Str(\log(\Phi_z^*\Phi_z))
=-\Str\big(\log\big(\Psi_z^{-1}\Phi_z^*\Phi_z\Psi_z\big)\big)\\
=-\Str\big(\log\big((\Psi_z^*\Psi_z)^{-1}(\Phi_z\Psi_z)^*\Phi_z\Psi_z\big)\big)\;.
\end{multline}
From~\eqref{Psi_z^* Psi_z = 1 + O(e^-c mu)} and \Cref{t: Phi_z+tau Psi_z,t: partial_z(Phi_z Psi_z),t: partial_z((Psi_z^* Psi_z)^pm1)}, we obtain
\begin{align*}
\big((\Psi_z^*\Psi_z)^{-1}(\Phi_z\Psi_z)^*\Phi_z\Psi_z\big)^{-1}
&=\Big(\frac{\pi}{\mu}\Big)^{\frac n2-\sN}+O\big(e^{-c\mu}\big)\;,\\
\partial_z\big((\Psi_z^*\Psi_z)^{-1}(\Phi_z\Psi_z)^*\Phi_z\Psi_z\big)
&=\partial_z\big((\Psi_z^*\Psi_z)^{-1}\big)(\Phi_z\Psi_z)^*\Phi_z\Psi_z\\
&\phantom{={}}{}+(\Psi_z^*\Psi_z)^{-1}(\partial_{\bar z}(\Phi_z\Psi_z))^*\Phi_z\Psi_z\\
&\phantom{={}}{}+(\Psi_z^*\Psi_z)^{-1}(\Phi_z\Psi_z)^*\partial_z(\Phi_z\Psi_z)\\
&\asymp_0\bigg(O\big(\mu^{-1}\big)+\Big(\frac n{4\mu}-\frac \sN{2\mu}\Big)\bigg)\Big(\frac{\pi}{\mu}\Big)^{\sN-\frac n2}\;.
\end{align*}
So
\begin{multline*}
\partial_z\Str\big(\log\big((\Psi_z^*\Psi_z)^{-1}(\Phi_z\Psi_z)^*\Phi_z\Psi_z\big)\big)\\
\begin{aligned}
&=\Str\big((\Psi_z^*\Psi_z)^{-1}(\Phi_z\Psi_z)^*\Phi_z\Psi_z\big)^{-1}
\partial_z\big((\Psi_z^*\Psi_z)^{-1}(\Phi_z\Psi_z)^*\Phi_z\Psi_z\big)\\
&=O\big(\mu^{-1}\big)+\Str\Big(\frac n{4\mu}-\frac \sN{2\mu}\Big)+O\big(e^{-c\mu}\big)
=O\big(\mu^{-1}\big)\;.
\end{aligned}
\end{multline*}
Then, by~\eqref{... = Trs(log((Psi_z^* Psi_z)^-1 (Phi_z Psi_z)^* Phi_z Psi_z))},
\begin{equation}\label{partial_z log(RS sm / M X)}
\partial_z\log\frac{\|\ \|^{\text{\rm RS,sm}}_{\det H^\bullet_z(M)}}{\|\ \|^{\text{\rm M},-X}_{\det H^\bullet_z(M)}}
=O\big(\mu^{-1}\big)\;.
\end{equation}

By taking the derivative with respect to $z$ of both sides of~\eqref{... = int_M (z eta - dh) wedge X^*psi(M nabla^M)}, and using~\eqref{... thetala'(0) ...},~\eqref{partial_z log(RS sm / M X)} and \Cref{c: zetala(1 z) = partial_z thetala'(0 z)}, we get $\zeta_{\text{\rm la}}(1,z)\asymp_1\bfz_{\text{\rm la}}$, as stated in \Cref{t: zeta_sm/la(1 z)}~\ref{i: zetala(1 z)}.

\begin{rem}\label{r: zeta_la(1 z) approx ... exact form - 2}
In the case where $\eta=dh$, \Cref{t: zeta_sm/la(1 z)}~\ref{i: zetala(1 z)} agrees with \Cref{t: zeta_la(1 z) approx ... mu -> infty exact form}. In fact, by \Cref{p: psi(M nabla^M)}~\ref{i: d psi}, \Cref{t: zeta_sm/la(1 z)}~\ref{i: zetala(1 z)} and the Stokes formula,
\begin{align*}
\zeta_{\text{\rm la}}(1,z)&\asymp_1-\int_Mh\,(-X)^*d\psi(M,\nabla^M)
=-\int_Mh\,(-X)^*(\pi^*e(M,\nabla^M)-\delta_M)\\
&=-\int_Mh\,e(M,\nabla^M)+\sum_{p\in\YY}(-1)^{\ind(p)}h(p)\;.
\end{align*}
\end{rem}

\section{Asymptotics of the small zeta-invariant}

\subsection{Condition on the integrals along instantons}\label{ss: condition on the integrals along instantons}

Let
\begin{align*}
\MM_p&=\MM_p(\eta,X)=-\max\{\,\eta(\gamma)\mid\gamma\in\TT^1_p\,\}\quad\big(p\in\YY_+\big)\;,\\
\MM_k&=\MM_k(\eta,X)=\min_{p\in\YY_k}\MM_p\quad(k=1,\dots,n)\;.
\end{align*}
Thus~\ref{i-a:  tight} means that $\MM_p=\MM_k$ for all $k=1,\dots,n$ and $p\in\YY_k$. The following result will be proved in \Cref{s: integrals along instantons}.

\begin{thm}\label{t: extension of Smale}
For every $\xi\in H^1(M,\R)$ and numbers $a_n\ge\dots\ge a_1\gg0$ or $a_1\ge\dots\ge a_n\gg0$, there is some $\eta\in\xi$, satisfying~\ref{i-a:  eta Morse} and~\ref{i-a:  eta Lyapunov} with the given $X$ and some metric $g$, such that $\MM_p(\eta,X)=a_k$ for all $k=1,\dots,n$ and $p\in\YY_k$.
\end{thm}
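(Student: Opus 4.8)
\textbf{Proof strategy for \Cref{t: extension of Smale}.} The plan is to start from a known good model: given the vector field $X$ satisfying \ref{i-a:  X ...} and any class $\xi \in H^1(M,\R)$, we first invoke the results recalled in \Cref{sss: Lyapunov} to produce an initial representative $\eta_0 \in \xi$ together with a metric $g_0$ so that $\eta_0$ is Lyapunov for $X$, $\eta_0^\sharp = -X$, and $\eta_0, g_0$ are in standard form with respect to $X$; thus \ref{i-a:  eta Morse} and \ref{i-a:  eta Lyapunov} already hold for the pair $(\eta_0, g_0)$. The zero set of $\eta_0$ is exactly $\XX = \Zero(X)$, with Morse index at each zero matching $\ind(p)$. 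What is \emph{not} controlled is the collection of numbers $\MM_p(\eta_0, X) = -\max\{\eta_0(\gamma) : \gamma \in \TT^1_p\}$; these are determined by the integrals of $\eta_0$ along the (finitely many, by \Cref{p: completion}~\ref{i: partial_l widehat W^-_p}) instantons. The idea is to add to $\eta_0$ an exact form $d\varphi$, with $\varphi$ supported near the zeros of $X$ and constant on a neighborhood of each zero, so as to shift all these instanton integrals to the prescribed values $a_k$ without destroying any of the standing hypotheses and without changing $\xi$.

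\textbf{Key steps.} First I would set up the combinatorics: for an instanton $\gamma \in \TT^1_p$ with $\omega$-limit $q \in \XX_{\ind(p)-1}$, if $\varphi$ equals a constant $c_r$ on a neighborhood of each zero $r \in \XX$, then $\int_\gamma d\varphi = c_q - c_p$, so $\eta_0(\gamma) + \int_\gamma d\varphi = \eta_0(\gamma) + c_q - c_p$. Hence $\MM_p(\eta_0 + d\varphi, X) = c_p - \min_{q} \bigl(c_q + \max_{\gamma \in \TT^1(p,q)} \eta_0(\gamma)\bigr)$, where the min is over $q \in \XX_{\ind(p)-1}$ with $\TT(p,q) \neq \emptyset$. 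So the task reduces to: choose constants $(c_r)_{r \in \XX}$ so that this expression equals $a_k$ for every $p \in \XX_k$. This is a finite system; I would solve it inductively on the index $k$ (starting from $k=1$ and the zeros of index $0$, whose constants can be fixed arbitrarily, e.g.\ to $0$), exploiting the monotonicity $a_1 \geq \dots \geq a_n$ or $a_n \geq \dots \geq a_1$ and the largeness $a_k \gg 0$ to guarantee that at each stage one can pick $c_p$ large enough (relative to the constants at lower index and the finitely many fixed numbers $\max_\gamma \eta_0(\gamma)$) that the minimum over $q$ is achieved and the required value $a_k$ comes out. Second, I would realize these constants by an actual smooth function: choose $\varphi \in C^\infty(M,\R)$ with $\varphi \equiv c_r$ on a small ball $V_r \subset U_r$ around each $r \in \XX$ (shrink the $U_r$ if needed so the $\overline{V_r}$ are disjoint); set $\eta := \eta_0 + d\varphi \in \xi$. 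Third, I would verify the hypotheses survive: on each $V_r$ we have $d\varphi = 0$, so $\eta = \eta_0$ there and the standard-form conditions \eqref{g around p}, \eqref{eta around p}, \eqref{X around p} persist with metric $g = g_0$ (no metric change is needed near the zeros), giving \ref{i-a:  eta Morse}; and $\eta$ has the same zero set as $\eta_0$ with the same indices. For \ref{i-a:  eta Lyapunov} I need $\eta(X) = \eta_0(X) + X\varphi < 0$ on $M \setminus \XX$; this is the one point requiring care, handled by choosing $\varphi$ to vary slowly, i.e.\ with $\|d\varphi\|_{C^0}$ small enough on the region $M \setminus \bigcup_r V_r$ — possible because $\eta_0(X) = -|X|^2_{g_0} < -c < 0$ uniformly on that compact region, while we may interpolate the constants $c_r$ across the ``necks'' $U_r \setminus V_r$ with gradient as small as we like by taking the $U_r$ large (or equivalently rescaling); if a global metric adjustment is also allowed, one may instead simply take $g = g_0$ away from the zeros and note $\eta^\sharp$ need not equal $-X$ away from $\XX$, only $\eta(X) < 0$ is required. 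Finally, \ref{i-a:  tight} is exactly the statement that $\MM_p(\eta,X) = a_k$ for all $p \in \XX_k$, which we arranged in step one.

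\textbf{Main obstacle.} The delicate point is the simultaneous control of two competing demands on $\varphi$: the constants $c_p$ must be \emph{large} (of size $\sim a_k \gg 0$) to force the prescribed maxima of instanton integrals, yet $d\varphi$ must be \emph{small} in $C^0$ off the coordinate balls to preserve the Lyapunov inequality $\eta(X) < 0$. Reconciling these requires enlarging the transition regions $U_r \setminus V_r$ — equivalently, one should phrase everything after first rescaling the initial data $(\eta_0, g_0) \mapsto (\lambda \eta_0, \lambda g_0)$ for large $\lambda$ (which, as noted in \Cref{sss: Morse type zeros}, keeps everything in standard form), so that $|\eta_0(X)|$ is uniformly large and a fixed-size perturbation $d\varphi$ cannot overcome it. I expect the bookkeeping of the inductive choice of the $c_r$, together with this rescaling to protect \ref{i-a:  eta Lyapunov}, to be the heart of the argument; the rest is routine. (This is why the authors relegate the proof to \Cref{s: integrals along instantons} — it is an extension of Smale's theorem \cite[Theorem~B]{Smale1961} on the existence of self-indexing-type Morse functions, and the instanton-integral prescription is the genuinely new ingredient.)
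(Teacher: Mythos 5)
Your skeleton — fix a Lyapunov representative adapted to $X$, then add an exact form supported away from $\XX$ and solve a triangular system (inductively over the Morse index) for the amounts by which the instanton integrals must be shifted — is the same general strategy as the paper's \Cref{s: integrals along instantons} (which likewise starts from Smale's self-indexing function $h$, takes $\eta''=\eta'+C\,dh$, and corrects by exact forms $df_k$ step by step in the index). The genuine gap is in how you preserve the Lyapunov condition~\ref{i-a:  eta Lyapunov}. Your correction $\varphi$ must take values $c_p$ differing by amounts of order $a_k\gg0$ at zeros of consecutive index, while the manifold and the distances between the supports are fixed; hence $d\varphi$ is necessarily large somewhere, and ``taking the $U_r$ large'' cannot make $\|d\varphi\|_{C^0}$ small. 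Your fallback, rescaling $(\eta_0,g_0)\mapsto(\lambda\eta_0,\lambda g_0)$, is not available: it replaces the prescribed class $\xi$ by $\lambda\xi$ (and also rescales all the instanton integrals you are trying to prescribe), and the final remark about changing $g$ away from the zeros does not help either, since $\eta(X)=\eta_0(X)+X\varphi$ is metric-independent. So, as written, the step ``\ref{i-a:  eta Lyapunov} survives'' fails.

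The mechanism the paper uses instead is not smallness but monotonicity along the flow: the corrections are built so that $df_k(X)\le0$ (see~\eqref{df_k(X) le 0}), which is compatible with $\eta''(X)<0$ no matter how large the correction is. Two ingredients make this possible and are missing from your proposal. First, the preliminary replacement $\eta''=\eta'+C\,dh$ with $C\gg0$ not only gives the Lyapunov property but pins all instanton integrals into the window $(-a_1,0)$ (inequality~\eqref{gamma(theta'')}); consequently at each stage one only ever needs to \emph{decrease} the maximal integral of an index-$k$ zero by a non-negative amount $a_k-b_p$, which can be realized by a function that is non-decreasing in $h$, i.e.\ non-increasing along $X$. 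Second, realizing such a correction smoothly, locally constant near the zeros and monotone along every orbit, with prescribed jumps, is itself the nontrivial part: it is exactly the collar-by-collar construction between the level sets $\Sigma_k^\pm$, with flow-invariant bump functions localized near the unstable manifolds, that occupies the appendix. Your ``one constant $c_r$ per zero'' ansatz could in principle be made to work along these lines when $a_1\gg A$ (your triangular system is solvable — note, though, that your formula should have a max, $\MM_p=c_p-\max_q(c_q+\max_{\gamma}\eta_0(\gamma))$, not a min — and the resulting constants respect the flow order), but then the construction of a flow-monotone $\varphi$ interpolating them is precisely the work you label as routine; with the smallness mechanism you actually propose, the argument does not go through.
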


\begin{rem}\label{r: extension of Smale-1}
If $\xi\ne0$, for $p\in\YY_k$, $q\in\YY_{k-1}$ and $\gamma,\delta\in\TT(p,q)\subset\TT^1_p$, the period $\langle\xi,\bar\gamma\bar\delta^{-1}\rangle=\eta(\gamma)-\eta(\delta)$ may not be zero. Hence it may not be possible to get $\eta(\gamma)=-a_k$ for all $\gamma\in\TT^1_p$, contrary to the case where $\xi=0$.
\end{rem}

From now on, we assume $\eta$ satisfies~\ref{i-a:  tight}, besides~\ref{i-a:  eta Morse} and~\ref{i-a:  eta Lyapunov}. By \Cref{t: extension of Smale}, this is possible for any prescription of the class $\xi=[\eta]\in H^1(M,\R)$. Let $a_k=\MM_k(\eta,X)$ ($k=1,\dots,n$).  Then $-\eta$ also satisfies~\ref{i-a:  eta Morse},~\ref{i-a:  eta Lyapunov} and~\ref{i-a:  tight} with $-X$ and $g$, and $\MM_k(-\eta,-X)=a_{n-k+1}$. So, if $M$ is oriented, by \Cref{c: m^j_z k only depends on | XX_k | and xi,c: m_k = m_n-k}, 
\begin{equation}\label{-bfz(M g -eta)}
-\bfz_{\text{\rm sm}}(-\eta)=-\sum_{k=1}^n(-1)^k\big(1-e^{a_{n-k+1}}\big)m^1_{n-k+1}\;.
\end{equation}

\subsection{Asymptotics of the perturbed Morse operators}\label{ss: bfd'}

Consider the notation of \Cref{sss: perturbed Morse complex}. By~\eqref{bfd_z bfe_q}, 
\begin{equation}\label{bfd_z k-1}
\bfd_{z,k-1}=e^{-a_kz}(\bfd'_{k-1}+\bfd''_{z,k-1})\;,
\end{equation}
for $k=1,\dots,n$, where
\begin{align}
\bfd'_{k-1}\bfe_q&=\sum_{p\in\YY_k,\ \gamma\in\TT(p,q),\ \eta(\gamma)=-a_k}\epsilon(\gamma)\bfe_p\;,
\label{bfd'_k-1 bfe_q}\\
\bfd''_{z,k-1}\bfe_q&=\sum_{p\in\YY_k,\ \gamma\in\TT(p,q),\ \eta(\gamma)<-a_k}e^{z(a_k+\eta(\gamma))}\epsilon(\gamma)\bfe_p\;,\label{bfd''_k-1 bfe_q}
\end{align}
for $q\in\YY_{k-1}$. Observe that
\begin{equation}\label{e^a_k z bfd_z k-1 -> bfd'_k-1}
e^{a_kz}\bfd_{z,k-1}=\bfd'_{k-1}+O(e^{-c\mu})\quad(\mu\to+\infty)\;.
\end{equation}
So
\[
\bfd'_k\bfd'_{k-1}=\lim_{\mu\to+\infty}e^{(a_{k+1}+a_k)z}\bfd_{z,k}\bfd_{z,k-1}=0\;.
\]
Hence the operator $\bfd'=\sum_k\bfd'_k$ on $\bfC^\bullet$ satisfies  $(\bfd')^2=0$. Taking adjoints in~\eqref{bfd_z k-1}--\eqref{bfd''_k-1 bfe_q}, or using~\eqref{bfdelta_z bfe_p}, we also get
\begin{equation}\label{bfdelta_z k}
\bfdelta_{z,k}=e^{-a_k\bar z}(\bfdelta'_k+\bfdelta''_{z,k})\;,
\end{equation}
for $k=1,\dots,n$, where
\begin{align}
\bfdelta'_k\bfe_p&=\sum_{q\in\YY_{k-1},\ \gamma\in\TT(p,q),\ \eta(\gamma)=-a_k}
\epsilon(\gamma)\,\bfe_q\;,\label{bfdelta'_k-1 bff_p}\\
\bfdelta''_{z,k}\bfe_p&=\sum_{q\in\YY_{k-1},\ \gamma\in\TT(p,q),\ \eta(\gamma)=-a_k}e
^{\bar z(a_k+\eta(\gamma))}\epsilon(\gamma)\bfe_q\;,\label{bfddelta''_k-1 bfe_q}
\end{align}
for $p\in\YY_k$. Moreover~\eqref{e^a_k z bfd_z k-1 -> bfd'_k-1} yields
\begin{equation}\label{e^a_k bar z bfdelta_z k -> bfdelta'_k}
e^{a_k\bar z}\bfdelta_{z,k}=\bfdelta'_k+O(e^{-c\mu})\quad(\mu\to+\infty)\;.
\end{equation}
Let $\bfdelta'=\sum_k\bfdelta'_k=(\bfd')^*$, which satisfies $(\bfdelta')^2 = 0$, and let
\begin{gather*}
\bfD'=\bfd'+\bfdelta'\;,\quad
\bfDelta'=(\bfD')^2=\bfd'\bfdelta'+\bfdelta'\bfd'\;.
\end{gather*}
We have
\begin{gather*}
\bfC^\bullet=\ker\bfDelta'\oplus\im\bfd'\oplus\im\bfdelta'\;,\\
\im\bfDelta'=\im\bfD'=\im\bfd'\oplus\im\bfdelta'\;,\quad
\ker\bfDelta'=\ker\bfD'=\ker\bfd'\cap\ker\bfdelta'\;.
\end{gather*}
The orthogonal projections of $\bfC^\bullet$ to $\ker\bfDelta'$, $\im\bfd'$ and $\im\bfdelta'$ are denoted by $\bfPi'=\bfPi^{\prime\,0}$, $\bfPi^{\prime\,1}$ and $\bfPi^{\prime\,2}$, respectively. Like in \Cref{sss: perturbations,sss: bfDelta_z}, the composition $(\bfd')^{-1}\bfPi^{\prime\,1}$ is defined on $\bfC^\bullet$. From~\eqref{e^a_k z bfd_z k-1 -> bfd'_k-1} and~\eqref{e^a_k bar z bfdelta_z k -> bfdelta'_k}, we easily get that, as $\mu\to+\infty$,
\begin{gather}
\bfPi^j_{z,k}=\bfPi^{\prime\,j}_k+O(e^{-c\mu})\quad(j=0,1,2)\;,\label{bfPi^j_z k = ...}\\
e^{-a_kz}(\bfd_{z,k-1})^{-1}\bfPi^1_{z,k}=(\bfd'_{k-1})^{-1}\bfPi^{\prime\,1}_k+O(e^{-c\mu})\;.
\label{e^-a_kz bfd_z_k-1^-1 bfPi^1_z_k = ...}
\end{gather}
By~\eqref{e^a_k z bfd_z k-1 -> bfd'_k-1} and~\eqref{e^a_k bar z bfdelta_z k -> bfdelta'_k}, on $\im\bfdelta_{z,k}\oplus\im\bfd_{z,k-1}$,
\begin{equation}\label{bfDelta_z}
\bfDelta_z=e^{-2a_k\mu}\bfDelta'+O(e^{-(2a_k+c)\mu})\quad(\mu\to+\infty)\;.
\end{equation}

\begin{prop}\label{p: eigenvalues of bfDelta_z}
For $k=0,\dots,n$ and $\mu\gg0$, the spectrum of $\bfDelta_z$ on $\im\bfdelta_{z,k}\oplus\im\bfd_{z,k-1}$ is contained in an interval of the form
\[
\big[Ce^{-2a_k\mu},C'e^{-2a_k\mu}\big]\quad(C'\ge C)\;.
\]
\end{prop}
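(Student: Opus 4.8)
The plan is to transfer the spectral information from the ``leading'' operator $\bfDelta'$ to the full perturbed Morse Laplacian $\bfDelta_z$ restricted to $V_k := \im\bfdelta_{z,k}+\im\bfd_{z,k-1}$, using the asymptotic identity~\eqref{bfDelta_z}. The first point to settle is that $\bfDelta'$ is \emph{invertible} on the fixed finite-dimensional space $\im\bfdelta'_k+\im\bfd'_{k-1} = \im\bfDelta'\cap(\bfC^{k-1}\oplus\bfC^k)$ in the appropriate degree range; this is immediate from the Hodge-type decomposition $\bfC^\bullet=\ker\bfDelta'\oplus\im\bfd'\oplus\im\bfdelta'$ recorded just before the statement, since $\bfDelta'$ restricts to an isomorphism of $\im\bfDelta'$. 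Consequently there are constants $0<C_0\le C_0'$, depending only on $X,\eta$ (not on $z$), with $\spec\big(\bfDelta'|_{\im\bfdelta'_k+\im\bfd'_{k-1}}\big)\subset[C_0,C_0']$.

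Next I would relate the space $V_k$ to the $z$-independent model space. By~\eqref{bfPi^j_z k = ...}, the projections $\bfPi^1_{z,k}$ and $\bfPi^2_{z,k}$ converge to $\bfPi^{\prime\,1}_k$ and $\bfPi^{\prime\,2}_k$ at rate $O(e^{-c\mu})$ as $\mu\to+\infty$; hence the projection $\bfPi^1_{z,k}+\bfPi^2_{z,k-1}$ onto $V_k$ is within $O(e^{-c\mu})$ of the fixed projection $\bfPi^{\prime\,1}_k+\bfPi^{\prime\,2}_{k-1}$. Since the target is a fixed finite-dimensional space, for $\mu\gg0$ the two projections have the same rank, and one can build a uniformly bounded isomorphism $U_z$ (with uniformly bounded inverse) intertwining the two subspaces — for instance the standard formula $U_z = \bfPi_{V_k}\bfPi_{V'_k}(\bfPi_{V'_k}\bfPi_{V_k}\bfPi_{V'_k})^{-1/2}$ restricted appropriately, which is $\id+O(e^{-c\mu})$. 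Under this identification,~\eqref{bfDelta_z} gives $U_z^{-1}\,e^{2a_k\mu}\bfDelta_z\,U_z = \bfDelta'|_{\im\bfdelta'_k+\im\bfd'_{k-1}} + O(e^{-c\mu})$ as operators on the fixed space.

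The conclusion then follows from a routine perturbation argument: the spectrum of a selfadjoint operator on a finite-dimensional space moves continuously (indeed Lipschitz, with constant $1$ in operator norm) under selfadjoint perturbations. Since $e^{2a_k\mu}\bfDelta_z$ is (conjugate to) $\bfDelta'$ plus an $O(e^{-c\mu})$ selfadjoint error, its eigenvalues all lie in $[C_0-O(e^{-c\mu}),\,C_0'+O(e^{-c\mu})]$, hence in $[C_0/2,\,2C_0']$ for $\mu$ large; multiplying back by $e^{-2a_k\mu}$ yields $\spec(\bfDelta_z|_{V_k})\subset[C e^{-2a_k\mu},C'e^{-2a_k\mu}]$ with $C=C_0/2$, $C'=2C_0'$. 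One should note that all estimates are uniform in $\nu=\Im z$ because the error terms in~\eqref{bfPi^j_z k = ...} and~\eqref{bfDelta_z} depend only on $\mu$; this is what makes the bounds genuinely of the stated form. The only mild subtlety — the part deserving care rather than difficulty — is confirming that the conjugating isomorphism $U_z$ can be chosen selfadjointly compatible so that the transported operator stays selfadjoint and the eigenvalue-stability estimate applies verbatim; alternatively one can avoid $U_z$ entirely and argue directly with the min–max characterization of eigenvalues on $V_k$, comparing Rayleigh quotients of $e^{2a_k\mu}\bfDelta_z$ with those of $\bfDelta'$ via~\eqref{bfDelta_z} and~\eqref{bfPi^j_z k = ...}.
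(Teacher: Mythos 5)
Your proposal is correct and follows essentially the paper's route: the paper proves the proposition by exactly the Rayleigh-quotient comparison you offer as your alternative, bounding $\langle\bfDelta_z\bfe,\bfe\rangle$ from above and below on $\im\bfdelta_{z,k}+\im\bfd_{z,k-1}$ via~\eqref{bfDelta_z} together with the fact that the positive spectrum of $\bfDelta'$ lies in a fixed interval $[C_0,C_0']$. Your additional step of identifying the $z$-dependent subspace with the model space $\im\bfdelta'_k+\im\bfd'_{k-1}$ by a near-identity unitary (equivalently, controlling the tilt through~\eqref{bfPi^j_z k = ...}) is sound and in fact makes explicit a point the paper's proof leaves implicit, namely that vectors of $\im\bfdelta_{z,k}+\im\bfd_{z,k-1}$ are only $O(e^{-c\mu})$-close to being orthogonal to $\ker\bfDelta'$, which is what justifies the lower bound $\langle\bfDelta'\bfe,\bfe\rangle\ge(C_0-o(1))\|\bfe\|^2$ there.
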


\begin{proof}
The positive eigenvalues of $\bfDelta'$ are contained in an interval $[C_0,C'_0]$ ($C'_0\ge C_0>0$). By~\eqref{bfDelta_z}, for $\mu\gg0$ and $\bfe\in\im\bfdelta_{z,k}\oplus\im\bfd_{z,k-1}$,
\begin{align*}
\langle\bfDelta_z\bfe,\bfe\rangle
&\ge e^{2a_k\mu}\langle\bfDelta'\bfe,\bfe\rangle-C_1e^{-(2a_k+c)\mu}\|\bfe\|^2
\ge\big(C_0e^{-2a_k\mu}-C_1e^{-(2a_k+c)\mu}\big)\|\bfe\|^2\;,\\
\langle\bfDelta_z\bfe,\bfe\rangle
&\le e^{2a_k\mu}\langle\bfDelta'\bfe,\bfe\rangle+C_1e^{-(2a_k+c)\mu}\|\bfe\|^2
\le\big(C'_0e^{-2a_k\mu}+C_1e^{-(2a_k+c)\mu}\big)\|\bfe\|^2\;.
\end{align*}
Then result follows taking $0<C<C_0$ and $C'>C'_0$.
\end{proof}

\subsection{Estimates of the nonzero small spectrum}\label{ss: nonzero small spectrum}

\begin{thm}\label{t: estimates of the small eigenvalues}
If $\mu\gg0$, the spectrum of $\Delta_{z,\text{\rm sm}}$ on $\im\delta_{z,\text{\rm sm},k}\oplus\im d_{z,\text{\rm sm},k-1}$ is contained in an interval of the form 
\[
[C\mu e^{-2a_k\mu},C'\mu e^{-2a_k\mu}]\quad(C'\ge C)\;.
\]
\end{thm}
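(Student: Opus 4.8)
The plan is to transport the spectral estimate for $\bfDelta_z$ on $\im\bfdelta_{z,k}+\im\bfd_{z,k-1}$ (namely \Cref{p: eigenvalues of bfDelta_z}) across to the small complex via the quasi-isomorphism $\Phi_z:E_{z,\text{\rm sm}}\to\bfC^\bullet$, which is an isomorphism for $\mu\gg0$ (\Cref{c: Phi_z: E_z sm^bullet to bfC^bullet iso}), taking into account the degree-dependent distortion $(\pi/\mu)^{\sN-n/2}$ of $\Phi_z$ measured in \Cref{c: Phi_z^* Phi_z}. Concretely, on $\im\delta_{z,\text{\rm sm},k}+\im d_{z,\text{\rm sm},k-1}$ we have $\Delta_{z,\text{\rm sm}}=D_{z,\text{\rm sm}}^2$, so it suffices to estimate $\|D_z\alpha\|$ from above and below for $\alpha$ in this subspace. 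By \Cref{c: d_z sm approx ...} (and its adjoint form), $d_{z,\text{\rm sm}}\asymp_0\widetilde\Psi_z\bfd_z\Phi_z$ and $\delta_{z,\text{\rm sm}}\asymp_0\widetilde\Psi_z\bfdelta_z\Phi_z$, and by \Cref{c: widetilde bfPi^j_z} the projections match up to $\asymp_0$; so the small complex, conjugated by $\Phi_z$, is exponentially close to $(\bfC^\bullet,\bfd_z)$. Hence the nonzero spectrum of $\Delta_{z,\text{\rm sm}}$ in degree data $k$ is, up to the scaling factors introduced by $\widetilde\Psi_z$ and $\Phi_z$, comparable to the spectrum of $\bfDelta_z$ on $\im\bfdelta_{z,k}+\im\bfd_{z,k-1}$, which by \Cref{p: eigenvalues of bfDelta_z} lies in $[Ce^{-2a_k\mu},C'e^{-2a_k\mu}]$.

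The first step is to pin down the scaling. Using $\widetilde\Psi_z=(\mu/\pi)^{\sN/2-n/4}\Psi_z$ and \Cref{c: widetilde Psi_z^* widetilde Psi_z,c: Phi_z^* Phi_z}, one computes how $\|\widetilde\Psi_z\bfe\|$ compares to $\|\bfe\|$ and how $\|\Phi_z\alpha\|$ compares to $\|\alpha\|$ on a given homogeneous degree. If $\alpha\in\im\delta_{z,\text{\rm sm},k}$, then $\Phi_z\alpha$ lies essentially in $\im\bfdelta_{z,k}$ (by \Cref{c: Pi^1_z sm,c: widetilde bfPi^j_z}) and picks up a factor $(\pi/\mu)^{k-n/2}$ in its squared norm; applying $\bfd_z$ and then $\widetilde\Psi_z$ lands in degree $k$ again. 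Tracking the factors, I expect the $\mu$-powers to cancel against the $(\mu/\pi)$-powers of $\widetilde\Psi_z$ except for one extra factor of $\mu$ — precisely the factor distinguishing $C\mu e^{-2a_k\mu}$ here from $Ce^{-2a_k\mu}$ in \Cref{p: eigenvalues of bfDelta_z}. (A cross-check: this is consistent with \Cref{t: spec Delta_z}, where the \emph{large} spectrum scales like $\mu$, i.e. one more power of $\mu$ than the Morse-theoretic normalization would naively suggest, reflecting the $\mu$ in the harmonic-oscillator eigenvalues of \Cref{p: Delta'_p mu}.) The cleanest way to organize this is: (i) show $D_{z,\text{\rm sm}}^2$ restricted to $\im\delta_{z,\text{\rm sm},k}+\im d_{z,\text{\rm sm},k-1}$ is conjugate, via $\Phi_z$ composed with the appropriate power of $(\mu/\pi)^{\sN-n/2}$, to something $O(e^{-c\mu})$-close to $\mu^{-1}\bfDelta_z$ on $\im\bfdelta_{z,k}+\im\bfd_{z,k-1}$; wait — let me instead phrase it as a min-max comparison: for $0\ne\alpha$ in the small subspace, $\langle\Delta_{z,\text{\rm sm}}\alpha,\alpha\rangle/\|\alpha\|^2$ is comparable, with $\mu$-dependent but $\nu$-independent constants, to $\mu\langle\bfDelta_z\Phi_z\alpha,\Phi_z\alpha\rangle/\|\Phi_z\alpha\|^2$.

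So the proof runs: (1) By \Cref{c: d_z sm approx ...}, $D_{z,\text{\rm sm}}=\widetilde\Psi_z\bfD_z\Phi_z+O(e^{-c\mu})$ on $E_{z,\text{\rm sm}}$, where the error is measured in operator norm; combined with \Cref{c: widetilde Psi_z^* widetilde Psi_z,c: Phi_z^* Phi_z} this gives, for $\alpha$ in the relevant degree-$k$ image subspace, $\|D_z\alpha\|^2=\|D_{z,\text{\rm sm}}\alpha\|^2$ comparable to $(\mu/\pi)^{?}\|\bfD_z\Phi_z\alpha\|^2$ with an explicit power. (2) Feed in \Cref{p: eigenvalues of bfDelta_z}: $\|\bfD_z\Phi_z\alpha\|^2=\langle\bfDelta_z\Phi_z\alpha,\Phi_z\alpha\rangle\in[Ce^{-2a_k\mu},C'e^{-2a_k\mu}]\|\Phi_z\alpha\|^2$, valid because $\Phi_z\alpha\in\im\bfdelta_{z,k}+\im\bfd_{z,k-1}$ up to $O(e^{-c\mu})$ (using \Cref{c: Pi^1_z sm,c: widetilde bfPi^j_z} and \Cref{c: Phi_z: E_z sm^bullet to bfC^bullet iso}). (3) Use \Cref{c: Phi_z^* Phi_z} once more to convert $\|\Phi_z\alpha\|^2$ back to $\|\alpha\|^2$ times $(\pi/\mu)^{k-n/2}$ (or the appropriate average over the two degrees $k-1,k$; since $\im\delta_{z,\text{\rm sm},k}$ sits in degree $k$ and $\im d_{z,\text{\rm sm},k-1}$ also sits in degree $k$, all relevant vectors are in degree $k$, so a single power suffices). (4) Collect the powers of $\mu$; they should combine to exactly one surviving factor $\mu$, giving $\langle\Delta_{z,\text{\rm sm}}\alpha,\alpha\rangle\in[C\mu e^{-2a_k\mu},C'\mu e^{-2a_k\mu}]\|\alpha\|^2$, and since everything holds uniformly in $\nu$ (all the cited asymptotic relations are uniform in $\nu$), so does the conclusion. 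Finally invoke min-max for the selfadjoint operator $\Delta_{z,\text{\rm sm}}$ restricted to the invariant subspace $\im\delta_{z,\text{\rm sm},k}+\im d_{z,\text{\rm sm},k-1}$.

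The main obstacle is bookkeeping the powers of $\mu$ cleanly: $\widetilde\Psi_z$, $\Phi_z$, and the degree shift caused by $\bfd_z$ versus $\bfdelta_z$ each contribute $(\mu/\pi)$-powers depending on degree, and one must be careful that $\im\delta_{z,\text{\rm sm},k}$ and $\im d_{z,\text{\rm sm},k-1}$ both live in $\Omega^k$ so that a single power of $(\mu/\pi)^{k-n/2}$ governs the norm comparison, while $\bfd_z:\bfC^{k-1}\to\bfC^k$ and $\bfdelta_z:\bfC^k\to\bfC^{k-1}$ mix degrees $k$ and $k-1$ inside the Morse complex — so the comparison on the Morse side genuinely involves both degrees, and \Cref{p: eigenvalues of bfDelta_z} is precisely the statement that handles that. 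A secondary subtlety is that the $\asymp_0$ errors ($O(e^{-c\mu})$) must be shown to be dominated by the main term $C\mu e^{-2a_k\mu}$; since $a_k>0$ is fixed, $e^{-2a_k\mu}$ is only exponentially small with a fixed rate $2a_k$, which may or may not exceed the constant $c$ in the $\asymp_0$ errors. This is handled exactly as in the proof of \Cref{t: spec Delta_z} and \Cref{p: eigenvalues of bfDelta_z}: shrink $c$ in the error terms, or absorb the discrepancy into the constants $C,C'$ — the point being that $\mu e^{-2a_k\mu}$ and $e^{-(2a_k+c)\mu}$ have the same leading exponential rate $2a_k$, so the ratio is polynomially controlled and the interval bounds survive with adjusted constants.
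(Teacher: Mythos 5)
Your overall strategy is the paper's: transport \Cref{p: eigenvalues of bfDelta_z} through $\Phi_z$ and let the degree-dependent distortion of \Cref{c: Phi_z^* Phi_z} produce the extra factor $\mu$. But the execution has a genuine gap: you base the comparison on the \emph{asymptotic} relations $D_{z,\text{\rm sm}}=\widetilde\Psi_z\bfD_z\Phi_z+O(e^{-c\mu})$ and ``$\Phi_z\alpha\in\im\bfdelta_{z,k}+\im\bfd_{z,k-1}$ up to $O(e^{-c\mu})$''. These errors are \emph{absolute} $O(e^{-c\mu})$, with $c$ a fixed constant coming from the Gaussian localization (\Cref{p: | D_z^l e_p z |_m i nu} etc.), completely unrelated to $a_k$; the quantities you must estimate are of size $e^{-a_k\mu}$ (resp.\ $e^{-2a_k\mu}$) up to polynomial factors, and the $a_k$ are large by construction (\Cref{t: extension of Smale} takes $a_k\gg0$). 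Whenever $a_k>c$, the additive error swamps the main term, so your upper bound degenerates to $O(e^{-2c\mu})$ and your lower bound becomes vacuous. Your proposed repair confuses two different errors: the $e^{-(2a_k+c)\mu}$ in \Cref{p: eigenvalues of bfDelta_z} is a \emph{relative} error (smaller than the main term by $e^{-c\mu}$), whereas the $\asymp_0$ errors in \Cref{c: d_z sm approx ...,c: Pi^1_z sm} are not, so they cannot be ``absorbed into $C,C'$''. The way out — and what the paper does — is to use the \emph{exact} chain-map identities of \Cref{c: d_z sm = ...}, namely $d_{z,\text{\rm sm}}=\Phi_z^{-1}\bfd_z\Phi_{z,\text{\rm sm}}$ and $d_{z,\text{\rm sm}}^{-1}\Pi^1_{z,\text{\rm sm}}=\Pi^2_{z,\text{\rm sm}}\Phi_z^{-1}\bfd_z^{-1}\Phi_z\Pi^1_{z,\text{\rm sm}}$, together with the observation (from the commutativity of~\eqref{CD im delta_z k+1 ...}) that the extreme points of the spectrum on $\im\delta_{z,\text{\rm sm},k}+\im d_{z,\text{\rm sm},k-1}$ are exactly $\|d_{z,\text{\rm sm},k-1}\|^2$ and $\|d_{z,\text{\rm sm},k-1}^{-1}\Pi^1_{z,\text{\rm sm},k}\|^{-2}$; then only the \emph{multiplicative} norm distortions of $\Phi_z$ and $\Phi_z^{-1}$ (\Cref{c: Phi_z^* Phi_z,c: (Phi_z^-1)^* Phi_z^-1}, whose errors are relative) enter, and \Cref{p: eigenvalues of bfDelta_z} applies on the Morse side with no transported subspaces. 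Note also that the exact intertwining only holds for $d_z$, not for $\delta_z$, which is another reason to reduce to $d_{z,\text{\rm sm},k-1}$ rather than work with $D_{z,\text{\rm sm}}$ on the whole subspace.

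A second, smaller but consequential slip: $\im\delta_{z,\text{\rm sm},k}$ lies in degree $k-1$ (it is $\delta_z$ applied to degree-$k$ forms), while $\im d_{z,\text{\rm sm},k-1}$ lies in degree $k$. Your parenthetical claim that both sit in degree $k$, so that ``a single power suffices'', would make the $(\mu/\pi)$-powers cancel exactly and destroy the very factor of $\mu$ you are after. The surviving factor comes precisely from the degree mismatch: in the paper's estimate one multiplies $\|\Phi_{z,k}^{-1}\|^2\sim(\mu/\pi)^{k-n/2}$ by $\|\Phi_{z,\text{\rm sm},k-1}\|^2\sim(\pi/\mu)^{k-1-n/2}$, leaving exactly one power of $\mu$ against the $e^{-2a_k\mu}$ from \Cref{p: eigenvalues of bfDelta_z}. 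With these two corrections (exact identities instead of $\asymp_0$, and the correct degrees $k-1$, $k$), your min-max framing goes through and coincides with the paper's argument.
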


\begin{proof}
By the commutativity of~\eqref{CD im delta_z k+1 ...}, for every eigenvalue $\lambda$ of $\Delta_{z,\text{\rm sm}}$ on $\im\delta_{z,\text{\rm sm},k}\oplus\im d_{z,\text{\rm sm},k-1}$, there are normalized $\lambda$-eigenforms, $e\in\im\delta_{z,\text{\rm sm},k}$ and $e'\in\im d_{z,\text{\rm sm},k-1}$, so that $d_ze=\lambda^{1/2}e'$ and $\delta_ze'=\lambda^{1/2}e$. So the maximum and minimum of the spectrum of $\Delta_{z,\text{\rm sm}}$ on $\im\delta_{z,\text{\rm sm},k}\oplus\im d_{z,\text{\rm sm},k-1}$ is $\|d_{z,\text{\rm sm},k-1}\|^2$ and $\|d_{z,\text{\rm sm},k-1}^{-1}\Pi^1_{z,\text{\rm sm},k}\|^{-2}$, respectively. Similarly, the maximum and minimum of the spectrum of $\bfDelta_z$ on $\im\bfdelta_{z,k}\oplus\im\bfd_{z,k-1}$ is $\|\bfd_{z,k-1}\|^2$ and $\|\bfd_{z,k-1}^{-1}\bfPi^1_{z,k}\|^{-2}$, respectively. Then the result follows from \Cref{c: Phi_z^* Phi_z,c: (Phi_z^-1)^* Phi_z^-1,c: d_z sm = ...,p: eigenvalues of bfDelta_z}:
\begin{gather*}
\begin{aligned}
\|d_{z,\text{\rm sm},k-1}\|^2
&\le\|\Phi_{z,k}^{-1}\|^2\|\bfd_{z,k-1}\|^2\|\Phi_{z,k-1}\|^2\\
&\le\Big(\Big(\frac\mu\pi\Big)^{k-\frac n2}+O\big(e^{-c\mu}\big)\Big)C'_0e^{-2a_k\mu}
\Big(\Big(\frac\pi\mu\Big)^{k-1-\frac n2}+O\big(e^{-c\mu}\big)\Big)\\
&\le C'\mu e^{-2a_k\mu}\;,
\end{aligned}\\
\begin{aligned}
\|d_{z,\text{\rm sm},k-1}^{-1}\Pi^1_{z,\text{\rm sm},k-1}\|^{-2}
&\ge\|\Phi_{z,k-1}^{-1}\|^{-2}\|\bfd_{z,k-1}^{-1}\bfPi^1_{z,k}\|^{-2}\|\Phi_{z,k}\|^{-2}\\
&\ge\Big(\Big(\frac\pi\mu\Big)^{k-1-\frac n2}+O\big(e^{-c\mu}\big)\Big)C_0e^{-2a_k\mu}
\Big(\Big(\frac\mu\pi\Big)^{k-\frac n2}+O\big(e^{-c\mu}\big)\Big)\\
&\ge C\mu e^{-2a_k\mu}\;.\;\qedhere
\end{aligned}
\end{gather*}
\end{proof}

\subsection{Asymptotics of the small zeta invariant}\label{ss: sm zeta inv}

\Cref{t: zeta_sm/la(1 z)}~\ref{i: zetasm(1 z)} is proved here.

\begin{thm}\label{t: eta wedge d_z^-1 Pi^1_z sm k}
As $\mu\to+\infty$,
\[
{\eta\wedge}\,d_z^{-1}\Pi^1_{z,\text{\rm sm},k}\asymp_1\big(1-e^{a_k}\big)\Pi^1_{z,\text{\rm sm},k}\;.
\]
\end{thm}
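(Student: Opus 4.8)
The plan is to reduce the statement about the operator $\eta\wedge\,d_z^{-1}\Pi^1_{z,\text{\rm sm},k}$ on $E_{z,\text{\rm sm}}$ to the corresponding statement on the Morse complex $\bfC^\bullet$, and then to a purely combinatorial computation involving the perturbed Morse differential. First I would transport everything to $\bfC^\bullet$ via the isomorphism $\Phi_z:E_{z,\text{\rm sm}}\to\bfC^\bullet$. By \Cref{c: d_z sm = ...} we have $d_{z,\text{\rm sm}}^{-1}\Pi^1_{z,\text{\rm sm}}=\Pi^2_{z,\text{\rm sm}}\Phi_z^{-1}\bfd_z^{-1}\Phi_z\Pi^1_{z,\text{\rm sm}}$, and by \Cref{c: Pi^1_z sm} together with \Cref{c: widetilde bfPi^j_z} the projections $\Pi^j_{z,\text{\rm sm}}$ correspond under $\Phi_z$ (up to $\asymp_0$) to $\bfPi^j_z$. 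The key extra ingredient I need is the asymptotic behaviour of the operator $\Phi_z\circ(\eta\wedge)\circ\Phi_z^{-1}$ on $\bfC^\bullet$. Since $\eta\wedge=\partial_z d_z$ by \eqref{partial_z delta_z}, and on the Morse side $\partial_z\bfd_z$ has an explicit expression from \eqref{bfd_z bfe_q} (namely $\partial_z\bfd_z\bfe_q=\sum_{\gamma}\epsilon(\gamma)\eta(\gamma)e^{z\eta(\gamma)}\bfe_p$), I expect that $\Phi_z(\eta\wedge)\Phi_z^{-1}\asymp\partial_z\bfd_z\cdot\bfd_z^{-1}$ in an appropriate sense; more precisely I would aim to show $\Phi_z\,(\eta\wedge)\,d_z^{-1}\Pi^1_{z,\text{\rm sm}}\,\Phi_z^{-1}$ is asymptotic to $(\partial_z\bfd_z)\bfd_z^{-1}\bfPi^1_z$, using \Cref{t: partial_z(Phi_z Psi_z)} and \Cref{c: Phi_z^-1} to control the error terms from differentiating $\Phi_z$ itself.

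The second step is the combinatorial heart: computing $(\partial_z\bfd_z)\bfd_z^{-1}\bfPi^1_z$ on $\bfC^k$ asymptotically as $\mu\to+\infty$. Here condition~\ref{i-a:  tight} is essential. Using the decomposition \eqref{bfd_z k-1}, $\bfd_{z,k-1}=e^{-a_kz}(\bfd'_{k-1}+\bfd''_{z,k-1})$ with $\bfd''_{z,k-1}=O(e^{-c\mu})$, I can write $\partial_z\bfd_{z,k-1}=e^{-a_kz}(-a_k(\bfd'_{k-1}+\bfd''_{z,k-1})+\partial_z\bfd''_{z,k-1})$. Because each term of $\bfd''_{z,k-1}$ carries a factor $e^{z(a_k+\eta(\gamma))}$ with $a_k+\eta(\gamma)<0$, both $\bfd''_{z,k-1}$ and its $z$-derivative are $O(e^{-c\mu})$. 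Hence $\partial_z\bfd_{z,k-1}\asymp_0 -a_k e^{-a_kz}\bfd'_{k-1}\asymp_0 -a_k\bfd_{z,k-1}$ on the relevant subspace. Then $(\partial_z\bfd_z)\bfd_z^{-1}\bfPi^1_z\asymp_0 -a_k\bfPi^1_z$ on $\bfC^k$... but wait: that gives $-a_k$, not $\pm(1-e^{a_k})$. So the correct bookkeeping must be subtler — the operator $\eta\wedge$ raises degree by one, so $\eta\wedge\,d_z^{-1}$ maps $\im d_{z,k}$ into $\Omega^{k+1}$, and the relevant object on $\bfC^\bullet$ is really $\Phi_z(\eta\wedge)\Phi_z^{-1}$ applied after $\bfd_z^{-1}\bfPi^1_z$, which lands in $\im\bfd_{z,k}$ where the differential $\bfd_{z,k}$ carries the factor $e^{-a_{k+1}z}$, not $e^{-a_kz}$; but on $\im\bfd_{z,k-1}\subset\bfC^k$ the inverse $\bfd_{z,k-1}^{-1}$ carries $e^{+a_kz}$. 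So I would instead track: on $\im\bfd_{z,k-1}$, $\bfd_z^{-1}\bfPi^1_z=\bfd_{z,k-1}^{-1}\bfPi^1_z$ lands in $\im\bfdelta_{z,k}$, and then $\Phi_z(\eta\wedge)\Phi_z^{-1}$ corresponds to $e^{-a_kz}\partial_z(e^{a_kz}\cdot)$ type operator plus the genuine $\partial_z\bfd'$ contributions, whose zero-order piece in the exponential gives precisely the factor $1-e^{a_k}$ after matching the constant $1$ coming from $\Phi_z(\eta\wedge)$'s "diagonal" part against $e^{a_k}$ from the exponential mismatch. I would carefully identify the two contributions: the $-a_k$-type term and a $+1$-type term, combining (after the manipulations of \Cref{l: Trs(theta wedge d_z^-1 Pi^1_z sm) = -Trs(h Pi^perp_z sm)}-style commutator computations and the asymptotics) into $\pm(1-e^{a_k})$.

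Actually, the cleaner route — and the one I would pursue — is to avoid the $\partial_z$ heuristic and instead use the commutator identity directly on $\bfC^\bullet$. Since $\eta\wedge$ corresponds (up to the asymptotics controlled above) to a bounded operator, and since $\bfd_{z,k-1}=e^{-a_kz}\bfd'_{k-1}+O(e^{-c\mu})$, I would write $\Phi_z(\eta\wedge)\Phi_z^{-1}$ restricted to $\im\bfdelta_{z,k}$ and compute its composition with $\bfd_z:\im\bfdelta_{z,k}\to\im\bfd_{z,k}$, where $\bfd_z$ contributes $e^{-a_{k+1}z}$; meanwhile on the $E_{z,\text{\rm sm}}$ side, $\eta\wedge\,d_z^{-1}$ composed with $d_z$ gives back $\eta\wedge$. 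The mismatch $e^{-a_{k+1}z}$ vs $e^{-a_kz}$ is where $e^{a_k}$ (or $e^{a_{k+1}}$) enters; the $\pm$ sign comes from the degree parity in the supertrace-free operator identity. I expect the main obstacle to be exactly this careful bookkeeping of exponential factors $e^{a_kz}$ attached to $\bfd_z$, $\bfd_z^{-1}$, and $\Phi_z(\eta\wedge)\Phi_z^{-1}$ on each graded piece, and verifying that the "off-diagonal in the instanton sum" terms (those $\gamma$ with $\eta(\gamma)<-a_k$) genuinely contribute only $O(\mu^{-1})$ after one accounts for the $\mu^{n/4}$-type normalizations hidden in $\Phi_z$ and $\widetilde\Psi_z$ — this is why the error is $\asymp_2$ (order $\mu^{-1}$) rather than exponentially small, the polynomial loss coming from \Cref{c: Phi_z^* Phi_z} and \Cref{p: P_z sm theta wedge}. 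Once the leading term on $\bfC^\bullet$ is identified as $\pm(1-e^{a_k})\bfPi^1_{z,k}$, transporting back through $\Phi_z$ via \Cref{c: Pi^1_z sm} and \Cref{c: widetilde bfPi^j_z} (which only costs $\asymp_0$) yields the claimed asymptotic for $\eta\wedge\,d_z^{-1}\Pi^1_{z,\text{\rm sm},k}$.
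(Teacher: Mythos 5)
There is a genuine gap: you never find the device that makes the operator ${\eta\wedge}$ tractable on the small complex. The paper's proof rests on the exact identity ${\eta\wedge}=d_z-d_{z-1}$, which reduces the theorem to showing $d_{z-1}\,d_z^{-1}\Pi^1_{z,\text{\rm sm},k}\asymp_2 e^{a_k}\,\Pi^1_{z,\text{\rm sm},k}$; this is then proved by transporting to $\bfC^\bullet$, where the two Morse differentials $\bfd_{z-1,k-1}$ and $\bfd_{z,k-1}$ have leading factors $e^{-a_k(z-1)}$ and $e^{-a_kz}$ by~\eqref{bfd_z k-1}, producing exactly the factor $e^{a_k}$, and the $O(\mu^{-1})$ loss (the reason the statement is only $\asymp_2$) comes from the parameter\emph{-shift} comparisons with $\tau=-1$ (\Cref{p: P_z sm P_z+tau sm}, \Cref{c: d_z+tau sm - d_z+tau P_z sm}, \Cref{c: Phi_z+tau widetilde Psi_z}, \Cref{c: Phi_z+tau P_z+tau sm widetilde Psi_z}), not from \Cref{c: Phi_z^* Phi_z} (exponentially small errors) nor from \Cref{p: P_z sm theta wedge} (which would only give $O(\mu^{-1/2})$, i.e.\ $\asymp_1$). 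Without this finite-difference trick, the central difficulty is untouched: ${\eta\wedge}$ neither preserves $E_{z,\text{\rm sm}}$ nor is intertwined by $\Phi_z$ with any explicitly computable operator on $\bfC^\bullet$, and the statement hides a near-exact cancellation — by \Cref{t: estimates of the small eigenvalues} the operator $d_z^{-1}\Pi^1_{z,\text{\rm sm},k}$ has norm of order $\mu^{-1/2}e^{a_k\mu}$, while ${\eta\wedge}$ against the small complex is only $O(\mu^{-1/2})$ by \Cref{p: P_z sm theta wedge}, so a crude product bound gives $O(\mu^{-1}e^{a_k\mu})$ and the bounded limit $1-e^{a_k}$ can only come from precise structural matching, which your sketch never provides.

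Your two substitutes do not close this gap. (i) The heuristic $\Phi_z({\eta\wedge})\Phi_z^{-1}\approx(\partial_z\bfd_z)\bfd_z^{-1}$-type reduction fails because $\Phi_z$ itself depends on $z$: conjugation and $\partial_z$ do not commute, and the commutator term involving $\dot\Phi_z\Phi_z^{-1}$ (multiplication by $\hat h^-_p$, of size comparable to $a_k$ on $W^-_p$) is not negligible — this is exactly why you land on $-a_k$ instead of $1-e^{a_k}$; you noticed the mismatch, but the subsequent "bookkeeping" remarks do not identify the actual resolution. (ii) The "cleaner route" is not an argument: composing with $\bfd_z:\im\bfdelta_{z,k}\to\im\bfd_{z,k}$ drags in $a_{k+1}$, which cannot appear (the operator under study only uses $d_z^{-1}$ from degree $k$ down to $k-1$, hence only $a_k$); no concrete asymptotic expression for $\Phi_z\circ({\eta\wedge})$ on $E_{z,\text{\rm sm}}$ is ever produced (and none exists without comparing $\Phi_z$ with $\Phi_{z-1}$, which is precisely what the paper's shifted-parameter corollaries supply); and the claimed source of the $O(\mu^{-1})$ error is misattributed. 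So the proposal identifies the right arena (transfer via $\Phi_z$, the factorization~\eqref{bfd_z k-1}, condition~\ref{i-a:  tight}) but is missing the one idea — ${\eta\wedge}=d_z-d_{z-1}$ plus the $z$ versus $z-1$ comparison machinery — that turns it into a proof.
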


\begin{proof}
Consider the notation of \Cref{ss: small complex vs Morse complex,ss: bfd'}. By \Cref{c: Pi^1_z sm,c: widetilde bfPi^j_z}, for $\mu\gg0$,
\begin{equation}\label{Pi^1_z sm - 2}
\Pi^1_{z,\text{\rm sm}}\asymp_0\widetilde\Psi_z\widetilde\bfPi^1_z\Phi_{z,\text{\rm sm}}
=\widetilde\Psi_z\bfPi^1_z\Phi_{z,\text{\rm sm}}\;.
\end{equation}

For brevity, let $S_z=\Phi_z\widetilde\Psi_{z-1}$ and $T_z=\Phi_{z-1}P_{z-1,\text{\rm sm}}\widetilde\Psi_z$ on $\bfC^\bullet$. By \Cref{c: Phi_z^* Phi_z,c: Pi^1_z sm}, 
\begin{equation}\label{S_z T_z asymp_2 1}
S_z,T_z\asymp_11\;.
\end{equation}
Moreover, by \Cref{p: P_z sm P_z+tau sm,c: Phi_z+tau widetilde Psi_z}, and the definitions of $\Psi_z$ and $\widetilde\Psi_z$, considered as maps $\bfC^\bullet\to L^2(M;\Lambda)$, we get 
\begin{multline}\label{widetilde Psi_z S_z}
\widetilde\Psi_zS_z=\widetilde\Psi_z\Phi_zP_{z-1,\text{\rm sm}}\widetilde\Psi_{z-1}
\asymp_1\widetilde\Psi_z\Phi_zP_{z,\text{\rm sm}}\widetilde\Psi_{z-1}\\
\asymp_1P_{z,\text{\rm sm}}\widetilde\Psi_{z-1}\asymp_1P_{z-1,\text{\rm sm}}\widetilde\Psi_{z-1}=\widetilde\Psi_{z-1}\;.
\end{multline}

By~\eqref{e^a_k z bfd_z k-1 -> bfd'_k-1},~\eqref{bfPi^j_z k = ...}, \eqref{e^-a_kz bfd_z_k-1^-1 bfPi^1_z_k = ...},~\eqref{Pi^1_z sm - 2}--\eqref{widetilde Psi_z S_z}, \Cref{p: eigenvalues of bfDelta_z,c: Phi_z+tau widetilde Psi_z,c: Phi_z+tau P_z+tau sm widetilde Psi_z,c: widetilde Psi_z^* widetilde Psi_z,c: Phi_z^* Phi_z,c: d_z+tau sm - d_z+tau P_z sm,c: Pi^1_z sm,c: Phi_z^-1,c: widetilde bfPi^j_z,c: d_z sm = ...,t: estimates of the small eigenvalues}, 
\begin{align*}
e^{a_k}\Pi^1_{z,\text{\rm sm},k}&\asymp_0 e^{a_k}\widetilde\Psi_z\bfPi^1_{z,k}\Phi_{z,\text{\rm sm}}
\asymp_1 e^{a_k}\widetilde\Psi_z\bfPi^{\prime\,1}_k\Phi_{z,\text{\rm sm}}\\
&=e^{a_k}\widetilde\Psi_z\bfd'_{k-1}(\bfd'_{k-1})^{-1}\bfPi^{\prime\,1}_k\Phi_{z,\text{\rm sm}}\\
&\asymp_1 e^{a_k}\widetilde\Psi_zS_z\bfd'_{k-1}T_z(\bfd'_{k-1})^{-1}\bfPi^{\prime\,1}_k\Phi_{z,\text{\rm sm}}\\
&\asymp_1 e^{a_k}\widetilde\Psi_{z-1}\bfd'_{k-1}T_z(\bfd'_{k-1})^{-1}\bfPi^{\prime\,1}_k\Phi_{z,\text{\rm sm}}\\
&\asymp_1 e^{a_k}\widetilde\Psi_{z-1}e^{a_k(z-1)}\bfd_{z-1,k-1}T_z
e^{-a_kz}\bfd_{z,k-1}^{-1}\bfPi^1_{z,k}\Phi_{z,\text{\rm sm}}\\
&=\widetilde\Psi_{z-1}\bfd_{z-1,k-1}T_z\bfd_{z,k-1}^{-1}\bfPi^1_{z,k}\Phi_{z,\text{\rm sm}}\\
&\asymp_0\widetilde\Psi_{z-1}\bfd_{z-1,k-1}T_z\widetilde\bfPi^2_{z,k-1}
\bfd_{z,k-1}^{-1}\widetilde\bfPi^1_{z,k}\Phi_{z,\text{\rm sm}}\\
&\asymp_0\widetilde\Psi_{z-1}\bfd_{z-1,k-1}\Phi_{z-1}P_{z-1,\text{\rm sm}}\Pi^2_{z,k-1}\widetilde\Psi_z\bfd_{z,k-1}^{-1}\Phi_z\Pi^1_{z,\text{\rm sm}}\\
&\asymp_0\widetilde\Psi_{z-1}\bfd_{z-1,k-1}\Phi_{z-1}P_{z-1,\text{\rm sm}}\Pi^2_{z,k-1}\Phi_z^{-1}\bfd_{z,k-1}^{-1}\Phi_z\Pi^1_{z,\text{\rm sm}}\\
&=\widetilde\Psi_{z-1}\Phi_{z-1}d_{z-1,\text{\rm sm},k-1}d_{z,\text{\rm sm},k-1}^{-1}\Pi^1_{z,\text{\rm sm},k}
\asymp_0 d_{z-1}d_z^{-1}\Pi^1_{z,\text{\rm sm},k}\;.
\end{align*}
Therefore
\[
{\eta\wedge}\,d_z^{-1}\Pi^1_{z,\text{\rm sm},k}=(d_z-d_{z-1})d_z^{-1}\Pi^1_{z,\text{\rm sm},k}
\asymp_1(1-e^{a_k})\Pi^1_{z,\text{\rm sm},k}\;.\qedhere
\]
\end{proof}

\Cref{t: zeta_sm/la(1 z)}~\ref{i: zetasm(1 z)} follows from \Cref{c: P_z sm : E_z -> E_z sm iso,c: zetala(1 z) = ...,t: eta wedge d_z^-1 Pi^1_z sm k}.

\begin{rem}
\Cref{t: zeta_sm/la(1 z)}~\ref{i: zetasm(1 z)} agrees with \Crefrange{c: zeta_sm/la(1 Delta_z theta wedge D_z bfw) in R}{c: zetasm(1 Delta_z theta wedge D_z bfw) is uniformly bd} by~\eqref{-bfz(M g -eta)}.
\end{rem}

\section{Prescription of the asymptotics of the zeta invariant}\label{s: prescription}

We prove \Cref{t: prescription of lim zeta(1 z)} here. By \Cref{t: extension of Smale}, given $a\gg0$, there is some $\eta_0\in\xi$ and some metric $g$ satisfying~\ref{i-a:  eta Morse} and~\ref{i-a:  tight} with the given $X$, and so that $\MM_k(\eta_0,X)=a$ for all $k=1,\dots,n$. Using the notation of \Cref{ss: Morse forms}, we are going to modify $\eta_0$ only in every $U_p$ for $p\in\YY_0\cup\YY_n$.

Fix any $\epsilon>0$ such that, for every $p\in\YY_0\cup\YY_n$, the open ball $B(p,3\epsilon)$ is contained in $U_p$. Let
\[
V=\bigcup_{p\in\YY_0\cup\YY_n}B(p,\epsilon)\;,\quad V'=\bigcup_{p\in\YY_0\cup\YY_n}B(p,2\epsilon)\;.
\]
Take a smooth function $\sigma:[0,3\epsilon]\to[0,1]$ so that
\[
\sigma'\le0\;,\quad\sigma([0,\epsilon])=1\;,\quad\sigma([2\epsilon,3\epsilon])=0\;.
\]
Let $f_j\in C^\infty(M,\R)$ ($j=0,n$) be the extension by zero of the combination of the functions $\sigma(|x_p|)\in\Cinftyc(B(p,3\epsilon),\R)$ ($p\in\YY_j$). We have 
\[
\supp df_j\subset V'_j\setminus V_j\;,\quad f_j(V_j)=1\;,\quad f_j(M\setminus V'_j)=0\;,\quad
Xf_0\ge0\;,\quad Xf_n\le0\;.
\]

For any $c_0,c_n\ge0$, let $\eta=\eta(c_0,c_n)=\eta_0-c_0\,df_0+c_n\,df_n$. This closed 1-form satisfies~\ref{i-a:  eta Morse} and~\ref{i-a:  tight} with $X$ and $g$, and we have
\[
\MM_1(\eta,X)=a+c_0\;,\quad\MM_n(\eta,X)=a+c_n\;,\quad
\MM_k(\eta_1,X)=a\quad(1<k<n)\;. 
\]
Hence, by \Cref{c: m^j_z k only depends on | XX_k | and xi},
\begin{equation}\label{Z_ sm(theta)}
\bfz_{\text{\rm sm}}(\eta)-\bfz_{\text{\rm sm}}(\eta_0)
=e^a(e^{c_0}-1)m^1_1+(-1)^ne^a(1-e^{c_n})m^1_n\;.
\end{equation}
By~\ref{i-a:  eta Morse}, $e(M,\nabla^M)=0$ on every $U_p$ ($p\in\YY$). So, using the Stokes formula,
\begin{align}
\bfz_{\text{\rm la}}(\eta)-\bfz_{\text{\rm la}}(\eta_0)
&=\int_M(c_n\,df_n-c_0\,df_0)\wedge (-X)^*\psi(M,\nabla^M)\notag\\
&=\int_M(c_0f_0-c_nf_n)\,(-X)^*d\psi(M,\nabla^M)\notag\\
&=\int_M(c_0f_0-c_nf_n)\,e(M,\nabla^M)-\sum_{p\in\YY}(-1)^{\ind(p)}(c_0f_0-c_nf_n)(p)\notag\\
&=(-1)^nc_n|\YY_n|-c_0|\YY_0|\;,\label{Z_ la(theta)}
\end{align}
Combining~\eqref{Z_ sm(theta)} and~\eqref{Z_ la(theta)}, we obtain
\[
\bfz(\eta)-\bfz(\eta_0)
=e^a(e^{c_0}-1)m^1_1+(-1)^ne^a(1-e^{c_n})m^1_n+(-1)^nc_n|\YY_n|-c_0|\YY_0|\;.
\]
Using local changes of $X$ and applying \cite[Lemmas~1.1 and~1.2]{Smale1961}, we can increase $|\YY_0|$ or $|\YY_n|$ as much as desired. By \Cref{l: m_z k = |XX_k| - beta_z^k} and~\eqref{m_z k^j}, we have
\begin{equation}\label{m^1_1 = |XX_0| - betaNo^0}
m^1_1=|\YY_0|-\betaNo^0\;,\quad m^1_n=|\YY_n|-\betaNo^n\;,
\end{equation}
which can be increased as much as desired. So, if $n$ is even (resp., odd), given any $\tau\in\R$ (resp., $\tau\gg0$), we get $\bfz(\eta(c_0,c_n))=\tau$ for some $c_0,c_n\ge0$.

Now assume $n$ is even. To prove that $\pm\bfz(\pm\eta)=\tau$, by~\eqref{-bfz_la(-eta)},~\eqref{Z_ sm(theta)} and~\eqref{Z_ la(theta)}, it is enough to prove that we can choose $|\YY_0|$, $|\YY_n|$, $c_0$ and $c_n$ so that
\begin{gather*}
\bfz_{\text{\rm sm}}(\eta)=\bfz_{\text{\rm sm}}(\eta_0)
+e^a(e^{c_0}-1)m^1_1+e^a(1-e^{c_n})m^1_n=0\;,\\
\bfz_{\text{\rm la}}(\eta)=\bfz_{\text{\rm la}}(\eta_0)
+c_n|\YY_n|-c_0|\YY_0|=\tau\;.
\end{gather*}
Using~\eqref{m^1_1 = |XX_0| - betaNo^0}, and writing $u=-e^{-a}\bfz_{\text{\rm sm}}(\eta_0)$ and $v=\tau-\bfz_{\text{\rm la}}(\eta_0)$, the above system becomes 
\begin{gather*}
(e^{c_0}-1)(|\YY_0|-\betaNo^0)+(1-e^{c_n})(|\YY_n|-\betaNo^n)=u\;,\\
c_n|\YY_n|-c_0|\YY_0|=v\;.
\end{gather*}
The following result states that these equalities are satisfied by some $c_0,c_n\ge0$ and $|\YY_0|,|\YY_n|\gg0$.

\begin{lem}\label{l: sols - system}
Given $u,v\in\R$ and $\beta,\gamma\ge0$, there are $c,d\ge0$ and integers $p,q\gg0$ such that
\begin{gather*}
(e^c-1)(p-\beta)+(1-e^d)(q-\gamma)=u\;,\\
dq-cp=v\;.
\end{gather*}
\end{lem}

\begin{proof}
Taking $q>0$, we get
\[
d=(cp+v)/q\;.
\]
Thus $cp+v\ge0$; i.e., $c\ge-v/p$. Let
\[
F_{p,q}(c)=(e^c-1)(p-\beta)+\big(1-e^{(cp+v)/q}\big)(q-\gamma)\;.
\]
We have to find integers $p,q\gg0$ and $c\ge0,-v/p$ such that $F_{p,q}(c)=u$.

Observe that
\begin{align}
\beta<p<q&\Rightarrow \lim_{c\to+\infty}F_{p,q}(c)=+\infty\;,\label{F_p,q(c) -> +infty}\\
\gamma<q<p&\Rightarrow \lim_{c\to+\infty}F_{p,q}(c)=-\infty\;.\label{F_p,q(c) -> -infty}
\end{align}
Note also that, if $(c,d,p,q)$ is a solution for some $(u,v, \beta, \gamma)$, then $(d,c,q,p)$ is a solution for $(-u,-v, \gamma,\beta)$. So it is sufficient to consider the case $v\geq 0$. In this case, $c$ can reach $0$ and
\[
F_{p,q}(0)=\big(1-e^{v/q}\big)(q-\gamma)\;,
\]
which is independent of $p$. Choose $q\gg\beta,\gamma$; thus $F_{p,q}(0)\le0$. If $u\ge F_{p,q}(0)$, take $p$ so that $\beta\ll p<q$, yielding $u\in\im F_{p,q}$ by~\eqref{F_p,q(c) -> +infty}. If $u<F_{p,q}(0)$, take $p>q$, yielding $u\in\im F_{p,q}$ by~\eqref{F_p,q(c) -> -infty}.
\end{proof}

\section{The switch of the order of integration}\label{s: switch}

The proof of \Cref{t: Z_pm} is given in this section. Let $\SS$ be the Schwartz space on $\R$. Recall that the space of tempered distributions is the continuous dual space $\SS'$, with the strong topology. Suppose first that~\eqref{Z_mu switching integrals} is used as definition of $Z_\mu$. By \Cref{t: zeta(1 z),t: zeta_sm/la(1 z)}, the expression~\eqref{Z_mu switching integrals} defines a tempered distribution $Z_\mu$ for $\mu\gg0$. Moreover, using also the formula of the inverse Fourier transform, we get, for $f\in\SS$, 
\[
\langle Z_\mu,f\rangle=\frac{1}{2\pi}\int_{-\infty}^\infty\zeta(1,z)\,\hat f(\nu)\,d\nu
\to\frac{\bfz}{2\pi}\int_{-\infty}^\infty\hat f(\nu)\,d\nu
=\bfz f(0)\;,
\]
as $\mu\to+\infty$, uniformly on $\nu$. For every $C>0$, this convergence is also uniform on $f\in\SS$ with $|\hat f(\nu)|,|\nu^2\hat f(\nu)|\le C$. So $Z_\mu\to\bfz\delta_0$ in $\SS'$ as $\mu\to+\infty$. To get \Cref{t: Z_pm}, it only remains to prove the following.

\begin{thm}\label{p: switch}
Both~\eqref{Z_mu} and~\eqref{Z_mu switching integrals} define the same tempered distribution $Z_\mu$ for $\mu\gg0$. 
\end{thm}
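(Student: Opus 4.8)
The plan is to show that both iterated integrals in~\eqref{Z_mu} and~\eqref{Z_mu switching integrals} converge absolutely as a double integral over $(0,\infty)\times(-\infty,\infty)$, so that Fubini's theorem applies and the two orders of integration agree. First I would fix $f\in\SS$ and $\mu\gg0$, and write the inner integrand as $F(u,\nu)=\Trs\big({\eta\wedge}\,\delta_z e^{-u\Delta_z}\big)\hat f(\nu)$. The task reduces to proving $\int_0^\infty\int_{-\infty}^\infty|F(u,\nu)|\,d\nu\,du<\infty$. Since $\hat f$ is Schwartz, it suffices to get a bound on $\big|\Trs\big({\eta\wedge}\,\delta_z e^{-u\Delta_z}\big)\big|$ that is integrable in $u$ over $(0,\infty)$ and grows at most polynomially in $\nu$ (uniformly for $\mu$ in a fixed range $\mu\gg0$).

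The key step is to split this supertrace using the small/large decomposition, $\Trs\big({\eta\wedge}\,\delta_z e^{-u\Delta_z}\big)=\Trs\big({\eta\wedge}\,\delta_z e^{-u\Delta_z}P_{z,\text{\rm sm}}\big)+\Trs\big({\eta\wedge}\,\delta_z e^{-u\Delta_z}P_{z,\text{\rm la}}\big)$, and to estimate each term separately for small $u$ and large $u$. For large $u$, both terms decay: the large part decays like $e^{-cu}$ because the large spectrum is bounded below by $C\mu$ (\Cref{t: spec Delta_z}), and the small part is more delicate because the nonzero small eigenvalues are exponentially small; here is exactly where the lower bound $C\mu e^{-2a_k\mu}$ from \Cref{t: estimates of the small eigenvalues} is needed, giving decay like $e^{-C\mu e^{-2a_n\mu}u}$ on the orthogonal complement of $\ker\Delta_z$ inside the small complex (note $\delta_z$ kills $\ker\Delta_z\oplus\im d_z$, so only $\im\delta_{z,\text{\rm sm}}$ contributes). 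For small $u$, I would invoke \Cref{c: regularity} and~\eqref{Str(theta wedge D_z e^-t Delta_z P_z la) = O(t^-1/2)}, which give $\Trs\big({\eta\wedge}\,D_z e^{-u\Delta_z}P_{z,\text{\rm la}}\big)=O(u^{-1/2})$ as $u\to0$ for the large part; the small part is bounded uniformly as $u\to0$ since it is a trace of a fixed-rank family of operators with uniformly bounded norm (by \Cref{c: P_z sm : E_z -> E_z sm iso} and \Cref{p: P_z sm theta wedge}). Since $u^{-1/2}$ is integrable near $0$ and all bounds decay integrably at $\infty$, the $u$-integral of $\big|\Trs\big({\eta\wedge}\,\delta_z e^{-u\Delta_z}\big)\big|$ is finite; and the polynomial-in-$\nu$ control comes from tracking the $\nu$-dependence through \Cref{p: perturbed Sobolev} and the $\nu$-uniform estimates already established (the constants in \Cref{t: spec Delta_z,t: estimates of the small eigenvalues} are independent of $\nu$), so that the product with $\hat f(\nu)$ is integrable in $\nu$.

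Having established absolute integrability of the double integral, Fubini's theorem immediately gives that~\eqref{Z_mu} equals the first line of~\eqref{Z_mu switching integrals}. To pass from the first line of~\eqref{Z_mu switching integrals} to the second, I would use the dominated convergence theorem: for each fixed $\nu$, $\int_0^\infty\Trs\big({\eta\wedge}\,\delta_z e^{-u\Delta_z}\big)\,du=\lim_{t\downarrow0}\int_t^\infty\Trs\big({\eta\wedge}\,\delta_z e^{-u\Delta_z}\big)\,du=\lim_{t\downarrow0}\Trs\big({\eta\wedge}\,d_z^{-1}e^{-t\Delta_z}\Pi_z^1\big)$, which is exactly $\zeta(1,z)$ by \Cref{c: zetala(1 Delta_z theta wedge D_z bfw) = ...} (here one uses $\delta_z\,e^{-u\Delta_z}=\tfrac{d}{du}\big({-}D_z^{-1}e^{-u\Delta_z}\Pi_z^\perp\big)$ restricted appropriately, together with the fact that ${\eta\wedge}$ shifts degree so only the $\im d_z$ component survives); the dominating function in $\nu$ is again the polynomial bound times $|\hat f(\nu)|$. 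The main obstacle I anticipate is the small-$u$ behavior of the small-complex contribution combined with the need for $\nu$-uniformity: one must be careful that the operator ${\eta\wedge}\,\delta_z P_{z,\text{\rm sm}}$, although of bounded rank, has norm controlled uniformly in $\nu$ as $u\to 0$, which requires appealing to \Cref{p: P_z sm theta wedge} and the fact (\Cref{c: zetasm(1 Delta_z theta wedge D_z bfw) is uniformly bd}-type reasoning, here via \Cref{t: estimates of the small eigenvalues}) that $\delta_z^{-1}$ on $\im d_{z,\text{\rm sm}}$ does not blow up faster than a fixed power of $\mu$ — so the product ${\eta\wedge}\,\delta_z e^{-u\Delta_z}P_{z,\text{\rm sm}}$ stays bounded as $u\to0$ uniformly in $\nu$. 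Everything else is a routine application of Tonelli/Fubini and dominated convergence once these estimates are in place.
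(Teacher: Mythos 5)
Your overall architecture (Tonelli/Fubini plus dominated convergence, with the lower bound of \Cref{t: estimates of the small eigenvalues} used to control the contribution of the nonzero small spectrum) overlaps with the paper's, but there is a genuine gap at the point you dismiss as routine: the claim that $\int_0^\infty\big|\Trs\big({\eta\wedge}\,\delta_ze^{-u\Delta_z}\big)\big|\,du$ is bounded by a polynomial in $\nu$. Near $u=0$ you invoke the bound~\eqref{Str(theta wedge D_z e^-t Delta_z P_z la) = O(t^-1/2)}, but that bound comes from the short-time heat-trace expansion combined with \Cref{t: regularity}, and nothing in the paper controls the dependence of its implied constant on $\nu$, which runs over the whole (non-compact) real line. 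The results you cite for $\nu$-uniformity --- \Cref{p: perturbed Sobolev}, \Cref{t: spec Delta_z}, \Cref{t: estimates of the small eigenvalues} --- are spectral statements and say nothing about the remainder of the heat expansion as $u\to0$. The only $\nu$-explicit estimate available is the trace-norm bound in the proof of \Cref{p: finite integral of the absolute value}, of the form $C|z|^{2N}(1+u^{-N})e^{c\mu}$ with $N>n$, which is polynomial in $|z|$ but not integrable near $u=0$. Hence the absolute convergence of the double integral over all of $(0,\infty)\times\R$ --- the very hypothesis of your Fubini step --- is not established by the ingredients you cite, and closing it would require a new input (e.g.\ a heat-expansion remainder estimate with explicitly polynomial dependence on $z$), not just bookkeeping. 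Your small-complex estimates (boundedness as $u\to0$, decay $e^{-C\mu e^{-2a_n\mu}u}$ for large $u$, uniform in $\nu$) and the large-$u$ decay of the large part are fine.

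The paper's proof is structured precisely to avoid this uniformity problem: absolute integrability is proved only over $[t,\infty)\times\R$ for each fixed $t>0$ (\Cref{p: finite integral of the absolute value}), where the crude $u^{-N}$ growth is harmless and the $|z|^{2N}$ factor is absorbed by $\hat f$; Fubini is applied on that region only, and the limit $t\downarrow0$ is taken at the end, outside the $u$-integration but after the $\nu$-integration, using the identity $\int_t^\infty\Trs({\eta\wedge}\,\delta_ze^{-u\Delta_z})\,du=\Trs({\eta\wedge}\,d_z^{-1}e^{-t\Delta_z}\Pi^1_z)$ together with \Cref{c: zetala(1 Delta_z theta wedge D_z bfw) = ...} and \Cref{t: zeta(1 z)}~\ref{i: zeta(1 z)} for the pointwise limit, and dominated convergence in $\nu$. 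Your second half (the $u$-integration by parts and the identification of the limit with $\zeta(1,z)$) agrees with this; so the natural repair of your argument is to reorganize it along the paper's lines, invoking Fubini only for $u\ge t$, rather than to insist on absolute convergence up to $u=0$, which is exactly the estimate the paper never proves and your proposal does not supply.
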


\begin{prop}\label{p: finite integral of the absolute value}
For $\mu\gg0$, $t>0$ and $f\in\SS$, 
\[
\int_{-\infty}^\infty\int_t^\infty\big|\Str\big({\eta\wedge}\,\delta_ze^{-u\Delta_z}\big)\big|\,|\hat f(\nu)|\,du\,d\nu<\infty\;.
\]
\end{prop}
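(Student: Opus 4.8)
The plan is to prove the stronger statement that, for $\mu\gg0$, the inner integral $\int_t^\infty\big|\Trs({\eta\wedge}\,\delta_ze^{-u\Delta_z})\big|\,du$ is bounded by a constant depending only on $\mu$ and $t$, uniformly in $\nu$; the proposition then follows immediately since $\hat f\in\SS$, so $\|\hat f\|_{L^1(\R)}<\infty$.

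The first step is to exploit a uniform spectral gap. Since $\delta_z$ annihilates $\ker\Delta_z$, we have $\Trs({\eta\wedge}\,\delta_ze^{-u\Delta_z})=\Trs({\eta\wedge}\,\delta_ze^{-u\Delta_z}\Pi_z^\perp)$, and for $\mu\gg0$ the restriction of $\Delta_z$ to $\im\Delta_z$ is bounded below by a constant $\lambda_0=\lambda_0(\mu)>0$ independent of $\nu$: the large spectrum satisfies $\Delta_z\ge C\mu$ by \Cref{t: spec Delta_z}, while by \Cref{t: estimates of the small eigenvalues} the nonzero small eigenvalues are $\ge C\mu e^{-2\bar a\mu}$, where $\bar a=\max_ka_k$ (the $\nu$-independence of the constants in these statements is built into them). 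I would then factor $e^{-u\Delta_z}=e^{-(u/2)\Delta_z}e^{-(u/2)\Delta_z}$ and estimate, for $u\ge t$,
\[
\big|\Trs\big({\eta\wedge}\,\delta_ze^{-u\Delta_z}\big)\big|\le C\,\big\|\delta_ze^{-(u/2)\Delta_z}\Pi_z^\perp\big\|_{\mathrm{op}}\,\Tr\big(e^{-(u/2)\Delta_z}\big),
\]
using $|\Trs(AB)|\le\sum_k\|A_k\|_{\mathrm{op}}\|B_k\|_{\mathrm{tr}}$, that ${\eta\wedge}$ has operator norm $\sup_M|\eta|$, and that ${\eta\wedge}\,\delta_ze^{-u\Delta_z}$ preserves the degree. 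From $\|\delta_z\beta\|^2\le\langle\Delta_z\beta,\beta\rangle$ and the spectral theorem, $\|\delta_ze^{-(u/2)\Delta_z}\Pi_z^\perp\|_{\mathrm{op}}\le\sup_{\lambda\ge\lambda_0}\lambda^{1/2}e^{-(u/2)\lambda}\le\sqrt{2/(et)}\,e^{-(u/4)\lambda_0}$, and $\Tr(e^{-(u/2)\Delta_z})$ is nonincreasing in $u$, hence $\le\Tr(e^{-(t/2)\Delta_z})$. It therefore only remains to bound $\Tr(e^{-s\Delta_z})$ uniformly in $\nu$, for the fixed value $s=t/2$.

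The heart of the matter is this heat-trace bound, and I would obtain it in two stages. Writing $\Delta_z=\Delta_{i\nu}+R$ with $R=\mu\sH_\eta+\mu^2|\eta|^2$ a bounded self-adjoint endomorphism — note $R\ge\mu\sH_\eta\ge-C\mu$, so $\|e^{-sR}\|_{\mathrm{op}}\le e^{C(\mu)s}$ — the Golden--Thompson inequality gives $\Tr(e^{-s\Delta_z})\le\|e^{-sR}\|_{\mathrm{op}}\,\Tr(e^{-s\Delta_{i\nu}})\le e^{C(\mu)s}\,\Tr(e^{-s\Delta_{i\nu}})$, so it suffices to bound $\Tr(e^{-s\Delta_{i\nu}})$ uniformly in $\nu$. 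This is exactly where \Cref{p: perturbed Sobolev} enters: fixing $l$ with $2l>n$, that proposition applied to $\nu\eta$, together with $\|D_{i\nu}^j\alpha\|^2=\langle\Delta_{i\nu}^j\alpha,\alpha\rangle\le\|(1+\Delta_{i\nu})^l\alpha\|^2$ for $j\le2l$, yields $\|\alpha\|_{L^\infty}\le C\|(1+\Delta_{i\nu})^l\alpha\|$ with $C$ independent of $\nu$; applying this to $\alpha=e^{-s\Delta_{i\nu}}\psi$ and using the spectral calculus gives $\|e^{-s\Delta_{i\nu}}\|_{L^2\to L^\infty}\le C_s$ uniformly in $\nu$, hence by self-adjointness and duality $\|e^{-s\Delta_{i\nu}}\|_{L^1\to L^2}\le C_s$, and so $\|e^{-2s\Delta_{i\nu}}\|_{L^1\to L^\infty}\le C_s^2$, i.e.\ the heat kernel of $\Delta_{i\nu}$ is bounded on the diagonal by $C_s^2$ uniformly in $\nu$. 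Integrating over $M$ gives $\Tr(e^{-2s\Delta_{i\nu}})\le(\rank\Lambda)\,\vol(M)\,C_s^2$, uniformly in $\nu$.

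Putting the pieces together, for $\mu\gg0$ and $u\ge t$ one gets $|\Trs({\eta\wedge}\,\delta_ze^{-u\Delta_z})|\le C(\mu,t)\,e^{-(u/4)\lambda_0}$, so $\int_t^\infty|\Trs({\eta\wedge}\,\delta_ze^{-u\Delta_z})|\,du\le C'(\mu,t)$ uniformly in $\nu$, and integrating against $|\hat f(\nu)|$ finishes the proof. The one real obstacle is the uniform-in-$\nu$ heat-trace estimate; the key point is that after using Golden--Thompson to shed the $\nu$-free zeroth-order perturbation $R$ (harmless here, $\mu$ being fixed), and after converting operator bounds into a diagonal kernel bound via the $L^1\to L^2\to L^\infty$ factorization, the residual $\nu$-dependence is precisely of the kind that the $\nu$-uniform Sobolev inequality \Cref{p: perturbed Sobolev} controls. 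Everything else — the uniform spectral gap, the elementary estimates on $\lambda^{1/2}e^{-u\lambda}$, and the trace-norm Hölder inequality — is routine.
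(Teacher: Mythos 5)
Your argument is correct, but it follows a genuinely different route from the paper's. The paper bounds $|\Trs({\eta\wedge}\,\delta_ze^{-u\Delta_z})|\le\|\eta\|_{L^\infty}\Tr(\Delta_z^{1/2}e^{-u\Delta_z})$, integrates in $u$ first to reduce to $\Tr(\Delta_z^{-1/2}e^{-t\Delta_z}\Pi_z^\perp)$, and then inserts the fixed trace-class operator $(I+D^2)^{-N}$ ($N>n$), controlling the remaining operator norm $\|(I+D^2)^N\Delta_z^{-1/2}e^{-t\Delta_z}\Pi_z^\perp\|$ via \Cref{c: | alpha |_m z} and \Cref{t: estimates of the small eigenvalues}; this produces a bound of size $C|z|^{2N}(1+t^{-N})e^{c\mu}$, i.e.\ only polynomial in $\nu$, which is then absorbed by the rapid decay of $\hat f$. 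You instead split the heat operator at time $u/2$, use the $\nu$-independent gap on the nonzero spectrum (from \Cref{t: spec Delta_z} and \Cref{t: estimates of the small eigenvalues}, exactly the ingredient the paper also needs) to make the operator-norm factor integrable in $u$, and then prove a $\nu$-uniform bound on $\Tr(e^{-s\Delta_z})$ by dominating $\Delta_z$ by $\Delta_{i\nu}-C\mu$ and invoking the $\nu$-uniform Sobolev inequality \Cref{p: perturbed Sobolev} to get a uniform on-diagonal heat-kernel bound; this yields an inner integral bounded uniformly in $\nu$, so only $\hat f\in L^1$ is used. What your approach buys is a stronger, $\nu$-uniform estimate of $\int_t^\infty|\Trs({\eta\wedge}\,\delta_ze^{-u\Delta_z})|\,du$ (and it showcases \Cref{p: perturbed Sobolev} directly); what the paper's buys is brevity, since it reuses \Cref{c: | alpha |_m z} and one fixed trace-class factor and is content with polynomial growth in $\nu$. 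Two small points to tidy up: your kernel argument bounds $\Tr(e^{-2s\Delta_{i\nu}})$ while you need the trace at time $s=t/2$, so run the $L^2\to L^\infty$/$L^1\to L^2$ factorization with $s/2$ in place of $s$; and rather than appealing to Golden--Thompson for an unbounded generator, it is cleaner to note that $\Delta_z\ge\Delta_{i\nu}-C\mu$ as quadratic forms with common form domain, so the min--max principle gives $\Tr(e^{-s\Delta_z})\le e^{Cs\mu}\Tr(e^{-s\Delta_{i\nu}})$ directly. Neither point affects the validity of the proof.
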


\begin{proof}
By \cite[Corollary~XI.9.8 and Lemma~XI.9.9~(d)]{DunfordSchwartz1988-II},
\begin{align*}
\big|\Str\big({\eta\wedge}\,\delta_{z}e^{-u\Delta_z}\big)\big|
&\le\big|{\eta\wedge}\,\delta_{z}e^{-u\Delta_z}\big|_1
\le\|{\eta\wedge}\|\,\big|\delta_{z}e^{-u\Delta_z}\big|_1\\
&=\|\eta\|_{L^\infty}\,\Tr\big((d_{z}\delta_{z})^{1/2}e^{-u\Delta_z}\big)
\le\|\eta\|_{L^\infty}\,\Tr\big(\Delta_{z}^{1/2}e^{-u\Delta_z}\big)\;,
\end{align*}
where $|\ |_1$ denotes the trace norm. Hence 
\begin{align*}
\int_t^\infty\big|\Str\big({\eta\wedge}\,\delta_ze^{-u\Delta_z}\big)\big|\,du
&\le\|\eta\|_{L^\infty}\int_t^\infty \Tr\big(\Delta_{z}^{1/2}e^{-u\Delta_z}\big)\,du\\
&=\|\eta\|_{L^\infty} \Tr\big(\Delta_{z}^{-1/2}e^{-t\Delta_z}\Pi_z^\perp\big)\;.
\end{align*}

The operator $(1+\Delta)^{-N}$ is of trace class for any $N>n$. Therefore
\[
\Tr\big(\Delta_{z}^{-1/2}e^{-t\Delta_z}\Pi_z^\perp\big)
\le \big|(1+\Delta)^{-N}\big|_1 \big\|(1+\Delta)^{N}\Delta_{z}^{-1/2}e^{-t\Delta_z}\Pi_z^\perp\big\|\;.
\]
By \Cref{c: | alpha |_m z,t: estimates of the small eigenvalues}, for $\mu\gg0$ and $\alpha\in L^2(M;\Lambda)$, 
\begin{multline*}
\big\|(1+\Delta)^{N}\Delta_{z}^{-1/2}e^{-t\Delta_z}\Pi_z^\perp\alpha \big\|\\
\begin{aligned}
&\le C_0 \big\|\Delta_{z}^{-1/2}e^{-t\Delta_z}\Pi_z^\perp\alpha \big\|_{2N}
\le C_1|z|^{2N} \big\|\Delta_{z}^{-1/2}e^{-t\Delta_z}\Pi_z^\perp\alpha \big\|_{2N,z}\\ 
&= C_2 |z|^{2N} \sum_{k=0}^{2N}\big\|D_z^k\Delta_{z}^{-1/2}e^{-t\Delta_z}\Pi_z^\perp\alpha \big\|
\le C_3|z|^{2N}\sum_{k=0}^{2N}\frac1{t^{k/2}}\big\|\Delta_{z}^{-1/2}\Pi_z^\perp\alpha \big\|\\
&\le C|z|^{2N}\big(1+t^{-N}\big)e^{c \mu}\|\alpha\|\;.
\end{aligned}
\end{multline*}
Thus, since $f\in\SS$,
\begin{multline*}
\int_{-\infty}^\infty\int_t^\infty\big|\Str\big({\eta\wedge}\,\delta_ze^{-u\Delta_z}\big)\big|\,|\hat f(\nu)|\,du\,d\nu\\
\le C\|\eta\|_{L^\infty}\big|(1+\Delta)^{-N}\big|_1\big(1+t^{-N}\big)e^{c \mu}
\int_{-\infty}^\infty|z|^{2N}|\hat f(\nu)|\,d\nu<\infty\;.\qedhere
\end{multline*}
\end{proof} 

\begin{proof}[Proof of \Cref{p: switch}]
We compute
\begin{multline*}
-\frac{1}{2\pi}\int_{-\infty}^\infty\lim_{t\downarrow0}\Str\left({\eta\wedge}\,d_z^{-1}e^{-t\Delta_z}\Pi_z^1\right)\,\hat f(\nu)\,d\nu\\
\begin{aligned}
&=-\frac{1}{2\pi}\lim_{t\downarrow0}\int_{-\infty}^\infty\Str\big({\eta\wedge}\,d_z^{-1}e^{-t\Delta_z}\Pi_z^1\big)\,\hat f(\nu)\,d\nu\\
&=\frac{1}{2\pi}\lim_{t\downarrow0}\int_{-\infty}^\infty\int_t^\infty\Str\big({\eta\wedge}\,\delta_ze^{-u\Delta_z}\big)\,\hat f(\nu)\,du\,d\nu\\
&=\frac{1}{2\pi}\lim_{t\downarrow0}\int_t^\infty\int_{-\infty}^\infty\Str\big({\eta\wedge}\,\delta_ze^{-u\Delta_z}\big)\,\hat f(\nu)\,d\nu\,du\\
&=\frac{1}{2\pi}\int_0^\infty\int_{-\infty}^\infty\Str\big({\eta\wedge}\,\delta_ze^{-u\Delta_z}\big)\,\hat f(\nu)\,d\nu\,du\;.\;\end{aligned}
\end{multline*}
Here, the first equality is given by the Lebesgue's dominated convergence theorem, whose hypothesis is satisfied because $\hat f\in\SS$ and $|\Str({\eta\wedge}\,d_z^{-1}e^{-t\Delta_z}\Pi_z^1)|\le C$ for all $t>0$, $|\mu|\gg0$ and $\nu\in\R$ by \Cref{t: zeta(1 z),t: zeta_sm/la(1 z)}. The third equality is given by Fubini's theorem, whose hypothesis is satisfied by \Cref{p: finite integral of the absolute value}.
\end{proof}

\appendix

\section{Integrals along instantons}\label{s: integrals along instantons}

\Cref{t: extension of Smale} is proved here. We show the case where $a_n\ge\dots\ge a_1\gg0$. Then the case where $a_1\ge\dots\ge a_n\gg0$ follows by using $-X$ and $-\xi$.

By \cite[Theorem~B]{Smale1961}, there is some Morse function $h$ on $M$ such that $h(\YY_k)=\{k\}$ ($k=0,\dots,n$), $Xh<0$ on $M\setminus\YY$, and $h$ is in standard form with respect to $X$; in particular, $\Crit_k(h)=\YY_k$. Now we proceed like in the proof of \cite[Proposition~16~(i)]{BurgheleaHaller2004}. Since $\YY$ is finite, there is some $\eta'\in\xi$ such that $\eta'=0$ on some open neighborhood $U_p$ of every $p\in\YY$. Let $U_k=\bigcup_{p\in\YY_k}U_p$ and $U=\bigcup_kU_k$. We can assume $h(U_k)\subset(k-1/4,k+1/4)$ for all $k=0,\dots,n$. If $C\gg0$, then the representative $\eta'':=\eta'+C\,dh$ of $\xi$ satisfies $\eta''(X)<0$ on $M\setminus\YY$.

For $k=0,\dots,n$, let $I_k^\pm\subset\R$ be the closed interval with boundary points $k\pm1/4$ and $k\pm1/2$. Since there are no critical values of $h$ in $I_k^\pm$, every $T_k^\pm:=h^{-1}(I_k^\pm)$ is compact submanifold with boundary of dimension $n$, every $\Sigma_k^\pm:=h^{-1}(k\pm1/2)$ is a closed submanifold of codimension $1$, and there are identities $T_k^\pm\equiv\Sigma_k^\pm\times I_k^\pm$ given by $x\equiv(\pi_k^\pm(x),h(x))$ ($x\in T_k^\pm$), where $\pi_k^\pm(x)$ is the unique point of $\Sigma_k^\pm$ that meets the $\phi$-orbit of $x$. Of course, $\Sigma_k^-=\Sigma_{k-1}^+$ ($k=1,\dots,n$) and $T_0^-=\Sigma_0^-=T_n^+=\Sigma_n^+=\emptyset$. (See Figure~\ref{f: pants}.)

\begin{figure}[tb]
	\centering
	\includegraphics[scale=0.4]{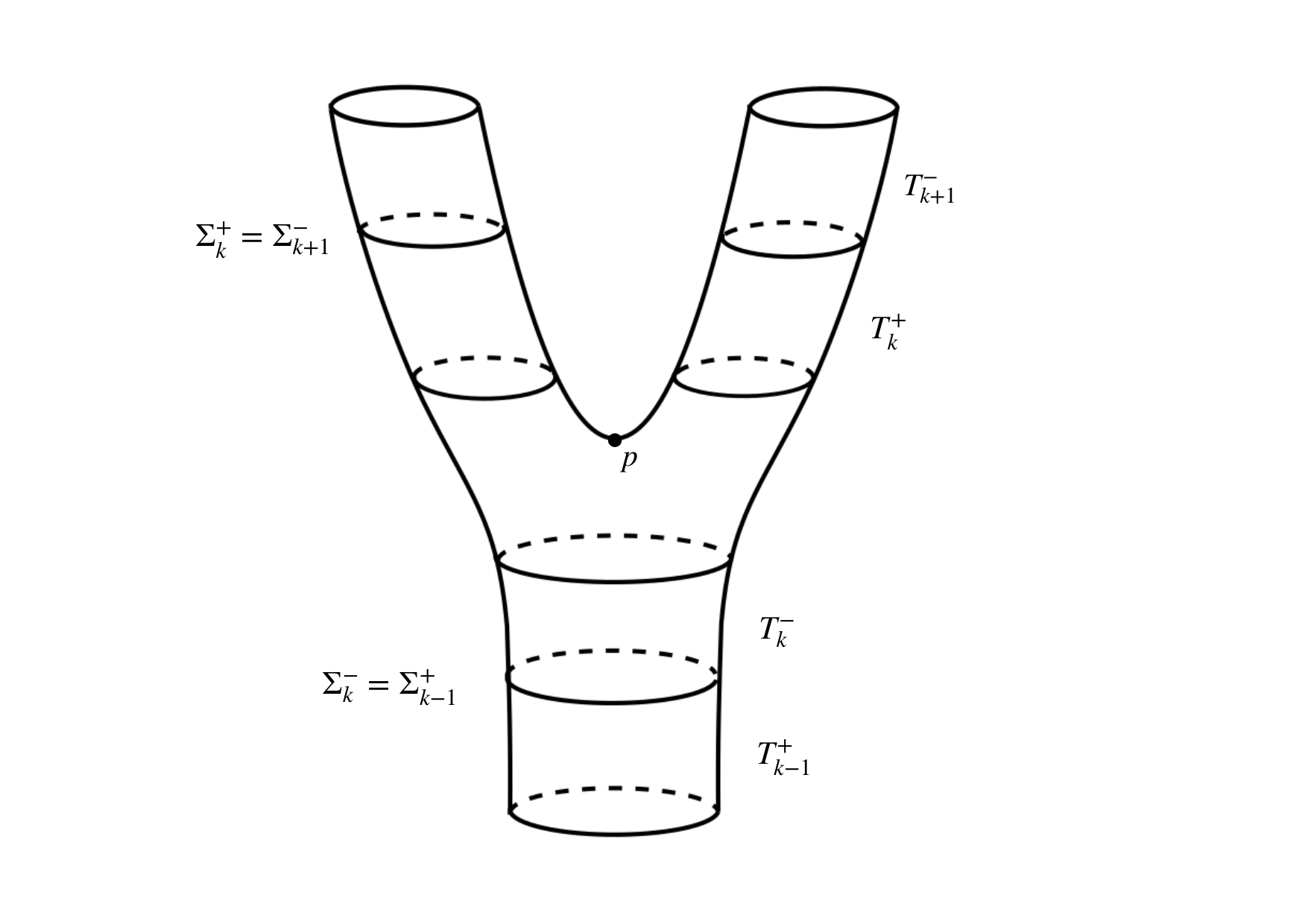}
	\caption{A representation of the sets $T_k^\pm$, $\Sigma_k^\pm$, $T_{k-1}^+$ and $T_{k+1}^-$, taking $\YY_k=\{p\}$.}
	\label{f: pants}
\end{figure}

We have $\Sigma_k^\pm\pitchfork\iota_p^\pm(W^\pm_p)$ for $p\in\YY_k$. Let $K_p^\pm=\Sigma_k^\pm\cap \iota_p^\pm(W^\pm_p)$ and $K_k^\pm=\bigcup_{p\in\YY_k}K_p^\pm$, which are closed submanifolds of $\Sigma_k^\pm$;  $K_k^-$ is of codimension $k$ in $\Sigma_k^-$, and $K_k^+$ of codimension $n-k$ in $\Sigma_k^+$. Since the $\alpha$- and $\omega$-limits of the orbits of $X$ are zero points, the orbit of $\phi$ through every point $x\in\Sigma_k^+\setminus K_k^+$ meets $\Sigma_k^-\setminus K_k^-$ at a unique point $\psi_k(x):=\phi^{\tau_k(x)}(x)$ ($\tau_k(x)>0$). This defines a diffeomorphism $\psi_k:\Sigma_k^+\setminus K_k^+\to\Sigma_k^-\setminus K_k^-$ and a smooth function $\tau_k:\Sigma_k^+\setminus K_k^+\to\R^+$. Moreover the sets $K_p^\pm$ ($p\in\YY_k$) have corresponding open neighborhoods $V_p^\pm$ in $\Sigma_k^\pm$, with disjoint closures, such that $\psi_k(V_p^+\setminus K_p^+)=V_p^-\setminus K_p^-$. Take smooth functions $\lambda_p^\pm$ ($p\in\YY_k)$ on $\Sigma_k^\pm$ so that $0\le\lambda_p^\pm\le1$, $\supp\lambda_p^\pm\subset V_p^\pm$, $\lambda_p^\pm=1$ on $K_p^\pm$, and $\lambda_p^+=\psi_k^*\lambda_p^-$ on $\Sigma_k^+\setminus K_k^+$. Moreover let
\begin{align*}
\widetilde T_k&=h^{-1}([k-1/2,k+1/2])\;,\quad
\widetilde K_p=\widetilde T_k\cap\big(\iota_p^+(W^+_p)\cup\iota_p^-(W^-_p)\big)\;,\\
\widetilde V_p&=\{\,\phi^t(x)\mid x\in V_p^+\setminus K_p^+,\ 0\le t\le\tau_k(x)\,\}\cup\widetilde K_p\;,\\
\widetilde K_k&=\bigcup_{p\in\YY_k}\widetilde K_p\;,\quad
\widetilde V_k=\bigcup_{p\in\YY_k}\widetilde V_p\;,\quad
M_k=h^{-1}((-\infty,k+1/2])\;.
\end{align*}
Thus $M_k=\widetilde T_0\cup\dots\cup\widetilde T_k$. Note that $\widetilde T_k$ and $M_k$ are compact submanifolds with boundary of dimension $n$, and every $\widetilde V_p$ (resp., $\widetilde K_p$) is open (resp., closed) in $\widetilde T_k$. We also get smooth functions $\tilde\lambda_p$ ($p\in\YY_k$) on $\widetilde T_k$ determined by the condition $\tilde\lambda_p(\phi^t(x))=\lambda_p^+(x)$ for all $x\in\Sigma_k^+\setminus K_k^+$ and $0\le t\le\tau_k(x)$. They satisfy $0\le\tilde\lambda_p\le1$, $\supp\tilde\lambda_p\subset\widetilde V_p$, and $\tilde\lambda_p=1$ on $\widetilde K_p$. 

Let
\begin{gather*}
A_p=\max\{\,|\eta'(\gamma)|\mid\gamma\in\TT^1_p\,\}\quad\big(p\in\YY_+\big)\;,\\
A_k=\max_{p\in\YY_k}A_p\quad(k=1,\dots,n)\;,\quad A=\max\{A_1,\dots,A_n\}\;.
\end{gather*}
We can suppose $C>A$ and $a_1>C+A>0$. For $p\in\YY_k$, $q\in\YY_{k-1}$ and $\gamma\in\TT(p,q)$,
\[
dh(\gamma)=h(q)-h(p)=-1\;.
\]
Therefore
\begin{equation}\label{gamma(theta'')}
0>\eta''(\gamma)=\eta'(\gamma)+C\,dh(\gamma)\ge-A-C>-a_1\quad(\gamma\in\TT^1)\;.
\end{equation}

\begin{claim}\label{cl: f_k}
For $k=0,\dots,n$, there is a smooth function $f_k$ on $M$ such that
\begin{align}
df_k(X)&\le0\;,\label{df_k(X) le 0}\\
\supp df_k&\subset\mathring M_k\;,\label{supp df_k}\\
\max\{\,(\eta''+df_k)(\gamma)\mid\gamma\in\TT^1_p\,\}&=-a_l\quad(p\in\YY_l,\ 1\le l\le k)\;,
\label{min gamma(theta'' + df_k) | gamma in TT^1_p}\\
(\eta''+df_k)(\delta)&>-a_k\quad(\delta\in\TT^1_{k+1})\;.\label{(theta'' + df_k)(delta)}
\end{align}
\end{claim}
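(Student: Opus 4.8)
The statement is Claim \ref{cl: f_k}, asserting the existence of the functions $f_k$ by induction on $k$. I would prove it by induction, the base case $k=0$ being trivial with $f_0=0$ (conditions \eqref{df_k(X) le 0}--\eqref{min gamma(theta'' + df_k) | gamma in TT^1_p} are vacuous, and \eqref{(theta'' + df_k)(delta)} for $\delta\in\TT^1_1$ is exactly \eqref{gamma(theta'')} since $a_0$ should be read as $a_1$—actually the relevant bound is $\eta''(\delta)>-A-C>-a_1=-a_0$ is nonsense; more carefully the needed estimate is $\eta''(\delta)>-a_1$ which holds by \eqref{gamma(theta'')}, noting one must set up the index convention so that the $k=0$ instance of \eqref{(theta'' + df_k)(delta)} reads $\eta''(\delta)>-a_1$ for $\delta\in\TT^1_1$). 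Granting $f_{k-1}$ with the four properties, I would construct $f_k = f_{k-1} + g_k$, where $g_k$ is a correction supported in the ``pair of pants'' region $\widetilde T_k$ (more precisely, near $\widetilde V_k$), built from the cut-off functions $\tilde\lambda_p$ ($p\in\XX_k$) already set up in the excerpt. The idea is that an instanton $\gamma\in\TT^1_p$ for $p\in\XX_k$ enters $\widetilde T_k$ through $K_p^+\subset\Sigma_k^+$ and leaves through $\Sigma_k^-$; along its passage we want to subtract just enough from its $\eta''$-integral to bring the maximum over $\TT^1_p$ down to exactly $-a_k$, while instantons $\delta\in\TT^1_{k+1}$ (which pass through $\widetilde T_k$ transversally, away from the $K_p^\pm$) pick up a controlled, smaller correction.

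\textbf{Construction of the correction term.} Concretely, for each $p\in\XX_k$ I would look at the quantity $c_p := -a_k - \max\{\,(\eta''+df_{k-1})(\gamma)\mid\gamma\in\TT^1_p\,\}$. By \eqref{(theta'' + df_k)(delta)} applied at level $k-1$ (with $\delta$ ranging over $\TT^1_k$), we have $(\eta''+df_{k-1})(\gamma) > -a_{k-1} \ge -a_k$ (using $a_k\ge a_{k-1}$ in the case $a_n\ge\dots\ge a_1$), so $c_p<0$; and since $a_k\gg0$ we can also ensure $(\eta''+df_{k-1})(\gamma)$ is bounded below independently, so $c_p$ is bounded. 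Then I would take $g_k = \sum_{p\in\XX_k} c_p\,(\text{function measuring progress through }\widetilde V_p)$. The natural choice is $g_k = \sum_{p\in\XX_k} c_p\,\tilde\lambda_p\cdot(\text{monotone reparametrization of }h)$, arranged so that $dg_k(X)\le 0$ (using $Xh<0$ and $c_p<0$, plus control of the $d\tilde\lambda_p$ terms which can be made negligible or sign-controlled by choosing the neighborhoods $V_p^\pm$ thin and the monotone factor with small derivative where $d\tilde\lambda_p\ne0$), that $g_k(\phi^t x)$ increases by exactly $-c_p\cdot 1$ as the orbit of a point $x\in K_p^+$ crosses $\widetilde T_k$ (so $g_k(\gamma\cap\widetilde T_k\text{ portion}) = c_p$ for $\gamma\in\TT^1_p$, adjusting the $\eta''+df_{k-1}$ integral by exactly $c_p$ and hitting $-a_k$), and that $\supp dg_k\subset \mathring{\widetilde T_k}\subset\mathring M_k$. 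One must also check \eqref{min gamma(theta'' + df_k) | gamma in TT^1_p} is preserved for $l<k$: the support of $dg_k$ lies in $\widetilde T_k$, which is disjoint from (the relevant portions of) instantons $\gamma\in\TT^1_p$ for $p\in\XX_l$, $l<k$, since such instantons live in $M_{l}\subset M_{k-1}$ and do not enter $\mathring{\widetilde T_k}$; so those integrals are unchanged. Finally, for $\delta\in\TT^1_{k+1}$ one estimates $(\eta''+df_k)(\delta) = (\eta''+df_{k-1})(\delta) + g_k(\text{portion})$; the added term is at most $\sum_p |c_p|$ times a factor controlled by how much $\tilde\lambda_p$ the orbit of $\delta$ sees, and since $\delta$ misses $\widetilde K_k$ one can make this small, keeping $(\eta''+df_k)(\delta) > -a_k$ provided $a_k$ is large enough relative to $a_{k-1}$ and the $|c_p|$'s.

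\textbf{Main obstacle.} The hard part will be the simultaneous bookkeeping: making the single function $g_k$ (a) monotone-decreasing along $X$ despite the $d\tilde\lambda_p$ cross-terms, (b) produce \emph{exactly} the adjustment $c_p$ on every instanton in $\TT^1_p$ (this requires that all instantons in $\TT^1_p$ enter $\widetilde T_k$ through $K_p^+$ and that the ``progress'' function depends only on $h$, not on which orbit, so the integral of $dg_k$ along all of them is the same $c_p$), and (c) give a uniformly \emph{small} perturbation to the $\TT^1_{k+1}$ instantons. The delicate point in (a) is that near $\partial V_p^\pm$ the function $\tilde\lambda_p$ is non-constant and $d\tilde\lambda_p$ can have either sign along $X$; the standard fix (as in Burghelea--Haller \cite{BurgheleaHaller2004}) is to use the product structure $\widetilde T_k\equiv\Sigma_k^+\times[\,\cdot\,]$ and write $g_k$ as $\tilde\lambda_p$ times a function of the $h$-coordinate alone with controlled derivative, so that $dg_k(X) = \tilde\lambda_p\cdot(\text{neg. term})\cdot(Xh) + (\text{function of }h)\cdot(d\tilde\lambda_p)(X)$, and along any $X$-orbit inside $\widetilde V_p$ the second term vanishes because $\tilde\lambda_p$ is constant along orbits (by its very definition $\tilde\lambda_p(\phi^t x)=\lambda_p^+(x)$). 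That observation—$\tilde\lambda_p$ is $\phi$-invariant inside $\widetilde V_p$—is what makes the whole scheme work, reducing (a) to the sign of $(Xh)<0$ and $c_p<0$. Once that is in hand the remaining verifications are routine estimates using $a_1>C+A>0$ and $a_k\gg0$.
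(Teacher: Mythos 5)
Your overall strategy coincides with the paper's: induction on $k$ with $f_0=0$, and a correction $g_k=f_k-f_{k-1}$ supported in $\widetilde T_k$ of the form (cut-off $\tilde\lambda_p$) $\times$ (monotone function of the $h$-coordinate), using precisely the observation that $\tilde\lambda_p$ is constant along orbits so that $dg_k(X)$ inherits its sign from $Xh<0$, and that the decrement along every $\gamma\in\TT^1_p$ is the same number $c_p=-(a_k-b_p)$, which brings the maximum over $\TT^1_p$ to exactly $-a_k$. Up to this point you are reconstructing the paper's argument.

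The gap is in your treatment of \eqref{(theta'' + df_k)(delta)}. You assert that instantons $\delta\in\TT^1_{k+1}$ ``pass through $\widetilde T_k$ transversally, away from the $K_p^\pm$'', so that the correction they pick up can be made small by thinning the neighborhoods $V_p^\pm$. This is geometrically false: such a $\delta$ runs from $u\in\XX_{k+1}$ to some $p\in\XX_k$, hence lies in $W^+_p$, enters $\widetilde T_k$ exactly through $K_p^+\subset\Sigma_k^+$, stays inside $\widetilde K_p$ where $\tilde\lambda_p\equiv1$, and converges to $p$ without ever reaching $\Sigma_k^-$. Since \eqref{supp df_k} forces $g_k$ to be a single constant above $\Sigma_k^+$ (the same constant for all the tubes $\widetilde V_p$ met by one component of $h^{-1}([k+1/2,\infty))$), the contribution of $dg_k$ to $(\eta''+df_k)(\delta)$ is exactly that constant subtracted from the value of $g_k$ at $p$; with your normalization this difference has size comparable to $|c_p|=a_k-b_p$ and cannot be shrunk by shrinking $V_p^\pm$. (A related slip: instantons in $\TT^1_p$, $p\in\XX_k$, start at $p$ and exit through $K_p^-\subset\Sigma_k^-$; they do not ``enter through $K_p^+$''. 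The uniform decrement $c_p$ along them does hold, but for that reason.) Moreover your normalization, which makes $g_k$ equal to the $p$-dependent values $c_p\tilde\lambda_p$ on $\Sigma_k^-$, cannot be continued below $\Sigma_k^-$ both monotonically and without destroying the exact value of the decrement on $\TT^1_p$. The paper resolves both issues by normalizing the correction to be $0$ at $\Sigma_k^-$, letting it equal $a_k-b_p$ near $p$ in each tube, and interpolating in $T_k^+$ to the single constant $a_k-b_k$ with $b_k=\min_p b_p$ (the minimum is what keeps $d\tilde h_k(X)\le0$ in the interpolation region); the resulting contribution to $\delta\in\TT(u,p)$ is then exactly $b_k-b_p$, and the verification of \eqref{(theta'' + df_k)(delta)} has to be carried out with this exact quantity — this is the delicate bookkeeping point of the whole proof, not a routine small-perturbation estimate, so your proposal as it stands does not close the induction.
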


The statement follows directly from \Cref{cl: f_k} taking $\eta=\eta''+df_n$. So we only have to prove this assertion.

We proceed by induction on $k$. For $k=0$, we choose $f_0=0$. Then~\eqref{min gamma(theta'' + df_k) | gamma in TT^1_p} is vacuous,~\eqref{df_k(X) le 0} and~\eqref{supp df_k} are trivial, and~\eqref{(theta'' + df_k)(delta)} is given by~\eqref{gamma(theta'')}. 

Now take any $k\ge1$ and assume $f_{k-1}$ is defined and satisfies~\eqref{df_k(X) le 0}--\eqref{(theta'' + df_k)(delta)}. Let
\begin{align}
b_p&=-\max\{\,(\eta''+df_{k-1})(\gamma)\mid\gamma\in\TT^1_p\,\}\quad(p\in\YY_k)\;,\label{b_p}\\
b_k&=\min\{\,b_p\mid p\in\YY_k\,\}\;.\notag
\end{align}
For every $p\in\YY_k$, we have $b_p<a_{k-1}\le a_k$ because $f_{k-1}$ satisfies~\eqref{(theta'' + df_k)(delta)}. So there is a smooth function $h_p^-$ on $I_k^-$ such that $(h_p^-)'\ge0$, $h_p^-=0$ around $k-1/2$, and $h_p^-=a_k-b_p$ around $k-1/4$. Let $\tilde h_p^-$ be the function on $V_p^-\times I_k^-\subset\Sigma_k^-\times I_k^-\equiv T_k^-$ given by $\tilde h_p^-(x,s)=h_p^-(s)$. We have $\tilde h_p^-=0$ around $V_p^-\times\{k-1/2\}$ and $\tilde h_p^-=a_k-b_p$ around $V_p^-\times\{k-1/4\}$. Thus $\tilde h_p^-$ has a smooth extension to $\widetilde V_p$, also denoted by $\tilde h_p^-$, which is equal to $a_k-b_p$ on $\widetilde V_p\setminus T_k^-$. The function $\tilde\lambda_p\tilde h_p^-$ on $\widetilde V_p$ can be extended by zero to get a smooth function on $\widetilde T_k$, also denoted by $\tilde\lambda_p\tilde h_p^-$. Let $\tilde h_k^-=\sum_{p\in\YY_k}\tilde\lambda_p\tilde h_p^-$ on $\widetilde T_k$.

On the other hand, let $\rho_k$ be a smooth function on $I_k^+$ such that $\rho_k'\ge0$, $\rho_k=0$ around $k+1/4$, and $\rho_k=1$ around $k+1/2$. Let $\tilde\rho_k$ be the smooth function on $T_k^+\equiv\Sigma_k^+\times I_k^+$ given by $\tilde\rho_k(x,s)=\rho_k(s)$, and let
\[
\tilde h_k^+=\tilde h_k^-(1-\tilde\rho_k)+(a_k-b_k)\tilde\rho_k
\]
on $T_k^+$. This smooth function is equal to $\tilde h_k^-$ around $\Sigma_k^+\times\{k+1/4\}$, and is equal to $a_k-b_k$ around $\Sigma_k^+\times\{k+1/2\}\equiv\Sigma_k^+$. So the functions, $\tilde h_k^-$ on $\widetilde T_k\setminus T_k^+$ and $\tilde h_k^+$ on $T_k^+$, can be combined to produce a smooth function $\tilde h_k$ on $\widetilde T_k$. Since $\tilde h_k=0$ around $\Sigma_k^-$ and $\tilde h_k=a_k-b_k$ around $\Sigma_k^+$, there is a smooth extension of $\tilde h_k$ to $M$, also denoted by $\tilde h_k$, which is constant on $M\setminus\widetilde T_k$.

Let $f_k=f_{k-1}+\tilde h_k$ on $M$. This smooth function satisfies~\eqref{df_k(X) le 0} because $f_{k-1}$ satisfies~\eqref{df_k(X) le 0}, and $X$ induces the opposite of the standard orientation on every fiber $\{x\}\times I_k^\pm\equiv I_k^\pm$ of $T_k^\pm$ ($x\in\Sigma_k^\pm$). It also satisfies~\eqref{supp df_k} and~\eqref{min gamma(theta'' + df_k) | gamma in TT^1_p} for $p\in\YY_l$ with $1\le l<k$ because $f_{k-1}$ satisfies these properties and $d\tilde h_k$ is supported in the interior of $\widetilde T_k$.

Next, take any $p\in\YY_k$, $q\in\YY_{k-1}$ and $\gamma\in\TT(p,q)\subset\TT^1_p$. We have $\gamma\cap T_k^-\equiv\{x\}\times I_k^-$ for some $x\in K_p^-\cap K_q^+\subset\Sigma_k^-=\Sigma_{k-1}^+$, and the orientation of $\gamma\cap T_k^-$ agrees with the opposite of the standard orientation of $\{x\}\times I_k^-\equiv I_k^-$. Then
\begin{align*}
(\eta''+df_k)(\gamma)&=(\eta''+df_{k-1}+d\tilde h_k)(\gamma)\le-b_p+\lambda_p^-(x)d\tilde h_p^-(\gamma)\\
&=-b_p-\int_{I_k^-}dh_p^-=-b_p-(a_k-b_p)=-a_k\;.
\end{align*}
Here, the equality holds when the maximum of~\eqref{b_p} is achieved at $\gamma$. Hence $f_k$ also satisfies~\eqref{min gamma(theta'' + df_k) | gamma in TT^1_p} for $p\in\YY_k$.

Finally, take any $p\in\YY_k$, $u\in\YY_{k+1}$ and $\delta\in\TT(u,p)\subset\TT^1_u\subset\TT^1_{k+1}$. Thus $\delta\cap T_k^+\equiv\{y\}\times I_k^+$ for some $y\in K_p^+\cap K_u^-\subset\Sigma_k^+=\Sigma_{k+1}^-$, and the orientation of $\delta\cap T_k^+$ agrees with the opposite of the standard orientation of $\{y\}\times I_k^+\equiv I_k^+$. Then
\begin{align*}
(\eta''+df_k)(\delta)&=(\eta''+df_{k-1}+d\tilde h_k)(\delta)
=\eta''(\delta)+d\tilde h_k^+(\delta)\\
&=\eta''(\delta)+\big(\tilde h_k^-(y)-(a_k-b_k)\big)\int_{I_k^-}d\rho_k\\
&=\eta''(\delta)+\tilde\lambda_p(y)\tilde h_p^-(y)+b_k-a_k\\
&=\eta''(\delta)+a_k-b_p+b_k-a_k=\eta''(\delta)+b_k-b_p
\ge\eta''(\delta)>-a_k\;,
\end{align*}
where the second equality is true because $f_{k-1}$ satisfies~\eqref{supp df_k}, and the last inequality holds by~\eqref{gamma(theta'')}. So $f_k$ satisfies~\eqref{(theta'' + df_k)(delta)}.


\bibliographystyle{amsplain}

\bibliography{../../../../Suso}

\end{document}